\newtheorem{assumption}{Assumption}
\newcommand{\norm}[1]{\left\lVert#1\right\rVert}
\pgfplotsset{compat=1.3}
\newcommand{\TheTitle}{\vspace{-1.2cm}Distributed optimization based on Gradient-tracking Revisited: enhancing convergence rate via surrogation } 
\title{{\TheTitle}\thanks{Submitted to the editors May 7, 2019; Revised October 2020.\newline  \indent  The authors are with the School of   Industrial Engineering, Purdue University, West-Lafayette, IN, USA. Emails: \email{$<$sun578,adaneshm,gscutari$>$@purdue.edu}.
\funding{This work  has been supported by the USA National Science Foundation under Grants  CIF 1719205, CIF 1564044, CMMI 1832688; and  the Army Research Office under Grant W911NF1810238.}}}
\author{
  Dianne Doe\thanks{Imagination Corp., Chicago, IL
    (\email{ddoe@imag.com}, \url{http://www.imag.com/\string~ddoe/}).}
  \and
  Paul T. Frank\thanks{Department of Applied Mathematics, Fictional
    University, Boise, ID (\email{ptfrank@fictional.edu},
    \email{jesmith@fictional.edu}).}
  \and
  Jane E. Smith\footnotemark[3]
}
\DeclareMathOperator*{\argmin}{argmin}
\renewcommand*{\theassumption}{\Alph{assumption}}
\newcommand{\wavg}[2]{\bar{\mathbf{#1}}_{\boldsymbol{\phi}}^{#2}}
\newcommand{\var}[2]{\mathbf{#1}_{\boldsymbol{\phi},\bot}^{#2}}
\newcommand{\Deltax}{\mathbf{d}}
\newcommand{\Deltaxi}[1]{\mathbf{d}_{#1}}
\newcommand{\optgap}{p_{\boldsymbol{\phi}}}
\newcommand{\bx}{\mathbf{x}}
\newcommand{\pgrad}{\nabla\mathbf{f}}
\newcommand{\hbx}{\widehat{\mathbf{x}}}
\newcommand{\hbW}{\widehat{\mathbf{W}}}
\newcommand{\bu}{\mathbf{u}}
\newcommand{\bn}{\mathbf{n}}
\newcommand{\by}{\mathbf{y}}
\newcommand{\bz}{\mathbf{z}}
\newcommand{\bA}{\mathbf{A}}
\newcommand{\bb}{\mathbf{b}}
\newcommand{\bH}{\mathbf{H}}
\newcommand{\bW}{\mathbf{W}}
\newcommand{\bJ}{\mathbf{J}}
\newcommand{\bI}{\mathbf{I}}
\newcommand{\1}{\mathbf{1}}
\newcommand{\0}{\mathbf{0}}
\newcommand{\bphi}{\boldsymbol{\phi}}
\newcommand{\bdelta}{\boldsymbol{\delta}}
\newcommand{\tf}{\widetilde{f}}
\newcommand{\tF}{\widetilde{F}}
\newcommand{\tH}{\widetilde{\mathbf{H}}}
\newcommand{\tL}{\widetilde{L}}
\newcommand{\tU}{\widetilde{U}}
\newcommand{\KK}{\mathcal{K}}
\newcommand{\GG}{\mathcal{G}}
\newcommand{\EE}{\mathcal{E}}
\newcommand{\PP}{\mathcal{P}}
\newcommand{\RR}{\mathcal{R}}
\newcommand{\VV}{\mathcal{V}}
\newcommand{\real}{\mathbb{R}}
\renewcommand{\natural}{\mathbb{N}}
\newcommand{\tmu}{\widetilde{\mu}}
\newcommand{\seqnorm}[1]{ #1^K(z)}
\DeclareMathOperator{\mx}{mx}
\DeclareMathOperator{\mn}{mn}
\theoremstyle{plain}
\newtheorem{theorem}{Theorem}[section]
\newtheorem{lemma}[theorem]{Lemma}
\newtheorem{corollary}[theorem]{Corollary}
\newtheorem{proposition}[theorem]{Proposition}
\theoremstyle{definition}
\newtheorem{definition}[theorem]{Definition}
\begin{document}
	
	\articletype{~}
	
	\title{Distributed Optimization Based on Gradient-tracking Revisited: Enhancing Convergence Rate via Surrogation}
	
	\author{
		\name{Ying Sun
			\thanks		{
				\indent The authors are with the School of   Industrial Engineering, Purdue University, West-Lafayette, IN, USA.  Emails: $<$sun578,adaneshm,gscutari$>$@purdue.edu.
				\newline
				\indent \textbf{Funding:} This work  has been supported by the USA National Science Foundation under Grants  CIF 1719205, CIF 1564044, CMMI 1832688; and  the Army Research Office under Grant W911NF1810238..
			}, 
			Amir Daneshmand, and Gesualdo Scutari
		}
	}
	\maketitle

\begin{abstract}{ 
We study distributed  
 multiagent optimization over (directed, time-varying) graphs. We consider the   minimization of $F+G$ subject to   convex constraints, where $F$ is the smooth strongly convex   sum of   the agent's losses and $G$ is a nonsmooth convex function.   We build on the SONATA algorithm:  the algorithm employs the use of surrogate objective functions    in the agents' subproblems (going  thus  beyond  linearization, such as proximal-gradient) coupled    with a   perturbed (push-sum)
consensus mechanism that aims to track locally the gradient of $F$. 
 SONATA achieves precision $\epsilon>0$ on the objective value in $\mathcal{O}(\kappa_g \log(1/\epsilon))$ gradient computations at each node and  $\tilde{\mathcal{O}}\big(\kappa_g (1-\rho)^{-1/2} \log(1/\epsilon)\big)$ communication steps, where $\kappa_g$ is the condition number of   $F$ and $\rho$ characterizes the connectivity of the network. This is the first linear rate result for distributed composite optimization; it   also  improves on   existing (non-accelerated) schemes just minimizing $F$, whose rate  depends  on much larger quantities than  $\kappa_g$ (e.g., the worst-case condition number among the agents). When considering  in particular empirical risk minimization problems with statistically similar data across the agents,   SONATA 
 employing high-order surrogates   achieves precision $\epsilon>0$ in $\mathcal{O}\big((\beta/\mu) \log(1/\epsilon)\big)$ iterations   and  $\tilde{\mathcal{O}}\big((\beta/\mu) (1-\rho)^{-1/2} \log(1/\epsilon)\big)$ communication steps, where $\beta$ measures the degree of similarity of the agents' losses and $\mu$ is the strong convexity constant of $F$.   Therefore, when  $\beta/\mu < \kappa_g$, the use of high-order surrogates yields provably faster rates than what achievable by first-order models; this is without exchanging any Hessian matrix over the network.}
\vspace{-0.2cm}
\end{abstract}

\begin{keywords}
Distributed optimization, gradient tracking, linear rate, machine learning,   statistical similarity, surrogate functions.
\newline 
\newline
Submitted on May 2019; Revised on Oct. 2020
\end{keywords}

\section{Introduction} \label{sec:intro} We study distributed    optimization over networks in the form: 
\vspace{-0.4cm}
\begin{equation}\label{eq:P}
\begin{array}{cl}
\underset{\mathbf{x}}{\min} & U(\bx)\triangleq\underbrace{\frac{1}{m}\sum_{i=1}^{m}f_{i}(\bx)}_{F(\bx)}+G(\bx)\vspace{-0.2cm}\\
\text{s.t.} & \bx\in\KK,
\end{array}\tag{P}\vspace{-0.2cm}
\end{equation} where   $f_i:\mathbb{R}^d\to \mathbb{R}$ is the loss function of agent $i$, 
 assumed to be smooth and  convex   while $F$ is strongly convex on $\mathcal{K}$;    $G:\mathbb{R}^d\to \mathbb{R}$ is      a   nonsmooth convex function on $\mathcal{K}$;  and $\mathcal{K}\subseteq \mathbb{R}^d$ represents the set of common  convex constraints. Each $f_i$ is known to the associated agent only. 
Agents are connected through a communication network, modeled as a  graph, possibly  directed and/or time-varying. The goal is to   
cooperatively solve  \eqref{eq:P} by exchanging  information only with their immediate neighbors.

Distributed  optimization in the form \eqref{eq:P}  has found a wide range of applications in several areas, including network information processing, telecommunications,    multi-agent control, and machine learning. {  An instance of particular interest to this work  is the distributed Empirical Risk Minimization (ERM) whereby the goal is to minimize the average loss over some dataset, distributed across the nodes of the network (cf. Sec.~\ref{sec_related}). Letting $\mathcal{D}^{(i)}=\{\mathbf{z}_1^{(i)},\ldots ,\mathbf{z}_n^{(i)}\}$ the dataset of $n$ examples available at node $i$'s side, the local empirical loss reads   $f_i(\bx)=1/n\sum_{j=1}^n f(\bx; \mathbf{z}_j^{(i)})$, where $f(\bx; \mathbf{z}_j^{(i)})$  measures the fit between the parameter $\bx$ and the  sample  $\mathbf{z}_j^{(i)}$. 
Data sets are usually large and high-dimensional, which makes routing local data to other agents (let alone to a centralized node) infeasible or highly inefficients. Given the cost of communications (especially if compared with the speed of local processing),  the challenge in such a network setting is designing communication efficient distributed algorithms.}   

  {  Motivated by the  aforementioned  applications, our focus  pertains to such a design in two possible settings (one being a special case of the other) \cite{Arjevani-ShamirNIPS15}:  {\bf 1)} The scenario where  no significant relationship can
be assumed among the local functions $f_i$--this is  what the   literature of distributed optimization has extensively studied, and will be refereed to as  the  {\it unrelated} setting---and {\bf 2)} the case where the $f_i$'s are {\it  related}, e.g., because they reflect statistical similarity in the data residing at different nodes. For instance, in the   distributed ERM problem above, when data are i.i.d.   among machines, one can show that quantities such as the gradients and Hessian matrices of the local functions differ only by $\beta=\mathcal{O}(1/\sqrt{n})$, due to concentrations of measure effects   \cite{DANE,DISCO}--we will refer to this as {\it $\beta$-related} setting (cf. Sec.~\ref{sec_related}). 
 If properly exploited in the algorithmic design, such similarity   can  speed up the
optimization/learning process over general purpose optimization algorithms. 

 \noindent  \paragraph*{\bf Centralized algorithms
 } Problem~\eqref{eq:P} in the two settings above has been extensively studied in the centralized environment, including   star-networks wherein there is  a master node   connected to all the other workers. Our interest is in the following (non-accelerated) algorithms: 

 {\it  1)  Unrelated setting:} \eqref{eq:P} can be solved on star-networks  employing the standard   proximal gradient method:   
 to reach precision $\epsilon>0$  on the objective value, one needs $\mathcal{O}\big(\kappa_g \log(1/\epsilon)\big)$ iterations (which is also the number of   communication rounds between the master and the workers), where $\kappa_g$ is the condition number of $F$. 
 
{\it  2)  $\beta$-related setting:} When the agents' functions $f_i$ are sufficiently similar,  a linear rate proportional  to $\kappa_g$ may be  highly suboptimal. For instance, in the  extreme case where all $f_i$'s are identical ($\beta=0$), the number of iterations/communications to an $\epsilon>0$ solution  would remain the same as for $\beta=\mathcal{O}(L)$.  In fact, when $1+\beta/\mu < \kappa_g$, faster rates can be   obtained exploiting the similarity of the  $f_i$'s. Specifically,    \cite{DANE} proposed DANE: a mirror-descent type algorithm over star-networks, where each worker $i$ replaces  the quadratic term in its local proximal-gradient update   with the Bregman divergence of the reference function  $f_i+\beta/2\|\bullet\|^2$;  and  the master averages the solutions  of the workers. DANE  is applicable to \eqref{eq:P} with  $G=0$: For  {\it quadratic} losses, it achieves an $\epsilon$-solution 
in $\mathcal{O}\big((\beta/\mu)^2\cdot \log(1/\epsilon)\big)$ iterations/communications (it is assumed $\beta/\mu\geq 1)$  
 while no improvement is proved over the proximal gradient   if the $f_i$'s are not quadratic. More recently, \cite{Fan2020}  proposed   
 CEASE,  which achieves  DANE's rate      for  \eqref{eq:P} with 
    $G\neq 0$ and    nonquadratic losses. Using recent results in \cite{LuFreundNesterov2020}, it is not difficult to check that the mirror-descent algorithm implemented at the master (thus  without averaging workers' iterates)   with the Bregman divergence of    $f_1+\beta/2\|\bullet\|^2$ ($f_1$ is the local function  at the master) achieves  an $\epsilon>0$ solution    in  $\widetilde{\mathcal{O}}\big(\beta/\mu \cdot\log(1/\epsilon)\big)$ iterations/communications,   improving thus  on DANE/CEASE's rates.

A natural question is whether similar results--in particular the dependence of the rate on global optimization parameters as obtained on star-networks in the unrelated and $\beta$-related settings--are achievable over general network topologies, possibly time-varying and directed. 
The literature  of distributed algorithms over  general network topologies--albeit vast--do not provide a satisfactory answer, leaving a gap between  rate results over star networks and what has been certified over general graphs--see  Sec~\ref{sec:related_works} for a   review of the state of the art. In a nutshell, {\bf (i)} there are no distributed schemes   provably achieving linear rate for \eqref{eq:P} with  $G\neq 0$ and/or constraints (cf. Table~\ref{convex_table}). Furthermore,    even considering  the unconstrained  minimization of $F$ (i.e., $G\!=\!0$ and $\mathcal{K}\!=\!\mathbb{R}^d$),  {\bf (ii)}  linear convergence is certified at a rate    depending on much larger quantities than the global condition number $\kappa_g$--see Table \ref{tab:optim_linear_rate}; and {\bf (iii)} when $1+\beta/\mu< \kappa_g$ ($\beta$-related setting), no rate improvement is provably achieved  by   existing distributed algorithms. These are much more pessimistic rate dependencies than what achieved over star-topologies. The goal of this paper is to close exactly this gap. }

{  
\subsection{Major contributions} Our major results are summarized next.   \begin{enumerate}
	\item  We provide the first  linear convergence rate analysis of a distributed algorithm, SONATA    (Successive cONvex Approximation algorithm over Time-varying digrAphs), applicable to the   {\it composite, constrained} formulation \eqref{eq:P} over (time-varying, directed)  graphs.     SONATA  was earlier proposed in the companion paper \cite{SONATA-companion}  for nonconvex problems.  It combines the use of surrogate functions in the agents' subproblems with a  perturbed (push-sum) consensus mechanism that aims at locally  tracking  the gradient of $F$.   
	Surrogate functions replace  the more classical first order approximation of the local $f_i$'s, which  is the omnipresent   choice in current distributed algorithms, offering   the potential to  better suit the geometry of the problem. For instance, (approximate) Newton-type subproblems or mirror descent-type updates  naturally  fit our surrogate models; they are the  key enabler of provably faster rates in the $\beta$-related setting. We comment SONATA's rates below (cf.  Table~\ref{Table-SONATA}).

	\item {\bf Unrelated setting (Table~\ref{Table-SONATA}):} When the network is sufficiently connected or it has a star-topology, SONATA reaches  an $\epsilon$-solution on the objective value in $\mathcal{O}\big(\kappa_g\log(1/\epsilon)\big)$ iterations/communications, which matches the  rate   of  the centralized proximal-gradient algorithm. For arbitrary network connectivity, the same iteration complexity is achieved at the cost of $\mathcal{O}((1-\rho)^{-1/2})$ rounds of communications per iteration (employing Chebishev acceleration), where  $\rho\in [0,1)$  is the  second  largest eigenvalue modulus of the mixing matrix.  Our rates improve on those of existing   distributed algorithms which    show  a much more pessimistic dependence on the optimization parameters and are proved under more restrictive assumptions
	--contrast Table~\ref{tab:optim_linear_rate} with Table~\ref{Table-SONATA}. Linear rates over time-varying digraphs are reported in   Table~\ref{Table-SONATA-TV} (cf. Sec.~\ref{sec:linear-rate-TV}).
	  
  \item  {\bf $\beta$-related setting (Table~\ref{Table-SONATA}):}  When the agents' functions are sufficiently similar  (specifically, $1+\beta/\mu<\kappa_g$), the use of a mirror descent-type surrogate   over linearization of the $f_i$'s   provably yields    faster rates,  
   at higher computation costs.  This improves on the rate of existing distributed algorithms, which are oblivious of function similarity (cf. Table~\ref{tab:optim_linear_rate}).  Notice that this is achieved without exchanging any Hessian matrix over the network but leveraging function  homogeneity via surrogation. 
When customized over star-topologies, SONATA's rates improve on DANE/CEASE's ones too. 
  

 \end{enumerate}

}
 \renewcommand{\arraystretch}{1.3}
\begin{table}[t!]  
\centering   
\resizebox{\textwidth}{!}{
	\begin{tabular} {l c | cccccccc}		
				\hline	
		{\bf   Algorithms}	& 						&    \cite{qu2016harnessing,shi2015extra,ling2015dlm,mokhtari2015dqm,shi2014linear,6926737,li2017decentralized,Jakovetic:da, Maros:2018vo}
		&   
		\cite{8267245,xi2015linear, Pu:ir, zeng2015extrapush}		
			 
			& \cite{Maros:2019ia,nedic2017geometrically,nedich2016achieving,yuan2018exact_p2, Khan-TV-AB18}
			
			&
				{\bf  SONATA}\\
			\hline 
			\multirow{3}{*}{\begin{tabular}[c]{@{}l@{}}\textbf{Problem:} \end{tabular} } &  \textbf{$F$ (smooth)} 	&	each $f_i$ scvx		&		each	$f_i$ scvx 
			&  $F$ scvx &  $F$ scvx
			\\	
			\cline{2-6}
			& \textbf{$G$ (nonsmooth)}   		&  																				&	 	   																		 	   					 & & \checkmark								
						\\	
			\cline{2-6}
			& \textbf{constraints $\mathcal{K}$}   		&	 														&	 	  												
			& & \checkmark															
			\\
			\hline 
			\multirow{2}{*}{\begin{tabular}[c]{@{}l@{}}\textbf{Network:} \end{tabular}  } &  \textbf{time-varying} 				&	 	only \cite{Maros:2018vo} 																	 	&   & 			only \cite{nedich2016achieving, Khan-TV-AB18}															  	&			 	\checkmark							
			\\	
			\cline{2-6}
			& \textbf{digraph}   		&			 																	&	\checkmark		 			 			  & only \cite{nedich2016achieving, Khan-TV-AB18}&	\checkmark																																 
			\\
			\hline 
	\end{tabular}
	}\medskip
	\centering 
	\caption{Existing linearly convergent distributed  algorithms. SONATA is the only scheme achieving linear rate in the presence of     $G$ in \eqref{eq:P} or constraints.  The explicit expression of the rates of the above nonaccelerated schemes (for which is available)  is reported in Table~\ref{tab:optim_linear_rate}.}
	\label{convex_table} \vspace{-.7cm}
\end{table}

\renewcommand{\arraystretch}{1.7}
\begin{table}[h!]\label{tab:optim_linear_rate}
	\centering
	 \resizebox{0.72\textwidth}{!}{
	\begin{tabular}{c|c|c}
		\hline
		 \small 
		{\bf Algorithm} & \small {\bf Problem}  &\small  {\bf Linear rate:} $\small \mathcal{O}\big(\delta\,\log(1/\epsilon)\big)$
		\\
		\hline \hline
	\small 	 EXTRA~\cite{shi2015extra} & \small $F$  &  \small $\delta=
	\mathcal{O}\big(\frac{\kappa_{\ell}^{2}}{1-\rho}\big)$,\quad ${\kappa_\ell}=\frac{L_{\mx}}{\mu_{\mn}}$  
		\\
		\hline
	\small 	 DIGing~\cite{nedich2016achieving,nedic2017geometrically}   & $F$       & \small $\delta=\frac{\hat{\kappa}^{1.5}}{(1-\rho)^2}$,\quad \small $\hat{\kappa}\triangleq \frac{L_{\mx}}{({1}/{m})\sum_i\mu_i}$  
		\\
		\hline
		 Harnessing~\cite{qu2016harnessing}   & $F$     & \small $\small\delta=\frac{\kappa_{\ell}^2}{(1-\rho)^2}$ 
		 \\
		\hline
	\small 	 NIDS~\cite{li2017decentralized}, \small ABC \cite{XU_at_al20}    & $F$    & \small $\delta=\max\big\{\kappa_{\ell},\, \frac{1}{1-\rho}\big\}$   
		\\
		\hline
		\small  Exact Diffusion~\cite{yuan2018exact_p2}    & \small $F$    & 
		\small 	$\delta=\frac{\bar{\kappa}^2}{1-\rho}$, \quad \small $\bar{\kappa}\triangleq \frac{L_{\mx}}{\mu_{\mx}}$
		\\
		\hline
		  \parbox{4cm}{\centering \small Augmented Lagrangian 
		  \small \cite{6926737}}    & \small $F$   & \small $\delta=\frac{\kappa_{\ell}}{1-\rho}$  
		\\
		\hline
		 \parbox{4cm}{\centering \small ADMM  \cite{shi2014linear} }   & $F$   & 
		$\small \frac{\kappa_{\ell}^4}{1-\rho}$
		\\
		\hline
	\end{tabular}}\medskip 
\caption{{ Linear rate of existing  non-accelerated algorithms over undirected graphs:  communications rounds to reach $\epsilon>0$ accuracy; $L_i$ and $\mu_i$ are the smoothness  and strong convexity constants of   $f_i$'s, respectively; $L_{\mx}\triangleq  max_{i} L_i$, $\mu_{\mn} \triangleq  \min_{i} \mu_i$; and   $\rho\in [0,1)$  is the  second  largest eigenvalue modulus of the mixing matrix [cf. (\ref{eq:rho_def})]. The rates above include the quantities   $\kappa_l$,  $\hat{\kappa}$,  and $\breve{\kappa}$ rather than the much desirable global condition number $\kappa_g\triangleq L/\mu$ ($L$ and $\mu$ are the smoothness and strong convexity constants of $F$, respectively). Furthermore, they are independent on $\beta$, implying that faster rates are not certified when $1+\beta/\mu <\kappa_g$ ($\beta$-related setting).\vspace{-0.5cm}}}
\end{table}

 \renewcommand{\arraystretch}{2}%
\begin{table}[h!]
\resizebox{\columnwidth}{!}{\begin{tabular}{|c|c|c|c|c|}
\hline 
{\bf Surrogate} & {\bf Communication Rounds} & {\bf Extra Averaging }& {\bf $\rho$ (network) }& $\beta$ \tabularnewline
\hline 
\hline 
\multirow{2}{3cm}{\centering linearization} & $\small\mathcal{O}\left(\kappa_g\,\log\left(1/\epsilon\right)\right)$ & \ding{55} & $\begin{array}{ccc}&\ensuremath{\rho=\mathcal{O}(\kappa_g^{-1}(1+\frac{\beta}{L})^{-2})} \vspace{-0.3cm}\\ & \vspace{-0.3cm}\text{or}\\ \vspace{-0.3cm}& \text{star-networks}\vspace{0.2cm}\\\end{array}$  & arbitrary\tabularnewline
\cline{2-5} 
 & $\widetilde{\mathcal{O}}\left(\dfrac{\kappa_g}{\sqrt{1-\rho}}\,\log(1/\epsilon)\right)$ & \ding{51} & arbitrary & arbitrary\tabularnewline
\hline 
\multirow{4}{3cm}{\centering local $f_{i}$} & $\small\mathcal{O}\left(1\cdot\log\left(1/\epsilon\right)\right)$ & \ding{55} & $\begin{array}{ccc}&\rho=\mathcal{O}\left( {\left(1 + \frac{\beta}{\mu}\right)^{-2}\left(\kappa_g + \frac{\beta}{\mu}\right)^{-2}}\right)\vspace{-0.3cm}\\ & \vspace{-0.3cm}\text{or}\\ \vspace{-0.3cm}& \text{star-networks}\vspace{0.2cm}\\\end{array}$  & \multirow{2}{3cm}{\centering$\beta\leq\mu$}\tabularnewline
\cline{2-4} 
 & $\widetilde{\mathcal{O}}\left(\dfrac{1}{\sqrt{1-\rho}}\,\log(1/\epsilon)\right)$ & \ding{51} & arbitrary & \tabularnewline
\cline{2-5} 
 & $\small\mathcal{O}\left(\dfrac{\beta}{\mu}\cdot\log\left(1/\epsilon\right)\right)$ & \ding{55} & $\begin{array}{ccc}&\rho=\mathcal{O}\left(\left(1 + \frac{L}{\beta}\right)^{-1} \left(\kappa_g + \frac{\beta}{\mu}\right)^{-1}\right)\vspace{-0.3cm}\\ & \vspace{-0.3cm}\text{or}\\ \vspace{-0.3cm}& \text{star-networks}\vspace{0.2cm}\\\end{array}$ & \multirow{2}{3cm}{\centering $\beta>\mu$}\tabularnewline
\cline{2-4} 
 &  $\widetilde{\mathcal{O}}\left( \dfrac{{\beta}/{\mu}}{\sqrt{1 - \rho_0}} \cdot\log (1/\epsilon)\right)$ & \ding{51} & arbitrary & \tabularnewline
\hline  
\end{tabular}}\medskip \caption{{Summary of convergence rates of SONATA over undirected graphs: number of communication rounds to reach   $\epsilon$-accuracy. In the table, 
$\beta$ is the homogeneity parameter measuring the similarity of the loss functions $f_i$'s (cf. Definition \ref{assump:homogeneity}); the other quantities are defined as in Table~\ref{tab:optim_linear_rate}. 
 The extra averaging steps are performed using Chebyshev acceleration \cite{auzinger2011iterative,pmlr-v70-scaman17a}. The $\widetilde{O}$ notation hides log dependence on $\kappa_g$ and $\beta/\mu$ (see Sec.~\ref{sec_arbitrary-topology} for the exact expressions). Rates   over time-varying directed graphs are summarized in Table~\ref{Table-SONATA-TV} (cf. Sec.~\ref{sec:linear-rate-TV}).\vspace{-0.9cm}}}\label{Table-SONATA}\end{table}

\subsection{Related works}\label{sec:related_works}
  Early works on distributed optimization aimed at decentralizing the (sub)gradient algorithm.    The Distributed  Gradient Descent (DGD) was introduced in  \cite{Nedic2009} for   unconstrained instances of \eqref{eq:P}  and in \cite{Lopes:ja} for   least squares, bot over undirected graphs. 
  A refined convergence rate analysis of DGD   \cite{Nedic2009} can be found  in \cite{Yuan:2016dv}. 
 Subsequent variants  of DGD include  the projected (sub)gradient algorithm \cite{nedic2010constrained} and   
  the push-sum gradient consensus algorithm \cite{nedic2015distributed}, the latter implementable  over 
  digraphs. 
 While  different, the updates of the agents' variables in   the above  algorithms   can be   abstracted as a combination of one (or multiple) consensus step(s)  (weighted average with neighbors variables)  
 and a  local (sub)gradient descent step, 
 controlled by a step-size (in some schemes, followed by a  proximal operation).     
 A diminishing step-size is used to reach {\it exact} consensus on the solution,  converging thus at a {\it sublinear rate}. 
With a fixed step-size $\alpha$,  linear rate of the iterates  is achievable, but it can only converge to  a $\mathcal{O}(\alpha)$-neighborhood of the  solution  \cite{Nedic2009,Yuan:2016dv}.

 Several subsequent attempts have been proposed   to cope with this speed-accuracy dilemma, leading to algorithms converging to the {\it exact} solution while employing a {\it constant} step-size. Based upon the mechanism put forth to cancel the steady state error in the individual gradient direction,  existing proposals can be roughly   organized in  three groups, namely: i) primal-based  distributed methods leveraging the idea of  gradient tracking  \cite{Xu2015augmented,LorenzoScutari-C'15,LorenzoScutari-J'16,qu2016harnessing,NaLi_Allerton_2016, nedich2016achieving,XiKha-J'16,xi2015linear,8267245,Xin:uf,Pu:ir,Khan-AB-Nesterov18}; ii) distributed schemes using ad-hoc corrections of the local optimization direction \cite{shi2015extra,zeng2015extrapush,Berahas:bs}; and iii) primal-dual-based methods  \cite{shi2014linear,ling2015dlm,mokhtari2015dqm,6926737,pmlr-v70-scaman17a}.  We elaborate next on these works, focusing   on   schemes achieving linear rate--{  Table~\ref{convex_table} organizes these schemes based upon  the setting their convergence is established 
 while Table~\ref{tab:optim_linear_rate} reports the explicit expression of the rates.}
 
  {\bf i) Gradient-tracking-based methods:} 
 In these schemes, each agent updates its own  variables along a  direction that  tracks the global gradient   $\nabla F$. This idea was   proposed independently in the NEXT algorithm \cite{LorenzoScutari-C'15,LorenzoScutari-J'16} for Problem \eqref{eq:P}  and in AUG-DGM \cite{Xu2015augmented} for strongly convex, smooth, unconstrained optimization. The work     \cite{SunScutariPalomar-C'16} introduced SONATA, extending  NEXT  over  (time-varying) digraphs. 
  A convergence rate analysis of  \cite{Xu2015augmented} was later developed in \cite{qu2016harnessing,nedich2016achieving,Xu-TAC:hs}, with \cite{nedich2016achieving} considering also (time-varying)  digraphs. Other algorithms based on the idea of gradient tracking and implementable over digraphs are ADD-OPT  \cite{XiKha-J'16} and \cite{8267245}. Subsequent schemes,  
   \cite{xi2015linear}, the Push-Pull \cite{Pu:ir}, and the $\mathcal{AB}$ \cite{Khan-TV-AB18} algorithms,      relaxed previous  conditions on the  mixing matrices used in the consensus and gradient tracking steps over digraphs, which neither  need to be   row- nor column-stochastic. 
 All the   schemes above but NEXT and SONATA are applicable only to {\it smooth, unconstrained} instances of \eqref{eq:P}, with {\it each $f_i$ strongly convex}. 
 This latter   assumption 
 is   restrictive in some applications, such as  distributed  machine learning, where not all $f_i$ are strongly convex but  $F$ is so.  



{\bf ii) Ad-hoc gradient correction-based methods:} These methods developed specific corrections of   the plain DGD direction. Specifically, EXTRA \cite{shi2015extra} and its variant over digraphs, EXTRA-PUSH \cite{zeng2015extrapush}, introduce two different weight matrices for any two consecutive iterations  as well as leverage history of gradient information. 
 They are  applicable only to {it smooth, unconstrained} problems; when each $f_i$ is  strongly convex, they generate iterates that converge linearly to  the minimizer of  $F$.  To deal with an additive   convex nonsmooth term in the objective, \cite{shi2015proximal} proposed 
 PG-EXTRA, which is  
 thus applicable to~\eqref{eq:P} over undirected graphs, {     possibly with different local nonsmooth functions}. However, linear convergence is not certified. 
 A different approach 
 is to use a linearly increasing number of consensus steps rather than correcting directly the gradient direction; this has been   studied in  \cite{Berahas:bs} for unconstrained minimization of smooth, strongly convex $f_i$'s over undirected graphs.

{\bf iii)  Primal-dual  methods:} 
 A common theme of these schemes is employing a prima-dual reformulation of the original multiagent problem whereby dual variables associated to    a properly defined (augmented) Lagrangian function serve the purpose of  correcting  the plain DGD local direction. 
   Examples of such algorithms include: i) distributed ADMM  methods \cite{jakovetic2011cooperative,shi2014linear} and their inexact implementations \cite{ling2015dlm,Maros:2019ia}; ii) distributed Augmented Lagrangian-based methods  with     randomized primal variable updates \cite{6926737};  
   and iii) a distributed dual ascent method employing  tracking of the average of the primal variable  \cite{Maros:2018vo}. 
  All these   schemes   are   applicable  only to {\it smooth, unconstrained} optimization over undirected graphs, with \cite{Maros:2018vo} handling time-varying  graphs.   
   The  extension of these methods  to   digraphs seems  not straightforward, because it is not clear how to enforce consensus via   constraints over   directed networks.

 To summarize, the above literature review   shows that currently there  exists no distributed algorithm for the general formulation   \eqref{eq:P} that provably converges at linear rate to the exact solution, 
in the presence of a nonsmooth function $G$ or constraints (cf. Table  \ref{convex_table}); let alone mentioning digraphs.  Furthermore, when it comes to the dependence of the rate on the optimization parameters, Table~\ref{Table-SONATA} shows that, even restricting to unconstrained, smooth minimization, SONATA's rates improve on existing ones--in particular, SONATA provably obtains fast convergence  if the agents' objective functions (e.g., data)  are sufficiently similar. 
  

  
 {   
\paragraph*{Concurrent works} 
  While our manuscript was under review and available  on arXiv \cite{SONATA-arxiv}, a few other related technical reports appeared online \cite{alghunaim2019linearly,rogozin2019projected,NetDane}, which we briefly discuss next. The authors in \cite{alghunaim2019linearly} studied a 
  class of  distributed proximal gradient-based  methods to solve   Problem~\eqref{eq:P} with $G\neq 0$, over undirected, static, graphs. The algorithms  reach an $\epsilon$-solution in   	$\mathcal{O}\big(\breve{\kappa}(1-\rho)^{-1}\log(1/\epsilon)\big)$ iterations/communications, where  $\breve{\kappa}\triangleq L_{\mx}/\mu$. The authors in \cite{rogozin2019projected} proposed an inexact  distributed projected gradient descent method for the unconstraint minimization of $F$ and proved a communication complexity of  $\tilde{O}\big(\kappa_g\,(1-\rho)^{-1}\log^2(1/\epsilon)\big)$ ($\tilde{\mathcal{O}}$ hides a log-dependence on $L_{\max}^2/\mu^2$), which is determined by  the global condition number $\kappa_g$; the algorithm runs over time-varying, undirected,  graphs (as long as they are  connected at each iteration). SONATA's rates compare favorably with those above. Furthermore, since both schemes \cite{alghunaim2019linearly} and \cite{rogozin2019projected}  are gradient-type methods, unlike SONATA, their performance   cannot  benefit from function similarity, resulting in  convergence rates independent on $\beta$.  On the other hand,    \cite{NetDane} explicitly considered the  $\beta$-related setting, and proposed Network-DANE, a decentralization of the DANE algorithm. It turns out that Network-DANE is a special case of SONATA; there are however some important differences in the convergence analysis/results.   First, convergence in \cite{NetDane} is established only for the {\it unconstrained} minimization of $F$ ($G=0$ and $\mathcal{K}=\mathbb{R}^d$) over  {\it undirected }graphs, with {\it each} $f_i$ assumed to be strongly convex. Second, convergence rates therein are more pessimistic than what predicted by our analysis. In fact,  the best communication complexity of  Network-DANE  reads  $\tilde{O}\big( (1+(\beta/\mu)^2)(1-\rho)^{-1/2}\log(1/\epsilon)\big)$ for quadratic   $f_i$'s and worsens  to  $\tilde{O}\big(\kappa_\ell (1+\beta/\mu)(1-\rho)^{-1/2}\log(1/\epsilon)\big)$ for nonquadratic losses. Note that the latter  is of the order of   the worst-case rate of first-order methods,   which do    not benefit from function similarity. A direct comparison with Table~\ref{Table-SONATA}, shows that SONATA' rates exhibit a better dependence on the optimization parameters ($\kappa_g$ vs. $\kappa_\ell$) and $\beta/\mu$  in all scenarios. In particular, in the $\beta$-related setting,  SONATA  retains   faster rates, even when $f_i$'s are   nonquadratic.  }

\vspace{-0.1cm}
\subsection{Paper organization}  Sec.~\ref{sec:problem_statement} introduces  the main assumptions on the optimization problem and network, along with some motivating examples from machine learning. The SONATA algorithm over  undirected graphs is studied in Sec.~\ref{sec:SONATA-NEXT}; in particular, linear convergence is proved in  Sec.~\ref{sec:linear_rate}, while {  a detailed discussion on the rate expression and its scalability properties is provided in Sec.~\ref{sec:discussion}.}  The case of time-varying, possibly directed, graphs is considered in Sec.~\ref{sec:TV-case}. {  Finally, some numerical results supporting our theoretical findings are reported in Sec. \ref{sec:num}.  The study of SONATA when $F$ is nonconvex can be found in the technical report \cite{SONATA-arxiv}. }
\vspace{-0.1cm}
 
\section{Problem \& Network Setting} \label{sec:problem_statement} This section summarizes the   assumptions on the optimization problem and network setting. We also introduce a general  learning problem over networks, which will be used as case study throughout the   paper.    
\vspace{-0.1cm}

\subsection{Assumptions on Problem \eqref{eq:P}}
{   Our algorithmic design and convergence results pertain to two   problem settings, namely: i) the one where the local functions $f_i$ are generic and unrelated (cf. Sec. \ref{sec_unrelated}), and ii) the case where they are related (cf. Sec. \ref{sec_related}). These two settings are formally introduced below. \vspace{-0.1cm}
} 
\subsubsection{The unrelated setting}\label{sec_unrelated}
  Consider the following standard assumption.
  \begin{assumption}[On Problem \eqref{eq:P}]\label{assump:p}
\begin{enumerate}[leftmargin=*,label=\theassumption\arabic*]
\item The set $\emptyset \neq \KK \subseteq \real^d$ is closed and convex; 
\item Each $f_i: \mathcal{O} \to \real $ is twice differentiable on the open set $\mathcal{O} \supseteq \KK$ and convex; 
\item $F$ satisfies $$\mu \mathbf{I} \preceq \nabla^2 F(\bx) \preceq L \mathbf{I}, \quad \forall \bx\in \mathcal{K},$$ with  $\mu >0$ and $0<L<\infty$;
\item $G: \KK \to \real$ is convex possibly nonsmooth.
\end{enumerate}
\end{assumption}
Note that \ref{assump:p}3 together with \ref{assump:p}2 imply 
\begin{equation}\label{eq:mu-L-smooth}\mu_i \mathbf{I} \preceq \nabla^2 f_i(\mathbf{x}) \preceq L_i \mathbf{I},\quad \forall \bx\in \mathcal{K},\,\,\forall i\in [m],\end{equation} for some   $\mu_i\geq 0$ and  $0<L_i<\infty$. Unlike  existing works (cf. Table~\ref{convex_table}), we do not require each $f_i$ to be strongly convex but just $F$ (cf. \ref{assump:p}3). Also,   twice differentiability of $f_i$ is not really necessary, but assumed here to simplify our derivations. 

{  Under Assumption \ref{assump:p}, we define 
the global  conditional number  associated to   \eqref{eq:P}:\vspace{-0.2cm}
 \begin{equation}\label{eq:cond_number}
 	\kappa_g\triangleq \frac{L}{\mu}.\vspace{-0.2cm}
 \end{equation}
Related quantities determining the (linear) convergence rate of existing distributed algorithms are (cf. Table \ref{tab:optim_linear_rate}): \vspace{-0.2cm}
 \begin{equation}\label{eq:cond_number_others}
 \kappa_{\ell} \triangleq \frac{L_{\mx}}{\mu_{\mn}},\quad \hat{\kappa}\triangleq \frac{L_{\mx}}{({1}/{m})\sum_i\mu_i},\quad  \breve{\kappa}\triangleq  \frac{L_{\mx}}{\mu},\quad \text{and}\quad  \bar{\kappa}\triangleq \frac{L_{\mx}}{\mu_{\mx}},
\end{equation}
 where\vspace{-0.2cm} \begin{equation}
	\label{eq:prob_param_def}
L_{\mx} \triangleq \max_{i=1,\ldots,m} L_i,   \quad  \mu_{\mn} \triangleq  \min_{i=1,\ldots,m} \mu_i, \quad \text{and} \quad \mu_{\mx} \triangleq \max_{i=1,\ldots,m} \mu_i.\vspace{-0.1cm}
\end{equation}
When $\mu_i=0$, we set $\kappa_\ell=\infty$. 
It is not difficult to check that $\kappa_g$ can be much smaller than $\breve{\kappa}$, $\bar{\kappa}$, $\hat{\kappa}$ and  $\kappa_\ell$, as shown in the following example. 

 \noindent \textbf{Example 1:} Consider the following instance of Problem \eqref{eq:P}: $$f_i(\bx)=\frac{1}{2} \bx^\top \left(\texttt{a} \mathbf{I} + m\cdot  \texttt{b}\, \text{diag}(\mathbf{e}_i)\right)\bx,\quad F(\bx)=\frac{1}{m} \sum_{i=1}^m f_i(\bx)=\frac{\texttt{a}+\texttt{b}}{2}\|\bx\|^2,$$ $G=0$, and $\mathcal{K}=\mathbb{R}^d$, where $\mathbf{e}_i$ is the $i$-th canonical vector, and \texttt{a}, \texttt{b} are some positive constants. We have $\mu_i=\texttt{a}$, $L_i=\texttt{a}+m\cdot \texttt{b}$,  and $\mu=L=\texttt{a}+\texttt{b}$. Therefore,
 $$\frac{\kappa_\ell}{\kappa_g}=\frac{\hat{\kappa}}{\kappa_g}=\frac{\bar{\kappa}}{\kappa_g}=1+m\cdot \frac{\texttt{b}}{\texttt{a}}\quad \text{and} \quad \frac{\breve{\kappa}}{\kappa_g}=\frac{1+m \cdot\texttt{b}/\texttt{a}}{1+ \texttt{b}/\texttt{a}},$$ which all grow indefinitely as   $\texttt{b}/\texttt{a}$ or $m$ increase. 
 $\hfill \square$

In the setting above,  our goal is   to design linearly convergent  distributed algorithms whose  iterations complexity is  proportional to  $\kappa_g$, instead of the larger quantities in (\ref{eq:cond_number_others}).\vspace{-0.2cm} 

\subsubsection{The $\beta$-related setting}\label{sec_related}{  
This   setting considers explicitly the case where the functions $f_i$ are    similar, in the sense defined below 
\cite{Arjevani-ShamirNIPS15}. 
{  
\begin{definition}[$\beta$-related $f_i$'s]\label{assump:homogeneity} The local functions  $f_i$'s (satisfying Assumption \ref{assump:p}) are called $\beta$-related if   
 $\left\|\nabla^2 F(\bx)-\nabla^2f_i(\bx)\right\|_2\leq \beta$, for all $\bx\in\mathcal{K}$ and some $\beta\geq 0$.
\end{definition}
}
The more similar the $f_i$'s, the smaller $\beta$.  
For arbitrary $f_i$'s, $\beta$ is of the order of
\vspace{-0.2cm}\begin{align}
	\beta\leq  \max_{i=1,\ldots, m} \sup_{\bx\in \mathcal{K},\,\|\mathbf{u}\|=1} \!\left| \mathbf{u}^\top  \left(\nabla^2 F(\bx)-\nabla^2f_i(\bx)\mathbf{u}\right) \right|\leq  \max_{i=1,\ldots, m} \!\max \left\{|L-\mu_i|, \,|\mu-L_i|\right\}.
\end{align}
The interesting case is when $1+\beta/\mu << \kappa_g$; a specific example  is discussed next.  

\paragraph*{\bf Example 2: Convex-Lipschitz-bounded learning problems over networks}\label{sec:case-study} 
Consider a stochastic learning setting whereby the ultimate goal is to minimize some population objective   \vspace{-0.1cm}
\begin{equation}\label{eq:population}
\bx^\star \in   \argmin_{\bx\in \mathcal {H} } F(\bx),\quad\text{with}\quad  F(\bx)\triangleq \mathbb{E}_{\mathbf{z}\sim \mathcal{P}} \left [ f(\bx;\mathbf{z})\right],\vspace{-0.1cm}\end{equation}
where $f:\mathcal{O}\times \mathcal{Z}\to \mathbb{R}$ is the   loss function, assumed to be  $C^2$, convex (but not strongly convex),   and  $L$-smooth on the open set  $\mathcal{O}\supset\mathcal{H}$, for all $\mathbf{z}\in \mathcal{Z}$; 
$\mathcal{H}\subseteq \mathbb{R}^d$ is the set of hypothesis classes, assumed to be convex and closed;   $\mathcal{Z}$ is the set of examples; and  $\mathcal{P}$ is the (unknown) distributed of  $\mathbf{z}\in \mathcal{Z}$. Furthermore, we assume that any $\bx^\star\in \mathcal{B}_B\triangleq \{\bx\,:\, \|\bx\|\leq B\}$, for some $0<B<\infty$.  This setting includes, for example,   supervised generalized linear models,    where $\mathbf{z}=(\mathbf{w},y)$ and $f(\bx;(\mathbf{w},y))=\ell(\boldsymbol{\phi}(\mathbf{w})^\top \bx;y)$, for some (strongly) convex loss $\ell(\bullet; y)$ and feature mapping   $\boldsymbol{\phi}$. For instance, in linear regression,  $f(\bx;(\mathbf{w},y))=(y-\boldsymbol{\phi}(\mathbf{w})^\top \bx)^2$, with $\boldsymbol{\phi}(\mathbf{w})\in \mathbb{R}^d$ and $y\in \mathbb{R}$; for logistic regression, we have   $f(\bx;(\mathbf{w},y))=\log(1+\exp(-y (\boldsymbol{\phi}(\mathbf{w})^\top \bx)))$,  with   $\mathbf{w}\in \mathbb{R}^d$ and $y\in \{-1,1\}$.
 
 To solve   (\ref{eq:population}),   the $m$ agents have   access only to a finite number, say $N=nm$,  of i.i.d. samples from the  distribution $\mathcal{P}$, evenly and randomly distributed over the network. Using the notation  introduced in Sec.~\ref{sec:intro}, the ERM problem reads:
\begin{equation}\label{eq:ERM}
\widehat{\bx} \triangleq   \argmin_{\bx\in \mathcal {H} }\widehat{F}(\bx)\triangleq \frac{1}{m} \sum_{i=1}^m f_i(\bx;\mathcal{D}^{(i)}),\qquad f_i(\bx;\mathcal{D}^{(i)})=\frac{1}{n} \sum_{j=1}^n f(\bx;\mathbf{z}_{j}^{(i)})+\frac{\lambda}{2} \|\bx\|^2, \vspace{-0.2cm}\end{equation}
where $f_i$ is regularized   empirical loss of agent $i$, 
 $\lambda$-strongly convex.  Clearly (\ref{eq:ERM}) is an instance of (\ref{eq:P}), satisfying  Assumption \ref{assump:p}.  

For the ERM problems (\ref{eq:ERM}) we derive next the associated $\beta/\mu$ and contrasts with $\kappa_g$.   $\widehat{F}$ is  $\lambda$-strongly convex; therefore, we can set $\mu=\lambda$. The optimal choice of $\lambda$ is the one minimizing the statistical error resulting in  using $\widehat{\bx}$ as proxy for  $ \bx^\star$.
We have   \cite[Th. 7]{Shwartz_et_al},  with high probability,  	$F(\widehat{\bx})-F({\bx}^\star)\leq \frac{\lambda}{2} \|\boldsymbol{\theta}^\star\|^2 + \mathcal{O}\big(\frac{G_f^2}{\lambda\,N}\big)\leq \mathcal{O}\big(\lambda\,B^2 + \frac{G_f^2}{\lambda\,N}\big)$, where 
    $G_f$ is  the Lipschitz constant of $f(\bullet; \mathbf{z})$ on $\mathcal{H}\bigcap \mathcal{B}_B$, for all  $\mathbf{z}\in \mathcal{Z}$. 
The optimal choice of $\lambda$ and resulting minimum error rate are  then\vspace{-0.1cm}
	\begin{equation}
	\label{eq:opt_lambda}
		\lambda=\mathcal{O}\left(\sqrt{\frac{G^2}{B^2\,N}}\right) \quad \Rightarrow \quad F(\widehat{\bx})-F(\bx^\star)\leq\mathcal{O}\left(\sqrt{\frac{G^2 B^2}{N}}\right).\vspace{-0.2cm}
	\end{equation}

An estimate of $\beta$ can be obtained exploring the statistical similarity of the local empirical losses $f_i$ in  (\ref{eq:ERM}). Under the additional assumption that $\nabla^2 f(\bullet;\mathbf{z})$ is $M$-Lipchitz on $\mathcal{H}$, for all $\mathbf{z}\in \mathcal{Z}$, a minor modification of \cite[Lemma 6]{Zhang-Xiao-chapter18} applied to  (\ref{eq:population})-(\ref{eq:ERM}), yields:  with high probability,\vspace{-0.2cm} \begin{equation*}\sup_{\bx\in \mathcal{B}_B} \left\|\nabla^2 f_i(\bx;\mathbf{z})-\nabla^2 \hat{F}(\bx)\right\|\leq \beta,\quad \forall \mathbf{z}\in \mathcal{Z},\,i\in [m],\vspace{-0.2cm}\end{equation*}  
  with \vspace{-0.4cm}
\begin{equation}\label{eq:beta_example}
\hspace{-0.35cm}	\beta=
\left\{ \begin{array}
{ll}
 	\widetilde{\mathcal{O}}\left(\sqrt{\frac{L^2}{n}}\right), & \text{ if } M=0;\\
 	\widetilde{\mathcal{O}}\left(\sqrt{\frac{L^2\,d}{n}}\right),& \text{ otherwise},
 \end{array}
\right. \vspace{-0.2cm}
\end{equation}
where $\widetilde{\mathcal{O}}$ hides the log-factor dependence. Note that when $f(\bullet; \mathbf{z})$ is quadratic   (i.e., $M=0$), $\beta$ scales favorably with the dimension $d$.  

Based on (\ref{eq:opt_lambda})-(\ref{eq:beta_example}), an estimate of $\beta/\mu$ and $\kappa_g$ for (\ref{eq:ERM}) reads: \vspace{-0.2cm}
\begin{equation}\label{eq:beta_vs_kappa_ML_problem}
1+	\frac{\beta}{\mu}=1+\widetilde{\mathcal{O}}\left(L \,\sqrt{d\,m} \right)\quad \text{and}\quad \kappa_g=1+\widetilde{\mathcal{O}}\left(L \,\sqrt{d\,m\,n} \right).\vspace{-0.1cm}
\end{equation} 
Note that $\kappa_g$ increases with the local sample size $n$ while $\beta/\mu$ {\it does not} (neglecting log-factors). It turns out that algorithms converging at a rate depending on $\kappa_g$ exhibit a speed-accuracy dilemma: small statistical errors in (\ref{eq:opt_lambda}) (larger $n$) are achieved at the cost of more iterations (larger $\kappa_g$). In this setting, it is  
thus desirable to design distributed algorithms whose rate depends on $\beta/\mu$ rather than 
$\kappa_g$. }}\vspace{-0.1cm}


\subsection{Network setting}
We will consider separately two network settings: i) the case where the underlying communication graph is fixed and undirected; and ii) the more general setting of time-varying directed graphs.   
\paragraph*{\it  Undirected, static graphs:} When the    network of the agent is modeled as a fixed, undirected graph, we write  $\GG \triangleq (\VV, \EE)$, where $\VV \triangleq \{1,\ldots, m\}$ denotes  the vertex    set--the set of agents--while $\EE \triangleq \{(i,j) \, |\, i,j \in \VV \}$ represents the set of edges--the communication links;  $(i,j) \in \EE$ iff there exists a communication link between agent $i$ and $j$. 
We make the following standard assumption on the graph connectivity.  
\begin{assumption}[On the network]\label{assump:network}
The graph $\GG$ is connected.
\end{assumption}

\paragraph*{\it  Directed, time-varying graphs} In this setting,  communication network  is modeled as a    time-varying digraph: time is slotted, and at time-frame $\nu$, the digraph reads $\mathcal{G}^{\nu}=\left(\mathcal{V},\mathcal{E}^{\nu}\right)$, where   the set of edges $\mathcal{E}^{\nu}$  represents the agents' communication links: $(i,j)\in \mathcal{E}^{\nu}$ there is a link going from agent $i$ to agent $j$.  
We make the following standard assumption on the  ``long-term'' connectivity property of the graphs.\smallskip 

\hypertarget{assumption:G}{\emph{\normalfont\sc Assumption} B$\,^\prime$ (On the network).}  
	{\it The graph sequence $\{\mathcal{G}^{\nu}\}$, $\nu=0,1,\ldots $, is $B$-strongly connected, i.e., there exists a finite integer $B > 0$   	such that the graph with edge set $\cup_{t=\nu B}^{(\nu+1)B-1} \mathcal{E}^{t}$ is strongly connected,  for all $\nu=0,1,\ldots $}.\smallskip 

  {  The network setting covers, as special case,   star-networks,
 i.e., architectures with a centralized node (a.k.a. master node) connected to all the others (a.k.a. workers). This is the  typical computational architecture of several   federated learning systems.  
 }


\section{The SONATA algorithm over   undirected graphs}\label{sec:SONATA-NEXT}
We recall here  the SONATA/NEXT algorithm \cite{LorenzoScutari-J'16, SONATA-companion},   customized to undirected, static, graphs.   
Each agent $i$ maintains and updates iteratively a    local copy  $\mathbf{x}_{i}\in \mathbb{R}^d$ of the global variable $\mathbf{x}$, along with the auxiliary variable $\mathbf{y}_{i}\in \mathbb{R}^d$, which   estimates  the gradient of $F$.   Denoting by $\mathbf{x}_{i}^\nu$ (resp. $\mathbf{y}_{i}^\nu$) the values of $\mathbf{x}_{i}$ (resp. $\mathbf{y}_{i}$) at iteration $\nu=0,1,\ldots,$ the SONATA algorithms is described in Algorithm \ref{alg:SONATA}. 
	\begin{algorithm}[h]	
	\caption{SONATA over undirected graphs}\label{alg:SONATA}
	\textbf{Data}: $\mathbf{x}^{0}_{i}\in \mathcal{K}$ and   $\by_i^0=\nabla f_i(\bx_i^0)$,   $i\in [m]$. 
	
	\textbf{Iterate}: $\nu=1,2,...$\vspace{0.1cm}
\begin{subequations}

  \texttt{[S.1] [Distributed Local Optimization]} Each agent $i$ solves\vspace{-0.2cm}
	\begin{equation} \hbx_i^\nu \triangleq {}   \argmin_{\bx_i \in \KK}~\underbrace{\tf_i (\bx_i ;\bx_i^\nu) + \big( \by_i^\nu - \nabla f_i (\bx_i^\nu) \big)^\top (\bx_i - \bx_i^\nu)}_{\tF_i (\bx_i; \bx_i^\nu)} + G(\bx_i), \label{eq:loc_opt}\vspace{-0.3cm}\end{equation} 
	
	\qquad and updates\vspace{-0.3cm} \begin{equation}\bx_i^{\nu+\frac{1}{2}} = {}   \bx_i^\nu + \alpha \cdot \Deltaxi{i}^\nu,\quad \text{with}\quad\Deltaxi{i}^\nu \triangleq \hbx_i^\nu - \bx_i^\nu; \label{eq:descent}\end{equation}
	

	 \texttt{[S.2] [Information Mixing]} Each agent $i$ computes \smallskip\\
	 \phantom{\texttt{[S.2]}} (a) \texttt{Consensus}  \vspace{-0.3cm}
	\begin{equation}\bx_i^{\nu + 1} =   \sum_{j=1}^m w_{ij} \bx_j^{\nu+\frac{1}{2}}, \label{eq:mix_x}\vspace{-0.3cm}\end{equation}  \phantom{\texttt{[S.2]}} (b) \texttt{Gradient tracking}\vspace{-0.2cm}
	\begin{equation}
	\by_i^{\nu + 1} =   \sum_{j=1}^m w_{ij} \big(\by_j^\nu + \nabla f_j(\bx_j^{\nu +1}) - \nabla f_j(\bx_j^\nu)\big).\label{eq:mix_y}\vspace{-0.4cm}
	 \end{equation}
	\textbf{end} \end{subequations}
\end{algorithm}
 In words, each agent $i$, given the current   iterates $\bx_i^\nu$ and $\by_i^\nu$,  first solves a strongly convex optimization problem wherein  $\tF_i$ is an approximation of the sum-cost $F$ at $\bx_i^\nu$; $\tf_i$ in (\ref{eq:loc_opt}) is a strongly convex function, which plays the role of a surrogate of   $f_i$ (cf. Assumption \ref{assump:SCA_surrogate} below) while $\mathbf{y}_i^\nu$ acts as approximation of the gradient of $F$ at $\bx_i^\nu$, that is, $\nabla F (\bx_i^\nu) \approx \by_i^\nu$ (see discussion below). Then, agent $i$      updates    $\bx_i^\nu$  along the local direction $\Deltaxi{i}^\nu$ [cf.~(\ref{eq:descent})], using the step-size $\alpha\in (0,1]$; the resulting point $\bx_i^{\nu+1/2}$ is broadcast to its neighbors.  The update $\bx_i^{\nu+1/2}\to \bx_i^{\nu+1}$ is  obtained via the consensus step (\ref{eq:mix_x}) while  the $y$-variables are updated via the  perturbed consensus   (\ref{eq:mix_y}), aiming at tracking  $\nabla F(\bx_i^\nu)$. 

The main assumptions underlying the convergence of SONATA are discussed next. 

\paragraph*{$\bullet$ \it On the subproblem (\ref{eq:loc_opt}) and surrogate functions $\tf_i$} The surrogate functions satisfy the following conditions. 
\vspace{-0.2cm}
\begin{assumption}\label{assump:SCA_surrogate}
Each  $\tf_i:\mathcal{O}\times \mathcal{O} \to \real$ is $C^2$ and satisfies
\begin{enumerate}
\item[(i)] $\nabla \tf_i (\bx ; \bx) = \nabla f_i (\bx)$, for all $\bx \in \KK$;
\item[(ii)] $\nabla \tf_i (\bullet;\bx)$ is $\tL_i$-Lipschitz continuous on $\KK$, for all $\bx\in \KK$;
\item[(iii)] $\tf_i(\bullet ; \bx)$ is $\tmu_i$-strongly convex on $\KK$, for all $\bx\in \KK$;\end{enumerate}
where $\nabla \tf_i (\bx ; \mathbf{z}) $ is the partial gradient of $\tf_i$ at $(\bx,\bz)$ with respect to the first argument.\vspace{-0.1cm}
\end{assumption}
The assumption states that $\tf_i$ should be regarded as a surrogate of $f_i$ that preserves at each iterate $\bx^\nu_i$  the first order properties of $f_i$. {  Conditions (i)-(iii)  are certainly satisfied if one uses the  classical linearization of $f_i$, that is, \vspace{-0.1cm}\begin{equation}\label{eq:sca-linearization}\tf_i(\bx_i;\bx_i^\nu)=\nabla f_i(\bx^{\nu}_i)^\top (\bx_i-\bx_i^\nu)+\frac{\tau_i}{2}\|\bx_i-\bx_i^\nu\|^2,\vspace{-0.1cm}\end{equation}
with $\tau_i>0$, which leads to the standard proximal-gradient update for $\widehat{\bx}_i$. Note that if, in addition,  $G=0$ and  $\KK=\mathbb{R}^d$,    (\ref{eq:loc_opt})--(\ref{eq:mix_x}) reduces to the standard  (ATC) consensus/gradient-tracking step (setting $\alpha=1$ and absorbing $1/\tau_i$ into the common stepsize $\gamma$): $\bx_i^{\nu+1}=\sum_j w_{ij} (\bx_i^\nu-\gamma \,\by_i^\nu)$ \cite{qu2016harnessing,nedich2016achieving,Xu2015augmented}. However,   Assumption~\ref{assump:SCA_surrogate}  allows us to cover a much wider array of approximations that better suit the geometry of the problem at hand, enhancing    convergence speed.
 For instance, on the opposite side of (\ref{eq:sca-linearization}), we have  a surrogate retaining all the structure of $f_i$, such as\vspace{-.1cm}
\begin{equation}\label{eq_sca_f_i}\tf_i(\bx_i;\bx_i^\nu)= f_i(\bx_i)+\frac{\tau_i}{2}\|\bx_i-\bx_i^\nu\|^2,\vspace{-.1cm}\end{equation}
with $\tau_i>0$. Using (\ref{eq_sca_f_i}), one can rewrite   (\ref {eq:loc_opt}) as: \vspace{-.1cm}
\begin{equation}\hbx_{i}^{\nu}=\argmin_{\bx_{i}\in\KK}\Big({1}\cdot\underset{\nabla g(\mathbf{x}_{i}^{\nu})}{\underbrace{\left(\nabla f_{i}(\mathbf{x}_{i}^{\nu})+\mathbf{y}_{i}^{\nu}\right)}}-\underset{\nabla\omega(\mathbf{x}_{i}^{\nu})}{\underbrace{\left(\nabla f_{i}(\mathbf{x}_{i}^{\nu})+\tau_{i}\mathbf{x}_{i}^{\nu}\right)}}\Big)^{\top}\!\!\!\mathbf{x}_{i}+\underset{\omega(\mathbf{x}_{i})}{\underbrace{\left(f_{i}(\mathbf{x}_{i})+\frac{{\tau_{i}}}{2}\left\Vert \mathbf{x}_{i}\right\Vert ^{2}\right)}}+G(\bx_{i}),\vspace{-.3cm}\end{equation}
which can be interpreted as  a mirror-descent update (with step-size one) for the composite minimization of    $g(\bx_i)\triangleq f_i(\bx_i)+(\by_i^{\nu})^\top (\bx_i-\bx_i^\nu)$, based on the Bregman distance associated with the reference function $\omega(\bx_i)\triangleq  f_{i}(\mathbf{x}_{i})+ {{\tau_{i}}}/{2} \Vert \mathbf{x}_{i} \Vert^2 $.

  We refer the reader to \cite{facchinei2015parallel,scutari_PartI,Scutari_Ying_LectureNote} as good sources of examples of nonlinear surrogates  satisfying Assumption~\ref{assump:SCA_surrogate};  here we only anticipate that, when the  $f_i$'s are sufficiently similar, higher order models such as (\ref{eq_sca_f_i}) yield indeed faster rates of SONATA than those achievable using linear surrogates (\ref{eq:sca-linearization}). 
Further intuition is provided next. 
 
Under Assumption~\ref{assump:SCA_surrogate}, it is not difficult to check that, for every $i\in [m]$, there exist constants $D_i^\ell$ and $D_i^u$, $D_i^\ell\leq  D_i^u$, such that \begin{equation}\label{eq:upper-lower-hessian}
 	 D_i^\ell \,\mathbf{I} \preceq \nabla^2 \tf_i (\bx ,\by) - \nabla^2 F(\bx) \preceq D_i^u \,\mathbf{I},\quad  \forall \bx, \by \in \KK;\qquad \text{let }\,\,  D_i \triangleq \max\{|D_i^\ell|, |D_i^u|\}.
 \end{equation}
For instance, \eqref{eq:upper-lower-hessian} holds with  $D_i = \max\{|\tmu_i - L|,|\tL_i - \mu|\}$.  Roughly speaking, the smaller $D_i$ the better  $\tF_i$ in  (\ref{eq:loc_opt}) approximates 
$F$.  To see this, compare   $F$ and $\tF_i$ up to the second order: there exist $\theta_1, \theta_2\in (0,1)$ such that
 \begin{equation} \label{eq_F_vs_f_tilde} \begin{aligned} 
 \tF_i (\bx_i; \bx_i^\nu) = \ &  \tf_i (\bx_i^\nu ;\bx_i^\nu) + \big( \by_i^\nu - \nabla f_i (\bx_i^\nu) + \nabla\tf_i (\bx_i^\nu ;\bx_i^\nu)\big)^\top (\bx_i - \bx_i^\nu) \\
 & + \frac{1}{2} (\bx_i - \bx_i^\nu)^\top \nabla^2 \tf_i\big(\bx_i^\nu + \theta_1 (\bx_i - \bx_i^\nu); \bx_i^\nu \big) (\bx_i - \bx_i^\nu)\\
 F(\bx_i) = \ & F(\bx_i^\nu) + \nabla F(\bx_i^\nu)^\top (\bx_i - \bx_i^\nu) \\
 & +  \frac{1}{2} (\bx_i - \bx_i^\nu)^\top \nabla^2 F \big(\bx_i^\nu + \theta_2 (\bx_i - \bx_i^\nu); \bx_i^\nu \big) (\bx_i - \bx_i^\nu).
 \end{aligned}\end{equation}
 Noting  that  $\nabla \tf_i (\bx_i^\nu; \bx_i^\nu) = \nabla f_i(\bx_i^\nu)$ [Assumption~\ref{assump:SCA_surrogate}(i)] and  $\nabla \tF_i (\bx_i^\nu; \bx_i^\nu)=\by_i^\nu$, and  anticipating $\|\nabla F(\bx_i^\nu)- \by_i^\nu\|\to 0$ as $\nu\to \infty$ (see discussion below),    it follows that  $\tF_i$ approximates  $F$  asymptotically, up to the first order. A better match,   is achieved when  $D_i$ is sufficiently small.  
One can then expect that, if  the local functions are sufficiently similar ($\beta$ is small),  surrogates  $\tf_i$ exploiting higher order information of $f_i$, such as (\ref{eq_sca_f_i}), may be more effective than  mere linearization.  Our theoretical findings 
confirm the above intuition--see Sec. \ref{sec:discussion}.    }
    
   \paragraph*{$\bullet$ \it Consensus and gradient tracking steps (\ref{eq:mix_x})-(\ref{eq:mix_y})}  In the consensus and tracking steps, the weights $w_{ij}$'s 
   satisfy  
  the following standard assumption. 
\begin{assumption}\label{assump:weight}
The weight matrix $\bW \triangleq (w_{ij})_{i,j = 1}^m$ has a sparsity pattern compliant with $\GG$, that is 
\begin{enumerate}[leftmargin=*,label=\theassumption\arabic*]
\item $w_{ii} > 0$, for all $i = 1, \ldots, m$;
\item $w_{ij} > 0$, if $(i,j) \in \EE$; and $w_{ij}=0$ otherwise;
\end{enumerate}
Furthermore, $\bW$ is doubly stochastic, that is, $\mathbf{1}^\top \bW = \mathbf{1}^\top$ and $\bW \mathbf{1} = \mathbf{1}$.
\end{assumption}

 Several rules have been proposed in the literature compliant with Assumption \ref{assump:weight}, such as    the Laplacian,  the Metropolis-Hasting, and the maximum-degree weights rules    \cite{xiao2005scheme}. 

Finally, we comment the anticipated gradient tracking property of the $y$-variables, that is, $\|\nabla F(\bx_i^\nu)- \by_i^\nu\|\to 0$ as $\nu\to \infty$. 
Define the average processes \vspace{-0.1cm}
\begin{equation}\label{eq:avg_y_def}
\bar{\by}^\nu \triangleq \frac{1}{m} \sum_{i=1}^m  \by_i^\nu \quad \text{and} \quad \overline{\pgrad}^\nu\triangleq \frac{1}{m} \sum_{i=1}^m \nabla f_i (\bx_i^\nu).\vspace{-0.2cm}
\end{equation}
Summing \eqref{eq:mix_y} over $i\in [m]$ and invoking the doubly stochasticity of $\bW$; we have \vspace{-0.2cm}
\begin{equation}\label{eq:y_update_avg}
\bar{\by}^{\nu + 1} =   \bar{\by}^\nu + \overline{\pgrad}^{\nu +1} - \overline{\pgrad}^\nu.\vspace{-0.2cm}
\end{equation}
Applying \eqref{eq:y_update_avg} inductively   and using the initial condition $\by^0_i = \nabla f_i(\bx_i^0)$, $i\in [m],$   yield \vspace{-0.3cm}
\begin{equation}\label{eq:average-consistency}
\bar{\by}^\nu = \overline{\pgrad}^\nu, \quad \forall \nu = 0,1,\ldots .
\end{equation}
That is, the average of all the $\by_i^\nu$'s  in the network is equal to that of the $\nabla f_i (\bx_i^\nu)$'s, at every iteration $\nu$. 
  Assuming that consensus on $\bx_i^\nu$'s and $\by_i^\nu$'s is asymptotically achieved, that is,  $\|\bx_i^\nu-\bx_j^\nu\|\underset{\nu\to\infty}{\longrightarrow} 0$ and $\|\by_i^\nu-\by_j^\nu\|\underset{\nu\to\infty}{\longrightarrow} 0$,    $i\neq j$,  \eqref{eq:average-consistency} would imply  the desired gradient tracking property $\|\nabla F(\bx_i^\nu)- \by_i^\nu\|\to 0$ as $\nu\to \infty$, for all $i\in [m]$.




\subsection{A special instance: SONATA on star-networks} { 

Although the main focus of the paper is the study of SONATA over meshed-networks, it is worth discussing here its special instance  over  star networks. 
Specifically, consider a star (unidirected)  graph with $m$ nodes, where one of them (the master node) connects with all the others (workers). The workers  still own only one function $f_i$ of the sum-cost $F$. Two common approaches developed in the literature to solve (\ref{eq:P}) in this setting are: (i) based upon receiving the gradients $\nabla f_i$ from the workers,  the  master solves (\ref{eq:P})  and broadcasts the updated vector variables   to the workers; (ii) based upon receiving the full gradient $\nabla F$ and the current iterate from the master, all the workers solve locally an instance of  (\ref{eq:P}) and send their outcomes to the master that averages them out, producing then the new iterate. Here we follow the latter approach; 
the algorithm is  described in    Algorithm \ref{alg:SONATA-star}, which corresponds to SONATA     (up to a proper initialization), with     weight matrix $\bW=\left[\mathbf{1},\, \mathbf{0}_{m,m-1}\right] \left[ \mathbf{1}/m, \, \mathbf{0}_{m,m-1}\right]^\top $. 
   \begin{algorithm}[t]	
	\caption{SONATA on Star-Networks (SONATA-Star)}\label{alg:SONATA-star}{ 
	\textbf{Data}: $\mathbf{x}^{0}\in \mathcal{K}$. 
	\vspace{0.1cm}
	
	\textbf{Iterate}: $\nu=1,2,...$\vspace{0.1cm}
	
	\quad \texttt{[S.1]} Each worker $i$ evaluates $\nabla f_i(\bx^\nu)$ and sends it to the master node;\vspace{0.1cm}
	
	\quad\texttt{[S.2]} The master   broadcasts  $\nabla F(\bx^\nu)=1/m \sum_{i=1}^m \nabla f_i(\bx^\nu)$ to the workers;\vspace{0.1cm}

	\quad \texttt{[S.3]} Each worker $i$ computes  \vspace{-0.2cm}
$$   \hbx_i^\nu \triangleq {}   \argmin_{\bx_i \in \KK} {\tf_i (\bx_i ;\bx^\nu) + \big( \nabla F(\bx^\nu) - \nabla f_i (\bx_i^\nu) \big)^\top (\bx_i - \bx^\nu)} + G(\bx_i),$$

\qquad \qquad and sends  $ \hbx_i^\nu$ to the master;\vspace{0.1cm}

	\quad \texttt{[S.4] } The master computes\vspace{-0.3cm} \begin{equation*}\bx^{\nu+1}=\bx^\nu +\alpha \left(\frac{1}{m}\sum_{i=1}^m   \hbx_i^\nu - \bx^\nu\right),\end{equation*} 
 \qquad \qquad and sends it back to the workers.\vspace{0.1cm}

  \textbf{end}}
 \end{algorithm}

 \paragraph*{Connection with existing schemes} SONATA-star, employing   linear surrogates [cf.~(\ref{eq:sca-linearization})] and $\alpha=1$, reduces to the proximal gradient algorithm. 
 When the surrogates (\ref{eq_sca_f_i}) are used (and still   $\alpha=1$),   SONATA-star coincides with  the DANE algorithm \cite{DANE} if $G=0$  and to the CEASE (with averaging) algorithm \cite{Fan2020} if $G\neq 0$.  Nevertheless,  our convergence rates improve on those of DANE and CEASE--see Sec.~\ref{sec:star-topology}. }
 
\subsection{Intermediate definitions} We conclude  this section introducing  some   quantities  that will be used in the rest of the paper.  
We define the   optimality gap as \vspace{-0.2cm}\begin{equation}\label{eq:opt_gap}
 	p^\nu \triangleq \sum_{i=1}^m \big(U(\bx_i^\nu) - U(\bx^\star) \big),\vspace{-0.2cm}\end{equation}
where $\bx^\star$  is  the unique solution of Problem (\ref{eq:P}). 
 
We stack the local variables and gradients in the column vectors 
\begin{equation}\label{eq:stacked_vec}
\!\bx^\nu   \triangleq [\bx_1^{\nu\top},\ldots,\, \bx_m^{\nu\top}]^\top,\,\, \by^\nu   \triangleq [\by_1^{\nu\top},\ldots, \by_m^{\nu\top}]^\top,\, \pgrad^\nu   \triangleq [\nabla f_1 (\bx_1^\nu)^\top,\ldots, \nabla f_m (\bx_m^\nu)^\top ]^\top.\end{equation}
 The average of each of the vectors above is defined as     $\bar{\bx}^\nu \triangleq (1/m) \cdot \sum_{i=1}^m \bx_i^\nu$.  
  The   consensus disagreements  on $\bx_i^\nu$'s and $\by^\nu_i$'s    are  \vspace{-0.1cm}
\begin{equation}\label{eq:err_x_def}
\bx_\bot^\nu \triangleq \bx^\nu - \mathbf{1}_m \otimes \bar{\bx}^\nu  \quad \text{and} \quad  \by_\bot^\nu \triangleq \by^\nu - \mathbf{1}_m \otimes \bar{\by}^\nu,\vspace{-0.2cm}
\end{equation}
respectively, while   the gradient   tracking error is   defined as  \vspace{-0.1cm}  { \begin{equation}\label{eq:tracking_err_def}
\bdelta^\nu   \triangleq [\bdelta_1^{\nu\top},\ldots, \bdelta_{m}^{\nu\top}]^\top, \quad \text{with}\quad \bdelta_i^\nu \triangleq \nabla F(\bx_i^\nu) -  \by_i^\nu,\quad i=1,\ldots, m.\vspace{-0.4cm}
\end{equation}}

Recalling  $L_i$, $\tL_i$,  $\tmu_i$, $D_i^\ell$ and $D_i$ as given in Assumptions~\ref{assump:p} and~\ref{assump:SCA_surrogate} and \eqref{eq:upper-lower-hessian}, we introduce the following    algorithm-dependent parameters\vspace{-0.1cm}
\begin{equation}\label{eq:alg_param_def}
\begin{aligned}
& \tmu_{\mn} \triangleq \min_{i\in [m]} \tmu_i , && \tL_{\mx}\triangleq \max_{i\in [m]}, \tL_i ,  \\
& {   D_{\mn}^\ell \triangleq \min_{i\in [m]} D_i^\ell }, && {  D_{\mx} \triangleq  {\max_{i\in [m]}} D_i. }
\end{aligned}
\end{equation}

Finally, given the weight matrix $\bW$, we define 
\begin{equation}\label{eq:def_W_hat}
  	\hbW \triangleq \bW \otimes \bI_d, \quad\text{and}\quad   \mathbf{J} \triangleq   \frac{1}{m} \mathbf{1}_m \mathbf{1}_m^\top \otimes \bI_d.  
  \end{equation}
Under Assumptions \ref{assump:network} and \ref{assump:weight}, it is well known that    {(see, e.g., \cite{Tsitsiklis_Thesis})}\vspace{-0.2cm}
\begin{equation}\label{eq:rho_def}
\rho  \triangleq  \sigma(\hbW - \bJ) < 1,\vspace{-0.2cm}
\end{equation}
where $\sigma (\bullet)$ denotes the largest singular value of its argument.

\subsection{Linear convergence rate}\label{sec:linear_rate}
 Our proof of linear rate of SONATA passes through the following steps. \textbf{Step 1:} We begin showing that the  optimality gap $p^\nu$    converges linearly up to an error of the order of $\mathcal{O} (\| \bx_\bot^\nu\|^2 + \| \by_\bot^\nu\|^2)$, see~ Proposition~\ref{prop:loc_geo_rate}.  \textbf{Step 2} proves that $\|\bx_\bot^\nu\|$ and $\|\by_\bot^\nu\|$ are also linearly convergent up to an error    $\mathcal{O}(\| \Deltaxi{}^\nu\|)$, see~ Proposition~\ref{prop:err_x}. In \textbf{Step 3} we close the loop establishing $\|\Deltaxi{}^\nu \|= \mathcal{O}(\sqrt{p^\nu}+\| \by_\bot^\nu \|)$, see~Proposition~\ref{prop:bound_delta_x}. Finally, in {\bf Step 4}, we properly chain together the above inequalities (cf. Proposition~\ref{prop:small_gain_eqs_transformed}), so that linear rate is proved for the sequences 
  $\{p^\nu\}$, $\{\| \bx_\bot^\nu\|^2 \}$, $\{\| \by_\bot^\nu\|^2 \}$, and $\{\| \mathbf{d}^\nu\|^2 \}$--see Theorems  \ref{thm:step_size_condition} and \ref{thm:linear_rate}. We will tacitly assume that    Assumptions~\ref{assump:p}, \ref{assump:network}, \ref{assump:SCA_surrogate},  and \ref{assump:weight} are satisfied.


\subsubsection{\!Step 1: \!$p^\nu$ converges linearly up to $\mathcal{O} (\| \bx_\bot^\nu\|^2 \!\!+ \!\| \by_\bot^\nu\|^2)$}\label{sec:inexact_sca}\!

Invoking  the convexity of $U$ and the doubly stochasticity of $\bW$, we can bound   $p^{\nu+1}$ as\vspace{-0.1cm}
 \begin{equation}\label{eq:bound_doubly_stoc_cvx}
p^{\nu+1} \leq \sum_{i=1}^m\sum_{j=1}^m w_{ij} \Big(U\big(\bx_j^{\nu+\frac{1}{2}}\big)  -U(\bx^\star)\Big) 
=  \sum_{i=1}^m \Big(U(\bx_i^{\nu+\frac{1}{2}})-U(\bx^\star)\Big).\vspace{-0.2cm}
\end{equation}
 
We can now bound $U(\bx_j^{\nu+\frac{1}{2}})$, regarding the local optimization~\eqref{eq:loc_opt}-\eqref{eq:descent}   as a perturbed descent on the objective, whose perturbation is due to the tracking error $\bdelta^\nu$. 
In fact, Lemma \ref{lem:loc_dec_obj} below shows that,  for   sufficiently small $\alpha$, the  local update \eqref{eq:descent}     will  decrease the   objective value $U$ up to some error, related to    $\bdelta_i^\nu$. 

{ 
\begin{lemma}\label{lem:loc_dec_obj} 
Let $\{\bx_i^\nu\}$ be the sequence generated by  
  SONATA; there holds:
\vspace{-0.1cm}
\begin{align}\label{eq:loc_dec_obj}
U(\bx_i^{\nu+\frac{1}{2}} ) \leq U(\bx_i^\nu) - \alpha \left(\left(1 - \frac{\alpha}{2}\right)\tmu_i +  \frac{\alpha}{2} \cdot D_i^\ell \right)\| \Deltaxi{i}^\nu\|^2   + \alpha\|\Deltaxi{i}^\nu\| \|\bdelta_i^\nu \|,\vspace{-0.1cm}
\end{align}
with    $D_i^{\ell}$ and $\bdelta_i^\nu$ are defined in~\eqref{eq:upper-lower-hessian} and~\eqref{eq:tracking_err_def}, respectively.
\end{lemma}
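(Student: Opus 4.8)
The plan is to control $U(\bx_i^{\nu+\frac12})$ by comparing the smooth part $F$ \emph{directly against the surrogate} $\tF_i(\cdot;\bx_i^\nu)$ — rather than against $F$'s own second-order model — so that the curvature gap between $\tf_i$ and $F$ encoded in $D_i^\ell$ enters with the correct sign. First I would record the two structural facts about subproblem~(\ref{eq:loc_opt}) that drive everything. (a) By Assumption~\ref{assump:SCA_surrogate}(i), $\nabla\tF_i(\bx_i^\nu;\bx_i^\nu)=\by_i^\nu$, so the tracking error satisfies $\bdelta_i^\nu=\nabla F(\bx_i^\nu)-\by_i^\nu=\nabla F(\bx_i^\nu)-\nabla\tF_i(\bx_i^\nu;\bx_i^\nu)$; that is, $\bdelta_i^\nu$ is exactly the gradient mismatch between $F$ and the surrogate at $\bx_i^\nu$. (b) Since $\tf_i$ is $\tmu_i$-strongly convex and $G$ is convex, $\tU_i(\cdot;\bx_i^\nu)\triangleq\tF_i(\cdot;\bx_i^\nu)+G$ is $\tmu_i$-strongly convex with unique minimizer $\hbx_i^\nu$, whence $\tU_i(\hbx_i^\nu;\bx_i^\nu)\le\tU_i(\bx_i^\nu;\bx_i^\nu)-\tfrac{\tmu_i}{2}\|\Deltaxi{i}^\nu\|^2$.

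Next I would establish the key comparison inequality. Setting $h\triangleq F-\tF_i(\cdot;\bx_i^\nu)$, the linear term of $\tF_i$ has zero Hessian, so $\nabla^2 h=\nabla^2 F-\nabla^2\tf_i\preceq -D_i^\ell\,\bI$ on $\KK$ by~(\ref{eq:upper-lower-hessian}); a second-order Taylor expansion of $h$ about $\bx_i^\nu$, together with $\nabla h(\bx_i^\nu)=\bdelta_i^\nu$ and $h(\bx_i^\nu)=F(\bx_i^\nu)-\tf_i(\bx_i^\nu;\bx_i^\nu)$, yields for every $\bz\in\KK$
\[
F(\bz)\le\tF_i(\bz;\bx_i^\nu)+F(\bx_i^\nu)-\tf_i(\bx_i^\nu;\bx_i^\nu)+(\bdelta_i^\nu)^\top(\bz-\bx_i^\nu)-\tfrac{D_i^\ell}{2}\|\bz-\bx_i^\nu\|^2 .
\]
Evaluating at $\bz=\bx_i^{\nu+\frac12}=\bx_i^\nu+\alpha\Deltaxi{i}^\nu$ and adding $G(\bx_i^{\nu+\frac12})$ to both sides recombines the right-hand side into $\tU_i(\bx_i^{\nu+\frac12};\bx_i^\nu)$ plus the error terms $F(\bx_i^\nu)-\tf_i(\bx_i^\nu;\bx_i^\nu)+\alpha(\bdelta_i^\nu)^\top\Deltaxi{i}^\nu-\tfrac{\alpha^2 D_i^\ell}{2}\|\Deltaxi{i}^\nu\|^2$.

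Then I would bound $\tU_i(\bx_i^{\nu+\frac12};\bx_i^\nu)$ by exploiting that $\bx_i^{\nu+\frac12}=(1-\alpha)\bx_i^\nu+\alpha\hbx_i^\nu$ is a convex combination of $\bx_i^\nu$ and the minimizer. The strong-convexity inequality for convex combinations gives
\[
\tU_i(\bx_i^{\nu+\frac12};\bx_i^\nu)\le(1-\alpha)\tU_i(\bx_i^\nu;\bx_i^\nu)+\alpha\,\tU_i(\hbx_i^\nu;\bx_i^\nu)-\tfrac{\tmu_i}{2}\alpha(1-\alpha)\|\Deltaxi{i}^\nu\|^2,
\]
and inserting fact (b) together with $\tU_i(\bx_i^\nu;\bx_i^\nu)=\tf_i(\bx_i^\nu;\bx_i^\nu)+G(\bx_i^\nu)$ collapses this to $\tf_i(\bx_i^\nu;\bx_i^\nu)+G(\bx_i^\nu)-\tmu_i\alpha(1-\tfrac{\alpha}{2})\|\Deltaxi{i}^\nu\|^2$. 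Substituting into the previous display, the terms $\tf_i(\bx_i^\nu;\bx_i^\nu)$ cancel, $F(\bx_i^\nu)+G(\bx_i^\nu)=U(\bx_i^\nu)$ reassembles, the two $\|\Deltaxi{i}^\nu\|^2$ coefficients merge into $-\alpha\big((1-\tfrac{\alpha}{2})\tmu_i+\tfrac{\alpha}{2}D_i^\ell\big)$, and Cauchy--Schwarz $(\bdelta_i^\nu)^\top\Deltaxi{i}^\nu\le\|\Deltaxi{i}^\nu\|\,\|\bdelta_i^\nu\|$ delivers~(\ref{eq:loc_dec_obj}).

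The main obstacle — and the single design choice that makes everything telescope — is the comparison step: one must bound $F$ against the surrogate $\tF_i$ through the Hessian gap $D_i^\ell$, not against $F$'s own quadratic model. A naive route (descent lemma on $F$, optimality variational inequality for the linear term) leaves a residual quadratic term $\tfrac{\alpha^2}{2}(\Deltaxi{i}^\nu)^\top\nabla^2 F(\zeta)\Deltaxi{i}^\nu$ that cannot be absorbed into the target coefficient, since doing so would demand $\nabla^2 F\preceq(\tmu_i-D_i^\ell)\bI$, a bound strictly tighter than the available $\nabla^2 F\preceq L\bI$ (indeed $\tmu_i-D_i^\ell\le L$). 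Routing the curvature through $\tf_i$ via~(\ref{eq:upper-lower-hessian}) is exactly what repairs this. A minor point is keeping all iterates in $\KK$ so that the Hessian bounds apply along the relevant segment, which holds because $\KK$ is convex and $\bx_i^{\nu+\frac12}$ is a convex combination of $\bx_i^\nu,\hbx_i^\nu\in\KK$.
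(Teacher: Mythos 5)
Your proof is correct, and it reaches \eqref{eq:loc_dec_obj} by a genuinely different route than the paper. The paper Taylor-expands $F$ itself around $\bx_i^\nu$ (with integral Hessian $\mathbf{H}$), invokes the first-order optimality condition of $\hbx_i^\nu$ together with the integral mean value theorem to produce the surrogate Hessian $\tH_i$, and then proves the matrix bound $\alpha\mathbf{H} - \tH_i \preceq -\tfrac{\alpha}{2}D_i^\ell\,\bI - \bigl(1-\tfrac{\alpha}{2}\bigr)\tmu_i\,\bI$ via a change of variables and a splitting of the integral over $[0,\alpha]$ and $[\alpha,1]$, applying convexity of $G$ separately at the end. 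You instead work at the level of function values: you Taylor-expand the difference $h = F - \tF_i(\cdot\,;\bx_i^\nu)$, whose Hessian is controlled by $-D_i^\ell\,\bI$ directly from \eqref{eq:upper-lower-hessian}, and you extract the $\tmu_i\bigl(1-\tfrac{\alpha}{2}\bigr)$ coefficient from two zeroth-order consequences of the $\tmu_i$-strong convexity of $\tU_i(\cdot\,;\bx_i^\nu) \triangleq \tF_i(\cdot\,;\bx_i^\nu)+G$: the quadratic decrease at its minimizer $\hbx_i^\nu$ and the strong-convexity inequality along the convex combination $\bx_i^{\nu+\frac{1}{2}} = (1-\alpha)\bx_i^\nu + \alpha\hbx_i^\nu$ (the latter also subsumes the paper's separate use of convexity of $G$). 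Both arguments rest on the same two pillars---the curvature gap $D_i^\ell$ and the surrogate's strong convexity---and your bookkeeping checks out exactly: the model-mismatch step contributes $-\tfrac{\alpha^2 D_i^\ell}{2}\|\Deltaxi{i}^\nu\|^2$, the surrogate step contributes $-\alpha\tmu_i\bigl(1-\tfrac{\alpha}{2}\bigr)\|\Deltaxi{i}^\nu\|^2$, and Cauchy--Schwarz finishes. What your organization buys is the elimination of the variational inequality, the mean value theorem, and the integral manipulations; what the paper's gradient-level argument buys is stylistic consistency with its subsequent results (Lemma~\ref{lem:delta_x_lb} and Proposition~\ref{prop:bound_delta_x} also start from the first-order optimality condition of $\hbx_i^\nu$). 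Your closing diagnosis is also on point: bounding the quadratic remainder of $F$ by $\nabla^2 F \preceq L\,\bI$ would leave a residual $+\tfrac{L}{2}\alpha^2\|\Deltaxi{i}^\nu\|^2$ that cannot produce the $\tfrac{\alpha}{2}D_i^\ell$ term in the stated coefficient, which is exactly why the curvature must be compared against the surrogate rather than against $F$'s own worst-case model.
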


\begin{proof}
Consider the Taylor expansion of $F$:  
\begin{align}\label{eq:taylor_F}
\begin{split}
F(\bx_i^{\nu +{\frac{1}{2}}})= \,& F(\bx_i^\nu) + \nabla F(\bx_i^\nu)^\top (\alpha \Deltaxi{i}^\nu)  + (\alpha\Deltaxi{i}^\nu)^\top \mathbf{H}(\alpha\Deltaxi{i}^\nu),\\
 \stackrel{\,\,\,\,\eqref{eq:tracking_err_def}}{\,\,\,\,=}& F(\bx_i^\nu) + \big(\bdelta_i^\nu\big)^\top(\alpha \Deltaxi{i}^\nu) + \big(\mathbf{y}_i^\nu\big)^\top(\alpha \Deltaxi{i}^\nu) + (\alpha\Deltaxi{i}^\nu)^\top \mathbf{H}(\alpha\Deltaxi{i}^\nu),
\end{split}
\end{align}

where $\mathbf{H} \triangleq  \int_{0}^1 (1 - \theta) \nabla^2 F (\theta \bx_i^{\nu + \frac{1}{2}} + (1 - \theta) \bx_i^\nu) d \theta$.

Invoking the  optimality of $\hbx_i^\nu$ and defining $\tH_i \triangleq  \int_0^1 \nabla^2 \tf_i (\theta \,\hbx_i^\nu + (1 - \theta) \,\bx_i^\nu; \bx_i^\nu) d \theta$, we have 
\begin{align}\label{eq:opt_x_hat}
\begin{split}
 G(\bx_i^\nu)-G(\widehat{\bx}_i^\nu) & \geq (\Deltaxi{i}^\nu)^\top \big(\nabla \tf_i(\hbx_i^\nu;\bx_i^\nu)  +  \by_i^\nu - \nabla f_i(\bx_i^\nu)\big) 
  = (\Deltaxi{i}^\nu)^\top \big( \by_i^\nu + \tH_i \Deltaxi{i}^\nu\big), 
\end{split}
\end{align}
 where the equality follows from  $\nabla \tf_i(\bx_i^\nu;\bx_i^\nu)=\nabla f_i(\bx_i^\nu)$ and the integral form of the mean value theorem. 
Substituting (\ref{eq:opt_x_hat}) in (\ref{eq:taylor_F}) and  using the convexity of $G$ yield\vspace{-0.2cm}
\begin{align}\label{eq:descent_lem}
\begin{split}
 &F(\bx_i^{\nu +{\frac{1}{2}}})\\
\leq &\,F(\bx_i^\nu) +(   \bdelta_i^\nu)^\top (\alpha \Deltaxi{i}^\nu)  + (\alpha\Deltaxi{i}^\nu)^\top \mathbf{H}(\alpha\Deltaxi{i}^\nu)
+ \alpha \left(G(\bx_i^\nu)-G(\widehat{\bx}_i^\nu)  - (\Deltaxi{i}^\nu)^\top \tH_i \Deltaxi{i}^\nu\right)\\
\leq & \, F(\bx_i^\nu) +(   \bdelta_i^\nu)^\top (\alpha \Deltaxi{i}^\nu)  +\alpha \left(  - (\Deltaxi{i}^\nu)^\top \tH_i \Deltaxi{i}^\nu+  (\alpha\Deltaxi{i}^\nu)^\top \mathbf{H}(\Deltaxi{i}^\nu)  \right)
+ G (\bx_i^\nu) - G(\bx_i^{\nu + \frac{1}{2}}).
\end{split}
\end{align}
It remains to bound $ \alpha \bH- \tH_i$. We proceed as follows:\vspace{-0.2cm}
\begin{align}\label{eq:Delta_H_bound}
\begin{split}
& \, \alpha \bH- \tH_i \\
= &\, \alpha \int_{0}^1 (1 - \theta) \nabla^2 F (\theta \bx_i^{\nu + \frac{1}{2}} + (1 - \theta) \bx_i^\nu) d \theta -  \int_0^1 \nabla^2 \tf_i (\theta \hbx_i^\nu  + (1 - \theta) \bx_i^\nu; \bx_i^\nu) d \theta\\
\stackrel{\eqref{eq:descent}}{=} &\, \int_0^\alpha (1 - \theta/\alpha) \nabla^2 F (\theta \hbx_i^\nu + (1 - \theta) \bx_i^\nu) d \theta - \int_0^1 \nabla^2 \tf_i (\theta \hbx_i^\nu + (1 - \theta) \bx_i^\nu; \bx_i^\nu) d \theta\\
\stackrel{(a)}{\preceq}  & 
\,-\int_0^\alpha (1 - \theta/\alpha) \cdot ( D_i^\ell )\,\mathbf{I}\, d\theta - \int_0^\alpha (\theta/\alpha) \nabla^2 \tf_i (\theta \hbx_i + (1 - \theta) \bx_i^\nu;\mathbf{x}_i^\nu) d \theta\\
& \,  - \int_\alpha^1 \nabla^2 \tf_i (\theta \,\hbx_i^\nu + (1 - \theta) \,\bx_i^\nu;\mathbf{x}_i^\nu) d \theta\\
\stackrel{(b)}{\preceq} & \, -\frac{1}{2} \alpha \, ( D_i^\ell)\,\mathbf{I}  - \left( 1 - \frac{\alpha}{2}\right)  \, \tmu_i \,\mathbf{I},
\end{split}
\end{align}
where in (a) we used $\nabla^2 F(\theta \hbx_i^\nu + (1 - \theta) \bx_i^\nu) \preceq - (D_i^\ell) \mathbf{I} + \nabla^2\tf_i (\theta \hbx_i^\nu + (1 - \theta) \bx_i^\nu; \bx_i^\nu)$ [cf. \eqref{eq:upper-lower-hessian}] while (b) follows from  Assumption~\ref{assump:SCA_surrogate}(iii). Substituting \eqref{eq:Delta_H_bound} into  \eqref{eq:descent_lem} completes the proof
\end{proof}
}

We can now substitute   (\ref{eq:loc_dec_obj})   into \eqref{eq:bound_doubly_stoc_cvx} and get     
{ 
\vspace{-0.2cm}
	\begin{subequations}\label{eq:opt_err_1}
\begin{align} 
	p^{\nu + 1}  &\leq p^\nu + \sum_{i=1}^m \left\{ \alpha \|\Deltaxi{i}^\nu\|  \| \bdelta_i^\nu \| - \alpha \left(1 - \frac{\alpha}{2}\right) \tmu_i \| \Deltaxi{i}^\nu\|^2 - \frac{D_i^\ell}{2}\alpha^2  \| \Deltaxi{i}^\nu\|^2 \right\} \label{eq:opt_err_0}\\
	& \overset{(a)}{\leq} p^\nu -  \left( \left(1 - \frac{\alpha}{2}\right) \tmu_{\mn} +  \frac{\alpha D_{\mn}^\ell}{2} - \frac{1}{2} \epsilon_{opt}\right) \alpha \| \Deltaxi{}^\nu\|^2 + \frac{1}{2} \epsilon_{opt}^{-1} \,\alpha \cdot   \| \bdelta^\nu \|^2,\label{eq:opt_err_2}
\end{align}\end{subequations}
where in (a) we used  Young's inequality, with   $\epsilon_{opt}>0$ satisfying 
\begin{equation}\label{eq:eps_opt}
\left(1 - \frac{\alpha}{2}\right) \tmu_{\mn} +  \frac{\alpha D_{\mn}^\ell}{2} - \frac{1}{2} \epsilon_{opt} >0;
\end{equation}}
and $D_{\mn}^\ell$ is defined in \eqref{eq:alg_param_def}.

Next we lower bound  $\|\Deltaxi{}^\nu\|^2$ in terms of  the   optimality gap. 

\begin{lemma}\label{lem:delta_x_lb}
The following lower bound holds for $\|\Deltaxi{}^\nu\|^2$:
{ \begin{equation}\label{eq:delta_x_lb}
\alpha \,\| \Deltaxi{}^\nu \|^2 \geq \frac{\mu}{D_{\mx}^2} \left(p^{\nu+1}- (1-\alpha) p^{\nu}-\frac{\alpha}{\mu}  \| \bdelta^\nu \|^2\right), \vspace{-0.1cm}
\end{equation}where $D_{\mx}$ is defined in \eqref{eq:alg_param_def}.}
 

\end{lemma}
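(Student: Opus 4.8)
The plan is to prove the equivalent \emph{reverse} estimate
\[
p^{\nu+1} \;\le\; (1-\alpha)\,p^\nu + \frac{\alpha}{\mu}\,\|\bdelta^\nu\|^2 + \frac{\alpha\,D_{\mx}^2}{\mu}\,\|\Deltaxi{}^\nu\|^2,
\]
which is exactly \eqref{eq:delta_x_lb} after rearranging; note the claim is vacuous whenever its right-hand side is nonpositive, so there is nothing to prove in that regime. First I would start from \eqref{eq:bound_doubly_stoc_cvx} and observe that the descent update \eqref{eq:descent} is the convex combination $\bx_i^{\nu+\frac12}=(1-\alpha)\,\bx_i^\nu+\alpha\,\hbx_i^\nu$ with $\alpha\in(0,1]$. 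Convexity of $U=F+G$ then gives $U(\bx_i^{\nu+\frac12})-U(\bx^\star)\le(1-\alpha)\big(U(\bx_i^\nu)-U(\bx^\star)\big)+\alpha\big(U(\hbx_i^\nu)-U(\bx^\star)\big)$, and summing over $i$ reduces everything to a per-agent bound on $\Delta_i\triangleq U(\hbx_i^\nu)-U(\bx^\star)$.

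The core of the argument is to control $\Delta_i$. I would combine two ingredients: (i) the first-order optimality of $\hbx_i^\nu$ for \eqref{eq:loc_opt}, tested against $\bx^\star\in\KK$, which yields $G(\hbx_i^\nu)-G(\bx^\star)\le(\bx^\star-\hbx_i^\nu)^\top\nabla\tF_i(\hbx_i^\nu;\bx_i^\nu)$; and (ii) the $\mu$-strong convexity of $F$, which upper bounds $F(\hbx_i^\nu)-F(\bx^\star)$ by $-\nabla F(\hbx_i^\nu)^\top(\bx^\star-\hbx_i^\nu)-\tfrac{\mu}{2}\|\bx^\star-\hbx_i^\nu\|^2$. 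Adding these produces
\[
\Delta_i \;\le\; (\bx^\star-\hbx_i^\nu)^\top\big(\nabla\tF_i(\hbx_i^\nu;\bx_i^\nu)-\nabla F(\hbx_i^\nu)\big) - \tfrac{\mu}{2}\|\bx^\star-\hbx_i^\nu\|^2 .
\]

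The decisive step, and the main obstacle, is to control the gradient mismatch $\nabla\tF_i(\hbx_i^\nu;\bx_i^\nu)-\nabla F(\hbx_i^\nu)$ so that the negative quadratic term can absorb the inner product. Using $\nabla\tf_i(\bx_i^\nu;\bx_i^\nu)=\nabla f_i(\bx_i^\nu)$ from Assumption~\ref{assump:SCA_surrogate}(i), together with \eqref{eq:upper-lower-hessian} written in integral mean-value form along the segment $[\bx_i^\nu,\hbx_i^\nu]$, I would show that the base-point residual equals $-\bdelta_i^\nu$ and the remainder is a bounded-Hessian correction, i.e.
\[
\nabla\tF_i(\hbx_i^\nu;\bx_i^\nu)-\nabla F(\hbx_i^\nu) = -\bdelta_i^\nu + \mathbf{M}_i\,\Deltaxi{i}^\nu, \qquad \mathbf{M}_i\triangleq\int_0^1\!\big(\nabla^2\tf_i-\nabla^2 F\big)(\cdots)\,dt,\quad \|\mathbf{M}_i\|\le D_i .
\]
Substituting this and applying Cauchy--Schwarz gives $\Delta_i\le a b-\tfrac{\mu}{2}a^2$ with $a=\|\bx^\star-\hbx_i^\nu\|$ and $b\le\|\bdelta_i^\nu\|+D_i\|\Deltaxi{i}^\nu\|$; optimizing over $a$ (equivalently Young's inequality) yields $\Delta_i\le\frac{1}{2\mu}b^2\le\frac{1}{\mu}\|\bdelta_i^\nu\|^2+\frac{D_{\mx}^2}{\mu}\|\Deltaxi{i}^\nu\|^2$. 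Summing over $i$ and inserting into the convex-combination bound from the first paragraph gives the reverse estimate, hence \eqref{eq:delta_x_lb}. I expect the only delicate point to be the bookkeeping in the integral identity above, namely correctly identifying $-\bdelta_i^\nu$ as the residual at $\bx_i^\nu$ and bounding the difference of the two curvature integrals by $D_i\|\Deltaxi{i}^\nu\|$ via \eqref{eq:upper-lower-hessian}; everything else is Cauchy--Schwarz and Young's inequality.
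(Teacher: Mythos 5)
Your proposal is correct and follows essentially the same route as the paper's proof: both rest on testing the optimality condition of $\hbx_i^\nu$ against $\bx^\star$, the $\mu$-strong convexity of $F$, the integral mean-value identity enabled by $\nabla \tf_i(\bx_i^\nu;\bx_i^\nu)=\nabla f_i(\bx_i^\nu)$ with the curvature bound \eqref{eq:upper-lower-hessian}, and the convex-combination step $U(\bx_i^{\nu+\frac{1}{2}})\leq (1-\alpha)U(\bx_i^\nu)+\alpha U(\hbx_i^\nu)$ together with \eqref{eq:bound_doubly_stoc_cvx}. The only cosmetic difference is that you decompose the gradient mismatch into $-\bdelta_i^\nu+\mathbf{M}_i\Deltaxi{i}^\nu$ before applying Cauchy--Schwarz and maximizing the concave quadratic in $\|\bx^\star-\hbx_i^\nu\|$, whereas the paper completes the square first and then splits the mismatch; both yield the identical constants $\frac{1}{\mu}\|\bdelta_i^\nu\|^2+\frac{D_i^2}{\mu}\|\Deltaxi{i}^\nu\|^2$.
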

\begin{proof}
Invoking the optimality condition of $\hbx_i^\nu$, yields 
{ 
	\begin{align}\label{eq:FOC}
   G(\bx^\star)-G(\hbx_i^\nu)\geq -(\bx^\star - \hbx_i^\nu)^\top \Big(\nabla \tf_i ( \hbx_i^\nu ; \bx_i^\nu)  +  \by_i^\nu - \nabla f_i (\bx_i^\nu) \Big).
\end{align}}
Using the $\mu$-strong convexity of $F$, we can write
{ \begin{align*}
\hspace{-0.3cm}\begin{split}
& U(\bx^\star)   \geq  U(\hbx_i^\nu) +    G(\bx^\star)-G(\hbx_i^\nu) +\nabla F( \hbx_i^\nu )^\top (\bx^\star - \hbx_i^\nu) + \frac{\mu}{2} \| \bx^\star -  \hbx_i^\nu\|^2   \\
& \overset{(\ref{eq:FOC})}{\geq}  \!\!  U(\hbx_i^\nu) \!+\! \Big( \nabla F( \hbx_i^\nu )\! -\!\nabla \tf_i ( \hbx_i^\nu ; \bx_i^\nu) \! -\! \big( \by_i^\nu - \nabla f_i (\bx_i^\nu)\big)  \Big)^\top \!\!\!(\bx^\star - \hbx_i^\nu) + \frac{\mu}{2} \| \bx^\star -  \hbx_i^\nu\|^2\\
& \,\,\,\,=   U(\hbx_i^\nu) + \frac{\mu}{2} \Big\| \bx^\star -  \hbx_i^\nu + \frac{1}{\mu}\Big( \nabla F( \hbx_i^\nu ) -\nabla \tf_i ( \hbx_i^\nu ; \bx_i^\nu)  - \big(  \by_i^\nu - \nabla f_i (\bx_i^\nu) \big) \Big) \Big\|^2 \\
& \,\,\,\,\quad- \frac{1}{2\mu } \left\|  \nabla F( \hbx_i^\nu ) -\nabla \tf_i ( \hbx_i^\nu ; \bx_i^\nu)  - \big(\by_i^\nu - \nabla f_i (\bx_i^\nu) \big)  \right\|^2\\
&  \,\,\,\,\geq  U(\hbx_i^\nu)  - \frac{1}{2\mu } \left\|  \nabla F( \hbx_i^\nu )  \pm \nabla F(\bx_i^\nu)-\nabla \tf_i ( \hbx_i^\nu ; \bx_i^\nu)  - \big( \by_i^\nu - \nabla f_i (\bx_i^\nu) \big)  \right\|^2
\\
& \,\,\,\, \geq  U(\hbx_i^\nu)  - \frac{1}{\mu } \left \|  \nabla F( \hbx_i^\nu )  -  \nabla F(\bx_i^\nu) + \nabla f_i(\bx_i^\nu) - \nabla \tf_i ( \hbx_i^\nu ; \bx_i^\nu) \right\|^2 - \frac{1}{\mu} \|\bdelta_i^\nu \|^2 
\\
&  \,\,\,\, =  U(\hbx_i^\nu)  - \frac{1}{\mu } \left\| \int_{0}^{1}\left(\nabla^2 F(\theta \hbx_i^\nu+(1-\theta)\bx_i^\nu)-\nabla^2 \tf_i(\theta \hbx_i^\nu+(1-\theta)\bx_i^\nu; \bx_i^\nu)\right)(\mathbf{d}_i^\nu)\,\text{d}\theta \right\|^2 \!\!- \frac{1}{\mu} \|\bdelta_i^\nu \|^2 
\\
&  \,\,\,\, \geq  U(\hbx_i^\nu)  - \frac{D_i^2}{\mu } \norm{\mathbf{d}_i^\nu}^2 - \frac{1}{\mu} \|\bdelta_i^\nu \|^2.
\end{split}\vspace{-0.2cm}
\end{align*}
}

Rearranging the terms and summing over $i\in [m]$, yields  { \begin{equation}\label{eq:delta_x_lb_a}
\| \Deltaxi{}^\nu \|^2 \geq \frac{\mu}{D_{\mx}^2} \left(\sum_{i=1}^m \big( U( \hbx_i^\nu) - U(\bx^\star)\big) - \frac{1}{\mu}  \| \bdelta^\nu \|^2\right).\vspace{-0.1cm}
\end{equation}}
Using (\ref{eq:bound_doubly_stoc_cvx}) in conjunction with  $U(\bx_i^{\nu+\frac{1}{2}})\leq \alpha U(\hbx_i^\nu)+ (1-\alpha) U(\bx_i^\nu)$ leads to   \vspace{-0.2cm} \begin{equation}\label{eq:lower-bound_U_hat}
	\alpha \, \sum_{i=1}^m \left(U(\hbx_i^\nu) - U(\bx^\star)\right) \geq p^{\nu+1} - (1-\alpha) p^\nu.
\end{equation}
Combining (\ref{eq:delta_x_lb_a}) with (\ref{eq:lower-bound_U_hat}) provides the desired result (\ref{eq:delta_x_lb}). 
\end{proof}

As last step, we upper bound   $\|\bdelta_{}^\nu\|^2$ in (\ref{eq:opt_err_1}) in terms of the consensus errors $\| \bx_\bot^\nu\|^2$ and $\| \by_\bot^\nu\|^2$.

\vspace{-0.1cm}
\begin{lemma}\label{lem:tracking_err_bound}{ 
The following upper bound holds for the tracking error $\|\bdelta_{}^\nu\|^2$:
 \begin{equation}\label{eq:delta_upper_by_consensus}
 \|\bdelta_{}^\nu\|^2  \leq 4  L_{\mx}^2 \| \bx_\bot^\nu\|^2 + 2  \| \by_\bot^\nu \|^2,
\end{equation} 
where $L_{\mx}$ is defined in~\eqref{eq:prob_param_def}.}
\end{lemma}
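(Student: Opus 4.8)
The plan is to decompose each block $\bdelta_i^\nu = \nabla F(\bx_i^\nu) - \by_i^\nu$ into a ``gradient disagreement'' part and a ``$\by$-consensus'' part, and to bound each separately. The crucial ingredient is the average-consistency identity \eqref{eq:average-consistency}, $\bar{\by}^\nu = \overline{\pgrad}^\nu$, which lets me replace $\by_i^\nu$ by $(\by_i^\nu - \bar{\by}^\nu) + \overline{\pgrad}^\nu$. Writing $\nabla F(\bx_i^\nu) = \frac1m\sum_j \nabla f_j(\bx_i^\nu)$ and $\overline{\pgrad}^\nu = \frac1m\sum_j \nabla f_j(\bx_j^\nu)$, I obtain the clean split
\begin{equation*}
\bdelta_i^\nu = \underbrace{\frac1m\sum_{j=1}^m\big(\nabla f_j(\bx_i^\nu)-\nabla f_j(\bx_j^\nu)\big)}_{A_i} \;-\; \underbrace{(\by_i^\nu - \bar{\by}^\nu)}_{B_i}.
\end{equation*}

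Applying $\|a-b\|^2 \le 2\|a\|^2 + 2\|b\|^2$ blockwise and summing over $i\in[m]$ gives $\|\bdelta^\nu\|^2 \le 2\sum_i\|A_i\|^2 + 2\sum_i\|B_i\|^2$. The second sum is immediate: by the definition \eqref{eq:err_x_def} of $\by_\bot^\nu$, one has $\sum_i\|B_i\|^2 = \|\by_\bot^\nu\|^2$, producing the $2\|\by_\bot^\nu\|^2$ term. For the first sum I would use $L_j$-smoothness of each $f_j$ [cf. \eqref{eq:mu-L-smooth}], the triangle inequality, and Jensen (or Cauchy--Schwarz) to get
\begin{equation*}
\|A_i\|^2 \le \Big(\frac1m\sum_{j=1}^m L_j\,\|\bx_i^\nu-\bx_j^\nu\|\Big)^2 \le \frac{L_{\mx}^2}{m}\sum_{j=1}^m \|\bx_i^\nu-\bx_j^\nu\|^2 .
\end{equation*}

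Summing over $i$ reduces the claim to the elementary identity $\sum_{i,j}\|\bx_i^\nu-\bx_j^\nu\|^2 = 2m\,\|\bx_\bot^\nu\|^2$, which follows by writing $\bx_i^\nu-\bx_j^\nu = (\bx_i^\nu-\bar{\bx}^\nu)-(\bx_j^\nu-\bar{\bx}^\nu)$, expanding the square, and using $\sum_i(\bx_i^\nu-\bar{\bx}^\nu)=\mathbf{0}$ to eliminate the cross term. Hence $\sum_i\|A_i\|^2 \le 2 L_{\mx}^2\|\bx_\bot^\nu\|^2$, and combining with the $\by$-term yields exactly $4L_{\mx}^2\|\bx_\bot^\nu\|^2 + 2\|\by_\bot^\nu\|^2$.

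The computation is essentially routine once the split is in place; the only place demanding a little care is the term $A_i$, where one must resist bounding it by $\|\nabla F(\bx_i^\nu)-\nabla F(\bar{\bx}^\nu)\|$ (which would measure $\bx_i^\nu$ only against $\bar{\bx}^\nu$ and obscure the coupling across agents). Keeping the pairwise differences $\bx_i^\nu - \bx_j^\nu$ intact until the final double-sum identity is what produces the sharp constant, and the role of average-consistency \eqref{eq:average-consistency} is precisely to convert the tracking error into these pairwise consensus gaps.
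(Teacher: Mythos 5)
Your proof is correct and follows essentially the same route as the paper: inserting $\pm\bar{\by}^\nu$, invoking the average-consistency identity \eqref{eq:average-consistency} to rewrite $\nabla F(\bx_i^\nu)-\bar{\by}^\nu$ as the average of pairwise gradient differences, splitting via $\|a-b\|^2\le 2\|a\|^2+2\|b\|^2$, and finishing with Lipschitz smoothness together with the identity $\sum_{i,j}\|\bx_i^\nu-\bx_j^\nu\|^2=2m\|\bx_\bot^\nu\|^2$. The paper carries out exactly these steps (with the factor $1/m^2$ pulled out front rather than kept inside each block), so there is nothing to add.
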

\begin{proof}    \vspace{-0.2cm}
{ 
\begin{equation*}  
\begin{aligned}  
 \hspace{-1.3cm}
\|\bdelta_{}^\nu\|^2  \overset{\eqref{eq:tracking_err_def}}{=} & \sum_{i=1}^m \|\nabla F(\bx_i^{\nu}) \pm  \bar{\by}^\nu-  \by_i^\nu\|^2  
\\ 
  \overset{\eqref{eq:avg_y_def}}{=}   &  \frac{1}{m^2} \sum_{i=1}^m \Big\|\sum_{j=1}^m \nabla f_j(\bx_i^\nu)- \sum_{j=1}^m \nabla f_j(\bx_j^\nu) +   m\cdot \bar{\by}^\nu- m \cdot \by_i^\nu \Big\|^2
\\
\overset{\eqref{eq:mu-L-smooth},\,\eqref{eq:prob_param_def}}{\leq} &\frac{1}{m^2}\sum_{i=1}^m \!\!\left( 2 m\sum_{j=1}^m L_{\mx}^2\| \bx_i^\nu - \bx_j^\nu \|^2 + 2 m^2 \|\bar{\by}^\nu-  \by_i^\nu \|^2\right)
\\
 =  &  \,\,4  L_{\mx}^2 \| \bx_\bot^\nu\|^2 + 2  \| \by_\bot^\nu \|^2.
\end{aligned}\vspace{-0.4cm}
\end{equation*}}
 ~
\end{proof}

We are ready to prove the linear convergence of the optimality gap   up to consensus errors.  {The result is summarized in  Proposition~\ref{prop:loc_geo_rate} below. The proof follows readily multiplying~\eqref{eq:opt_err_1} and~(\ref{eq:delta_x_lb}) by $\tmu_{\mn} - \frac{L}{2} \alpha - \frac{1}{2}  \epsilon_{opt}$ and ${6 (L^2 + \tL_{\mx}^2)}/{\mu}$, respectively,    adding them  together }
 to cancel out  $\|\Deltaxi{}^\nu\|$, and   using (\ref{eq:delta_upper_by_consensus}) to bound $\|\bdelta_{}^\nu\|^2$. 

\begin{proposition}\label{prop:loc_geo_rate}
 The optimality gap  $p^\nu$ [cf. (\ref{eq:opt_gap})] satisfies  \vspace{-0.1cm}
{ 
	\begin{equation}\label{eq:opt_gap_plus_error}
p^{\nu + 1} \leq \sigma(\alpha) \cdot p^\nu + \eta(\alpha) \cdot  \left(4 L_{\mx}^2 \| \bx_\bot^\nu\|^2 + 2 \| \by_\bot^\nu \|^2\right) 
,\vspace{-0.1cm}
\end{equation}}
where   
 $\sigma(\alpha) \in (0,1)$ and $\eta(\alpha) >0$ are defined as \vspace{-0.2cm}
{ 
\begin{align}\label{eq:sigma_C_def}
\sigma(\alpha) & \triangleq 1-\alpha\,\frac{\left(1 - \frac{\alpha}{2}\right)\tmu_{\mn} + \frac{D_{\mn}^\ell}{2} \alpha - \frac{1}{2}  \epsilon_{opt}}{\frac{ D_{\mx}^2}{\mu} + \left(1 - \frac{\alpha}{2}\right)\tmu_{\mn} + \frac{D_{\mn}^\ell}{2} \alpha - \frac{1}{2}  \epsilon_{opt} },
\\
\eta(\alpha) & \triangleq \frac{ \frac{1}{2}\epsilon_{opt}^{-1} {\alpha} \cdot \frac{D_{\mx}^2}{\mu} + \frac{{\alpha}}{\mu} \cdot \left(\left(1 - \frac{\alpha}{2}\right)\tmu_{\mn} + \frac{D_{\mn}^\ell}{2} \alpha - \frac{1}{2}  \epsilon_{opt}\right)}{\frac{D_{\mx}^2}{\mu} +  \left(1 - \frac{\alpha}{2}\right)\tmu_{\mn} + \frac{D_{\mn}^\ell}{2} \alpha - \frac{1}{2}  \epsilon_{opt}};
\end{align}}
  $\epsilon_{opt}$ satisfies \eqref{eq:eps_opt};      and $L_{\mx}$ and  
  $\tmu_{\mn}$, $D_{\mn}^\ell$, $D_{\mx}$  are defined in~\eqref{eq:prob_param_def} and~\eqref{eq:alg_param_def}, respectively.   \end{proposition}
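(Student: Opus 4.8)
The plan is to chain together the two one-step bounds already established—the descent inequality \eqref{eq:opt_err_2} and the lower bound \eqref{eq:delta_x_lb}—so as to eliminate the common descent-direction term $\|\Deltaxi{}^\nu\|^2$, and then to replace the residual tracking error $\|\bdelta^\nu\|^2$ by consensus quantities via Lemma~\ref{lem:tracking_err_bound}. To keep the bookkeeping transparent I would write $c\triangleq\left(1-\tfrac{\alpha}{2}\right)\tmu_{\mn}+\tfrac{D_{\mn}^\ell}{2}\alpha-\tfrac12\epsilon_{opt}$, so that condition \eqref{eq:eps_opt} reads exactly $c>0$; with this notation \eqref{eq:opt_err_2} becomes $p^{\nu+1}\le p^\nu-c\,\alpha\|\Deltaxi{}^\nu\|^2+\tfrac12\epsilon_{opt}^{-1}\alpha\|\bdelta^\nu\|^2$.

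First I would use \eqref{eq:delta_x_lb} to upper-bound the negative term $-c\,\alpha\|\Deltaxi{}^\nu\|^2$. Since $c>0$, multiplying \eqref{eq:delta_x_lb} by $c$ and negating yields $-c\,\alpha\|\Deltaxi{}^\nu\|^2\le-\tfrac{c\mu}{D_{\mx}^2}\big(p^{\nu+1}-(1-\alpha)p^\nu-\tfrac{\alpha}{\mu}\|\bdelta^\nu\|^2\big)$: the lower bound on $\|\Deltaxi{}^\nu\|^2$ turns into the upper bound on its negative multiple that the descent inequality needs. Substituting this into the displayed form of \eqref{eq:opt_err_2} and setting $a\triangleq c\mu/D_{\mx}^2>0$, I obtain an inequality in which $p^{\nu+1}$ appears on both sides; collecting those terms gives $(1+a)\,p^{\nu+1}\le\big(1+a(1-\alpha)\big)p^\nu+\alpha\big(\tfrac{a}{\mu}+\tfrac12\epsilon_{opt}^{-1}\big)\|\bdelta^\nu\|^2$.

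Dividing by $1+a$ and simplifying $\tfrac{a}{1+a}=\tfrac{c}{D_{\mx}^2/\mu+c}$, the coefficient of $p^\nu$ becomes $1-\alpha\,\tfrac{c}{D_{\mx}^2/\mu+c}$, which is precisely $\sigma(\alpha)$; multiplying numerator and denominator of the $\|\bdelta^\nu\|^2$ coefficient by $1/\mu$ reduces it to $\eta(\alpha)$ as defined in \eqref{eq:sigma_C_def}. Since $c>0$ and $\alpha\in(0,1]$ one has $\tfrac{c}{D_{\mx}^2/\mu+c}\in(0,1)$, whence $\sigma(\alpha)\in(0,1)$, and $\eta(\alpha)>0$ because every summand is positive. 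Finally, inserting the tracking-error bound $\|\bdelta^\nu\|^2\le 4L_{\mx}^2\|\bx_\bot^\nu\|^2+2\|\by_\bot^\nu\|^2$ from \eqref{eq:delta_upper_by_consensus} produces exactly \eqref{eq:opt_gap_plus_error}.

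I do not expect a genuine obstacle here, as the argument is a deterministic algebraic elimination. The only two points that require care are the sign bookkeeping noted above and the fact that \eqref{eq:delta_x_lb} itself contains $p^{\nu+1}$, so the substitution produces an implicit inequality that must be solved for $p^{\nu+1}$ rather than read off directly; carrying $c$ and $a$ as named constants is what makes the identification with the stated $\sigma(\alpha)$ and $\eta(\alpha)$ clean.
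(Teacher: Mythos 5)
Your proposal is correct and follows essentially the same route as the paper: the paper's proof also forms a positive linear combination of \eqref{eq:opt_err_1} and \eqref{eq:delta_x_lb} to cancel the $\|\Deltaxi{}^\nu\|^2$ term, solves the resulting implicit inequality for $p^{\nu+1}$, and then invokes Lemma~\ref{lem:tracking_err_bound} to replace $\|\bdelta^\nu\|^2$ by the consensus errors. Your constants $c$ and $a$ reproduce $\sigma(\alpha)$ and $\eta(\alpha)$ exactly, so nothing is missing.
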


\subsubsection{Step 2: $\|\bx_\bot^\nu\|$ and $\|\by_\bot^\nu\|$   linearly converge up to    $\mathcal{O}(\| \Deltaxi{}^\nu\|)$  }\label{sec:consensus_err} 
We   upper bound $\| \bx_\bot^\nu\|$ and $ \| \by_\bot^\nu\|$  in terms of $\|\mathbf{d}^\nu\|$. 
We begin rewriting the SONATA algorithm \eqref{eq:loc_opt}-\eqref{eq:mix_y} in vector-matrix form; using \eqref{eq:stacked_vec} and \eqref{eq:def_W_hat},
 we have\vspace{-0.1cm}
 \begin{subequations}\label{alg:dsca_matrix}
\begin{align}
\bx^{\nu + 1} ={} & \hbW (\bx^\nu + \alpha \Deltaxi{}^\nu) \label{eq:x_update}\\
\by^{\nu + 1} ={} & \hbW (\by^\nu + \pgrad^{\nu +1} - \pgrad^\nu ).\label{eq:y_update}
\end{align}
\end{subequations}
Noting that $\bx_\bot^{\nu}=(\bI-\mathbf{J})\bx^{\nu}$ [similarly, $\by_\bot^{\nu}=(\bI-\mathbf{J})\by^{\nu}$] and  $(\bI-\mathbf{J})\hbW=\hbW-\mathbf{J}$ (due to the  doubly stochasticity of $\bW$), it follows from \eqref{alg:dsca_matrix} that \vspace{-0.1cm}
\begin{align}
\bx_\bot^{\nu + 1} & =  (\hbW - \bJ) (\bx_\bot^\nu + \alpha \Deltaxi{}^\nu) \label{eq:x_err_update}\\
\by_\bot^{\nu + 1}  & =   (\hbW - \bJ) (\by_\bot^\nu +\pgrad^{\nu +1} - \pgrad^\nu  ).\label{eq:y_err_update}
\end{align}
 Using (\ref{eq:x_err_update})-(\ref{eq:y_err_update}), Proposition~\ref{prop:err_x} below establishes linear  
 convergence  of the consensus errors $\bx_\bot^{\nu}$ and $\by_\bot^{\nu}$, up to a perturbation. 

\begin{proposition}\label{prop:err_x}
There holds: 
 \vspace{-0.2cm}
\begin{subequations}\label{eq:consensus-error-bounds}
\begin{align}
\| \bx_\bot^{\nu + 1} \| & \leq \rho \| \bx_\bot^\nu \| + \alpha \rho \| \Deltaxi{}^\nu\|,\label{eq:err_x_bound}\\
\| \by_\bot^{\nu +1}\| & \leq \rho  \| \by_\bot^\nu\| + 2L_{\mx} \rho \| \bx_\bot^\nu\| + \alpha  L_{\mx} \rho\|\Deltaxi{}^\nu\|,\label{eq:err_y_bound}
\end{align}
\end{subequations}
with $\rho$  and $L_{\mx} $ defined in~\eqref{eq:rho_def} and~\eqref{eq:prob_param_def}, respectively.
\end{proposition}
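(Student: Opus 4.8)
The plan is to derive both inequalities directly from the two consensus-error recursions \eqref{eq:x_err_update}--\eqref{eq:y_err_update}, using a single structural fact repeatedly: since $\rho\triangleq\sigma(\hbW-\bJ)$ is the largest singular value of $\hbW-\bJ$ [cf. \eqref{eq:rho_def}], we have the contraction $\|(\hbW-\bJ)\mathbf{z}\|\le\rho\,\|\mathbf{z}\|$ for every $\mathbf{z}$. For \eqref{eq:err_x_bound} this already suffices: taking norms in \eqref{eq:x_err_update}, applying the contraction to the factor $(\hbW-\bJ)$, and splitting the argument with the triangle inequality gives
\[
\|\bx_\bot^{\nu+1}\|=\|(\hbW-\bJ)(\bx_\bot^\nu+\alpha\,\mathbf{d}^\nu)\|\le\rho\,\|\bx_\bot^\nu+\alpha\,\mathbf{d}^\nu\|\le\rho\,\|\bx_\bot^\nu\|+\alpha\rho\,\|\mathbf{d}^\nu\|,
\]
which is \eqref{eq:err_x_bound} and requires no further work.

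For \eqref{eq:err_y_bound} the same two steps applied to \eqref{eq:y_err_update} yield $\|\by_\bot^{\nu+1}\|\le\rho\,\|\by_\bot^\nu\|+\rho\,\|\pgrad^{\nu+1}-\pgrad^\nu\|$, so the only real content is to bound the gradient increment by $2L_{\mx}\|\bx_\bot^\nu\|+\alpha L_{\mx}\|\mathbf{d}^\nu\|$. Because $\pgrad^\nu$ stacks the blocks $\nabla f_i(\bx_i^\nu)$ [cf. \eqref{eq:stacked_vec}] and each $\nabla f_i$ is $L_i$-Lipschitz by \eqref{eq:mu-L-smooth}, a block-wise estimate gives $\|\pgrad^{\nu+1}-\pgrad^\nu\|^2=\sum_i\|\nabla f_i(\bx_i^{\nu+1})-\nabla f_i(\bx_i^\nu)\|^2\le L_{\mx}^2\|\bx^{\nu+1}-\bx^\nu\|^2$, hence $\|\pgrad^{\nu+1}-\pgrad^\nu\|\le L_{\mx}\|\bx^{\nu+1}-\bx^\nu\|$. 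It therefore remains to control the iterate drift $\|\bx^{\nu+1}-\bx^\nu\|$ in terms of $\|\bx_\bot^\nu\|$ and $\|\mathbf{d}^\nu\|$.

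This drift estimate is the only non-routine step and is where I expect the main obstacle. Using the stacked $\bx$-update $\bx^{\nu+1}=\hbW(\bx^\nu+\alpha\,\mathbf{d}^\nu)$ [cf. \eqref{eq:x_update}], I would write $\bx^{\nu+1}-\bx^\nu=(\hbW-\bI)\bx^\nu+\alpha\hbW\mathbf{d}^\nu$ and exploit that $\hbW$ fixes the consensus component: since $\bW$ is doubly stochastic one has $\hbW\bJ=\bJ$, so $(\hbW-\bI)(\bJ\bx^\nu)=\mathbf{0}$ and thus $(\hbW-\bI)\bx^\nu=(\hbW-\bI)\bx_\bot^\nu=(\hbW-\bJ)\bx_\bot^\nu-\bx_\bot^\nu$, the last equality using $\bJ\bx_\bot^\nu=\mathbf{0}$ (a consequence of $\bJ^2=\bJ$ and $\bx_\bot^\nu=(\bI-\bJ)\bx^\nu$ from \eqref{eq:err_x_def}). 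Taking norms, applying the $\rho$-contraction to $(\hbW-\bJ)\bx_\bot^\nu$ and the nonexpansiveness $\|\hbW\|_2\le 1$ to $\alpha\hbW\mathbf{d}^\nu$ (a doubly stochastic $\bW$ is, by Birkhoff--von Neumann, a convex combination of orthogonal permutation matrices, so $\|\bW\|_2\le 1$ and hence $\|\hbW\|_2=\|\bW\|_2\le 1$), I obtain $\|\bx^{\nu+1}-\bx^\nu\|\le(1+\rho)\|\bx_\bot^\nu\|+\alpha\|\mathbf{d}^\nu\|\le 2\|\bx_\bot^\nu\|+\alpha\|\mathbf{d}^\nu\|$, where the final inequality uses $\rho<1$. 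Substituting this into the Lipschitz bound for the gradient increment and then multiplying by $\rho$ yields exactly $\|\by_\bot^{\nu+1}\|\le\rho\,\|\by_\bot^\nu\|+2L_{\mx}\rho\,\|\bx_\bot^\nu\|+\alpha L_{\mx}\rho\,\|\mathbf{d}^\nu\|$, completing \eqref{eq:err_y_bound}. The delicate point to get right is keeping the $\mathbf{d}^\nu$-coefficient at $\alpha$ rather than $\alpha(1+\rho)$, which is why I decompose $(\hbW-\bI)\bx^\nu$ directly instead of routing the drift through the already-established bound on $\|\bx_\bot^{\nu+1}\|$.
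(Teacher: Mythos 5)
Your proof is correct and follows essentially the same route as the paper's: contract with $\rho=\sigma(\hbW-\bJ)$ in both recursions \eqref{eq:x_err_update}--\eqref{eq:y_err_update}, bound the gradient increment by $L_{\mx}\|\bx^{\nu+1}-\bx^\nu\|$ via blockwise Lipschitz continuity, and control the drift $\bx^{\nu+1}-\bx^\nu=(\hbW-\bI)\bx_\bot^\nu+\alpha\hbW\Deltaxi{}^\nu$ using $\|\hbW\|\le 1$. The only cosmetic difference is that you bound $\|(\hbW-\bI)\bx_\bot^\nu\|$ by $(1+\rho)\|\bx_\bot^\nu\|$ through the decomposition $(\hbW-\bJ)\bx_\bot^\nu-\bx_\bot^\nu$ before relaxing to $2\|\bx_\bot^\nu\|$, whereas the paper obtains the factor $2$ directly from the triangle inequality and $\|\bW\|\le 1$.
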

\begin{proof}
  We prove next \eqref{eq:err_y_bound};    \eqref{eq:err_x_bound} follows readily from \eqref{eq:x_err_update}. Using \eqref{eq:x_update},  \eqref{eq:y_err_update}, and the Lipschitz continuity of $\nabla f_i$ [cf. \eqref{eq:mu-L-smooth}], we can bound $\| \by_\bot^{\nu + 1}\|$ as\vspace{-0.1cm}
\begin{align*}
\begin{split}
\|\by_\bot^{\nu +1} \| &  \leq \rho \|\by_\bot^{\nu} \| + \rho \| \pgrad^{\nu + 1} - \pgrad^\nu\|\\
& \leq \rho \|\by_\bot^{\nu} \| + L_{\mx} \rho \| \underset{=(\hbW-\bI)\bx_{\bot}^{\nu}}{\underbrace{(\hbW-\bI)\bx^{\nu}}} + \alpha \hbW\Deltaxi{}^\nu\|\\
& \leq \rho \|\by_\bot^{\nu} \| + 2L_{\mx}\rho  \| \bx_\bot^\nu\| + \alpha  L_{\mx} \rho\|\Deltaxi{}^\nu\|,
\end{split}
\end{align*}
where in the last inequality we  used  $\|\bW\|\leq 1$. ~\end{proof}

\subsubsection{Step 3:   $\| \Deltaxi{}^\nu\|=\mathcal{O}(\sqrt{p^\nu}+\| \by_\bot^\nu\|)$ (closing the loop)}\label{sec:delta_x_bound}
Given the inequalities in Propositions \ref{prop:loc_geo_rate} and \ref{prop:err_x},   to close the loop, one needs to link   $\|\Deltaxi{}^\nu \|$ to the quantities in the aforementioned inequalities,   which is done next. 

\begin{proposition}\label{prop:bound_delta_x}
    The following upper bound holds for    $\| \Deltaxi{}^\nu \|$:\vspace{-0.1cm} 
{ \begin{equation}\label{eq:bound_delta_x}
\|\Deltax^\nu\|^2 
\leq  \frac{6}{\mu}\left(  \left( \frac{D_{\mx}}{\tmu_{\mn}} + 1\right)^2 + \frac{4 L_{\mx}^2}{\tmu_{\mn}^2 } \right) p^\nu  + \frac{3}{\tmu_{\mn}^2} \| \by_\bot^\nu\|^2.
\end{equation}
where $L_{\mx}$ and  $\tL_{\mx}$,  $\tmu_{\mn}$, $D_{\mx}$ are defined in~\eqref{eq:prob_param_def} and   \eqref{eq:alg_param_def}, respectively.}
\end{proposition}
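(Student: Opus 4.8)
The plan is to bound each local direction $\Deltaxi{i}^\nu=\hbx_i^\nu-\bx_i^\nu$ by routing through the global optimum $\bx^\star$, writing $\|\Deltaxi{i}^\nu\|\le\|\hbx_i^\nu-\bx^\star\|+\|\bx_i^\nu-\bx^\star\|$ and controlling the two pieces separately. The distance $\|\bx_i^\nu-\bx^\star\|$ is immediately tied to the optimality gap: since $U=F+G$ is $\mu$-strongly convex [cf.\ \ref{assump:p}3] and $\bx^\star$ minimizes it, $U(\bx_i^\nu)-U(\bx^\star)\ge\frac{\mu}{2}\|\bx_i^\nu-\bx^\star\|^2$, whence $\sum_i\|\bx_i^\nu-\bx^\star\|^2\le\frac{2}{\mu}p^\nu$. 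The first and main task is to bound $\|\hbx_i^\nu-\bx^\star\|$. I would proceed as in the proof of Lemma~\ref{lem:delta_x_lb}: combining the optimality condition of $\hbx_i^\nu$ for \eqref{eq:loc_opt} with the convexity of $G$ at $\bx^\star$ (using $-\nabla F(\bx^\star)\in\partial G(\bx^\star)$) yields
\begin{equation*}
(\hbx_i^\nu-\bx^\star)^\top\big(\nabla\tf_i(\hbx_i^\nu;\bx_i^\nu)-\nabla F(\bx^\star)+\by_i^\nu-\nabla f_i(\bx_i^\nu)\big)\le 0.
\end{equation*}
Adding and subtracting $\nabla\tf_i(\bx^\star;\bx_i^\nu)$ and invoking the $\tmu_i$-strong convexity of $\tf_i(\bullet;\bx_i^\nu)$ [Assumption~\ref{assump:SCA_surrogate}(iii)] gives $\tmu_i\|\hbx_i^\nu-\bx^\star\|^2\le(\hbx_i^\nu-\bx^\star)^\top\mathbf{r}_i$ with residual $\mathbf{r}_i\triangleq\nabla F(\bx^\star)-\by_i^\nu+\nabla f_i(\bx_i^\nu)-\nabla\tf_i(\bx^\star;\bx_i^\nu)$; Cauchy--Schwarz then leaves $\tmu_i\|\hbx_i^\nu-\bx^\star\|\le\|\mathbf{r}_i\|$.

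The crux is to show that $\|\mathbf{r}_i\|$ is governed by the surrogate-mismatch constant $D_{\mx}$ rather than by the larger $\tL_{\mx}$ or $L$. The key is to exploit the consistency $\nabla\tf_i(\bx_i^\nu;\bx_i^\nu)=\nabla f_i(\bx_i^\nu)$ [Assumption~\ref{assump:SCA_surrogate}(i)] together with the tracking error. Substituting $\by_i^\nu=\nabla F(\bx_i^\nu)-\bdelta_i^\nu$ [cf.\ \eqref{eq:tracking_err_def}] and regrouping, one obtains $\mathbf{r}_i=\boldsymbol{\psi}_i(\bx^\star)-\boldsymbol{\psi}_i(\bx_i^\nu)+\bdelta_i^\nu$, where $\boldsymbol{\psi}_i(\bx)\triangleq\nabla F(\bx)-\nabla\tf_i(\bx;\bx_i^\nu)$. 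Since $\nabla\boldsymbol{\psi}_i=\nabla^2F-\nabla^2\tf_i(\bullet;\bx_i^\nu)$ has spectral norm at most $D_i$ by \eqref{eq:upper-lower-hessian}, the mean value theorem gives $\|\boldsymbol{\psi}_i(\bx^\star)-\boldsymbol{\psi}_i(\bx_i^\nu)\|\le D_i\|\bx_i^\nu-\bx^\star\|$, so that $\|\mathbf{r}_i\|\le D_{\mx}\|\bx_i^\nu-\bx^\star\|+\|\bdelta_i^\nu\|$, and hence
\begin{equation*}
\|\Deltaxi{i}^\nu\|\le\Big(\tfrac{D_{\mx}}{\tmu_{\mn}}+1\Big)\|\bx_i^\nu-\bx^\star\|+\tfrac{1}{\tmu_{\mn}}\|\bdelta_i^\nu\|.
\end{equation*}

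To land the stated constants, rather than invoking Lemma~\ref{lem:tracking_err_bound} wholesale (which would couple $\|\bx_\bot^\nu\|$ and $\|\by_\bot^\nu\|$ with an extra factor of two), I would split the tracking error into its two natural pieces, $\bdelta_i^\nu=(\nabla F(\bx_i^\nu)-\bar{\by}^\nu)+(\bar{\by}^\nu-\by_i^\nu)$, using $\bar{\by}^\nu=\overline{\pgrad}^\nu$ [cf.\ \eqref{eq:average-consistency}]. This presents $\Deltaxi{i}^\nu$ as a sum of three terms; applying $\|a+b+c\|^2\le 3(\|a\|^2+\|b\|^2+\|c\|^2)$ and summing over $i\in[m]$ produces three contributions. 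The first is $3\big(\tfrac{D_{\mx}}{\tmu_{\mn}}+1\big)^2\sum_i\|\bx_i^\nu-\bx^\star\|^2\le\tfrac{6}{\mu}\big(\tfrac{D_{\mx}}{\tmu_{\mn}}+1\big)^2p^\nu$. For the second, $\nabla F(\bx_i^\nu)-\bar{\by}^\nu=\tfrac1m\sum_j(\nabla f_j(\bx_i^\nu)-\nabla f_j(\bx_j^\nu))$, so by $L_{\mx}$-smoothness [cf.\ \eqref{eq:mu-L-smooth}] and the identity $\sum_{i,j}\|\bx_i^\nu-\bx_j^\nu\|^2=2m\|\bx_\bot^\nu\|^2$ it is controlled by a multiple of $\tfrac{L_{\mx}^2}{\tmu_{\mn}^2}\|\bx_\bot^\nu\|^2$, which is at most $\tfrac{24L_{\mx}^2}{\mu\tmu_{\mn}^2}p^\nu$ after using $\|\bx_\bot^\nu\|^2\le\sum_i\|\bx_i^\nu-\bx^\star\|^2\le\tfrac2\mu p^\nu$. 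The third is exactly $\tfrac{3}{\tmu_{\mn}^2}\|\by_\bot^\nu\|^2$, completing the bound. The main obstacle is the middle paragraph: correctly identifying the telescoping $\boldsymbol{\psi}_i(\bx^\star)-\boldsymbol{\psi}_i(\bx_i^\nu)$ so that the contraction depends on $D_{\mx}$ (the very quantity that later makes the rate scale with $\beta$) rather than on a cruder Lipschitz bound; the three-way split at the end is what keeps the $\|\by_\bot^\nu\|^2$ coefficient at $3$ instead of $6$.
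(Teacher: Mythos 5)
Your proof is correct and follows essentially the same route as the paper's: both sum the optimality (variational) inequalities of $\hbx_i^\nu$ and $\bx^\star$, insert $\pm\nabla\tf_i(\bx^\star;\bx_i^\nu)$ and $\pm\bar{\by}^\nu$ (with $\bar{\by}^\nu=\overline{\pgrad}^\nu$), use the strong convexity of $\tf_i(\bullet;\bx_i^\nu)$ together with the Hessian-mismatch bound \eqref{eq:upper-lower-hessian} to extract the $D_{\mx}$-dependence, and finish with the three-term squaring, summation over $i$, and $\sum_i\|\bx_i^\nu-\bx^\star\|^2\le\tfrac{2}{\mu}p^\nu$. The remaining differences are cosmetic: you invoke the mean value theorem on $\boldsymbol{\psi}_i$ where the paper uses the integral form, you handle the cross-consensus term via $\sum_{i,j}\|\bx_i^\nu-\bx_j^\nu\|^2=2m\|\bx_\bot^\nu\|^2$ rather than the paper's pairwise bound through $\bx^\star$ (yours is in fact slightly tighter than needed), and the optimality of $\bx^\star$ should be stated as the variational inequality over $\KK$ rather than $-\nabla F(\bx^\star)\in\partial G(\bx^\star)$ when $\KK\neq\real^d$, though the displayed inequality you actually use is the correct one.
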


\begin{proof}{ 
By  optimality of $\hbx_i^\nu$ and $\bx^\star$ we have\vspace{-.4cm}
\begin{align*}
\left(\nabla \tf_i (\hbx_i^\nu ; \bx_i^\nu) + \by_i^\nu - \nabla f_i (\bx_i^\nu)\right)^\top \left(\bx^\star-\hbx_i^\nu\right) + G(\bx^\star)-G(\hbx_i^\nu) &\geq 0,\\
 \nabla F (\bx^\star)^\top  \left(\hbx_i^\nu-\bx^\star\right) +  G(\hbx_i^\nu) - G(\bx^\star) & \geq 0.
\end{align*}

Summing the two inequalities above yields \begin{align*}
\begin{split}
0  \leq  & {}\left(\nabla F (\bx^\star) - \by_i^\nu + \nabla f_i (\bx_i^\nu)  - \nabla \tf_i (\hbx_i^\nu ; \bx_i^\nu)  \pm \bar{\by}^\nu\right)^\top (\hbx_i^\nu - \bx^\star)\\
 \leq & {}\left(\nabla F (\bx^\star)  - \frac{1}{m}\sum_{j=1}^m \nabla f_j(\bx_j^\nu)+ \nabla f_i (\bx_i^\nu)  - \nabla \tf_i (\hbx_i^\nu ; \bx_i^\nu)  \right)^\top (\hbx_i^\nu - \bx^\star)\\
 & {}+ \| \bar{\by}^\nu - \by_i^\nu \| \|\hbx_i^\nu - \bx^\star\|\\
  \leq & {}\left(\nabla F (\bx^\star)  - \nabla F(\bx_i^\nu) + \nabla f_i (\bx_i^\nu)  - \nabla \tf_i (\hbx_i^\nu ; \bx_i^\nu)  \right)^\top (\hbx_i^\nu - \bx^\star)\\
 & {}+ \| \bar{\by}^\nu - \by_i^\nu \| \|\hbx_i^\nu - \bx^\star\| + \norm{\nabla F(\bx_i^\nu)  - \frac{1}{m}\sum_{j=1}^m \nabla f_j(\bx_j^\nu)}\norm{\hbx_i^\nu - \bx^\star}
 \end{split}
 \end{align*}
 \begin{align*}
 \begin{split}
\leq &{}\left(\nabla F (\bx^\star)  - \nabla F(\bx_i^\nu) + \nabla f_i (\bx_i^\nu)  \pm\nabla \tf_i (\bx^\star ; \bx_i^\nu)   - \nabla \tf_i (\hbx_i^\nu ; \bx_i^\nu)  \right)^\top (\hbx_i^\nu - \bx^\star)\\
 & {}+ \| \bar{\by}^\nu - \by_i^\nu \| \|\hbx_i^\nu - \bx^\star\| +\left(\frac{1}{m} \sum_{j=1}^m L_j \norm{\bx_i^\nu - \bx_j^\nu}\right) \norm{\hbx_i^\nu - \bx^\star}\\
\leq & {} \left(\int_0^1 \Big(\nabla^2 F (\theta \bx^\star + (1 - \theta) \bx_i^\nu)  - \nabla^2 \tf_i \big(\theta \bx^\star + (1 - \theta) \bx_i^\nu; \bx_i^\nu\big) \Big)( \bx^\star - \bx_i^\nu)\,\text{d}\theta\right)^\top (\hbx_i^\nu - \bx^\star) \\
& {}- \tmu_i \norm{\hbx_i^\nu - \bx^\star}^2 + \| \bar{\by}^\nu - \by_i^\nu \| \|\hbx_i^\nu - \bx^\star\| +\left(\frac{1}{m} \sum_{j=1}^m L_j \norm{\bx_i^\nu - \bx_j^\nu}\right) \norm{\hbx_i^\nu - \bx^\star}\end{split}\end{align*} \begin{align*}\begin{split}
 \leq & \, D_i \norm{ \bx^\star - \bx_i^\nu} \norm{\hbx_i^\nu - \bx^\star} - \tmu_i \norm{\hbx_i^\nu - \bx^\star}^2 + \| \bar{\by}^\nu - \by_i^\nu \| \|\hbx_i^\nu - \bx^\star\| \\
& {}+\left(\frac{1}{m} \sum_{j=1}^m L_j \norm{\bx_i^\nu - \bx_j^\nu}\right) \norm{\hbx_i^\nu - \bx^\star}.
\end{split}
\end{align*}

Rearranging terms and using the reverse triangle inequality we obtain  the following bound for $\|\Deltaxi{i}^\nu\|$:\vspace{-0.2cm}
\begin{multline}
D_i \norm{ \bx^\star - \bx_i^\nu}   + \| \bar{\by}^\nu - \by_i^\nu \| +\left(\frac{1}{m} \sum_{j=1}^m L_j \norm{\bx_i^\nu - \bx_j^\nu}\right)\\
 \geq \tmu_i \norm{\hbx_i^\nu - \bx^\star} \geq \tmu_i \left(\|\Deltaxi{i}^\nu\| - \| \bx^\star - \bx_i^\nu\| \right).
\end{multline}
Therefore, \begin{align*}
\hspace{-0.2cm}\begin{split}
\|\Deltaxi{i}^\nu\|^2  &\leq \ 3 \left( \frac{D_i}{\tmu_i} + 1\right)^2 \norm{ \bx^\star - \bx_i^\nu}^2   + \frac{3}{\tmu_i^2} \| \bar{\by}^\nu - \by_i^\nu \|^2 + \frac{3}{\tmu_i^2}\left(\frac{1}{m} \sum_{j=1}^m L_j \norm{\bx_i^\nu - \bx_j^\nu}\right)^2
\\
&\leq   \ 3 \left( \frac{D_i}{\tmu_i} + 1\right)^2 \norm{ \bx^\star - \bx_i^\nu}^2   + \frac{3}{\tmu_i^2} \| \bar{\by}^\nu - \by_i^\nu \|^2 
+ \frac{6 L_{\mx}^2}{\tmu_i^2 m} \left(\sum_{j=1}^m \|\bx_j^\nu - \bx^\star\|^2 +  m\|\bx_i^\nu - \bx^\star \|^2\right).
\end{split}
\end{align*}
Summing over $i = 1,\ldots, m$, yields
\begin{align*}
\|\Deltax^\nu\|^2 \leq  & \ \left( 3 \left( \frac{D_{\mx}}{\tmu_{\mn}} + 1\right)^2 + \frac{12L_{\mx}^2}{\tmu_{\mn}^2 } \right)\sum_{j=1}^m\norm{\bx^\nu_j-\bx^\star}^2   + \frac{3}{\tmu_{\mn}^2} \| \by_\bot^\nu\|^2\\
\leq  &\ \frac{6}{\mu}\left(  \left( \frac{D_{\mx}}{\tmu_{\mn}} + 1\right)^2 + \frac{{4} L_{\mx}^2}{\tmu_{\mn}^2 } \right) p^\nu  + \frac{3}{\tmu_{\mn}^2} \| \by_\bot^\nu\|^2.
\end{align*}}
\end{proof}

\subsubsection{Step 4: Proof of the linear rate (chaining the inequalities)}\label{sec:small_gain}

We are now ready to prove linear rate of the SONATA  algorithm.  We build on the following intermediate result, introduced in \cite{nedich2016achieving}. 
\begin{lemma}\label{Lemma_Small_gain} Given the sequence $ \{s^\nu\}$, define  the   transformations  \vspace{-0.2cm}
\begin{equation}\label{eq:seq_transform}
\seqnorm{S} \triangleq \max_{\nu = 0, \ldots, K}  |s^\nu| z^{-\nu} \quad \text{and} \quad S(z) \triangleq \sup_{\nu \in \natural} |s^\nu| z^{-\nu},\vspace{-0.2cm}
\end{equation}
for $z \!\in \!(0,1)$.  If $S(z)$ is bounded, then $|s^\nu|=\mathcal{O}(z^\nu)$. 
\end{lemma}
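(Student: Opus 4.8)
The plan is to observe that the statement is essentially a direct unwinding of the definition of $S(z)$ as a supremum, so the proof collapses to a one-line argument once the hypothesis is made explicit. First I would fix $z \in (0,1)$ and let $C \triangleq S(z) < \infty$ be the finite value guaranteed by the boundedness hypothesis. Since $S(z) = \sup_{\nu \in \natural} |s^\nu| z^{-\nu}$, every element of the supremand is dominated by the supremum, i.e. $|s^\nu| z^{-\nu} \le C$ for all $\nu \in \natural$. Multiplying through by $z^\nu > 0$ gives $|s^\nu| \le C\, z^\nu$ for every $\nu$, and since $C$ does not depend on $\nu$ this is exactly the assertion $|s^\nu| = \mathcal{O}(z^\nu)$.

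It is worth recording the role of the truncated transform, since that is the form in which the lemma is actually invoked. The quantity $\seqnorm{S} = \max_{\nu \le K} |s^\nu| z^{-\nu}$ is nondecreasing in $K$ and satisfies $\seqnorm{S} \le S(z)$ for every $K$, with $S(z) = \sup_{K} \seqnorm{S}$. Consequently the hypothesis ``$S(z)$ is bounded'' is equivalent to possessing a single constant $C$ with $\seqnorm{S} \le C$ \emph{uniformly in $K$}. In Step~4 one first derives, by chaining the inequalities of Propositions~\ref{prop:loc_geo_rate}--\ref{prop:bound_delta_x} (for a suitable $z$ determined by the contraction factors), an estimate $\seqnorm{S} \le C$ with $C$ independent of $K$, and then lets $K \to \infty$ to conclude $S(z) < \infty$; the present lemma converts this into the desired linear decay $|s^\nu| = \mathcal{O}(z^\nu)$.

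There is no genuine analytic obstacle here: the content is purely definitional. The one point that requires care -- and the reason the lemma is stated separately -- is ensuring that the bounding constant is truly independent of the index $\nu$ (equivalently, of the truncation level $K$); this uniformity is precisely what upgrades a family of per-iteration inequalities into a single $\mathcal{O}(z^\nu)$ rate. Accordingly, the ``hard part'' lies not in this lemma but in verifying its hypothesis in the application, namely producing the $K$-uniform bound on $\seqnorm{S}$, which is the work carried out in Step~4.
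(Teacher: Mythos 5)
Your proof is correct. The paper itself states this lemma without proof (it simply cites \cite{nedich2016achieving}), and your argument supplies the evident justification: since $S(z)=\sup_{\nu}|s^\nu|z^{-\nu}\leq C<\infty$, multiplying by $z^\nu$ gives $|s^\nu|\leq C z^\nu$ uniformly in $\nu$, which is exactly $|s^\nu|=\mathcal{O}(z^\nu)$. Your accompanying remark--that the truncated transforms $S^K(z)$ are nondecreasing in $K$ with $S(z)=\sup_K S^K(z)$, so a $K$-uniform bound on $S^K(z)$ is what verifies the hypothesis--also matches precisely how the lemma is invoked in Step~4 of the paper (bounding $D^K(z)\leq B$ independently of $K$ before concluding linear decay).
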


We show next how to chain the inequalities \eqref{eq:opt_gap_plus_error}, \eqref{eq:consensus-error-bounds} and \eqref{eq:bound_delta_x} 
so that Lemma \ref{Lemma_Small_gain} can be applied to the    sequences $\{ p^\nu \}$,  $\{ \|\bx_\bot^\nu\|^2 \}$,  $\{ \| \by_\bot^\nu\|^2\}$  and $\{ \|\Deltaxi{}^\nu\|^2 \}$, establishing thus their linear convergence. 

\begin{proposition}\label{prop:small_gain_eqs_transformed}
\!\! Let $\seqnorm{P}$, $\seqnorm{X_\bot}$, $\seqnorm{Y_\bot}$ and $\seqnorm{D}$  denote the transformation \eqref{eq:seq_transform} applied to the   sequences $\{ p^\nu \}$,\! $\{ \|\bx_\bot^\nu\|^2 \}$,  $\{ \| \by_\bot^\nu\|^2 \}$  and $\{ \|\Deltaxi{}^\nu\|^2 \}$, respectively. Given the constants $\sigma(\alpha)$ and $\eta(\alpha)$ (defined in Proposition~\ref{prop:loc_geo_rate}) and the free parameters $\epsilon_x, \epsilon_y >0$ (to be determined), the following hold 
\begin{subequations}\label{eq:all_ineq_transf}
\begin{align}
\seqnorm{P} & \leq G_P(\alpha,z) \cdot \left( 4   L_{\mx}^2 \seqnorm{X_\bot} +  2  \seqnorm{Y_\bot}\right) + \omega_p, \label{eq:bound_P}\\
\seqnorm{X_\bot} & \leq G_X (z)  \cdot \rho^2 \alpha^2 \seqnorm{D} + \omega_x, \label{eq:bound_X}\\
\seqnorm{Y_\bot} & \leq G_Y(z) \cdot 8L_{\mx}^2 \rho^2  \seqnorm{X_\bot} + G_Y(z) \cdot 2L_{\mx}^2 \rho^2  \alpha^2 \seqnorm{D} + \omega_y,  \label{eq:bound_Y}\\
\seqnorm{D} & \leq C_1 \cdot \seqnorm{P} + C_2 \cdot \seqnorm{Y_\bot}, \label{eq:bound_Dx}
\end{align}\end{subequations}
for all \vspace{-0.3cm}\begin{align}\label{eq:bound_z}
z \in \left(\max\{ \sigma(\alpha),  \rho^2(1 + \epsilon_x), \rho^2 (1+ \epsilon_y)\},1 \right),
\end{align} where\vspace{-0.4cm}
\begin{subequations}\begin{align}
& G_P(\alpha,z) \triangleq \frac{\eta(\alpha)  }{z - \sigma(\alpha)}, && \omega_p \triangleq  \frac{z}{z - \sigma(\alpha)} \cdot p^0
\\
& G_X (z) \triangleq \frac{ (1+ \epsilon_x^{-1})}{z- \rho^2(1 + \epsilon_x)}, && \omega_x \triangleq \frac{z}{z- \rho^2(1 + \epsilon_x)} \cdot \| \bx_\bot^0 \|^2,
\\
& G_Y (z) \triangleq  \frac{ (1+ \epsilon_y^{-1})}{z- \rho^2(1 + \epsilon_y)}, && \omega_y \triangleq \frac{z}{z- \rho^2(1 + \epsilon_y)} \cdot \| \by_\bot^0 \|^2,
\\
& {C_1 \triangleq  \frac{6}{\mu}\left(  \left( \frac{D_{\mx}}{\tmu_{\mn}} + 1\right)^2 + \frac{4 L_{\mx}^2}{\tmu_{\mn}^2 } \right),} &&  {C_2 \triangleq \frac{4}{\tmu_{\mn}^2}}\label{eq:C1_C2}. 
\end{align}
\end{subequations}
\end{proposition}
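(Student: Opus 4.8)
The plan is to obtain each of \eqref{eq:bound_P}--\eqref{eq:bound_Dx} by applying the sequence transformation $\seqnorm{\cdot}$ to the per-iteration inequalities already established in Propositions~\ref{prop:loc_geo_rate}, \ref{prop:err_x}, and \ref{prop:bound_delta_x}. The single tool I would state and prove once, then reuse, is the following elementary fact about the transform: if a nonnegative sequence satisfies $s^{\nu+1}\le a\,s^\nu + b^\nu$ with $a\ge 0$, then multiplying by $z^{-(\nu+1)}$ and maximizing over $\nu$ gives $\max_{1\le\nu\le K} s^\nu z^{-\nu}\le a z^{-1}\seqnorm{S} + z^{-1}\seqnorm{B}$; combining this with $\seqnorm{S}=\max\{s^0,\max_{1\le\nu\le K} s^\nu z^{-\nu}\}$ and bounding the outer max by the sum of its (nonnegative) arguments yields, for every $z>a$,
\[
\seqnorm{S}\le \frac{z}{z-a}\,s^0 + \frac{1}{z-a}\,\seqnorm{B}.
\]
I would also record that $\seqnorm{\cdot}$ is subadditive and positively homogeneous, so that the transform of a sum of forcing terms is bounded by the corresponding combination of individual transforms.

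Applying the identity to the optimality-gap recursion~\eqref{eq:opt_gap_plus_error} with $a=\sigma(\alpha)$ and forcing term $b^\nu=\eta(\alpha)\big(4L_{\mx}^2\|\bx_\bot^\nu\|^2+2\|\by_\bot^\nu\|^2\big)$, subadditivity gives $\seqnorm{B}\le\eta(\alpha)\big(4L_{\mx}^2\seqnorm{X_\bot}+2\seqnorm{Y_\bot}\big)$, which is exactly \eqref{eq:bound_P} with $G_P(\alpha,z)=\eta(\alpha)/(z-\sigma(\alpha))$ and $\omega_p=z\,p^0/(z-\sigma(\alpha))$, valid whenever $z>\sigma(\alpha)$.

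The two consensus bounds \eqref{eq:err_x_bound}--\eqref{eq:err_y_bound} are stated in terms of norms rather than squared norms, so I would square them before transforming. For $\|\bx_\bot^\nu\|$, Young's inequality $(u+v)^2\le(1+\epsilon_x)u^2+(1+\epsilon_x^{-1})v^2$ produces $\|\bx_\bot^{\nu+1}\|^2\le(1+\epsilon_x)\rho^2\|\bx_\bot^\nu\|^2+(1+\epsilon_x^{-1})\alpha^2\rho^2\|\Deltaxi{}^\nu\|^2$, and the transformation identity with $a=\rho^2(1+\epsilon_x)$ gives \eqref{eq:bound_X}. For $\|\by_\bot^\nu\|$, I would first peel off the $\|\by_\bot^\nu\|$ term with Young's inequality (parameter $\epsilon_y$) and then apply $(u+v)^2\le 2u^2+2v^2$ to the two remaining terms, yielding $\|\by_\bot^{\nu+1}\|^2\le(1+\epsilon_y)\rho^2\|\by_\bot^\nu\|^2+(1+\epsilon_y^{-1})\big(8L_{\mx}^2\rho^2\|\bx_\bot^\nu\|^2+2\alpha^2L_{\mx}^2\rho^2\|\Deltaxi{}^\nu\|^2\big)$; transforming with $a=\rho^2(1+\epsilon_y)$ delivers \eqref{eq:bound_Y}. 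Finally, \eqref{eq:bound_delta_x} carries no index shift, so multiplying by $z^{-\nu}$ and taking the maximum directly gives \eqref{eq:bound_Dx}, with $C_1$ identical to the coefficient of $p^\nu$ in Proposition~\ref{prop:bound_delta_x} and $C_2=4/\tmu_{\mn}^2$ dominating the $3/\tmu_{\mn}^2$ coefficient of $\|\by_\bot^\nu\|^2$. Intersecting the three lower thresholds $\sigma(\alpha)$, $\rho^2(1+\epsilon_x)$, $\rho^2(1+\epsilon_y)$ on $z$ yields the admissible range~\eqref{eq:bound_z}.

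Each step is individually routine; the work is entirely bookkeeping. The main subtlety lies in the squaring of the two consensus recursions: they couple $\|\bx_\bot\|$, $\|\by_\bot\|$ and $\|\Deltaxi{}\|$ linearly, and the placement of the Young's-inequality weights—one free parameter $\epsilon_x$, $\epsilon_y$ per recursion, shared across its forcing terms—is precisely what later renders the small-gain condition on $\alpha$ and $z$ solvable. Getting the factors $(1+\epsilon_x)$, $(1+\epsilon_x^{-1})$ and their $y$-analogues into the correct positions, and simultaneously keeping every denominator $z-a$ strictly positive over the range~\eqref{eq:bound_z}, is where an error would most plausibly arise.
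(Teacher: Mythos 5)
Your proposal is correct and follows essentially the same route as the paper's proof: square the consensus recursions \eqref{eq:consensus-error-bounds} via Young's inequality with parameters $\epsilon_x,\epsilon_y$, then transform \eqref{eq:opt_gap_plus_error}, the squared recursions, and \eqref{eq:bound_delta_x} using the shift inequality $\max_{\nu\le K}|s^{\nu+1}|z^{-\nu}\geq z\,\seqnorm{S}-z|s^0|$, which is exactly the content of the reusable lemma you state. The only difference is presentational (you package the transform step as a standalone lemma rather than invoking the inequality inline), and your bookkeeping of the gains, offsets, and the admissible range \eqref{eq:bound_z} matches the paper's constants, including the deliberate slack $C_2=4/\tmu_{\mn}^2$ over the factor $3/\tmu_{\mn}^2$ from Proposition~\ref{prop:bound_delta_x}.
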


\begin{proof}
Squaring  \eqref{eq:consensus-error-bounds}  and  using    Young's inequality yield 
\begin{align}\label{eq:small_gain_eqs}
\begin{split}
\|\bx_\bot^{\nu +1} \|^2 & \leq \rho^2(1 + \epsilon_x) \|\bx_\bot^{\nu} \|^2 + \rho^2 (1+ \epsilon_x^{-1})\alpha^2 \|\Deltaxi{}^\nu\|^2 \\
\| \by_\bot^{\nu +1}\|^2 &\leq \rho^2 (1+ \epsilon_y)  \| \by_\bot^\nu\|^2 + \rho^2(1 + \epsilon_y^{-1}) \Big(8L_{\mx}^2  \| \bx_\bot^\nu\|^2 +  2\alpha^2  L_{\mx}^2  \|\Deltaxi{}^\nu\|^2 \Big),
\end{split}
\end{align}
for arbitrary  $\epsilon_x,\epsilon_y>0$.
The proof is completed by taking the maximum of both sides of   \eqref{eq:opt_gap_plus_error},  \eqref{eq:bound_delta_x}, and 
\eqref{eq:small_gain_eqs} over $ \nu = 0,\ldots, K$  and using  
$\max_{\nu = 0,\ldots, K} |s^{\nu +1}| z^{-\nu} \geq z \cdot \max_{\nu = 0,\ldots, K} |s^{\nu}| \,z^{-\nu} - z \cdot |s^0|$, for  any sequence $\{s^\nu\}$ and $z\in (0,1)$. 
\end{proof}

Chaining the inequalities in~Proposition~\ref{prop:small_gain_eqs_transformed} in the way shown in Fig.~\ref{fig:small_gain}, we can bound $\seqnorm{D}$  as (see Appendix~\ref{app:pf_small_gain_chain} for the proof) 
\begin{align}\label{eq:small_gain_chain}
\seqnorm{D} \leq \PP (\alpha ,z) \cdot \seqnorm{D} + \RR (\alpha ,z),
\end{align}
where  $\PP (\alpha ,z)$ is  defined as
\begin{align}
\label{eq:stability_polynomial}
\begin{split}
\PP (\alpha ,z)  \triangleq {} &  G_P (\alpha,z) \cdot G_X(z) \cdot C_1 \cdot 4  L_{\mx}^2 \cdot \rho^2 \cdot \alpha^2\\
& + \left( G_P (\alpha,z) \cdot 2   C_1   + C_2  \right)  \cdot G_Y(z) \cdot 2 L_{\mx}^2 \rho^2  \cdot \alpha^2\\
& +\left( G_P (\alpha,z) \cdot 2   C_1   + C_2   \right) \cdot G_Y(z)\cdot 8 L_{\mx}^2 \rho^2 \cdot G_X(z) \cdot \rho^2 \cdot \alpha^2,
\end{split}
\end{align}
and $\RR(\alpha ,z)$ is a  remainder, which is bounded under  \eqref{eq:bound_z}. 
 \begin{figure}\label{fig:small_gain}
 \caption{Chain of the inequalities in Proposition~\ref{prop:small_gain_eqs_transformed} leading to (\ref{eq:small_gain_chain}).}
 \centering
  \begin{tikzpicture}[scale=.7]\footnotesize
\tikzstyle{edge} = [  shorten >=.5pt,shorten <=.5pt,line width=0.7pt,>=stealth,->]
\node (Dx1) at (0,0) {$D^K$};

\node (P2) at ($(Dx1) + (2,1)$) {$P^K$};
\node (Y2) at ($(Dx1) + (2,-1)$) {$Y_\bot^K$};

\node (X3) at ($(P2) + (2,1)$) {$X_\bot^K$};
\node (Y3) at ($(P2) + (2,-1)$) {$Y_\bot^K$};
\node (Y3') at ($(Y2) + (2,0)$) {$Y_\bot^K$};

\node (Y3combined) at ($(Y3) + (0.3,-0.5)$){};

\node (Dx4) at ($(X3) + (2,0)$){$D^K$};
\node (X4) at ($(Y3combined) + (2,1)$) {$X_\bot^K$};
\node (Dx4') at ($(Y3combined) + (2,-1)$) {$D^K$};

\node (Dx5) at ($(X4) + (2,0)$){$D^K$};

\draw[edge] (P2) -- node [above]{\eqref{eq:bound_Dx}}  (Dx1);
\draw[edge] (Y2) -- node [below]{\eqref{eq:bound_Dx}} (Dx1);

\draw[edge] (X3) -- node [above]{\eqref{eq:bound_P}} (P2);
\draw[edge] (Y3) -- node [below]{\eqref{eq:bound_P}} (P2);
\draw[shorten >=.5pt,shorten <=.5pt,line width=0.7pt] (Y3') -- (Y2);

\draw[edge] (Dx4) -- node [above]{\eqref{eq:bound_X}} (X3);
\draw[edge] (X4) -- node [above]{\eqref{eq:bound_Y}} (Y3combined);
\draw[edge] (Dx4') -- node [below]{\eqref{eq:bound_Y}} (Y3combined);

\draw[edge] (Dx5) -- node [above]{\eqref{eq:bound_X}} (X4);

\draw[dashed] (Y3.north west) rectangle (Y3'.south east);

\end{tikzpicture}\vspace{-0.7cm}
 \end{figure}
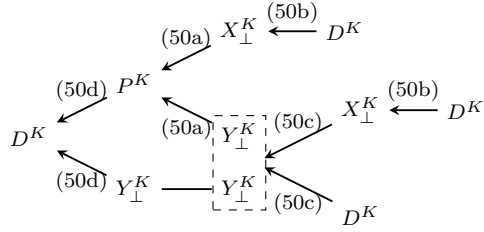

Therefore, as long as $\PP (\alpha ,z) < 1$,  \eqref{eq:small_gain_chain} implies 
\begin{equation}\vspace{-0.2cm}
\seqnorm{D} \leq \frac{\RR (\alpha ,z)}{1 - \PP (\alpha ,z)} \leq B < +\infty
\end{equation}
where  $B$ is  a constant independent of $K$. Therefore,  $D(z) \leq B$ and thus $\{ \|  \Deltaxi{}^\nu\|^2 \}$ converges R-linearly to zero at rate at least $z$ (cf. Lemma~\ref{Lemma_Small_gain}).
Applying the same argument to the other inequalities in~ Proposition \ref{prop:small_gain_eqs_transformed}, one can conclude that also the sequences  $\{ p^\nu \}$, $\{ \|\bx_\bot^\nu\|^2 \}$ and $\{ \| \by_\bot^\nu\| \}$ converge R-linearly to zero.

The last step consists to showing that there exist a sufficiently small step-size $\alpha\in (0,1]$ and   $z\in (0,1)$ satisfying \eqref{eq:bound_z}, such that $\PP (\alpha ,z) < 1$. This is proved in the Theorem \ref{thm:step_size_condition} below. 
\begin{theorem}\label{thm:step_size_condition}
 Consider Problem \eqref{eq:P} under Assumptions \ref{assump:p}-\ref{assump:network}; \!and 
the  SONA- TA  algorithm \eqref{eq:loc_opt}-\eqref{eq:mix_y}, under Assumptions~\ref{assump:SCA_surrogate} and \ref{assump:weight}, with  {$\tmu_{\mn}\geq  D_{\mn}^\ell$}.
 Then, there exists a sufficiently small step-size $\bar{\alpha}\in (0,1]$ [see the proof for its  expression] such that for all $\alpha < \bar{\alpha}$, $\{U(\bx_i^\nu)\}$ converges to $U^\star$  at an R-linear rate,   $i\in [m]$.
\end{theorem}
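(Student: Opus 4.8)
The plan is to exhibit, for every sufficiently small step-size $\alpha$, a radius $z\in(0,1)$ obeying \eqref{eq:bound_z} for which the stability polynomial $\PP(\alpha,z)$ in \eqref{eq:stability_polynomial} is strictly less than one. By the discussion preceding the theorem, $\PP(\alpha,z)<1$ bounds $\seqnorm{D}$ uniformly in $K$, which via Lemma~\ref{Lemma_Small_gain} applied to the chained inequalities of Proposition~\ref{prop:small_gain_eqs_transformed} forces $\{p^\nu\}$, $\{\|\bx_\bot^\nu\|^2\}$, $\{\|\by_\bot^\nu\|^2\}$ and $\{\|\Deltax^\nu\|^2\}$ to vanish R-linearly at rate $z$. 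So the whole theorem reduces to a scalar fact about the orders in $\alpha$ of the gains and the rate $\sigma(\alpha)$.

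First I would fix the free parameters independently of $\alpha$. Since $\rho<1$, I choose $\epsilon_x,\epsilon_y>0$ so small that $\rho^2(1+\epsilon_x)<1$ and $\rho^2(1+\epsilon_y)<1$, and I fix $\epsilon_{opt}\in(0,2\tmu_{\mn})$; the hypothesis $\tmu_{\mn}\ge D_{\mn}^\ell$ keeps the coefficient $(1-\alpha/2)\tmu_{\mn}+\tfrac{D_{\mn}^\ell}{2}\alpha$ positive on $\alpha\in(0,1]$, so \eqref{eq:eps_opt} holds for all small $\alpha$ and $\sigma(\alpha)\in(0,1)$ as in Proposition~\ref{prop:loc_geo_rate}. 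The decisive structural facts, read off \eqref{eq:sigma_C_def}, are $1-\sigma(\alpha)=\Theta(\alpha)$ and $\eta(\alpha)=\Theta(\alpha)$ as $\alpha\downarrow0$, with positive bounded constants: both numerators tend to strictly positive limits and the common denominator tends to $\tfrac{D_{\mx}^2}{\mu}+\tmu_{\mn}-\tfrac12\epsilon_{opt}>0$. I then tie $z$ to $\alpha$ by $z=\tfrac12\big(1+\sigma(\alpha)\big)$, so that $z-\sigma(\alpha)=\tfrac12\big(1-\sigma(\alpha)\big)=\Theta(\alpha)$ and $z\uparrow1$; for $\alpha$ small enough $z>\rho^2(1+\epsilon_x)$ and $z>\rho^2(1+\epsilon_y)$, so \eqref{eq:bound_z} is satisfied and the remainder $\RR(\alpha,z)$ is finite.

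With these choices the three gains are uniformly bounded: $G_P(\alpha,z)=\eta(\alpha)/(z-\sigma(\alpha))=\Theta(\alpha)/\Theta(\alpha)=O(1)$, while $G_X(z)$ and $G_Y(z)$ tend to $(1+\epsilon_x^{-1})/(1-\rho^2(1+\epsilon_x))$ and $(1+\epsilon_y^{-1})/(1-\rho^2(1+\epsilon_y))$, hence are $O(1)$. Since each of the three summands defining $\PP(\alpha,z)$ in \eqref{eq:stability_polynomial} carries an explicit factor $\alpha^2$ multiplied only by such bounded gains and by the fixed constants $C_1,C_2,L_{\mx},\rho$, I obtain $\PP(\alpha,z)=O(\alpha^2)\to0$. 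Therefore there is $\bar\alpha\in(0,1]$ with $\PP(\alpha,z)<1$ for all $\alpha<\bar\alpha$, giving $\seqnorm{D}\le \RR(\alpha,z)/(1-\PP(\alpha,z))<\infty$ and, by the same argument applied to the remaining inequalities, the R-linear decay of $\{p^\nu\}$. To finish I pass from $p^\nu$ to the individual objective gaps: each iterate $\bx_i^\nu$ stays in $\KK$ (the local step \eqref{eq:descent} is a convex combination of $\bx_i^\nu$ and $\hbx_i^\nu\in\KK$, and the consensus step \eqref{eq:mix_x} is a convex average), and $\bx^\star$ minimizes $U$ over $\KK$, so every summand of $p^\nu=\sum_i\big(U(\bx_i^\nu)-U(\bx^\star)\big)$ is nonnegative and $0\le U(\bx_i^\nu)-U^\star\le p^\nu$ for each $i$; R-linear convergence of $p^\nu$ thus transfers to $\{U(\bx_i^\nu)\}$. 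The main obstacle is precisely the coupling inside $G_P$: because $z<1$ forces $z-\sigma(\alpha)\le 1-\sigma(\alpha)=\Theta(\alpha)$, one cannot enlarge its denominator, and the whole argument hinges on the matching order $\eta(\alpha)=\Theta(\alpha)$ so that $G_P$ remains bounded and the $\alpha^2$ factors genuinely push $\PP$ below one.
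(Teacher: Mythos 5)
Your proposal is correct and follows essentially the same route as the paper: both rest on the chained small-gain inequalities of Proposition~\ref{prop:small_gain_eqs_transformed}, the observation that $\eta(\alpha)$ and $1-\sigma(\alpha)$ are both $\Theta(\alpha)$ so that $G_P$ stays bounded near $z=1$ and $\PP(\alpha,z)=\mathcal{O}(\alpha^2)$, and Lemma~\ref{Lemma_Small_gain} to conclude R-linear decay. The only differences are tactical: you couple the rate directly via $z=\tfrac12\bigl(1+\sigma(\alpha)\bigr)$, whereas the paper first shows $\PP^\star(\alpha,1)<1$ at the marginal value $z=1$ and then invokes continuity in $z$; moreover the paper optimizes $\epsilon_x,\epsilon_y,\epsilon_{opt}$ precisely so as to produce the explicit expression of $\bar{\alpha}$ promised in the statement, which your purely existential argument (with the free parameters fixed arbitrarily) does not yield -- also note your parenthetical claim that $\tmu_{\mn}\geq D_{\mn}^\ell$ makes $(1-\alpha/2)\tmu_{\mn}+\tfrac{D_{\mn}^\ell}{2}\alpha$ positive on all of $(0,1]$ can fail when $D_{\mn}^\ell<-\tmu_{\mn}$, though this is harmless since your argument only uses small $\alpha$.
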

\begin{proof} The proof is organized in following two steps: \textbf{Step 1)} We first consider the ``marginal'' stable case by letting $z=1$, and show that there exists    $\bar{\alpha} > 0$ so that $\PP(\alpha,1) <1$, for all $\alpha \in (0, \bar{\alpha})$; \textbf{Step 2)}  Then, invoking the continuity of $\PP(\alpha,z)$, we argue that, for any  $\alpha \in (0, \bar{\alpha})$, one can find    $\bar{z}(\alpha)<1$ such that $\PP \big(\alpha,\bar{z}(\alpha) \big) < 1$. This implies the boundedness of $D^K\big( \bar{z}(\alpha) \big)$, and thus $ \|  \Deltaxi{}^\nu \|^2=\mathcal{O} \big(\bar{z}(\alpha)^\nu \big)$ (cf. Lemma~\ref{Lemma_Small_gain}). 

\noindent $\bullet$ \textbf{Step 1:} We begin optimizing the free parameters $\epsilon_x$, $\epsilon_y$, and $\epsilon_{opt}$. 
Since the goal is to find the largest $\bar{\alpha}$ so that $\PP (\alpha , 1) <1$, for all $\alpha \in (0, \bar{\alpha})$, the optimal choice of $\epsilon_x$, $\epsilon_y$, and $\epsilon_{opt}$ is the one that  minimizes $\PP (\alpha , 1)$, that is,
\vspace{-0.1cm}  \begin{equation}
  \epsilon^\star = \argmin_{\epsilon>0} \frac{1+\epsilon^{-1}}{1 - \rho^2 (1 + \epsilon)} = \frac{1 - \rho}{\rho}.\vspace{-0.1cm}
\end{equation}
We then set   $\epsilon_x = \epsilon_y= \epsilon^\star$, and  proceed   to optimize $\epsilon_{opt}$, which  appears in $\eta (\alpha)$ and $\sigma(\alpha)$. Recalling the definition of 
  $\eta (\alpha)$ and $\sigma(\alpha)$ (cf.   Proposition~\ref{prop:loc_geo_rate}) and the constraint (\ref{eq:eps_opt}),  
the problem boils down to minimize    \vspace{-0.1cm}
\begin{equation*}\label{eq:ratio_alpha}
{  G_P(\alpha,1) = \frac{ \eta(\alpha)}{1 - \sigma(\alpha)} =\frac{ \frac{1}{2} \epsilon_{opt}^{-1} \cdot \frac{D_{\mx}^2}{\mu} + \frac{1}{\mu} \cdot \left(\left(1 - \frac{\alpha}{2}\right)\tmu_{\mn} + \frac{D_{\mn}^\ell}{2} \alpha - \frac{1}{2}  \epsilon_{opt} \right)}{ \left(1 - \frac{\alpha}{2}\right)\tmu_{\mn} + \frac{D_{\mn}^\ell}{2} \alpha - \frac{1}{2}  \epsilon_{opt} },}\vspace{-0.1cm}
\end{equation*}
   subject to $\epsilon_{opt}\in (0, 2\tmu_{\mn} - \alpha( \tmu_{\mn} -   D_{\mn}^\ell))$. To have a nonempty feasible set, we require   $\alpha < {2 \tmu_{\mn}}/({\tmu_{\mn} - D_{\mn}^\ell})$ (recall that it is assumed   {$\tmu_{\mn}\geq  D_{\mn}^\ell$}).
Setting the derivative of $G_P(\alpha,1)$ with respect to $\epsilon_{opt}$ to zero, yields 
$\epsilon_{opt}^\star =\left(1 - \frac{\alpha}{2}\right) \tmu_{\mn} + \alpha  {D_{\mn}^\ell}/{2}$, which is strictly feasible, and thus the solution. 

Let $\PP^\star(\alpha,z)$ denote  the value of $\PP(\alpha,z)$ corresponding to the optimal choice of the above parameters. 
The expression of $\PP^\star (\alpha,1)$ reads\vspace{-0.2cm}
\begin{align}\label{eq:P_star}
\begin{split}
\PP^\star (\alpha ,1)  \triangleq {} &  G_P^\star (\alpha) \cdot C_1 \cdot 4  L_{\mx}^2 \cdot \frac{\rho^2 }{(1 - \rho)^2 } \cdot \alpha^2\\
& + \left( G_P^\star (\alpha) \cdot 2   C_1   + C_2  \right)   \cdot 2 L_{\mx}^2 \cdot \frac{\rho^2 }{(1 - \rho)^2 } \cdot \alpha^2\\
& +\left( G_P^\star (\alpha) \cdot 2   C_1   + C_2   \right) \cdot  8 L_{\mx}^2 \cdot \frac{\rho^4 }{(1 - \rho)^4 } \cdot \alpha^2,
\end{split}
\end{align}
where  \vspace{-0.2cm}
\begin{equation}\label{eq:r_star_def}
{  G_P^\star (\alpha) \triangleq \frac{\frac{D_{\mx}^2}{\mu} + \frac{1}{\mu} \cdot \left(\left(1 - \frac{\alpha}{2}\right) \tmu_{\mn} + \frac{D_{\mn}^\ell}{2} \alpha \right)^2}{ \left(\left(1 - \frac{\alpha}{2}\right) \tmu_{\mn} + \frac{D_{\mn}^\ell}{2} \alpha\right)^2 }.}
\end{equation}

\noindent $\bullet$ \textbf{Step 2:}  Since $\PP^\star (\bullet ,1)$ is continuous and monotonically increasing on $(0, 2 \tmu_{\mn} /$ $ (\tmu_{\mn} - D_{\mn}^\ell)$, with $\PP^\star (0 ,1) = 0$, there  exists  some $\bar{\alpha} < 2 \tmu_{\mn} / (\tmu_{\mn} - D_{\mn}^\ell)$ such that $\PP^\star (\alpha ,1) < 1,$ for all $\alpha \in (0, \bar{\alpha})$.
One can verify that, for any $\alpha\in (0, 2 \tmu_{\mn} / (\tmu_{\mn} - D_{\mn}^\ell))$, $\PP^\star(\alpha,z)$ is continuous at $z = 1$. Therefore, for any fixed $\alpha \in (0, \bar{\alpha})$, $\PP^\star (\alpha ,1) < 1$ implies the existence of some $\bar{z}(\alpha) < 1$ such  that $\PP^\star(\alpha,\bar{z}(\alpha))<1$. 

We conclude the proof   providing  the expression of  a valid  $\bar{\alpha}$. Restricting   $\alpha \leq  \tmu_{\mn} / (\tmu_{\mn} - D_{\mn}^\ell)$, we   upper bound $G_P^\star (\alpha)$ by $G_P^\star (\tmu_{\mn}/(\tmu_{\mn} - D_{\mn}^\ell))$. Using for $G_P^\star (\alpha)$ this upper bound in (\ref{eq:P_star}) and  solving the resulting $\PP^\star (\alpha ,1) < 1$ for $\alpha$, yield\vspace{-0.1cm}  
\begin{align}\label{eq:expression_alpha}
\begin{split}
\alpha < \alpha_1 \triangleq & \left( G_P^\star \left( \frac{\tmu_{\mn}}{\tmu_{\mn} - D_{\mn}^\ell} \right) \cdot C_1 \cdot 4  L_{\mx}^2 \cdot \frac{\rho^2 }{(1 - \rho)^2 } \right.\\
& \quad + \left( G_P^\star\left( \frac{\tmu_{\mn}}{\tmu_{\mn} - D_{\mn}^\ell} \right)  \cdot 2 C_1   + C_2  \right)   \cdot 2 L_{\mx}^2 \cdot \frac{\rho^2 }{(1 - \rho)^2 } \\
& \quad \left.+\left( G_P^\star \left( \frac{\tmu_{\mn}}{\tmu_{\mn} - D_{\mn}^\ell} \right) \cdot 2 C_1   + C_2   \right) \cdot  8 L_{\mx}^2 \cdot \frac{\rho^4 }{(1 - \rho)^4 }    \right)^{-\frac{1}{2}}.
\end{split}
\end{align}
Therefore, a valid $\bar{\alpha}$ is 
$\bar{\alpha} = \min\{  \tmu_{\mn} / (\tmu_{\mn} - D_{\mn}^\ell), \alpha_1\}$.
~
\end{proof}

The next theorem provides an explicit expression of the convergence rate in Theorem \ref{thm:step_size_condition} in terms of the step-size $\alpha$; the  constants $J$, $A_{\frac{1}{2}}$, and $\alpha^*$  therein are defined in~\eqref{eq:expression_J}, \eqref{eq:rate_condition_2} with $\theta = 1/2$, and ~\eqref{eq:expression_alpha_ast}, respectively.

\begin{theorem}\label{thm:linear_rate} In the setting of Theorem \ref{thm:step_size_condition},  
suppose that the step-size $\alpha$ satisfies $\alpha \in (0,\alpha_{\mx})$, with 
$\alpha_{\mx} \triangleq \min\{(1-\rho)^2/A_{\frac{1}{2}},\tmu_{\mn} /(\tmu_{\mn} - D_{\mn}),1 \}.$
Then, $U(\bx_i^\nu)-U^\star=\mathcal{O}(z^\nu)$, for all $i\in [m]$, where \vspace{-0.2cm}
\begin{equation}\label{eq:rate}
z =  
\begin{cases}
1 - J \cdot \alpha & \text{for }\alpha \in \left( 0,\min\{\alpha^*,\alpha_{\mx}\}\right), \\
\bigg(\rho + \sqrt{ \alpha A_{\frac{1}{2}}   } \bigg)^2 & \text{for } \alpha \in \left[\min\{\alpha^*,\alpha_{\mx}\}, \alpha_{\mx} \right).
\end{cases}
\end{equation}
\end{theorem}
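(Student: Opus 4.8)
The plan is to make Theorem~\ref{thm:step_size_condition} quantitative. That theorem only asserts the existence of some admissible $\bar z(\alpha)<1$ with $\PP^\star(\alpha,\bar z(\alpha))<1$; here I would instead exhibit an explicit $z=z(\alpha)$ that simultaneously (a) lies in the admissible interval \eqref{eq:bound_z} and (b) satisfies $\PP^\star(\alpha,z)<1$. Once both hold, the chained inequality \eqref{eq:small_gain_chain} gives $\seqnorm{D}\le \RR(\alpha,z)/\big(1-\PP^\star(\alpha,z)\big)<\infty$, uniformly in $K$, so by Lemma~\ref{Lemma_Small_gain} $\|\Deltaxi{}^\nu\|^2=\mathcal{O}(z^\nu)$; propagating this back through the inequalities of Proposition~\ref{prop:small_gain_eqs_transformed} carries the same rate to $\{p^\nu\}$, whence $U(\bx_i^\nu)-U^\star=\mathcal{O}(z^\nu)$ for every $i\in[m]$, which is the claim.

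First I would record the relevant monotonicity. Because $G_P$, $G_X$, $G_Y$ each carry $z$ (minus a fixed left endpoint) in the denominator, $z\mapsto\PP^\star(\alpha,z)$ is continuous and strictly decreasing on the admissible interval and diverges at its left endpoint $\underline z(\alpha)\triangleq\max\{\sigma(\alpha),\,\rho^2(1+\epsilon_x),\,\rho^2(1+\epsilon_y)\}$. Thus, for fixed $\alpha$, the condition $\PP^\star(\alpha,z)<1$ is equivalent to $z>z_{\min}(\alpha)$ for a single threshold, and it suffices to produce any explicit admissible $z$ above $\max\{z_{\min}(\alpha),\underline z(\alpha)\}$. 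The two branches in \eqref{eq:rate} correspond precisely to which of the competing lower bounds, the optimality level $\sigma(\alpha)$ or the consensus level $\rho^2(1+\epsilon)$, dictates the achievable rate.

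For the small step-size branch I would exploit that, at the optimized parameters of Theorem~\ref{thm:step_size_condition}, $\sigma(\alpha)=1-\Theta(\alpha)$ and, as is visible in \eqref{eq:P_star}, $\PP^\star(\alpha,z)=\alpha^2\,\Phi(\alpha,z)$ with $\Phi$ bounded whenever $z-\sigma(\alpha)=\Theta(\alpha)$ and $z-\rho$ is bounded away from $0$. Choosing $z=1-J\alpha$ with $J$ strictly below the linear coefficient of $1-\sigma(\alpha)$ keeps $z>\sigma(\alpha)$ and forces $G_P(\alpha,z)=\mathcal{O}(1)$ (the $\alpha$ of $\eta$ cancels against $z-\sigma(\alpha)$), while $G_X,G_Y=\mathcal{O}((1-\rho)^{-2})$; hence $\PP^\star(\alpha,1-J\alpha)=\mathcal{O}(\alpha^2)<1$ for all $\alpha$ below the explicit threshold $\alpha^*$ obtained by solving $\PP^\star(\alpha,1-J\alpha)=1$. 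For the larger step-size branch the binding constraint is instead $z>\rho^2(1+\epsilon)$, i.e.\ in un-squared terms $\sqrt z>\rho$; near this endpoint the dominant contribution to $\PP^\star$ is the last term of \eqref{eq:stability_polynomial}, which, after passing to the un-squared recursion of Proposition~\ref{prop:err_x}, behaves like $\alpha^2/(\sqrt z-\rho)^2$. Balancing this against $1$—with the free parameters selected through the single value $\theta=\tfrac12$ that defines $A_{\frac12}$—yields the quadratic relation $\sqrt z=\rho+\sqrt{\alpha A_{\frac12}}$, that is $z=(\rho+\sqrt{\alpha A_{\frac12}})^2$; the cutoff $\alpha_{\mx}$, with its factor $(1-\rho)^2/A_{\frac12}$, is exactly what guarantees $z<1$ while $\alpha\le\tmu_{\mn}/(\tmu_{\mn}-D_{\mn})$ and $\alpha\le 1$ keep $\epsilon_{opt}$ and $\alpha$ feasible.

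The main obstacle I anticipate is the bookkeeping of the rational function $\PP^\star(\alpha,z)$: extracting the clean closed forms $1-J\alpha$ and $(\rho+\sqrt{\alpha A_{\frac12}})^2$ requires tracking the constants $J$, $A_{\frac12}$ and the crossover $\alpha^*$ through \eqref{eq:P_star}--\eqref{eq:r_star_def}, and verifying that the two branches meet at $\min\{\alpha^*,\alpha_{\mx}\}$ so that \eqref{eq:rate} is continuous, with each branch remaining inside the admissible interval \eqref{eq:bound_z} on its own sub-interval. The genuinely delicate point is the second branch: one must reason through the un-squared rate $\sqrt z$ to linearize the square-root dependence and to see why the balance produces $\Theta(\sqrt\alpha)$—rather than $\Theta(\alpha)$—separation from $\rho$, and then confirm that at $\alpha^*$ the value $1-J\alpha^*$ indeed coincides with $(\rho+\sqrt{\alpha^* A_{\frac12}})^2$, which is what makes $\min\{\alpha^*,\alpha_{\mx}\}$ the correct switch point.
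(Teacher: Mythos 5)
Your plan follows the paper's own proof (Appendix~\ref{app:pf_linear_rate}) essentially step for step: fix $\epsilon_{opt}$ at its optimizer, tie $\epsilon_x=\epsilon_y$ to $\sqrt{z}$ so that $G_X(z)=G_Y(z)=(\sqrt{z}-\rho)^{-2}$, take $\theta=\tfrac12$, and read the rate off as the maximum of the optimality-driven bound $1-J\alpha$ and the network-driven bound $\big(\rho+\sqrt{\alpha A_{\frac{1}{2}}}\big)^2$, with $\alpha^*$ the crossover of the two branches and $\alpha_{\mx}$ the feasibility cap. One small correction to an intermediate claim: with these parameter choices the dominant network term of $\PP^\star$ scales as $\alpha^2/(\sqrt{z}-\rho)^4$ rather than $\alpha^2/(\sqrt{z}-\rho)^2$, which is exactly why balancing it against $1$ produces the $\Theta(\sqrt{\alpha})$ separation from $\rho$ that you anticipate.
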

\begin{proof}
See Appendix~\ref{app:pf_linear_rate}.\end{proof}

 \subsection{Discussion} \label{sec:discussion}{ 
 Theorem~\ref{thm:linear_rate} provides a unified set of convergence conditions for different choices of surrogates and network topologies. 
 To shed light on the expression of the rate and its dependence on the key optimization and network parameters, we customize here Theorem~\ref{thm:linear_rate} to specific network topologies and   surrogate functions. 
 We   begin considering star-networks (cf.~Sec.~\ref{sec:star-topology}) and then   move to  general graph topologies with no master node (cf.~Sec.~\ref{sec_arbitrary-topology}). We  will 
 customize the rate achieved by SONATA employing the following two  surrogate functions $\tf_i$, representing the two extreme choices in the spectrum of admissible surrogates: \begin{itemize}
  	\item \textbf{Linearization:} \vspace{-0.2cm}\begin{equation}
  		\label{eq:linear_surrogate}
\tf_i (\bx_i; \bx_i^\nu)  \triangleq  \nabla f_i (\bx_i^\nu)^\top(\bx_i - \bx_i^\nu) + \frac{L}{2}\|\bx_i - \bx_i^\nu\|^2;\vspace{-0.1cm} 
  	\end{equation}
  	\item \textbf{Local $f_i$:} \vspace{-0.3cm}\begin{equation} 
 	\label{eq:f_surrogate}
\tf_i (\bx_i; \bx_i^\nu)  \triangleq f_i (\bx_i) + \frac{\beta}{2} \| \bx_i - \bx_i^\nu \|^2.
 \end{equation}
  \end{itemize}}
 
 \subsubsection{Star-networks: SONATA-Star}\label{sec:star-topology} Convergence of SONATA-Star (Algorithm~\ref{alg:SONATA-star}) is established in Corollary \ref {cor:centralized_rate} below. 

{  \begin{corollary}\label{cor:centralized_rate}
 Consider Problem~\eqref{eq:P} under Assumption~\ref{assump:p}   over a  star-network; let $\{\bx^\nu\}$ be the sequence generated by   SONATA-Star  (Algorithm~\ref{alg:SONATA-star}), based on the surrogate functions satisfying Assumption \ref{assump:SCA_surrogate} and step-size $\alpha \in (0, \min(2 \tmu_{\mn}/(\tmu_{\mn} - D_{\mn}^\ell),1)]$.
Then, 
for all $i =1,\ldots,m$,   \vspace{-0.2cm}
\begin{equation}\label{eq:rate_central_expression}
U(\bx^\nu)-U^\star=\mathcal{O}(z^\nu),\quad \text{with}\quad z = 1 - \alpha \cdot \frac{  \left(1 - \frac{\alpha}{2}\right) \tmu_{\mn} +  \frac{\alpha D_{\mn}^\ell}{2}  }{\frac{ D_{\mx}^2}{2 \mu}  +  \left(1 - \frac{\alpha}{2}\right) \tmu_{\mn} +  \frac{\alpha D_{\mn}^\ell}{2}}.
\end{equation}
In particular, when the surrogates \eqref{eq:linear_surrogate} and \eqref{eq:f_surrogate} are employed along with $\alpha=1$, the rate above  reduces to the following expressions: \begin{itemize} 
\item \textbf{Linearization \eqref{eq:linear_surrogate}:}  $z \leq 1 - \kappa_g^{-1}$. Therefore, $U(\bx^\nu)-U^\star\leq \epsilon$ in at most $\mathcal{O}\Big(\kappa_g \log({1}/{\epsilon})\Big)$ iterations (communications);
\item \textbf{Local $f_i$ \eqref{eq:f_surrogate}:} 
\begin{equation}\label{eq:z_surrogate}
 z \leq 1 - \frac{1}{1 + 4 \cdot \frac{\beta}{\mu} \cdot \min\{1, \frac{\beta}{\mu}\}}.
 \end{equation}
Therefore, $U(\bx^\nu)-U^\star\leq \epsilon$ in at most\vspace{-0.2cm} 
\begin{equation}\label{eq:rate_SONATA_Star_f_surrogate}\left\{\hspace{-0.4cm}\begin{array}{ll}
	&\mathcal{O}\left(1\cdot \log\big(1/\epsilon\big)\right), \,\qquad   \text{if } \,\beta\leq \mu,\\
	& \mathcal{O}\left( \frac{\beta}{\mu}\cdot\,  \log\big(1/\epsilon\big)\right),  \quad  \text{if }\, \beta> \mu,
\end{array}
\right.\end{equation}
iterations (communications).
\end{itemize} 
\end{corollary}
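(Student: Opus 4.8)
The plan is to exploit the fact that on a star-network the mixing matrix is $\bW=\bJ$, so $\rho=\sigma(\hbW-\bJ)=0$. Consequently the two error sources that the general analysis (Steps~2--4) must propagate both collapse: consensus is exact, so all agents carry the common iterate $\bx^\nu$ and $\bx_\bot^\nu=\0$, $\by_\bot^\nu=\0$; and gradient tracking is \emph{exact}, i.e.\ each worker uses $\by_i^\nu=\nabla F(\bx^\nu)$, whence the tracking error $\bdelta^\nu=\0$. Writing $p^\nu=m\,(U(\bx^\nu)-U^\star)$, it then suffices to invoke only the ``centralized'' estimates of Step~1 and chain them; the small-gain machinery of Step~4 is not needed, and this is also what lets us obtain a sharper constant than a blind application of Theorem~\ref{thm:linear_rate} with $\rho=0$ would give.

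First I would specialize Lemma~\ref{lem:loc_dec_obj}: summing (\ref{eq:loc_dec_obj}) over $i$ with $\bdelta^\nu=\0$ (so the cross term drops), using that $\bx^{\nu+1}$ is the average of the half-step points together with Jensen's inequality $U(\bx^{\nu+1})\le\frac1m\sum_i U(\bx_i^{\nu+\frac12})$, and bounding the per-agent constants by their extremal values (legitimate since $\alpha\le1$), yields the descent inequality $p^{\nu+1}\le p^\nu-\alpha A\,\norm{\Deltax^\nu}^2$ with $A\triangleq(1-\tfrac{\alpha}{2})\tmu_{\mn}+\tfrac{\alpha}{2}D_{\mn}^\ell$; the step-size range $\alpha\in(0,\min\{2\tmu_{\mn}/(\tmu_{\mn}-D_{\mn}^\ell),1\}]$ is exactly what guarantees $A>0$. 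Next I would sharpen Lemma~\ref{lem:delta_x_lb}. In its proof the factor-two loss (coefficient $1/\mu$ rather than $1/(2\mu)$) arises solely from the split $\norm{a+b}^2\le 2\norm{a}^2+2\norm{b}^2$ used to isolate $\bdelta_i^\nu$; with $\bdelta^\nu=\0$ this split is unnecessary. Completing the square in the $\mu$-strong-convexity estimate and bounding the residual gradient mismatch by $D_i\,\norm{\Deltaxi{i}^\nu}$ via (\ref{eq:upper-lower-hessian}) gives directly $U(\hbx_i^\nu)-U^\star\le\frac{D_{\mx}^2}{2\mu}\norm{\Deltaxi{i}^\nu}^2$, hence, after summing and using (\ref{eq:lower-bound_U_hat}), the sharper bound $\alpha\,\norm{\Deltax^\nu}^2\ge\frac{2\mu}{D_{\mx}^2}\big(p^{\nu+1}-(1-\alpha)p^\nu\big)$.

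Eliminating $\norm{\Deltax^\nu}^2$ between the two inequalities and solving the resulting scalar recursion for $p^{\nu+1}/p^\nu$ produces precisely $z=1-\alpha A/(\tfrac{D_{\mx}^2}{2\mu}+A)$, the claimed general rate (\ref{eq:rate_central_expression}), and R-linear convergence of $U(\bx^\nu)-U^\star$ follows. I would then specialize at $\alpha=1$. For linearization (\ref{eq:linear_surrogate}), $\nabla^2\tf_i=L\bI$ gives $\tmu_{\mn}=L$, $D_{\mn}^\ell=0$, $D_{\mx}=L-\mu$, and a short computation reduces $z$ to $1-L\mu/(L^2-L\mu+\mu^2)$; since $L^2-L\mu+\mu^2\le L^2$ (as $\mu\le L$), this is $\le 1-\kappa_g^{-1}$, i.e.\ $\mathcal{O}(\kappa_g\log(1/\epsilon))$ rounds. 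For the local surrogate (\ref{eq:f_surrogate}), $\nabla^2\tf_i=\nabla^2 f_i+\beta\bI$ and Definition~\ref{assump:homogeneity} give $\tmu_{\mn}=\mu_{\mn}+\beta$, $D_{\mn}^\ell=0$, $D_{\mx}=2\beta$, whence $z=1-\big(1+\tfrac{4\beta^2}{\mu(\mu_{\mn}+\beta)}\big)^{-1}$. The key arithmetic fact is $\mu_{\mn}+\beta\ge\max\{\mu,\beta\}$: the bound $\ge\beta$ is trivial ($\mu_{\mn}\ge0$), while $\mu_{\mn}\ge\mu-\beta$ follows from $\nabla^2 f_i\succeq\nabla^2 F-\beta\bI\succeq(\mu-\beta)\bI$. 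This gives $\tfrac{4\beta^2}{\mu(\mu_{\mn}+\beta)}\le 4\tfrac{\beta}{\mu}\min\{1,\tfrac{\beta}{\mu}\}$ and hence (\ref{eq:z_surrogate}), from which the $\mathcal{O}(1)$ (for $\beta\le\mu$) and $\mathcal{O}(\beta/\mu)$ (for $\beta>\mu$) counts in (\ref{eq:rate_SONATA_Star_f_surrogate}) follow.

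The main obstacle I anticipate is the sharpening of Lemma~\ref{lem:delta_x_lb}: one must resist quoting it verbatim (which would leave $\tfrac{D_{\mx}^2}{\mu}$ and miss the stated constant) and instead re-run its completion-of-square step, justifying that the factor-two saving is valid \emph{precisely because} $\bdelta^\nu=\0$ on the star-network. Everything else is elementary bookkeeping and the two Hessian-comparison computations.
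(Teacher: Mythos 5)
Your proposal is correct and follows essentially the same route as the paper's own proof: set $\bW=\bJ$ so that $\bdelta^\nu=\0$, specialize the Step-1 descent inequality and the lower bound on $\alpha\|\Deltax^\nu\|^2$ (with the factor $2\mu/D_{\mx}^2$ recovered precisely because the Young split isolating $\bdelta_i^\nu$ is no longer needed), chain them into the scalar recursion giving \eqref{eq:rate_central_expression}, and then plug in the surrogate-specific constants. Your handling of the local-$f_i$ case ($\tmu_{\mn}=\mu_{\mn}+\beta$ followed by $\mu_{\mn}+\beta\geq\max\{\mu,\beta\}$) is an equivalent, slightly more explicit version of the paper's direct choice $\tmu_{\mn}=\max\{\beta,\mu\}$, and your identification of where the factor-of-two sharpening comes from is exactly the step the paper performs implicitly when it says the inequalities "reduce to" the sharper form.
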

\begin{proof} See Appendix \ref{app:proof-SONATA-star}.\end{proof}  
 
}
{ 
The following comments are in order. When linearization is employed,   SONATA-Star matches the iteration complexity 
 of the centralized proximal-gradient algorithm. 
 When the $f_i$'s are sufficiently similar,   \eqref{eq:z_surrogate}-\eqref{eq:rate_SONATA_Star_f_surrogate} proves that faster rates can be achieved if  surrogates (\ref{eq:f_surrogate}) are chosen over first-order approximations:  
 when   $\beta \ll L$,    \eqref{eq:rate_SONATA_Star_f_surrogate} is  significantly faster than $\mathcal{O}\big(\kappa_g \log({1}/{\epsilon})\big)$. 
 As case study, consider   Example 2 (cf. Sec.~\ref{sec_related}): plugging   (\ref{eq:beta_vs_kappa_ML_problem}) into Corollary~\ref{cor:centralized_rate} shows that using the surrogates (\ref{eq:f_surrogate}) yields {  $\widetilde{\mathcal{O}}\big(L \,\sqrt{{d\,m}}\cdot\log(1/\epsilon) \big)$ }iterations (communications); this  contrasts with  {  $\widetilde{\mathcal{O}}\big(L \,\sqrt{d\,m\,n}\cdot\log(1/\epsilon) \big)$}, achieved by first-order methods (and  SONATA-Star using linearization), which instead increases with the sample size $n$.


 
 \paragraph*{Comparison with DANE \& CEASE} Since SONATA-Star contains as special cases the DANE \cite{DANE} and CEASE \cite{Fan2020} algorithms, we contrast here Corollary~\ref{cor:centralized_rate}  with their convergence rates.  We recall that 
  DANE  is applicable to \eqref{eq:P} when $G=0$: For  quadratic losses, it achieves an $\epsilon$-optimal objective value in $\mathcal{O}\big((\beta/\mu)^2\cdot \log(1/\epsilon)\big)$ iterations/communications (here $\beta/\mu\geq 1$). 
   This rate is    worse than  
   \eqref{eq:rate_SONATA_Star_f_surrogate}. 
   For nonquadratic losses, \cite{DANE}  did not  show any rate improvement  of DANE over plain gradient algorithms, i.e.,  $\mathcal{O}\big(\kappa_g\cdot \log(1/\epsilon)\big)$ while SONATA-star  still retains   $\mathcal{O}\big(\beta/\mu\cdot \log(1/\epsilon)\big)$. The CEASE algorithm 
   is proved to achieve an $\epsilon$-solution on the iterates in $\mathcal{O}\big((\beta/\mu)^2\cdot \log(1/\epsilon)\big)$ iterations/communications (with $\beta/\mu\geq 1$);   SONATA  reaches the same error on the iterates in   $\mathcal{O}\big(\beta/\mu\cdot \log(\kappa_g/\epsilon)\big)$ iterations/communications, which matches the order of the mirror-decent algorithm. 
    
 In the next section we extend the study  to   networks with no centralized nodes, sheding lights on the role of the network in  achieving the same kind of results.  }


 \subsubsection{The general case}\label{sec_arbitrary-topology}{ 
 The convergence rate of SONATA over general graphs is summarized in Corollary~\ref{cor:linearization_rate}  for the linearization surrogates \eqref{eq:linear_surrogate} while   Corollaries~\ref{cor:f_rate_1}  and~\ref{cor:f_rate_2} consider  the surrogates  \eqref{eq:f_surrogate} based on local $f_i$, with Corollary~\ref{cor:f_rate_1} addressing the case $\beta\leq \mu$ and Corollary~\ref{cor:f_rate_2} the case $\beta>\mu$. The step-size $\alpha$ is tuned to   obtain favorable rate expressions.

 \begin{corollary}[Linearization surrogates]\label{cor:linearization_rate} 
In the setting of Theorem~\ref{thm:linear_rate}, let $\{\bx^\nu\}$ be the sequence generated by SONATA, using the surrogates   \eqref{eq:linear_surrogate} and   step-size   $\alpha = c\cdot \alpha_{\mx}$,  $c \in (0,1)$,    with $\alpha_{\mx} = \min\{1, (1 - \rho)^2/(\rho \cdot 110 \kappa_g (1 + \beta/L)^2)\}$. The number of iterations (communications) needed for $U(\bx_i^\nu)-U^\star\leq \epsilon$, $i\in [m]$,   is\vspace{-0.2cm}
\begin{align}
& \text{\bf Case I:} \quad && \mathcal{O} \left(\kappa_g\log (1/\epsilon)\right), & \text{if } \quad \frac{\rho}{(1 - \rho)^2}\leq \frac{1}{ 110 \,\kappa_g\, \left(1 + \frac{\beta}{L}\right)^2}, \label{eq:Case_I_linear}\\
& \text{\bf Case II:} \quad  && \mathcal{O} \left(\frac{ \big(\kappa_g + \beta/\mu\big)^2 \,\rho}{(1 - \rho)^2}\,\log (1/\epsilon)\right), & \text{otherwise}.  \label{eq:Case_II_linear}
\end{align}
\end{corollary}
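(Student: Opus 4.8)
The plan is to obtain Corollary~\ref{cor:linearization_rate} as a direct specialization of Theorem~\ref{thm:linear_rate} to the linearization surrogate \eqref{eq:linear_surrogate}. First I would evaluate all the surrogate-dependent constants. For \eqref{eq:linear_surrogate} one has $\nabla^2\tf_i(\bullet;\bx_i^\nu)=L\,\bI$, so $\tmu_i=\tL_i=L$ and hence $\tmu_{\mn}=\tL_{\mx}=L$; moreover $L\,\bI-\nabla^2F(\bx)$ lies in $[\,\0,(L-\mu)\bI\,]$ by \ref{assump:p}3, giving $D_i^\ell=0$ and $D_i=D_i^u=L-\mu$, whence $D_{\mn}^\ell=0$ and $D_{\mx}=D_{\mn}=L-\mu$. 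Finally, $\beta$-relatedness (Definition~\ref{assump:homogeneity}) together with $\nabla^2F\preceq L\,\bI$ yields $\nabla^2 f_i\preceq(L+\beta)\bI$, i.e. $L_{\mx}\leq L+\beta$, so $L_{\mx}/L\leq 1+\beta/L$.

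Next I would substitute these values into the constants $C_1,C_2$ of \eqref{eq:C1_C2} and then into the quantities $J$, $A_{\frac12}$, $\alpha^*$ and $\alpha_{\mx}$ driving the rate \eqref{eq:rate}. Using $\tmu_{\mn}=L$, $D_{\mx}=L-\mu$ and $L_{\mx}\leq L+\beta$ gives $C_1=\mathcal{O}\big(\mu^{-1}(1+\beta/L)^2\big)$ and $C_2=4/L^2$. Since $G_P^\star(\alpha)$ is sandwiched between $1/\mu$ and $5/\mu$ on $\alpha\in(0,1]$ (here $\big((1-\alpha/2)L\big)^2\geq L^2/4$ and $(L-\mu)^2\leq L^2$), the slope of the optimization branch obeys $J=\Theta(1/\kappa_g)$; indeed the per-step contraction in $\sigma(\alpha)$ is bounded below by a constant times $\mu/L$. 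Plugging the same quantities into $A_{\frac12}$ yields $A_{\frac12}=\mathcal{O}\big(\rho\,\kappa_g(1+\beta/L)^2\big)$, and I would propagate the numerical coefficients to certify the explicit bound $A_{\frac12}\leq\rho\cdot 110\,\kappa_g(1+\beta/L)^2$; together with $\tmu_{\mn}/(\tmu_{\mn}-D_{\mn})=L/\mu=\kappa_g\geq 1$ (so this term does not bind), this shows that the $\alpha_{\mx}$ stated in the corollary is an admissible, conservative instance of $\alpha_{\mx}=\min\{(1-\rho)^2/A_{\frac12},\,\tmu_{\mn}/(\tmu_{\mn}-D_{\mn}),\,1\}$. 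The single identity I would flag for the final bookkeeping is $\kappa_g(1+\beta/L)=\kappa_g+\beta/\mu$, which converts the surrogate-level quantities into the advertised $(\kappa_g+\beta/\mu)$ dependence.

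With these estimates the two cases are read off \eqref{eq:rate}. In \textbf{Case I} the hypothesis $\rho/(1-\rho)^2\leq[110\,\kappa_g(1+\beta/L)^2]^{-1}$ is exactly the statement $\alpha_{\mx}=1$; choosing $\alpha=c$ keeps us on the optimization branch $z=1-J\alpha=1-\Theta(1/\kappa_g)$, so $U(\bx_i^\nu)-U^\star\leq\epsilon$ after $\mathcal{O}\big((1-z)^{-1}\log(1/\epsilon)\big)=\mathcal{O}\big(\kappa_g\log(1/\epsilon)\big)$ iterations. In \textbf{Case II} we have $\alpha_{\mx}=(1-\rho)^2/[\rho\cdot 110\,\kappa_g(1+\beta/L)^2]<1$ and take $\alpha=c\,\alpha_{\mx}$; still on the branch $z=1-J\alpha$, the iteration count is $\mathcal{O}\big((J\alpha)^{-1}\log(1/\epsilon)\big)=\mathcal{O}\big(J^{-1}\alpha_{\mx}^{-1}\log(1/\epsilon)\big)$, and inserting $J^{-1}=\Theta(\kappa_g)$, $\alpha_{\mx}^{-1}=\Theta\big(\rho\,\kappa_g(1+\beta/L)^2/(1-\rho)^2\big)$ together with the identity above produces $\mathcal{O}\big((\kappa_g+\beta/\mu)^2\,\rho\,(1-\rho)^{-2}\log(1/\epsilon)\big)$, as claimed.

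The main obstacle is the branch bookkeeping in the second case: I must guarantee that $\alpha=c\,\alpha_{\mx}$ genuinely falls on the optimization branch, i.e. $c\,\alpha_{\mx}\leq\min\{\alpha^*,\alpha_{\mx}\}$. Since at $\alpha=\alpha_{\mx}$ the network branch $(\rho+\sqrt{\alpha A_{\frac12}})^2$ equals $1$ while $1-J\alpha_{\mx}<1$, the crossover obeys $\alpha^*<\alpha_{\mx}$ but only by a $\Theta(J/A_{\frac12})$-correction (here $J/A_{\frac12}$ is of order $\rho^{-1}\kappa_g^{-2}(1+\beta/L)^{-2}$), so $\alpha^*\approx\alpha_{\mx}$; hence for $c\in(0,1)$ bounded away from $1$ (equivalently, after absorbing $c$ into the $\mathcal{O}$-constant) one remains on the $1-J\alpha$ branch and the rate is governed by $J\alpha$ rather than by $(1-\rho)$. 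The remainder is purely the careful propagation of numerical constants needed to pin down the factor $110$.
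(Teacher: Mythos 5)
Your particularization of the constants coincides with the paper's own proof (Appendix~\ref{app:linearization_rate}): $\tmu_{\mn}=L$, $D_{\mn}^\ell=0$, $D_{\mx}=L-\mu$, $L_{\mx}=L+\beta$, the bounds $J\in[1/(8\kappa_g),1/2]$ and $A_{\frac{1}{2}}\leq 110\,\kappa_g(1+\beta/L)^2\rho=:M\rho$, the identity $\kappa_g(1+\beta/L)=\kappa_g+\beta/\mu$, and the case split on $\alpha_{\mx}$. The genuine gap is precisely the step you flag as "the main obstacle": the branch bookkeeping. Your justification -- that the crossover $\alpha^*$ of \eqref{eq:expression_alpha_ast} differs from $\alpha_{\mx}$ only by a $\Theta(J/A_{\frac{1}{2}})$ correction, with $J/A_{\frac{1}{2}}=\mathcal{O}\big(\rho^{-1}\kappa_g^{-2}(1+\beta/L)^{-2}\big)$ "small" -- is not valid uniformly over Case II. Take $\kappa_g=\mathcal{O}(1)$, $\beta=0$, $\rho\approx 10^{-2}$: this satisfies the Case II condition $110\kappa_g\rho>(1-\rho)^2$, yet $J=1/2$ while $A_{\frac{1}{2}}=\Theta(\rho)$ (with an absolute constant of a few tens), so $J/A_{\frac{1}{2}}\gtrsim 1$ and the "small correction" reasoning collapses. (In this particular instance $\alpha^*$ happens to exceed $\alpha_{\mx}$, so your conclusion survives, but not by your argument; you have given no uniform proof that $c\,\alpha_{\mx}\leq\alpha^*$.)

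The fix -- and what the paper actually does -- is to never identify the active branch. The rate of Theorem~\ref{thm:linear_rate} is by construction $z\leq\max\{z_1,z_2\}$ with $z_1=1-J\alpha$ and $z_2=\big(\rho+\sqrt{\alpha A_{\frac{1}{2}}}\big)^2$, so it suffices to bound both terms. Since $A_{\frac{1}{2}}\leq M\rho$ and $\alpha=c\,\alpha_{\mx}$, in both cases $z_2\leq\big(\rho+\sqrt{c}\,(1-\rho)\big)^2\leq 1-(1-\sqrt{c})^2(1-\rho)^2$. In Case II one additionally proves $Jc/(M\rho)<1$ by a short contradiction ($M\rho\leq Jc<1/2$ together with $M\geq 110\kappa_g$ would force $\rho<1/(220\kappa_g)$, contradicting $M\rho\geq(1-\rho)^2$), whence both terms of the max are at most $1-\tfrac{cJ}{M\rho}(1-\sqrt{c})^2(1-\rho)^2$ and \eqref{eq:Case_II_linear} follows from $J\geq 1/(8\kappa_g)$ and the identity; Case I is analogous using $\rho\leq 1/110$ to get a deficit of order $J$, yielding \eqref{eq:Case_I_linear}. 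Equivalently, your write-up can be repaired by observing that even if $c\,\alpha_{\mx}$ lands past $\alpha^*$, the network-branch deficit $(1-\sqrt{c})^2(1-\rho)^2$ is already at least of the order required by the claimed complexities under the respective case conditions -- but this must be argued explicitly rather than inferred from $\alpha^*\approx\alpha_{\mx}$.
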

\begin{proof}
See Appendix~\ref{app:linearization_rate}.
\end{proof}
 
\begin{corollary}[local $f_i$, $\beta\leq \mu$]\label{cor:f_rate_1}  
Instate assumptions of Theorem~\ref{thm:linear_rate} and suppose $\beta \leq \mu$. Consider   SONATA  using the surrogates~\eqref{eq:f_surrogate} and step-size $\alpha = c\cdot \alpha_{\mx}$,  $c \in (0,1)$, with $\alpha_{\mx} = \min\{1, (1 - \rho)^2/(M\rho) \}$ and  $M = 193\left(1 + \frac{\beta}{\mu}\right)^2\left(\kappa_g + \frac{\beta}{\mu}\right)^2 $. The number of iterations (communications) needed for $U(\bx_i^\nu)-U^\star\leq \epsilon$, $i\in [m]$,   is\vspace{-0.2cm}
\begin{align}
& \text{\bf Case I:} \quad && \mathcal{O} \left(1 \cdot \log (1/\epsilon) \right), & \text{if }\quad \frac{\rho}{(1 - \rho)^2}\leq \frac{1}{193\left(1 + \frac{\beta}{\mu}\right)^2\left(\kappa_g + \frac{\beta}{\mu}\right)^2},  \label{eq:Case_I_surrogate_beta_less_mu}\\
& \text{\bf Case II:} \quad  && \mathcal{O} \left( \frac{\kappa_g^2 \,\rho}{(1 - \rho)^2} \,\log (1/\epsilon)\right), & \text{otherwise}.\label{eq:Case_II_surrogate_beta_less_mu}
\end{align}
\end{corollary}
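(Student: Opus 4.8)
The plan is to specialize Theorem~\ref{thm:linear_rate} to the surrogate \eqref{eq:f_surrogate}, using the regime $\beta\le\mu$ to collapse every surrogate-dependent constant to the global scale $\kappa_g$. First I would evaluate the quantities entering Assumption~\ref{assump:SCA_surrogate} and \eqref{eq:upper-lower-hessian} for $\tf_i(\bx_i;\bx_i^\nu)=f_i(\bx_i)+\frac{\beta}{2}\norm{\bx_i-\bx_i^\nu}^2$. Since $\nabla^2\tf_i(\bx;\bz)=\nabla^2 f_i(\bx)+\beta\bI$, $\beta$-relatedness (Definition~\ref{assump:homogeneity}) gives $\mathbf{0}\preceq\nabla^2\tf_i-\nabla^2 F=(\nabla^2 f_i-\nabla^2 F)+\beta\bI\preceq 2\beta\bI$, so $D_{\mn}^\ell=0$ and $D_{\mx}=2\beta$; in particular $\tmu_{\mn}\ge D_{\mn}^\ell$, matching the hypothesis of Theorems~\ref{thm:step_size_condition}--\ref{thm:linear_rate}. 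The crucial bookkeeping is on $\tmu_{\mn}$: from $\nabla^2 f_i\succeq\nabla^2 F-\beta\bI\succeq(\mu-\beta)\bI$ we get $\mu_i\ge\mu-\beta$, hence $\tmu_{\mn}=\mu_{\mn}+\beta\ge\mu>0$; likewise $\nabla^2 f_i\preceq(L+\beta)\bI$ gives $L_{\mx}\le L+\beta$. These two facts are what replace the pessimistic ratio $L_{\mx}/\mu_{\mn}$ by the global one $L_{\mx}/\tmu_{\mn}\le\kappa_g+\beta/\mu$.

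Next I would substitute these values into $C_1,C_2$ of \eqref{eq:C1_C2} and the derived quantities $A_{1/2}$, $J$, $\alpha^*$, $\alpha_{\mx}$ of Theorem~\ref{thm:linear_rate}, bounding each with $\beta\le\mu$ and $\mu\le\tmu_{\mn}$. The estimates $D_{\mx}/\tmu_{\mn}\le 2\beta/\mu$ and $L_{\mx}/\tmu_{\mn}\le\kappa_g+\beta/\mu$ give $C_1=\mathcal{O}\big(\mu^{-1}(\kappa_g+\beta/\mu)^2\big)$ and $C_2=\mathcal{O}(\mu^{-2})$, and I expect the algebra of \eqref{eq:P_star}--\eqref{eq:r_star_def} to produce $A_{1/2}\le M\rho$ with $M=193\,(1+\beta/\mu)^2(\kappa_g+\beta/\mu)^2$, so that $\alpha_{\mx}=\min\{(1-\rho)^2/A_{1/2},\tmu_{\mn}/(\tmu_{\mn}-D_{\mn}^\ell),1\}$ reduces (the middle term being $1$ since $D_{\mn}^\ell=0$) to exactly $\alpha_{\mx}=\min\{1,(1-\rho)^2/(M\rho)\}$, as stated. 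The decisive point is the per-step constant $J$: optimizing $\epsilon_{opt}$ as in the proof of Theorem~\ref{thm:step_size_condition} gives $\sigma(\alpha)=1-\alpha\,\frac{(1-\alpha/2)\tmu_{\mn}}{2D_{\mx}^2/\mu+(1-\alpha/2)\tmu_{\mn}}$; since $\frac{2D_{\mx}^2}{\mu\,\tmu_{\mn}}=\frac{8\beta^2}{\mu\,\tmu_{\mn}}\le\frac{8\beta^2}{\mu^2}\le 8$, the $\alpha$-coefficient of $1-\sigma(\alpha)$ is bounded below by an absolute constant, and as this optimality-gap contraction is the binding term of the small-gain chain for small $\alpha$, the linear-decrease rate obeys $J=\Theta(1)$, independent of $\kappa_g$ and $\beta$.

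The two regimes then follow by inserting $\alpha=c\,\alpha_{\mx}$, $c\in(0,1)$, into the rate \eqref{eq:rate} and bounding the iteration count $\log(1/\epsilon)/(-\log z)$. In \textbf{Case~I} the condition $\rho/(1-\rho)^2\le 1/M$ forces $\alpha_{\mx}=1$ and, because $M\ge 193$, also $1-\rho=\Theta(1)$; here $\alpha=c$ is a constant, the linear branch gives $z=1-Jc$ with $J=\Theta(1)$, while the network branch gives $1-z=\Theta(1-\rho)=\Theta(1)$, so either way $\mathcal{O}(\log(1/\epsilon))$ iterations suffice, yielding \eqref{eq:Case_I_surrogate_beta_less_mu}. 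In \textbf{Case~II}, $\alpha_{\mx}=(1-\rho)^2/(M\rho)<1$: the linear branch gives iteration count $\mathcal{O}\big((J\alpha)^{-1}\log(1/\epsilon)\big)=\mathcal{O}\big(M\rho(1-\rho)^{-2}\log(1/\epsilon)\big)$, whereas the network branch gives $\mathcal{O}\big((1-\rho)^{-1}\log(1/\epsilon)\big)$, which the Case~II inequality $M\rho>(1-\rho)^2$ shows is also $\mathcal{O}\big(M\rho(1-\rho)^{-2}\log(1/\epsilon)\big)$; thus the bound holds regardless of the branch. Finally $\beta\le\mu$ collapses $M$: $(1+\beta/\mu)^2\le 4$ and, since $\kappa_g\ge 1$, $(\kappa_g+\beta/\mu)^2\le(\kappa_g+1)^2\le 4\kappa_g^2$, so $M=\mathcal{O}(\kappa_g^2)$ and the count becomes $\mathcal{O}\big(\kappa_g^2\rho(1-\rho)^{-2}\log(1/\epsilon)\big)$, which is \eqref{eq:Case_II_surrogate_beta_less_mu}.

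The main obstacle I anticipate is the numerical constant-tracking needed to land exactly on $M=193(1+\beta/\mu)^2(\kappa_g+\beta/\mu)^2$ through the chained inequalities of \eqref{eq:stability_polynomial}--\eqref{eq:P_star}, and, more fundamentally, the uniform lower bound $J\ge\text{const}$. The latter is precisely where $\beta\le\mu$ is indispensable: it is what forces $8\beta^2/\mu^2\le 8$ (so that the optimality-gap contraction $\sigma(\alpha)$ does not degenerate) and, together with $\tmu_{\mn}\ge\mu$, what keeps every local ratio at the scale $\kappa_g+\beta/\mu$ rather than $L_{\mx}/\mu_{\mn}$. A secondary check is that the two branches of \eqref{eq:rate} need not be distinguished, since, as shown above, the choice $\alpha=c\,\alpha_{\mx}$ yields the same order in either branch.
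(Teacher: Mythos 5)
Your proposal is correct and follows essentially the same route as the paper's own proof (Appendix~\ref{app:f_rate}): the same specialization $D_{\mn}^\ell=0$, $D_{\mx}=2\beta$, $\tmu_{\mn}=\max\{\beta,\mu\}$, $L_{\mx}=L+\beta$, the same key observation that $\beta\le\mu$ makes $J=\tfrac{1}{2}\big(1+16(\beta/\mu)^2\big)^{-1}\ge 1/34$ an absolute constant, the same bound $A_{\frac{1}{2}}\le M\rho$ with the case split on whether $\alpha_{\mx}=1$ or $(1-\rho)^2/(M\rho)$, and the same final collapse $(1+\beta/\mu)^2\le 4$, $(\kappa_g+\beta/\mu)^2\le 4\kappa_g^2$ giving $M=\mathcal{O}(\kappa_g^2)$. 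The only step you defer, the explicit constant-tracking that lands on $M=193\left(1+\frac{\beta}{\mu}\right)^2\left(\kappa_g+\frac{\beta}{\mu}\right)^2$ [cf.~\eqref{eq:def_M_surrogate}], is the mechanical computation the paper carries out there.
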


\begin{corollary}[local $f_i$, $\beta> \mu$]\label{cor:f_rate_2}  
Instate assumptions of Theorem~\ref{thm:linear_rate} and suppose $\beta > \mu$.  Consider   SONATA   using the surrogates~\eqref{eq:f_surrogate} and step-size    $\alpha = c\cdot \alpha_{\mx}$, $c \in (0,1)$, with $\alpha_{\mx} = \min\{1, (1 - \rho)^2/(M\rho) \}$ and  $M = 253\left(1 + \frac{L}{\beta}\right) \left(\kappa_g + \frac{\beta}{\mu}\right)$.  The number of iterations (communications) needed for $U(\bx_i^\nu)-U^\star\leq \epsilon$, $i\in [m]$,   is\vspace{-0.2cm}
\begin{align}
& \text{\bf Case I:}   && \mathcal{O} \left( \frac{\beta}{\mu}\cdot  \log (1/\epsilon) \right) & \text{if }  \, \frac{\rho}{(1 - \rho)^2}\leq \frac{1}{253\left(1 + \frac{L}{\beta}\right) \left(\kappa_g + \frac{\beta}{\mu}\right)},\label{eq:Case_I_surrogate_beta_big_mu} \\
& \text{\bf Case II:}   && \mathcal{O} \left( \frac{\left(\kappa_g + (\beta/\mu)\right)^2 \rho}{(1 - \rho)^2}\,\log (1/\epsilon) \right), & \text{otherwise }.\label{eq:Case_II_surrogate_beta_big_mu}
\end{align}
\end{corollary}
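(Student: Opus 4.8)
The plan is to specialize Theorem~\ref{thm:linear_rate} to the surrogate \eqref{eq:f_surrogate} in the regime $\beta>\mu$, paralleling the (unshown) proof of Corollary~\ref{cor:f_rate_1} but carrying the \emph{weaker} lower bound on $\tmu_{\mn}$ that is all one has when $\beta>\mu$. First I would compute the surrogate-dependent constants. Since $\tf_i(\bullet;\bx_i^\nu)=f_i+\tfrac{\beta}{2}\|\bullet-\bx_i^\nu\|^2$, we have $\nabla^2\tf_i=\nabla^2 f_i+\beta\bI$, whence $\tmu_i=\mu_i+\beta$ and $\tL_i=L_i+\beta$. By Definition~\ref{assump:homogeneity}, $-\beta\bI\preceq\nabla^2 f_i-\nabla^2 F\preceq\beta\bI$, so that $0\preceq\nabla^2\tf_i-\nabla^2 F\preceq 2\beta\bI$; comparing with \eqref{eq:upper-lower-hessian} gives $D_i^\ell=0$ and $D_i=D_{\mx}=2\beta$, hence $D_{\mn}^\ell=0$. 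The same homogeneity bound yields $L_i\le L+\beta$, i.e. $L_{\mx}\le L+\beta$. Crucially, since $\mu_i\ge 0$ while now $\mu-\beta<0$, the only available bound is $\tmu_{\mn}=\mu_{\mn}+\beta\ge\beta$ — this is the single place where the argument departs from Corollary~\ref{cor:f_rate_1}, in which $\beta\le\mu$ gave the sharper $\tmu_{\mn}\ge\mu$. The hypothesis $\tmu_{\mn}\ge D_{\mn}^\ell$ of Theorem~\ref{thm:linear_rate} holds trivially because $D_{\mn}^\ell=0$.

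Next I would substitute these values into $C_1,C_2$ [cf. \eqref{eq:C1_C2}] and into the appendix expressions for $J$ and $A_{\frac12}$. Using $D_{\mx}/\tmu_{\mn}\le 2$ and $L_{\mx}/\tmu_{\mn}\le 1+L/\beta$, one gets $C_1=\mathcal{O}\big(\tfrac1\mu(1+L/\beta)^2\big)$ and $C_2=\mathcal{O}(1/\beta^2)$. Two quantities must then be controlled: (i) the optimization rate constant $J$, which governs the linear branch $z=1-J\alpha$ of \eqref{eq:rate}; and (ii) $A_{\frac12}$, which fixes the network-limited step-size $\alpha_{\mx}$. For (i), the dominant contribution to $1/J$ scales like $D_{\mx}^2/(\mu\,\tmu_{\mn})=4\beta^2/(\mu\,\tmu_{\mn})\le 4\beta/\mu$, so that $1/J=\mathcal{O}(\beta/\mu)$; this is exactly the degradation relative to the $\mathcal{O}(1)$ obtained when $\beta\le\mu$, and it is consistent with the star-network value in Corollary~\ref{cor:centralized_rate}. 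For (ii), a direct computation should give $A_{\frac12}\le M\rho$ with $M=253\,(1+L/\beta)(\kappa_g+\beta/\mu)$, matching the $\alpha_{\mx}$ in the statement; the constant $253$ absorbs the numerical factors from $C_1,C_2,L_{\mx}^2$, while the product $(1+L/\beta)(\kappa_g+\beta/\mu)$ pairs the smoothness/homogeneity ratio with the effective condition number.

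Finally I would choose $\alpha=c\,\alpha_{\mx}$, $c\in(0,1)$, so that $\alpha<\alpha_{\mx}\le\min\{\alpha^*,\alpha_{\mx}\}$ and Theorem~\ref{thm:linear_rate} places us in the first branch $z=1-J\alpha$; then $U(\bx_i^\nu)-U^\star=\mathcal{O}(z^\nu)$, and reaching accuracy $\epsilon$ costs $\mathcal{O}\!\big(\tfrac{1}{1-z}\log(1/\epsilon)\big)=\mathcal{O}\!\big(\tfrac{1}{Jc\,\alpha_{\mx}}\log(1/\epsilon)\big)$ iterations. In Case~I the condition $\tfrac{\rho}{(1-\rho)^2}\le 1/M$ forces $\alpha_{\mx}=1$, giving $\mathcal{O}(\tfrac1J\log(1/\epsilon))=\mathcal{O}(\tfrac\beta\mu\log(1/\epsilon))$. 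In Case~II, $\alpha_{\mx}=(1-\rho)^2/(M\rho)$ and the count is $\mathcal{O}\!\big(\tfrac{M\rho}{J(1-\rho)^2}\log(1/\epsilon)\big)$; using $\tfrac{M}{J}=\mathcal{O}\!\big(\tfrac\beta\mu(1+L/\beta)(\kappa_g+\beta/\mu)\big)=\mathcal{O}\!\big((\kappa_g+\beta/\mu)^2\big)$ — since $\tfrac\beta\mu(1+L/\beta)=\kappa_g+\beta/\mu$ — yields $\mathcal{O}\!\big(\tfrac{(\kappa_g+\beta/\mu)^2\rho}{(1-\rho)^2}\log(1/\epsilon)\big)$, as claimed.

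The main obstacle I anticipate is the second step: verifying the sharp inequality $A_{\frac12}\le M\rho$ with precisely the stated constant, since $A_{\frac12}$ is an intricate combination of $C_1,C_2,L_{\mx},\rho$ arising from the stability polynomial $\PP^\star(\alpha,z)$, and one must check that the $\beta>\mu$ bounds ($\tmu_{\mn}\ge\beta$, $L_{\mx}\le L+\beta$, $D_{\mx}=2\beta$) collapse cleanly into the single product $(1+L/\beta)(\kappa_g+\beta/\mu)$ rather than a messier expression. A secondary, lighter check is confirming $\alpha^*\ge\alpha_{\mx}$ (equivalently that $\alpha=c\,\alpha_{\mx}$ stays in the linear branch of \eqref{eq:rate}), which is what keeps both cases governed by $z=1-J\alpha$.
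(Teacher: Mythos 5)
Your parameter specialization and final accounting coincide with the paper's own proof (Appendix~\ref{app:f_rate}): from \eqref{eq:f_surrogate} and Definition~\ref{assump:homogeneity} one sets $D_{\mn}^\ell=0$, $D_{\mx}=2\beta$, $\tmu_{\mn}=\max\{\beta,\mu\}=\beta$, $L_{\mx}=L+\beta$, which gives $J=\tfrac12\big(1+16\beta/\mu\big)^{-1}\ge \tfrac{1}{34}\tfrac{\mu}{\beta}$ and, after the computation you defer, $A_{\frac{1}{2}}\le M\rho$ with $M=253\big(1+\tfrac{L}{\beta}\big)\big(\kappa_g+\tfrac{\beta}{\mu}\big)$; the identity $\tfrac{\beta}{\mu}\big(1+\tfrac{L}{\beta}\big)=\kappa_g+\tfrac{\beta}{\mu}$ is exactly how the paper collapses $M/J$ in Case II. The constant $253$ does check out (via $(A_{\frac12})^2\le(24\,G_P^\star(1)C_1+5C_2)\cdot 2L_{\mx}^2\rho^2$ with $G_P^\star(1)=17/\mu$), but note that as submitted you have left unverified precisely the step you yourself call the main obstacle.

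The genuine flaw is the branch claim. You assert that $\alpha=c\,\alpha_{\mx}$ satisfies $\alpha<\alpha_{\mx}\le\min\{\alpha^*,\alpha_{\mx}\}$, i.e.\ $\alpha_{\mx}\le\alpha^*$, and list this as a light check; in fact it fails exactly where you need it. At $\alpha=(1-\rho)^2/A_{\frac{1}{2}}$ the second expression in \eqref{eq:rate} equals $1$, strictly above $1-J\alpha$, and since the two expressions cross at $\alpha^*$ this forces $\alpha^*<(1-\rho)^2/A_{\frac{1}{2}}$. So when $A_{\frac{1}{2}}$ is of order $M\rho$ and Case II holds, $\alpha_{\mx}=(1-\rho)^2/(M\rho)$ can exceed $\alpha^*$, and for $c$ near $1$ the step-size lands in the \emph{second} branch, leaving your count $\mathcal{O}\big(1/(Jc\,\alpha_{\mx})\big)$ unjustified. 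The repair — and what the paper does — is to skip branch bookkeeping: the rate in \eqref{eq:rate} is always dominated by $\max\big\{1-J\alpha,\ (\rho+\sqrt{\alpha A_{\frac{1}{2}}})^2\big\}$, and one bounds both terms, the second by $1-(1-\sqrt{c})^2(1-\rho)^2$ using $\alpha A_{\frac{1}{2}}\le c(1-\rho)^2$. In Case II the two bounds are then merged through the small argument that $cJ/(M\rho)<1$, giving $1-z\ge c(1-\sqrt{c})^2 J(1-\rho)^2/(M\rho)$ in either branch, which yields \eqref{eq:Case_II_surrogate_beta_big_mu}; in Case I the same max-bound gives \eqref{eq:Case_I_surrogate_beta_big_mu}. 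With that substitution your argument becomes the paper's.
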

The proof of Corollaries~\ref{cor:f_rate_1} and~\ref{cor:f_rate_2}    can be found in Appendix~\ref{app:f_rate}.}

{  Several comments are in order.   

\noindent $\bullet$ {\bf Order of the  rate of centralized (nonaccelerated) methods (Case I):} For a fixed optimization problem, if the network is sufficiently connected ($\rho$ ``small''), its impact on the rate becomes negligible (the bottleneck is the optimization), and SONATA matches  the {\it network-independent} rate order achieved on   star-topologies (cf. Corollary \ref{cor:centralized_rate}) by the proximal gradient algorithm when linearization is employed [cf.~\eqref{eq:Case_I_linear}]  and by the mirror-descent scheme when the local $f_i$'s are used in the surrogates [cf.~\eqref{eq:Case_I_surrogate_beta_less_mu} and \eqref{eq:Case_I_surrogate_beta_big_mu}].  

\noindent $\bullet$ {\bf  Network-dependent rates (Case II):} As expected, the convergence rate deteriorates as $\rho$ increases, i.e., the network connectivity gets worse. This translates in  a less favorable    dependence of the complexity on $\kappa_g$ and $\beta/\mu$ (by a square factor) and  network scalability   of the order of $\rho/(1-\rho)^2$. When $\beta \sqrt{\rho} =\mathcal{O}(L)$ (e.g., the network is decently connected or $\beta=\mathcal{O}(L)$), the complexity becomes $\mathcal{O}\left(\kappa_g^2 (1 - \rho)^{-2} \log (1/\epsilon)\right)$, which compares favorably with that of existing distributed schemes, determined instead by the more pessimistic local quantities \eqref{eq:cond_number_others}. 
The scalability of the rate with the network connectivity, $(1-\rho)^{-2}$,  can be improved leveraging multiple rounds of communications  or accelerated consensus protocols, as discussed below.

  
\noindent $\bullet$ {\bf Linearization \eqref{eq:linear_surrogate} vs. local $f_i$  \eqref{eq:f_surrogate} surrogates:}  
As already observed in the setting of  star-networks, the use of the local losses as surrogates employs a form of preconditioning in the local agents subproblems. When the $f_i$'s are sufficiently similar to each other,  so that $1+\beta/\mu<\kappa_g$, exploiting local Hessian information via  \eqref{eq:f_surrogate} provably reduces the iteration/communication complexity over linear models \eqref{eq:linear_surrogate}--contrast (\ref{eq:Case_I_linear}) with  \eqref{eq:Case_I_surrogate_beta_less_mu} and  \eqref{eq:Case_I_surrogate_beta_big_mu}. Note that these faster rates are  achieved without exchanging any matrices over the network, which is a key   feature of SONATA.  On the other hand, when the functions $f_i$ are heterogeneous, the local surrogates \eqref{eq:f_surrogate} are no longer informative of the average-loss $F$ and using linearization might yield better rates. Although these design recommendations are based on  sufficient conditions,  numerical results seem to confirm  the above conclusions--see Sec.~\ref{sec:num}.  

\noindent $\bullet$ {\bf Multiple communications rounds and acceleration:} 
The discussion above shows that rates of the order of those of centralized methods can be achieved  if the network is sufficiently connected (Case I). When this is not the case, one can still achieve the same iteration complexity at the cost of multiple, finite, rounds of communications per iteration. 
Specifically, let $\rho_0$ be the connectivity  of the  given network and suppose we run $K$ steps  of communications per iteration (computation) in (\ref{eq:x_update})-(\ref{eq:y_update}); this yields an effective network with improved connectivity $\rho = \rho_0^K$. One can then choose $K$ so that the ratio $\rho_0^K/(1- \rho_0^K)^2$ satisfies the condition triggering     Case I in the Corollaries~\ref{cor:linearization_rate}--\ref{cor:f_rate_2}, as briefly summarized next.  

\textbf{1) Linearization:} Invoking Corollary~\ref{cor:linearization_rate}, one can check that the order of such a $K$ is  
  $K = \mathcal{O} (\log (\kappa_g(1+ \beta/L)^2)/\log (1/\rho_0)) = \mathcal{O} (\log (\kappa_g (1+ \beta/L)^2)/(1- \rho_0))$; therefore, SONATA using the surrogates   \eqref{eq:linear_surrogate} reaches an $\epsilon$-solution    in  $\mathcal{O} \left(\kappa_g\log (1/\epsilon)\right)$ iterations and $\mathcal{O}\left(\kappa_g\cdot (1 - \rho_0)^{-1}\log(\kappa_g (1 + \beta/L)^2)\log (1/\epsilon)\right)$ communications.  The dependence on the network connectivity $\rho_0$ can be further improved  leveraging   Chebyshev polynomials (see, e.g., \cite{auzinger2011iterative,pmlr-v70-scaman17a}): the final communication complexity of SONATA  reads \vspace{-0.5cm}
\begin{equation*}\mathcal{O}\left(\frac{\kappa_g}{\sqrt{1-\rho_0}}\cdot \log\left(\kappa_g(1+\beta/L)^{2}\right)\,\log(1/\epsilon)\right).\vspace{-0.1cm}\end{equation*}

{\bf 2) Local $f_i$ surrogates:} Considering the case $\beta\geq \mu$ (Corollary~\ref{cor:f_rate_2}), we can show that SONATA using the surrogates  \eqref{eq:f_surrogate} and employing multiple rounds of communications per iteration, reaches an  $\epsilon$-solution    in  $\mathcal{O} \left(\beta/\mu\cdot \log (1/\epsilon)\right)$ iterations and $\mathcal{O}\left(\beta/\mu\cdot\log\big((\kappa_g+ \beta/\mu)(1+L/\beta)\big) (1 - \rho_0)^{-1}\log (1/\epsilon)\right)$ communications. If Chebyshev  polynomials are used to accelerate the communications, the communication complexity further improves to \vspace{-0.2cm} 
\begin{equation*}
\mathcal{O}\left( \frac{{\beta}/{\mu}}{\sqrt{1 - \rho_0}}\cdot \log\big((\kappa_g+ \beta/\mu)(1+L/\beta)\big) \log (1/\epsilon)\right). \vspace{-0.2cm} \end{equation*}}

\section{The SONATA algorithm over    directed time-varying graphs}  \label{sec:TV-case}
 In this section we extend SONATA  and its convergence analysis  to solve   Problem~\eqref{eq:P}  over {\it directed,  time-varying  graphs} (Assumption \hyperlink{assumption:G}{B$\,^\prime$}). Note that \eqref{eq:loc_opt}-\eqref{eq:mix_y} is not readily   applicable to this   setting, as  constructing a doubly stochastic weight matrix   compliant with a directed graph  is generally infeasible or computationally costly--{see e.g. \cite{5530578}}.    
Conditions on the weight matrices can be relaxed if the consensus/tracking schemes \eqref{eq:mix_x}-\eqref{eq:mix_y} are properly changed to deal with the lack of doubly stochasticity.

 Here, we consider the perturbed push-sum protocols as proposed   in the companion paper \cite{SONATA-companion} (but in the Adapt-Then-Combine (ATC) form). The resulting distributed algorithm, still termed SONATA, is formally described in Algorithm~\ref{alg:SONATA_TV}. 
\vspace{-0.1cm}

	\begin{algorithm}[h]	
	\caption{SONATA over time-varying directed graphs}\label{alg:SONATA_TV}
	\textbf{Data}: $\mathbf{x}^{0}_{i}\in \mathcal{K}$,   $\by_i^0=\nabla f_i(\bx_i^0)$,  and $\phi_i^0=1$,  $i\in [m]$. 
	
	\textbf{Iterate}: $\nu=1,2,...$\vspace{0.1cm}
\begin{subequations}

  \texttt{[S.1] [Distributed Local Optimization]} Each agent $i$ solves\vspace{-0.2cm}
	\begin{equation} 	\hbx_i^\nu \triangleq {}   \argmin_{\bx_i \in \KK}~\tf_i (\bx_i ;\bx_i^\nu) + \big(\by_i^\nu - \nabla f_i (\bx_i^\nu) \big)^\top (\bx_i - \bx_i^\nu) + G(\bx_i),
	\label{eq:loc_opt2} 
	\vspace{-0.3cm}\end{equation} 
	
	\qquad and updates\vspace{-0.3cm} \begin{equation}\bx_i^{\nu+\frac{1}{2}} = {}   \bx_i^\nu + \alpha \cdot \Deltaxi{i}^\nu,\quad \text{with}\quad\Deltaxi{i}^\nu \triangleq \hbx_i^\nu - \bx_i^\nu; \label{eq:descent2}\end{equation}
	

	 \texttt{[S.2] [Information Mixing]} Each agent $i$ computes \smallskip\\
	 \phantom{\texttt{[S.2]}} (a) \texttt{Consensus}  \vspace{-0.3cm}
	\begin{equation}\phi_i^{\nu+1}=\sum_{j=1}^m c^\nu_{ij}\phi_j^\nu,\quad   
	\bx_i^{\nu + 1} = {}  \frac{1}{\phi_i^{\nu+1}} \sum_{j=1}^m  {c^\nu_{ij}\phi_j^\nu}  \bx_j^{\nu+\frac{1}{2}},
	\label{eq:mix_x2}\vspace{-0.3cm}\end{equation}  \phantom{\texttt{[S.2]}} (b) \texttt{Gradient tracking}\vspace{-0.2cm}
	\begin{equation}
	\by_i^{\nu + 1} =  \frac{1}{\phi_i^{\nu+1}} \sum_{j=1}^m  c^\nu_{ij}\left(\phi_j^\nu \,\by_j^\nu +  \nabla f_j(\bx_j^{\nu +1}) - \nabla f_j(\bx_j^\nu) \right),\label{eq:mix_y2}
\vspace{-0.4cm}
	 \end{equation}
	\textbf{end} \end{subequations}
\end{algorithm}

   In the perturbed push-sum protocols \eqref{eq:mix_x2}-\eqref{eq:mix_y2},   $\mathbf{C}^\nu\triangleq (c^\nu_{ij})_{i,j=1}^m$ satisfies the assumption below. 
\begin{assumption}
	\label{assumption:TVwights}
	For each $\nu\geq 0$, the weight matrix $\mathbf{C}^\nu \triangleq (c^\nu_{ij})_{i,j = 1}^m$ has a sparsity pattern compliant with $\GG^\nu$, i.e., there exists a constant $c_\ell$ such that, for all $\nu=0,1,\ldots,$
	\begin{enumerate}[leftmargin=*,label=\theassumption\arabic*]
		\item $c_{ii}^\nu\geq c_\ell >0$, for all $i\in [m]$;
		\item $c_{ij}^\nu\geq c_\ell>0$, if $\left(j,i\right)\in \mathcal{E}^{\nu}$; and $c_{ij}^\nu=0$ otherwise.
	\end{enumerate}
	Moreover, $\mathbf{C}^{\nu}$ is column stochastic, i.e., $\mathbf{1} ^\top\mathbf{C}^{\nu}= \mathbf{1}^{\top}$, for all $\nu=0,1,\ldots.$
\end{assumption}

We conclude this section stating    the counterparts of the definitions introduced in Sec.~\ref{sec:problem_statement},   adjusted here to  the case of directed time-varying graphs. 
Using the column stochasticity of $\mathbf{C}^\nu$ and~\eqref{eq:mix_y2},  one can see that opposed to \eqref{eq:y_update_avg}, the average gradient is  now preserved on the weighted average of the $\by_i$'s:\vspace{-0.1cm}
\begin{equation}\label{eq:avg_y_eq_TV}
\frac{1}{m}\sum_{i=1}^m \phi_i^{\nu +1} \by_i^{\nu + 1} = \frac{1}{m}\sum_{i=1}^m \phi_i^{\nu } \by_i^{\nu }  + \overline{\pgrad}^{\nu+1} - \overline{\pgrad}^{\nu}, \vspace{-0.1cm}
\end{equation}
where  $\overline{\pgrad}^\nu$ is defined in \eqref{eq:avg_y_def}.
This suggests to decompose $\by^\nu$ into its weighted average and the consensus error, defined respectively as\vspace{-0.3cm}
\begin{equation}
\wavg{\by}{\nu}\triangleq   \frac{1}{m}\sum_{i=1}^m  \phi_i^\nu\by_i^\nu \quad \text{and} \quad \var{y}{\nu} \triangleq \by^\nu - \mathbf{1}_m \otimes \wavg{\by}{\nu}.\vspace{-0.2cm}
\end{equation}
Accordingly, we   define the weighted average of $\bx^\nu$ and the consensus error as\vspace{-0.2cm}
\begin{equation}
\wavg{\bx}{\nu}\triangleq   \frac{1}{m}\sum_{i=1}^m  \phi_i^\nu\bx_i^\nu \quad \text{and} \quad \var{x}{\nu} \triangleq \bx^\nu - \mathbf{1}_m \otimes \wavg{\bx}{\nu}.\vspace{-0.2cm}
\end{equation}
In addition, we also generalize the definition of the optimality gap  as\vspace{-0.2cm}
\begin{equation}\label{eq:opt_gap_TV}
\optgap^\nu \triangleq \sum_{i=1}^m \phi_i^\nu p_i^\nu, \quad \text{with} \quad p_i^\nu \triangleq  \big(U(\bx_i^\nu) - U^\star \big).\vspace{-0.2cm}
\end{equation}

Finally, apart from the problem parameters $L_i$, $L_{\mx}$, $L$, $\mu$ [cf. \eqref{eq:prob_param_def}] and   algorithm parameters $\tmu_{\mn}$, $\tL_{\mx}$, $D_{\mn}^\ell$, $D_{\mx}$ [cf. \eqref{eq:alg_param_def}], we introduce the following network parameters, borrowed from \cite[Prop. 1]{SONATA-companion}: 
\begin{align}\label{eq:net_param_def}
\phi_{lb} \triangleq c_\ell^{2(m-1)B},  \quad 
\phi_{ub} \triangleq m - c_\ell^{2(m-1)B}, 
\end{align}
with $c_\ell$ and $B$ given in Assumptions~\ref{assumption:TVwights} and \hyperlink{assumption:G}{B$\,^\prime$}, respectively; and 
\begin{align}
c_0 \triangleq 2 m \cdot  \frac{1 + \tilde{c_\ell}^{-(m-1)B}}{1 - \tilde{c_\ell}^{-(m-1)B}}, \quad \rho_B  \triangleq (1 - \tilde{c_\ell}^{(m-1)B})^{\frac{1}{(m-1)B}}, \quad \tilde{c_\ell} \triangleq c_\ell^{2(m-1)B + 1}/m.
\end{align}
Furthermore, we will use the following lower and upper bounds of $\phi_i^\nu$ \cite[Prop. 1]{SONATA-companion} \vspace{-0.2cm} 
\begin{equation*} 
\phi_{lb}\leq \phi^\nu_i\leq \phi_{ub}, \quad \text{for all } i\in [m],\quad \nu=0,1,\ldots . 
\end{equation*}



\subsection{Linear convergence rate}\label{sec:linear_rate-directed}\!
The proof of linear convergence of SONATA (Algorithm~\ref{alg:SONATA_TV}) follows  the same path of the one developed in   Sec. \!\!\ref{sec:linear_rate} for the case of undirected graphs. Hence, we omit similar   derivations and  highlight only the key differences.    We will tacitly assume that    Assumptions~\ref{assump:p}, \hyperlink{assumption:G}{B$\,^\prime$}, \ref{assump:SCA_surrogate},  and  \ref{assumption:TVwights}  are satisfied. 

\subsubsection{Step 1: $\optgap^{\nu}$ converges linearly up $\mathcal{O} (\| \var{x}{\nu}\|^2 + \| \var{y}{\nu}\|^2)$}\label{sec:inexact_scaTV}
This is  counterpart of Proposition~\ref{prop:loc_geo_rate} (cf. Sec.~\ref{sec:linear_rate}), and stated as follows. 
{  \begin{proposition}\label{prop:loc_geo_rate2}
	The optimality gap sequence $\{\optgap^{\nu}\}$ satisfies:\vspace{-0.1cm}
		\begin{equation}\label{eq:opt-gap-descentTV}
		\optgap^{\nu + 1} \leq \sigma(\alpha) \cdot \optgap^\nu + \eta(\alpha) \cdot   \phi_{ub} \cdot \left(8 L_{\mx}^2 \| \var{x}{\nu}\|^2 + 2  \| \var{y}{\nu} \|^2\right),
		\end{equation}  where the constants $L_{\mx}$ and $\tmu_{\mn}$ are defined in~\eqref{eq:prob_param_def} and~\eqref{eq:alg_param_def}, respectively; and $\sigma(\alpha) \in (0,1)$ and $\eta(\alpha) >0$ are   defined in~\eqref{eq:sigma_C_def}.
\end{proposition}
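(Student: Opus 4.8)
The plan is to mirror, step for step, the four-part decomposition used for the undirected case (Lemmas~\ref{lem:loc_dec_obj}--\ref{lem:tracking_err_bound} feeding Proposition~\ref{prop:loc_geo_rate}), replacing every plain sum/average by its $\boldsymbol{\phi}$-weighted counterpart, and replacing each appeal to double stochasticity by the column stochasticity of $\mathbf{C}^\nu$ together with the recursion $\phi_i^{\nu+1}=\sum_j c_{ij}^\nu\phi_j^\nu$. Throughout I would work with the weighted quantities $\|\Deltax^\nu\|_{\boldsymbol{\phi}}^2\triangleq\sum_{i}\phi_i^\nu\|\Deltaxi{i}^\nu\|^2$ and $\|\bdelta^\nu\|_{\boldsymbol{\phi}}^2\triangleq\sum_{i}\phi_i^\nu\|\bdelta_i^\nu\|^2$.

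First I would establish the analogue of \eqref{eq:bound_doubly_stoc_cvx}. By the consensus step \eqref{eq:mix_x2}, each $\bx_i^{\nu+1}$ is a convex combination of the $\bx_j^{\nu+\frac12}$'s with weights $c_{ij}^\nu\phi_j^\nu/\phi_i^{\nu+1}$ that sum to one (since $\phi_i^{\nu+1}=\sum_j c_{ij}^\nu\phi_j^\nu$); convexity of $U$ then gives $\phi_i^{\nu+1}\big(U(\bx_i^{\nu+1})-U^\star\big)\le\sum_j c_{ij}^\nu\phi_j^\nu\big(U(\bx_j^{\nu+\frac12})-U^\star\big)$, and summing over $i$ with $\sum_i c_{ij}^\nu=1$ yields $\optgap^{\nu+1}\le\sum_i\phi_i^\nu\big(U(\bx_i^{\nu+\frac12})-U^\star\big)$. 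Next, Lemma~\ref{lem:loc_dec_obj} depends only on the local step \eqref{eq:loc_opt2}--\eqref{eq:descent2}, which is unchanged, so it applies verbatim; multiplying \eqref{eq:loc_dec_obj} by $\phi_i^\nu$, summing, and invoking Young's inequality exactly as in \eqref{eq:opt_err_1} produces the $\boldsymbol{\phi}$-weighted descent inequality. Because $\tmu_i\ge\tmu_{\mn}$ and $D_i^\ell\ge D_{\mn}^\ell$, the same per-step coefficients survive, giving $\optgap^{\nu+1}\le\optgap^\nu-\alpha\big((1-\tfrac{\alpha}{2})\tmu_{\mn}+\tfrac{D_{\mn}^\ell}{2}\alpha-\tfrac12\epsilon_{opt}\big)\|\Deltax^\nu\|_{\boldsymbol{\phi}}^2+\tfrac12\epsilon_{opt}^{-1}\alpha\|\bdelta^\nu\|_{\boldsymbol{\phi}}^2$.

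Then I would reproduce the lower bound of Lemma~\ref{lem:delta_x_lb}. Its per-agent strong-convexity estimate, $U(\hbx_i^\nu)-U(\bx^\star)\le\tfrac{D_i^2}{\mu}\|\Deltaxi{i}^\nu\|^2+\tfrac1\mu\|\bdelta_i^\nu\|^2$, is network-free; weighting it by $\phi_i^\nu$, summing with $D_i\le D_{\mx}$, and combining with the weighted version of \eqref{eq:lower-bound_U_hat}, namely $\alpha\sum_i\phi_i^\nu\big(U(\hbx_i^\nu)-U^\star\big)\ge\optgap^{\nu+1}-(1-\alpha)\optgap^\nu$, yields $\alpha\|\Deltax^\nu\|_{\boldsymbol{\phi}}^2\ge\tfrac{\mu}{D_{\mx}^2}\big(\optgap^{\nu+1}-(1-\alpha)\optgap^\nu-\tfrac{\alpha}{\mu}\|\bdelta^\nu\|_{\boldsymbol{\phi}}^2\big)$. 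Using this to eliminate $\|\Deltax^\nu\|_{\boldsymbol{\phi}}^2$ from the descent inequality is precisely the algebra behind Proposition~\ref{prop:loc_geo_rate}, and the same optimizing of $\epsilon_{opt}$ makes the identical constants $\sigma(\alpha)$ and $\eta(\alpha)$ of \eqref{eq:sigma_C_def} reappear, leaving $\optgap^{\nu+1}\le\sigma(\alpha)\optgap^\nu+\eta(\alpha)\|\bdelta^\nu\|_{\boldsymbol{\phi}}^2$.

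Finally, for the tracking error I would first note that induction on \eqref{eq:avg_y_eq_TV} with the initialization $\phi_i^0=1,\ \by_i^0=\nabla f_i(\bx_i^0)$ gives the consistency $\wavg{y}{\nu}=\overline{\pgrad}^\nu$ (the weighted replacement for $\bar\by^\nu=\overline{\pgrad}^\nu$). Splitting $\bdelta_i^\nu=\big(\nabla F(\bx_i^\nu)-\overline{\pgrad}^\nu\big)+\big(\wavg{y}{\nu}-\by_i^\nu\big)$, the Lipschitzness of the $\nabla f_j$ [cf.~\eqref{eq:mu-L-smooth}] together with the elementary bound $\tfrac1m\sum_{i,j}\|\bx_i^\nu-\bx_j^\nu\|^2\le4\|\var{x}{\nu}\|^2$ gives $\|\bdelta^\nu\|^2\le8L_{\mx}^2\|\var{x}{\nu}\|^2+2\|\var{y}{\nu}\|^2$; then $\|\bdelta^\nu\|_{\boldsymbol{\phi}}^2\le\phi_{ub}\|\bdelta^\nu\|^2$ delivers \eqref{eq:opt-gap-descentTV}. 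The step I expect to require the most care is the consistent bookkeeping of the $\phi_i^\nu$-weights so that the descent and lower-bound inequalities combine into the \emph{same} $\sigma(\alpha),\eta(\alpha)$; this hinges on the convex-combination structure of \eqref{eq:mix_x2} and on the consistency $\wavg{y}{\nu}=\overline{\pgrad}^\nu$, while the extra $\phi_{ub}$ factor and the widened constant $8L_{\mx}^2$ (versus $4L_{\mx}^2$ in the undirected case) are exactly the price of passing from weighted to unweighted norms and of the cruder pairwise bound on the consensus disagreement.
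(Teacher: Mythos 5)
Your proposal is correct and follows essentially the same route as the paper's proof: the weighted consensus inequality via column stochasticity and the recursion $\phi_i^{\nu+1}=\sum_j c_{ij}^\nu\phi_j^\nu$, the unchanged per-agent descent lemma weighted by $\phi_i^\nu$, the $\boldsymbol{\phi}$-weighted lower bound on $\sum_i\phi_i^\nu\|\Deltaxi{i}^\nu\|^2$, the tracking-error bound $8L_{\mx}^2\|\var{x}{\nu}\|^2+2\|\var{y}{\nu}\|^2$ built on $\wavg{\by}{\nu}=\overline{\pgrad}^\nu$, and the factor $\phi_{ub}$ to pass from weighted to unweighted norms. The only cosmetic difference is that you carry the weighted norm $\|\bdelta^\nu\|_{\boldsymbol{\phi}}^2$ to the end before invoking $\phi_i^\nu\leq\phi_{ub}$, whereas the paper applies this bound inside the intermediate inequalities; the resulting constants are identical.
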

\begin{proof} The proof follows closely   that of Proposition \ref{prop:loc_geo_rate}   and thus is omitted. For completeness, we report it in  the  supporting materials. 
Here, we only notice that, instead of    (\ref{eq:bound_doubly_stoc_cvx}), we built on:
	$\sum_{i=1}^m \phi_i^{\nu+1}	U(\bx_i^{\nu + 1}) \leq \sum_{i=1}^m \phi_i^{\nu}U\big(\bx_i^{\nu+\frac{1}{2}}\big)$, 
	where we used  $\sum_{j=1}^m {c^\nu_{ij}\phi_j^\nu}/{\phi_i^{\nu+1}}=1$, for all $i\in [m]$.%
\end{proof}
}

\subsubsection{Step 2: Decay of $\|\var{x}{\nu}\|$ and $\|\var{y}{\nu}\|$ }\label{sec:consensus_errTV} 

{ 

\begin{lemma}\label{lemma:consensus-decay} 
The following bounds hold for $\|\var{x}{\nu}\|$ and $\|\var{y}{\nu}\|$:\vspace{-0.3cm}

	\begin{subequations}\label{eq:consensus-boundsTV}
			\begin{align}
		\begin{split}\label{x_B_decay}
		\norm{\var{x}{\nu}}^2 \leq  2 c_0^2 \rho_B^{2 \nu} \norm{\var{x}{0}}^2 + \frac{2 c_0^2 \rho_B^2}{1-\rho_B} \sum_{t = 0}^{\nu - 1} \rho_B^{\nu -1 - t} \alpha^2 \norm{\Deltaxi{}^t}^2 
		\end{split}\\
		\begin{split}\label{y_B_decay}
		\norm{\var{y}{\nu}}^2 \leq  2 c_0^2 \rho_B^{2 \nu} \norm{\var{y}{0}}^2 + \frac{2 c_0^2 \rho_B^2 m  L_{\mx}^2 \phi_{lb}^{-2}}{1- \rho_B} \sum_{t=0}^{\nu -1}  \rho_B^{\nu -1 - t}   \left(8 \norm{\var{x}{t}}^2 + 2 \alpha^2 \norm{\Deltaxi{}^t}^2\right).
		\end{split}
		\end{align}
	\end{subequations}
	where $B$ and  $\rho_{B}$ 
	are   defined in~\eqref{eq:net_param_def}, and $\epsilon_x$ and $\epsilon_y$ are arbitrary positive constants (to be determined).

\end{lemma}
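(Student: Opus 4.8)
The goal is to establish the two recursive bounds in Lemma~\ref{lemma:consensus-decay} for the consensus errors $\var{x}{\nu}$ and $\var{y}{\nu}$ over directed, time-varying graphs. The key structural difference from the undirected case (Proposition~\ref{prop:err_x}) is that we can no longer rely on a single contraction factor $\rho$ coming from a doubly-stochastic $\bW$; instead, the push-sum dynamics only guarantee a contraction of the consensus error after every block of $B$ consensus steps, with per-block factor $\rho_B$ and a transient amplification constant $c_0$. The plan is to first write the $\bx$- and $\by$-updates~\eqref{eq:mix_x2}--\eqref{eq:mix_y2} in stacked vector-matrix form, isolating the dynamics of the consensus components $\var{x}{\nu}$ and $\var{y}{\nu}$ by subtracting off the weighted averages $\wavg{\bx}{\nu}$ and $\wavg{\by}{\nu}$.

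**Deriving the one-step perturbed recursions.** First I would obtain one-step inequalities analogous to~\eqref{eq:err_x_bound}--\eqref{eq:err_y_bound}. For the $\bx$-variables, the push-sum consensus step applied to $\bx^{\nu+1/2}=\bx^\nu+\alpha\Deltax^\nu$ contracts the consensus error up to the forcing term $\alpha\Deltax^\nu$; for the $\by$-variables, the extra forcing comes from the gradient increment $\pgrad^{\nu+1}-\pgrad^\nu$, which is controlled via the $L_{\mx}$-Lipschitz continuity of each $\nabla f_i$ [cf.~\eqref{eq:mu-L-smooth}], together with the bound $\bx^{\nu+1}-\bx^\nu$ in terms of $\var{x}{\nu}$ and $\alpha\Deltax^\nu$. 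The factor $\phi_{lb}^{-2}$ in~\eqref{y_B_decay} will enter here because the push-sum normalization divides by $\phi_i^{\nu+1}\geq\phi_{lb}$, so bounding $\|\by^{\nu+1}_\bot\|$ requires dividing the gradient perturbation by the smallest weight.

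**Unrolling the block contraction.** The main step is then to unroll these one-step recursions and invoke the block-contraction property of the push-sum operator, borrowed from \cite[Prop.~1]{SONATA-companion}, which supplies the constants $c_0$ and $\rho_B$ defined in~\eqref{eq:net_param_def}. Rather than a clean geometric decay per step, the composition of $B$ consecutive (possibly non-contractive) weight matrices contracts the consensus error by $\rho_B$, with $c_0$ absorbing the worst-case amplification within a block. Summing the resulting geometric series over the forcing terms $\alpha^2\|\Deltax^t\|^2$ (and, for the $\by$-bound, also $\|\var{x}{t}\|^2$) produces the convolution sums $\sum_{t=0}^{\nu-1}\rho_B^{\nu-1-t}(\cdots)$ with the prefactor $\tfrac{2c_0^2\rho_B^2}{1-\rho_B}$, where the $\tfrac{1}{1-\rho_B}$ comes from bounding the geometric tail. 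The factors of $2$ arise from squaring the one-step bounds and applying Young's inequality to split cross terms (which is where the free parameters $\epsilon_x,\epsilon_y$ enter, to be optimized later as in Step~4 of the undirected analysis).

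**Main obstacle.** The hardest part will be translating the scalar-per-step contraction of the undirected case into the block-wise push-sum contraction correctly, in particular tracking how the transient constant $c_0$ and the normalization weights $\phi_i^\nu\in[\phi_{lb},\phi_{ub}]$ propagate through the inductive unrolling without losing the summable geometric structure. One must be careful that the perturbation terms are measured in the correct (weighted versus unweighted) norm and that the Lipschitz bound on $\pgrad^{\nu+1}-\pgrad^\nu$ is expressed back in terms of $\var{x}{\nu}$ and $\Deltax^\nu$ rather than the raw iterates. Once the block-contraction estimate from \cite{SONATA-companion} is invoked as a black box, the remaining algebra is a routine geometric-series summation mirroring~\eqref{eq:small_gain_eqs}.
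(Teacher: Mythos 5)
Your proposal follows essentially the same route as the paper's proof: the paper also treats the push-sum mixing dynamics as a black box, invoking \cite[Lemma 5]{nedic2010convergence} and \cite[Lemmas 3, 11]{SONATA-companion} to obtain the already-unrolled convolution bounds $\norm{\var{x}{\nu}} \leq c_0\big(\rho_B^{\nu}\norm{\var{x}{0}} + \sum_{t=0}^{\nu-1}\rho_B^{\nu-1-t}\rho_B\alpha\norm{\Deltax^t}\big)$ and the analogous bound for $\norm{\var{y}{\nu}}$ (with the $\sqrt{m}L_{\mx}\phi_{lb}^{-1}$ factor on the gradient perturbation, exactly as you predicted), and then squares these following \cite[Lemma 2]{Xu2015augmented}. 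Two corrections to your account, neither fatal. First, there is no one-step contraction of the weighted consensus error over time-varying digraphs: a single column-stochastic $\mathbf{C}^\nu$ does not contract $\var{x}{\nu}$, so the claim in your second paragraph that the push-sum step ``contracts the consensus error up to the forcing term'' is wrong as stated. What the cited lemmas actually supply is a bound on the deviation of the \emph{products} of mixing matrices from their limiting rank-one form, which places the geometric weight $\rho_B^{\nu-t}$ and the constant $c_0$ directly into the unrolled bound; your third paragraph has the correct picture, and that is the step the paper outsources to the companion references rather than re-deriving. Second, the constants in \eqref{x_B_decay}--\eqref{y_B_decay} do not arise from an $\epsilon_x,\epsilon_y$-parameterized Young split: no free parameters appear in these bounds (their mention in the lemma statement is vestigial). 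The factor $2$ comes from $(a+b)^2\leq 2a^2+2b^2$ applied to the initial-condition and convolution terms, and the $1/(1-\rho_B)$ from Jensen/Cauchy--Schwarz applied inside the convolution, namely $\big(\sum_{t}\rho_B^{\nu-1-t}u_t\big)^2\leq \big(\sum_{t}\rho_B^{\nu-1-t}\big)\sum_{t}\rho_B^{\nu-1-t}u_t^2\leq \frac{1}{1-\rho_B}\sum_{t}\rho_B^{\nu-1-t}u_t^2$; the parameters $\epsilon_x,\epsilon_y$ play a role only later, in Proposition~\ref{prop:small_gain_eqs_transformed2}.
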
 
\begin{proof}
Using the result in~\cite[Lemma 5]{nedic2010convergence} and~\cite[Lemma 3, 11]{SONATA-companion}, we obtain  
	\begin{align}
\begin{split}
\norm{\var{x}{\nu}} \leq  c_0 \left( \rho_B^{\nu}  \norm{\var{x}{0}} +   \sum_{t = 0}^{\nu - 1}  \rho_B^{(\nu - 1) - t} ( \rho_B \alpha \norm{\Deltaxi{}^t}) \right)
\end{split}\\
\begin{split}
\norm{\var{y}{\nu}} \leq c_0 \left( \rho_B^\nu \norm{\var{y}{0}} +  \sqrt{m} L_{\mx} \phi_{lb}^{-1} \sum_{t = 0}^{\nu - 1} \rho_B^{(\nu-1) - t} \cdot \rho_B \left(2 \norm{\var{x}{t}} + \alpha \norm{\Deltaxi{}^t}\right)\right).
\end{split}
\end{align}
The rest of the proof follows similar steps as \cite[Lemma 2]{Xu2015augmented}, hence it is omitted.  
\end{proof}
}

\subsubsection{Step 3:   $\| \Deltaxi{}^\nu\|=\mathcal{O}(\sqrt{\optgap^\nu}+\| \var{y}{\nu}\|)$   }\label{sec:delta_x_bound_directed}


	\begin{proposition}\label{prop:bound_delta_x_TV}
		The following upper bound holds for $\| \Deltaxi{}^\nu \|$:
{ 	\begin{equation}\label{eq:bound_delta_x_TV}
		\|\Deltax^\nu\|^2 
		\leq \frac{6}{\mu\phi_{lb}}\left(  \left( \frac{D_{\mx}}{\tmu_{\mn}} + 1\right)^2 + \frac{{4} L_{\mx}^2}{\tmu_{\mn}^2 } \right) \optgap^\nu  + \frac{3}{\tmu_{\mn}^2} \|\var{y}{\nu}\|^2,
		\end{equation}}
		where $L_{\mx}$, $\tL_{\mx}$, $\tmu_{\mn}$, and $D_{\mx}$ are defined in~\eqref{eq:prob_param_def} and \eqref{eq:alg_param_def}, respectively.
\end{proposition}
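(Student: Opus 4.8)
The plan is to transcribe the proof of Proposition~\ref{prop:bound_delta_x} almost verbatim, replacing the plain averages by the $\boldsymbol{\phi}$-weighted quantities that are conserved by the push-sum dynamics. First I would write the first-order optimality conditions for $\hbx_i^\nu$, the minimizer of the local subproblem~\eqref{eq:loc_opt2}, and for $\bx^\star$, the minimizer of~\eqref{eq:P}; adding the two inequalities cancels the nonsmooth $G$-terms exactly as in the undirected case.

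The single structural change occurs at the point where the original argument inserts $\pm\bar{\by}^\nu$ and invokes the average consistency $\bar{\by}^\nu=\overline{\pgrad}^\nu$. Here I would instead insert $\pm\wavg{\by}{\nu}$ and use the weighted-average preservation obtained by iterating~\eqref{eq:avg_y_eq_TV} from the initialization $\by_i^0=\nabla f_i(\bx_i^0)$, $\phi_i^0=1$, which gives $\wavg{\by}{\nu}=\overline{\pgrad}^\nu=\tfrac1m\sum_{j=1}^m\nabla f_j(\bx_j^\nu)$. With this substitution the cross term becomes $\norm{\wavg{\by}{\nu}-\by_i^\nu}\,\norm{\hbx_i^\nu-\bx^\star}$, whose squared per-node contributions sum to $\norm{\var{y}{\nu}}^2$ by the very definition of the $\boldsymbol{\phi}$-weighted consensus error. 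Bounding the Hessian gap through~\eqref{eq:upper-lower-hessian}, using the $\tmu_i$-strong convexity of $\tf_i$ and the Lipschitz continuity of the $\nabla f_j$, and then applying Young's inequality as before, I obtain the per-node estimate
\begin{align*}
\norm{\Deltaxi{i}^\nu}^2 \leq{} & 3\Big(\tfrac{D_i}{\tmu_i}+1\Big)^2\norm{\bx^\star-\bx_i^\nu}^2 + \frac{3}{\tmu_i^2}\norm{\wavg{\by}{\nu}-\by_i^\nu}^2 \\
& + \frac{6L_{\mx}^2}{\tmu_i^2 m}\Big(\sum_{j=1}^m\norm{\bx_j^\nu-\bx^\star}^2 + m\norm{\bx_i^\nu-\bx^\star}^2\Big).
\end{align*}
Summing over $i\in[m]$ and replacing $D_i,\tmu_i$ by the worst-case constants $D_{\mx},\tmu_{\mn}$ collapses this to
\begin{equation*}
\norm{\Deltax^\nu}^2 \leq \Big(3\big(\tfrac{D_{\mx}}{\tmu_{\mn}}+1\big)^2+\tfrac{12L_{\mx}^2}{\tmu_{\mn}^2}\Big)\sum_{j=1}^m\norm{\bx_j^\nu-\bx^\star}^2 + \frac{3}{\tmu_{\mn}^2}\norm{\var{y}{\nu}}^2.
\end{equation*}

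The last step, where the undirected proof converts $\sum_j\norm{\bx_j^\nu-\bx^\star}^2$ into the optimality gap via $\mu$-strong convexity of $F$, is exactly where the factor $\phi_{lb}^{-1}$ is produced. Using $\tfrac{\mu}{2}\norm{\bx_j^\nu-\bx^\star}^2\leq U(\bx_j^\nu)-U^\star=p_j^\nu$ together with the uniform lower bound $\phi_j^\nu\geq\phi_{lb}$ [cf.~\eqref{eq:net_param_def}], I would write $\sum_{j}\norm{\bx_j^\nu-\bx^\star}^2\leq\tfrac{2}{\mu}\sum_j p_j^\nu\leq\tfrac{2}{\mu\phi_{lb}}\sum_j\phi_j^\nu p_j^\nu=\tfrac{2}{\mu\phi_{lb}}\optgap^\nu$; substituting this bound yields exactly~\eqref{eq:bound_delta_x_TV}.

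No step is genuinely difficult, since the argument is a transcription of Proposition~\ref{prop:bound_delta_x}. The two points that demand care are (i) confirming that it is $\wavg{\by}{\nu}$, and not the unweighted $\bar{\by}^\nu$, that equals $\overline{\pgrad}^\nu$ in the push-sum setting, so that the conserved-average identity still closes the gradient-tracking term; and (ii) tracking the single $\phi_{lb}^{-1}$ inflation, which enters only through the strong-convexity conversion to the weighted gap $\optgap^\nu$ and multiplies solely the $\optgap^\nu$ term, leaving the $\norm{\var{y}{\nu}}^2$ coefficient $3/\tmu_{\mn}^2$ unchanged.
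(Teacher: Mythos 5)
Your proposal is correct and matches the paper's approach exactly: the paper omits this proof, stating it follows the same path as Proposition~\ref{prop:bound_delta_x}, with precisely the two modifications you identify—using the conserved weighted average $\wavg{\by}{\nu}=\overline{\pgrad}^{\nu}$ in place of $\bar{\by}^\nu$ so that the tracking term sums to $\|\var{y}{\nu}\|^2$, and producing the $\phi_{lb}^{-1}$ factor solely through the strong-convexity conversion $\sum_j\|\bx_j^\nu-\bx^\star\|^2\leq \tfrac{2}{\mu}\sum_j p_j^\nu\leq\tfrac{2}{\mu\phi_{lb}}\optgap^\nu$. All constants in your final bound ($\tfrac{6}{\mu\phi_{lb}}$, $\tfrac{4L_{\mx}^2}{\tmu_{\mn}^2}$, $\tfrac{3}{\tmu_{\mn}^2}$) check out against \eqref{eq:bound_delta_x_TV}.
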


\begin{proof} The proof follows similar path of that of Proposition \ref{prop:bound_delta_x} and thus is omitted.  \end{proof}

\subsection{Establishing linear rate}\label{sec:linear-rate-TV}
{We can now prove linear rate following the path introduced in Sec. \ref{sec:linear_rate}; for sake of simplicity, we will use the same notation as in Sec. \ref{sec:linear_rate}}.  We begin applying the transformation   \eqref{eq:seq_transform} to the sequences $\{ \optgap^\nu \}_{\nu \in \natural_+}$,  $\{ \|\var{x}{\nu}\|^2 \}$, $\{ \| \var{y}{\nu}  \|^2 \}$, and $\{ \|\Deltaxi{}^\nu\|^2 \}$,   satisfying the inequalities   \eqref{eq:opt-gap-descentTV},  \eqref{x_B_decay}, \eqref{y_B_decay}, and (\ref{eq:bound_delta_x_TV}), respectively.  


\begin{proposition}\label{prop:small_gain_eqs_transformed2}
	Let $\seqnorm{P_{\bphi}}$, $\seqnorm{D}$,  $\seqnorm{X_{\bphi,\bot}}$, and $\seqnorm{Y_{\bphi,\bot}}$  denote the transformation \eqref{eq:seq_transform}  of the sequences $\{ \optgap^\nu \}$, $\{ \|\Deltaxi{}^\nu\|^2 \}$, $\{ \|\var{x}{\nu}\|^2 \}$ and $\{ \| \var{y}{\nu}  \|^2$ $ \}$. Given the constants $\sigma(\alpha)$ and $\eta(\alpha)$,    defined in Proposition~\ref{prop:loc_geo_rate2},  and the free parameters $\epsilon_x, \epsilon_y >0$, the following holds:
	\begin{align}
	\seqnorm{P_{\bphi}}  & \leq  G_{P}(\alpha,z)  \cdot \left( 8  \phi_{ub} L_{\mx}^2  \seqnorm{X_{\bphi,\bot}} +   2 \phi_{ub}  \seqnorm{Y_{\bphi,\bot}}\right) +  \omega_p
	\label{eq:bound_P2}
	\\
	\seqnorm{X_{\bphi,\bot}} & \leq   G_X(z) \cdot \rho_B^2 \alpha^2 D^K(z) + \omega_x
	\label{eq:bound_X2}
	\\
	\seqnorm{Y_{\bphi,\bot}} & \leq   G_Y(z) \cdot 2 m \phi_{lb}^{-2} L_{\mx}^2 \rho_B^2 \left(4\seqnorm{X_{\bphi,\bot}}+\alpha^2 D^K(z)\right)+ \omega_y\label{eq:bound_Y2}
	\\
	\seqnorm{D} &  \leq  C_1 \cdot \seqnorm{P_{\bphi}} + C_2 \cdot \seqnorm{Y_{\bphi,\bot}},
	\label{eq:bound_Dx2}
	\end{align}
	for all \vspace{-0.3cm}
	\begin{equation}\label{eq:bound_z2}
	z \in \left(\max\left\{ \sigma(\alpha), \rho_B\right\},1 \right), \vspace{-0.2cm}
	\end{equation}	where\vspace{-0.2cm}
	\begin{align}
	& G_{P}(\alpha,z) \triangleq \frac{\eta(\alpha)}{z - \sigma(\alpha)}, && \omega_p \triangleq \frac{z}{z - \sigma(\alpha)} \cdot \optgap^0
	\label{G_p}
	\\
	& G_{X}(z) \triangleq \frac{2 c_0^2}{(1 - \rho_B)(z - \rho_B)} ,&& \omega_x \triangleq 2 c_0^2  \norm{\var{x}{0}}^2\\
	&G_Y(z) \triangleq \frac{2 c_0^2}{(1 - \rho_B)(z - \rho_B)}, && \omega_y \triangleq 2 c_0^2  \norm{\var{y}{0}}^2
	\\
	&{  C_1 \triangleq  \frac{6}{\mu\phi_{lb}}\left(  \left( \frac{D_{\mx}}{\tmu_{\mn}} + 1\right)^2 + \frac{4 L_{\mx}^2}{\tmu_{\mn}^2 } \right),} && {  C_2 \triangleq \frac{4}{\tmu_{\mn}^2}}\label{eq:C1_C2_TV}.
	\end{align}
\end{proposition}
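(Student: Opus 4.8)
The plan is to mirror the proof of Proposition~\ref{prop:small_gain_eqs_transformed}, applying the sequence transformation $(\cdot)^K(z)$ of \eqref{eq:seq_transform} to each of the four scalar inequalities \eqref{eq:opt-gap-descentTV}, \eqref{x_B_decay}, \eqref{y_B_decay}, and \eqref{eq:bound_delta_x_TV}, and exploiting the fact that, by definition of the transform, $\norm{\Deltaxi{}^t}^2 \leq \seqnorm{D}\, z^t$, $\norm{\var{x}{t}}^2 \leq \seqnorm{X_{\bphi,\bot}}\, z^t$, and $\norm{\var{y}{t}}^2 \leq \seqnorm{Y_{\bphi,\bot}}\, z^t$ for every $t \leq K$. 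First I would dispatch \eqref{eq:bound_P2} exactly as in the undirected case: multiply \eqref{eq:opt-gap-descentTV} by $z^{-(\nu+1)}$, take the maximum over $\nu = 0,\ldots,K$, and invoke the one-step shift inequality $\max_{\nu}|s^{\nu+1}|z^{-\nu} \geq z\max_{\nu}|s^{\nu}|z^{-\nu} - z|s^0|$ with $s^\nu = \optgap^\nu$; since $z > \sigma(\alpha)$ under \eqref{eq:bound_z2}, rearranging gives $G_P(\alpha,z)=\eta(\alpha)/(z-\sigma(\alpha))$ and $\omega_p$ as stated. The closing inequality \eqref{eq:bound_Dx2} is immediate, because \eqref{eq:bound_delta_x_TV} is pointwise in $\nu$, so multiplying by $z^{-\nu}$ and taking the maximum preserves it verbatim, yielding $\seqnorm{D} \leq C_1\seqnorm{P_{\bphi}} + C_2\seqnorm{Y_{\bphi,\bot}}$.

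The substantive departure from the undirected argument is in \eqref{eq:bound_X2}--\eqref{eq:bound_Y2}. Unlike the one-step recursions of Proposition~\ref{prop:err_x}, the decay bounds in Lemma~\ref{lemma:consensus-decay} are already ``unrolled'' into discrete convolutions with kernel $\rho_B^{\nu-1-t}$ (a consequence of the $B$-step connectivity of the push-sum dynamics), so the transformation must be applied to these sums directly rather than through a single-step contraction. The plan is to multiply \eqref{x_B_decay} by $z^{-\nu}$ and maximize over $\nu$. The initial-condition term contributes $2c_0^2\norm{\var{x}{0}}^2\max_\nu(\rho_B^2/z)^\nu$, which---because $z > \rho_B > \rho_B^2$ by \eqref{eq:bound_z2}---is attained at $\nu=0$ and equals $\omega_x$. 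For the convolution term I would substitute $\norm{\Deltaxi{}^t}^2 \leq \seqnorm{D}\,z^t$ and reindex by $s = \nu-1-t$, so that
\begin{equation*}
z^{-\nu}\sum_{t=0}^{\nu-1}\rho_B^{\nu-1-t}\norm{\Deltaxi{}^t}^2 \;\leq\; \seqnorm{D}\, z^{-1}\sum_{s=0}^{\nu-1}(\rho_B/z)^s \;\leq\; \frac{\seqnorm{D}}{z-\rho_B},
\end{equation*}
the geometric series converging precisely because $\rho_B/z < 1$. Multiplying by the prefactor $\tfrac{2c_0^2\rho_B^2}{1-\rho_B}\alpha^2$ reproduces $G_X(z)\rho_B^2\alpha^2$ with $G_X(z)=\tfrac{2c_0^2}{(1-\rho_B)(z-\rho_B)}$. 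The bound \eqref{eq:bound_Y2} follows by the identical route from \eqref{y_B_decay}, now inserting both $\norm{\var{x}{t}}^2 \leq \seqnorm{X_{\bphi,\bot}}z^t$ and $\norm{\Deltaxi{}^t}^2 \leq \seqnorm{D}z^t$ into the two pieces of the summand and reusing the same geometric-series estimate to produce $G_Y(z)$.

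The main obstacle---though purely a matter of bookkeeping---is verifying that the constants align exactly: one must check that the product of the prefactor in Lemma~\ref{lemma:consensus-decay} with the geometric sum yields precisely $G_X(z)$ and $G_Y(z)$, and that the coefficients $8L_{\mx}^2$, $2\alpha^2$ in \eqref{y_B_decay} factor (after extracting a $2$) into the $4\seqnorm{X_{\bphi,\bot}}+\alpha^2\seqnorm{D}$ form displayed in \eqref{eq:bound_Y2}, with the $m\phi_{lb}^{-2}$ factor carried through unchanged. The single genuinely load-bearing condition is $z > \rho_B$ from \eqref{eq:bound_z2}, which simultaneously guarantees convergence of every geometric series and boundedness of the transformed initial-condition terms; the freedom in $\epsilon_x,\epsilon_y$ plays no role here (the contraction is governed by $\rho_B$), and the remaining algebra is routine.
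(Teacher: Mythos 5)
Your proposal is correct and follows essentially the same route as the paper: \eqref{eq:bound_P2} and \eqref{eq:bound_Dx2} are obtained exactly as in Proposition~\ref{prop:small_gain_eqs_transformed} (shift inequality for the former, pointwise maximization for the latter), and the condition $z>\rho_B$ plays precisely the role you identify. The only difference is that for \eqref{eq:bound_X2}--\eqref{eq:bound_Y2} the paper simply invokes an external result ([Lemma 21] of \cite{tian2020achieving}) applied to the convolution bounds of Lemma~\ref{lemma:consensus-decay}, whereas you prove that step inline via reindexing and the geometric series $\sum_{s\geq 0}(\rho_B/z)^s$; your constants ($G_X$, $G_Y$, $\omega_x$, $\omega_y$, and the factor-of-two extraction in \eqref{eq:bound_Y2}) all check out against the statement.
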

\begin{proof}\label{proof-small-gain-TV}
The proof of the first  two inequalities~\eqref{eq:bound_P2} and~\eqref{eq:bound_Dx2} follows the same steps of  those used to prove   Proposition \ref{prop:small_gain_eqs_transformed}.
Applying~\cite[Lemma 21]{tian2020achieving} to~\eqref{x_B_decay} and~\eqref{y_B_decay} respectively gives~\eqref{eq:bound_X2} and~\eqref{eq:bound_Y2}.

\end{proof}

Chaining the inequalities in~Proposition~\ref{prop:small_gain_eqs_transformed2} as done in   for (\ref{eq:all_ineq_transf})  (cf.~Fig.~\ref{fig:small_gain}), we can bound $\seqnorm{D}$  as\vspace{-0.2cm}
\begin{equation}\label{eq:small_gain_chain2}
\seqnorm{D} \leq \PP (\alpha ,z) \cdot \seqnorm{D} + \RR (\alpha ,z),
\end{equation}
where  $\PP (\alpha ,z)$ is defined as\vspace{-0.2cm}
\begin{align}\label{def_P_alpha_TV}
\begin{split}
\PP (\alpha ,z)  \triangleq {}& G_P(\alpha,z) \cdot G_X(z) \cdot C_1 \cdot 8 \phi_{ub}  L_{\mx}^2 \cdot \rho_B^2 \cdot \alpha^2 
\\
& + \left(G_P(\alpha,z) \cdot 2 \phi_{ub}  \cdot C_1 + C_2\right) \cdot G_Y(z) \cdot 2m \phi_{lb}^{-2} L_{\mx}^2 \cdot \rho_B^2 \cdot  \alpha^2 
\\
& + \left(G_P(\alpha,z) \cdot 2 \phi_{ub}  \cdot C_1 + C_2\right) \cdot G_Y(z) \cdot 8m\phi_{lb}^{-2} L_{\mx}^2 \cdot  G_X(z)  \cdot \rho_B^4 \cdot \alpha^2
\end{split}
\end{align}
and $\RR (\alpha ,z)$ is a bounded remainder term.

Comparing~\eqref{def_P_alpha_TV} to~\eqref{eq:stability_polynomial} we can see that they share the same form and only differ in coefficients. Therefore, with the same argument as in the proof of Theorem~\ref{thm:step_size_condition} we can easily arrive at the following conclusion.

\begin{theorem}\label{thm:step_size_condition2}
	Consider  Problem~\eqref{eq:P} under Assumptions \ref{assump:p}, and \hyperlink{assumption:G}{B$\,^\prime$}; and  SONATA (Algorithm~\ref{alg:SONATA_TV}) under Assumptions~\ref{assump:SCA_surrogate} and \ref{assumption:TVwights}, with   $\tmu_{\mn}\geq  D_{\mn}^\ell$.
 Then, there exists    a sufficiently small step-size $\bar{\alpha}\in (0,1]$ 
 such that, for all $\alpha < \bar{\alpha}$, $\{U(\bx_i^\nu)\}$ converges to $U^\star$  at an R-linear rate,   $i\in [m]$.
\end{theorem}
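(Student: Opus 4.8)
The plan is to exploit the structural identity, noted just below \eqref{def_P_alpha_TV}, that the stability polynomial $\PP(\alpha,z)$ for the time-varying case has exactly the same functional form as its undirected counterpart \eqref{eq:stability_polynomial}, differing only in the coefficients (now involving $\rho_B$, $\phi_{lb}$, $\phi_{ub}$, $c_0$ and $m$ in place of $\rho$). Consequently I would replay the two-step argument of Theorem~\ref{thm:step_size_condition} verbatim, checking at each stage that the new coefficients do not break the monotonicity/continuity properties that drove the undirected proof. Throughout, the nonemptiness of the admissible interval \eqref{eq:bound_z2} is guaranteed because Assumption~B$\,'$ (via the network parameters) ensures $\rho_B<1$, while Proposition~\ref{prop:loc_geo_rate2} gives $\sigma(\alpha)\in(0,1)$.

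First I would treat the marginal case $z=1$. Exactly as in Theorem~\ref{thm:step_size_condition}, the quantity $G_P(\alpha,1)=\eta(\alpha)/(1-\sigma(\alpha))$ is minimized over $\epsilon_{opt}$ by $\epsilon_{opt}^\star=(1-\alpha/2)\tmu_{\mn}+\alpha D_{\mn}^\ell/2$, which is strictly feasible provided $\alpha<2\tmu_{\mn}/(\tmu_{\mn}-D_{\mn}^\ell)$ (here the hypothesis $\tmu_{\mn}\geq D_{\mn}^\ell$ is used). The key remaining observation is that $G_X(1)=2c_0^2/(1-\rho_B)^2$ and $G_Y(1)$ are finite since $\rho_B<1$; hence the optimized polynomial $\PP^\star(\alpha,1)$ is a sum of three terms, each carrying a factor $\alpha^2$. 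It follows that $\PP^\star(\cdot,1)$ is continuous and monotonically increasing on the feasible $\alpha$-range with $\PP^\star(0,1)=0$, so there exists $\bar\alpha\in(0,1]$ with $\PP^\star(\alpha,1)<1$ for all $\alpha\in(0,\bar\alpha)$.

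Second, I would push $z$ strictly below $1$. For every fixed $\alpha\in(0,\bar\alpha)$, $\PP^\star(\alpha,z)$ is continuous at $z=1$, so $\PP^\star(\alpha,1)<1$ yields some $\bar z(\alpha)\in(\max\{\sigma(\alpha),\rho_B\},1)$ with $\PP^\star(\alpha,\bar z(\alpha))<1$. Feeding this into the chained inequality \eqref{eq:small_gain_chain2} gives $\seqnorm{D}\leq \RR(\alpha,\bar z(\alpha))/(1-\PP^\star(\alpha,\bar z(\alpha)))\leq B<\infty$, a bound uniform in $K$. By Lemma~\ref{Lemma_Small_gain} this forces $\|\Deltaxi{}^\nu\|^2=\mathcal{O}(\bar z(\alpha)^\nu)$, and the same reasoning applied to the remaining three transformed inequalities of Proposition~\ref{prop:small_gain_eqs_transformed2} yields R-linear decay of $\{\optgap^\nu\}$, $\{\|\var{x}{\nu}\|^2\}$ and $\{\|\var{y}{\nu}\|^2\}$. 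Finally, since $\optgap^\nu=\sum_{i}\phi_i^\nu(U(\bx_i^\nu)-U^\star)$ with $\phi_i^\nu\geq\phi_{lb}>0$, I would conclude $U(\bx_i^\nu)-U^\star\leq \optgap^\nu/\phi_{lb}=\mathcal{O}(\bar z(\alpha)^\nu)$ for every $i\in[m]$, which is the claim.

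I expect the main obstacle to be essentially bookkeeping: verifying that replacing $\rho$ by $\rho_B$ and inserting the extra network constants $\phi_{lb}$, $\phi_{ub}$, $c_0$, $m$ preserves the two properties on which the argument rests, namely $\PP^\star(0,1)=0$ (each term must retain its $\alpha^2$ factor) and the finiteness of $G_X$, $G_Y$ at $z=1$ (guaranteed by $\rho_B<1$). Because the time-varying consensus bounds of Lemma~\ref{lemma:consensus-decay} are derived through the push-sum contraction of \cite[Lemma 21]{tian2020achieving} rather than the Young's-inequality split used for undirected graphs, I would also double-check that the remainder $\RR(\alpha,z)$ absorbing the initial terms $\|\var{x}{0}\|$, $\|\var{y}{0}\|$ and $\optgap^0$ stays finite on the whole interval \eqref{eq:bound_z2}; this is immediate since every $G$-function carries a single simple pole at $z=\rho_B$ or $z=\sigma(\alpha)$, both strictly below the admissible range of $z$.
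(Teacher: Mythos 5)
Your proposal is correct and follows essentially the same route as the paper's proof: exploit the structural identity between \eqref{def_P_alpha_TV} and \eqref{eq:stability_polynomial}, optimize $\epsilon_{opt}$ at $\epsilon_{opt}^\star=\left(1-\frac{\alpha}{2}\right)\tmu_{\mn}+\alpha D_{\mn}^\ell/2$, show $\PP^\star(\cdot,1)$ is continuous and increasing with $\PP^\star(0,1)=0$ to extract $\bar\alpha$, then push $z$ below $1$ by continuity and invoke the small-gain argument of Proposition~\ref{prop:small_gain_eqs_transformed2} together with Lemma~\ref{Lemma_Small_gain}. Your closing observation that $\phi_i^\nu\geq\phi_{lb}>0$ is needed to pass from the weighted gap $\optgap^\nu$ to the individual gaps $U(\bx_i^\nu)-U^\star$ is a detail the paper leaves implicit, and is handled correctly.
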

\begin{proof}
	We provide the proof in the  supporting material.
\end{proof}

{  For sake of completeness, we provide an explicit expression of the linear rates   in terms of the step-size $\alpha$ in the supporting material--see Theorem \ref{thm:linear_rate_TV}. Table~\ref{Table-SONATA-TV} summarizes the expression of the rates achieved by SONATA  using the surrogate functions \eqref{eq:linear_surrogate} and  \eqref{eq:f_surrogate}--a formal statement of these results along with the proofs can be found   in the supporting material-see  Corollaries \ref{cor:linearization_rate_TV},  \ref{cor:f_rate_1_TV} and \ref{cor:f_rate_2_TV}.   


	 \renewcommand{\arraystretch}{2}%
\begin{table}[h!]
\resizebox{\columnwidth}{!}{\begin{tabular}{|c|c|c|c|}
\hline 
{\bf Surrogate} & {\bf Communication Rounds} &  {\bf $\rho_B$ (network) }& $\beta$ \tabularnewline
\hline 
\hline 
\multirow{2}{3cm}{\centering linearization} & $\small\mathcal{O}\left(\kappa_g\,\log\left(1/\epsilon\right)\right)$ &  $\begin{array}{ccc}&\ensuremath{\rho_B=\mathcal{O}(\kappa_g^{-1}(1+\frac{\beta}{L})^{-2})} \vspace{-0.3cm}
\\ & \vspace{-0.3cm}
\text{or}
\\ 
\vspace{-0.3cm}& \text{star-networks}\vspace{0.2cm}
\\
\end{array}$  & arbitrary
\tabularnewline
\cline{2-3} 
& $\mathcal{O} \left(\frac{ \big(\kappa_g + \beta/\mu\big)^2 \rho_{B}}{(1 - \rho_{B})^2}\,\log (1/\epsilon)\right)$ &   arbitrary & \tabularnewline
\hline 
\multirow{4}{3cm}{\centering local $f_{i}$} & $\small\mathcal{O}\left(1\cdot\log\left(1/\epsilon\right)\right)$ &  $\begin{array}{ccc}&\rho_B=\mathcal{O}\left( {\left(1 + \frac{\beta}{\mu}\right)^{-2}\left(\kappa_g + \frac{\beta}{\mu}\right)^{-2}}\right)\vspace{-0.3cm}\\ & \vspace{-0.3cm}\text{or}\\ \vspace{-0.3cm}& \text{star-networks}\vspace{0.2cm}
\\
\end{array}$  & \multirow{2}{3cm}{\centering$\beta\leq\mu$}
\tabularnewline
\cline{2-3} 
& $\mathcal{O} \left( \frac{\kappa_g^2 \rho_B}{(1 - \rho_B)^2} \,\log (1/\epsilon)\right)$ &   arbitrary & \tabularnewline
\cline{2-4} 
 & $\small\mathcal{O}\left(\dfrac{\beta}{\mu}\cdot\log\left(1/\epsilon\right)\right)$ &  $\begin{array}{ccc}&\rho_B=\mathcal{O}\left(\left(1 + \frac{L}{\beta}\right)^{-1} \left(\kappa_g + \frac{\beta}{\mu}\right)^{-1}\right)\vspace{-0.3cm}\\ & \vspace{-0.3cm}\text{or}\\ \vspace{-0.3cm}& \text{star-networks}\vspace{0.2cm}\\\end{array}$ & \multirow{2}{3cm}{\centering $\beta>\mu$}
 \tabularnewline
 \cline{2-3} 
 &$\mathcal{O} \left(\frac{ \big(\kappa_g + \beta/\mu\big)^2 \rho_{B}}{(1 - \rho_{B})^2}\,\log (1/\epsilon)\right)$  &   arbitrary & \tabularnewline
\hline  
\end{tabular}}\medskip \caption{{  Summary of convergence rates of SONATA over time-varying directed graphs: number of communication rounds to reach   $\epsilon$-accuracy. }
\vspace{-0.7cm}}
\label{Table-SONATA-TV}\end{table} 
	


The rate estimates in Table~\ref{Table-SONATA-TV} are almost identical to those  obtained in Sec.  \ref{sec_arbitrary-topology}, with the difference that the network dependence now is expressed throughout    $\rho_B$ rather than  $\rho$.  Therefore, similar comments--as those stated in Sec. \ref{sec_arbitrary-topology}--apply to the rates in Table~\ref{Table-SONATA-TV}. For example, if the network is sufficiently connected ($\rho_B$ ``small''), its impact on the rate becomes negligible and SONATA matches  the {\it network-independent} rate achieved on   star-topology (cf. Corollary \ref{cor:centralized_rate}) or  centralized settings. Specifically,     when linearization surrogate  \eqref{eq:linear_surrogate} is used,  this rate  coincides with the rates of centralized proximal gradient algorithm.}

{  
\section{Numerical Results}\label{sec:num} In this section, we corroborate numerically  the complexity results proved in Corollaries~\ref{cor:linearization_rate}--\ref{cor:f_rate_2}. 
As  a test problem, we consider the distributed ridge regression: \vspace{-0.2cm}
\begin{equation}~\label{p:ridge_regression}
\min_{\bx \in \mathbb{R}^d}~\frac{1}{m} \left\{\frac{1}{2n} \| \bA_i \bx - \bb_i\|^2 + \lambda \|\bx\|^2\right\},\vspace{-0.1cm}
\end{equation}
where the loss function of agent  $i$ is   $f_i (\bx)= \frac{1}{2n} \| \bA_i \bx - \bb_i\|^2 + \lambda \|\bx\|^2$ [agent $i$ owns data $(\bA_i, \bb_i)$]. Problem parameters are generated as follows.
Each row of the measurement matrix $\bA_i$ is independently and identically drawn from distribution $\mathcal{N} (\0, \boldsymbol{\Sigma})$; and $\bb_i$ is generated according to the linear model $\bb_i = \bA_i \bx^* + \bn_i$, where $\bx^*$ is the ground truth,   generated according to   $\mathcal{N} (5 \cdot \1, \bI)$, and $\bn_i \sim \mathcal{N} (\0, 0.1 \cdot \bI)$ is the   measurement noise.  The covariance matrix $\boldsymbol{\Sigma}$ is constructed according to the eigenvalue decomposition $\boldsymbol{\Sigma} =  \sum_{j = 1}^{d} \lambda_j \bu_j \bu_j^\top$, where the eigenvalues $\{\lambda_j\}_{j = 1}^d$ are uniformly distributed in $[\mu_0, L_0]$. The eigenvectors, forming   $\mathbf{U} = [\bu_1, \ldots, \bu_d]$, are obtained via the QR decomposition of a random $d \times d$ matrix with standard Gaussian i.i.d. elements. The network is generated using an Erd\H{o}s-R\'{e}nyi model $G(m,p)$, with $m = 30$ nodes and each edge independently  included in the graph with probability $p = 0.5$.

To investigate the impact of $\kappa_g$ and $\beta$ on the convergence rate, we specifically  consider the following two scenarios:
\begin{itemize}
\item[(S.I) ] {\bf Changing $\kappa_g$ with fixed $\beta$}:  We generate a sequence of instances of~\eqref{p:ridge_regression} with  fixed $\beta$ and increasing $\kappa_g$. To do so, we  use the same   data set  $\{\bA_i ,\bb_i\}$  across  the  different instances and  change  the regularization parameter $\lambda$, so that the condition number $\kappa_g$ ranges in $[K_\ell,  K_u]$.

\item[(S.II)] {\bf Changing $\beta$ with (almost) fixed  $\kappa_g$}:  We generate  instances of~\eqref{p:ridge_regression} with decreasing $\beta$ and (almost) fixed $\kappa_g$. To do so, we set $\lambda = 0$ and   increased the local sample size $n$ from $N_\ell$ to $N_u$;  we set $N_\ell$ sufficiently large so that the empirical condition number $\kappa_g$ is close to $L_0/\mu_0$ for  all  instances.
\end{itemize}

We run SONATA using surrogates  {\eqref{eq:linear_surrogate} (linearization) and~\eqref{eq:f_surrogate} (local $f_i$)--we term it as    SONATA-L and   SONATA-F, respectively.
	The simulations parameters of the different experiments are summarized in  Table~\ref{tab:sim_params}; and the algorithmic parameters are set according to Corollaries~\ref{cor:linearization_rate}--
	\ref{cor:f_rate_2}.\footnote{\label{param} The expressions are not tight in terms of the absolute constants. To show convergence rate in both Cases I and II in Corollary~\ref{cor:linearization_rate}-\ref{cor:f_rate_2}, we enlarged the   second term in the expression of $\alpha_{\mx}$ by a constant factor.} 
	We measure the algorithm's complexity using 
	$
	T_\epsilon = \inf\left\{ \nu\geq 0 \,|\, \frac{1}{m} \sum_{i=1}^{m}(F(\bx_i^\nu) - F^\star)\right.$ $\left.\leq 10^{-7}\right\}
	$. 
	
	 In Table~\ref{tab:iter_complexity_figs}, we report the corresponding  iteration complexity of SONATA for each  simulation setup (s.1)-(s.6) in   Table~\ref{tab:sim_params}. Each figure is generated under one particular realization of the problem setting. Further, in order to compare the complexity of SONATA across different settings, all the simulations share the same network parameters, as well as the same data set whenever the problem parameters are the same. 
	 The results of our experiments are reported in 
	  Table~\ref{tab:iter_complexity_figs}; the curve  are generated using only one random realization for visualization clarity. However, the behavior of the curves (e.g., scalability with respect to the parameters) is representative and  consistent across all the random experiments we conducted. \begin{table}[t!]\label{tab:sim_params}
	\centering
	\begin{tabular}{ c | c | c}
		\hline
						 & Setting (S.I) & Setting (S.II) \\\hline
	Linearization 	 & \makecell{(s.1) \\[.5ex]$n = 10^3$ \\ $\mu_0 = 1$, $L_0 = 10^3$ \\ $K_\ell = 10$, $K_u = 100$} &  
	\makecell{(s.4) \\[.5ex] $\lambda = 0$ \\ $\mu_0 = 1$, $L_0 = 5$, $\kappa_g \approx 5$ \\ $N_\ell = 10$, $N_u = 10^3$} \\\hline
	Local $f_i$ ($\beta \geq \mu$)	& \makecell{(s.2) \\[.5ex] same as above}
			&	\makecell{(s.5) \\[.5ex] same as above} \\\hline			
	Local $f_i$ ($\beta < \mu$)	&  \makecell{(s.3) \\[.5ex] $n = 10^5$ \\ $\mu_0 = 1$, $L_0 = 20$ \\ $K_\ell = 1.1$, $K_u = 19$} &  
	\makecell{(s.6) \\[.5ex] $\lambda = 0$ \\ $\mu_0 = 1$, $L_0 = 2$, $\kappa_g \approx 2$ \\ $N_\ell = 2 \times 10^3$, $N_u = 10^5$ } \\\hline
	\end{tabular}\medskip \caption{Simulation setup and parameter setting.} \vspace{-0.8cm}
\end{table}

	  The following comments are in order.

	 \noindent $\bullet$ \textbf{Scalability with respect to $\kappa_g$.} Consider setting (S.I) wherein $\beta$ is fixed and $\lambda$ is changing. Figures for (s.1)-(s.3) show that when $\alpha = 1$ (blue curve), the iteration complexity of SONATA-L scales linearly with respect to $\kappa_g$ [as predicted by Corollary~\ref{cor:linearization_rate}], while that of SONATA-F is invariant whenever $\beta < \mu$ [as stated in Corollary~\ref{cor:f_rate_1}]. When $\beta \geq \mu$,  the iteration complexity of SONATA-F grows as $\lambda$ increases since $\beta/\mu$ decreases [cf. Corollary~\ref{cor:f_rate_2}]. However, the increasing rate is much slower than SONATA-L, due to the fact that  $(\beta/\mu)/\kappa_g = \beta/L \ll 1$ for large $\lambda$. When $\alpha <1$, the iteration complexity scales quadratically with respect to $\kappa_g$, in all settings, as predicted by our theory.

\noindent $\bullet$ \textbf{Scalability with respect to $\beta$.} Consider now  setting (S.II), where  we decrease the local sample size $n$ to increase $\beta$. 
In contrast to setting (S.I), Figures for (s.4) and (s.5) show that, with $\alpha = 1$, the iteration complexity of SONATA-F scales linearly with $\beta/\mu$ when $\beta >\mu$, while that of SONATA-L is invariant--this is consistent with Corollaries~\ref{cor:linearization_rate} and~\ref{cor:f_rate_2}. When $\alpha < 1$, the iteration complexity scales quadratically with respect to $\beta/\mu$. Finally, the plot associated with  (s.6) simply reveals that when $\beta < \mu$, iteration complexity of SONATA-F remains bounded, as stated in Corollary~\ref{cor:f_rate_1}.

\noindent $\bullet$ \textbf{Linearization versus Local $f_i$.} We compare the performance of SONATA-L and SONATA-F in the setting (S.II), with  parameters  $\lambda = 0$, $\mu_0 = 1$, $L_0 = 100$, $N_\ell = 10$, $N_u = 10^5$. We consider a relatively connected network with edge activation probability $p = 0.9$ so that the step-size can be set to $\alpha = 1$, for all experiments. Note that such connectivity can also be achieved with a less connected network by running multiple but fixed rounds of   consensus steps. Fig.~\ref{fig:lin_vs_surrogate} compares the iteration complexity as $\beta$ increases, averaged over $100$ Monte-Carlo realizations. We can see that for small $\beta$ SONATA-F converges faster than SONATA-L; while for large $\beta$ SONATA-L is faster. This can be explained using our results in Corollaries~\ref{cor:linearization_rate} and~\ref{cor:f_rate_2}. As the complexity of SONATA-F and SONATA-L scales   proportionally to $\beta/\mu$ and $\kappa_g$, respectively, when $\beta/\mu$ is comparatively  smaller  than $\kappa_g$, SONATA-F enjoys a better rate. But as $\beta/\mu$ increases, the rate deteriorates and eventually gets worse than that of SONATA-L.

}

\begin{table}[t]\label{tab:iter_complexity_figs}
	\centering
	\begin{tabular}{ c | c | c}
		\hline
		& Setting (S.I) & Setting (S.II) \\\hline
		Linearization 	 & 		\makecell{(s.1)\\	\includegraphics[width = 4cm, height = 2cm  ]{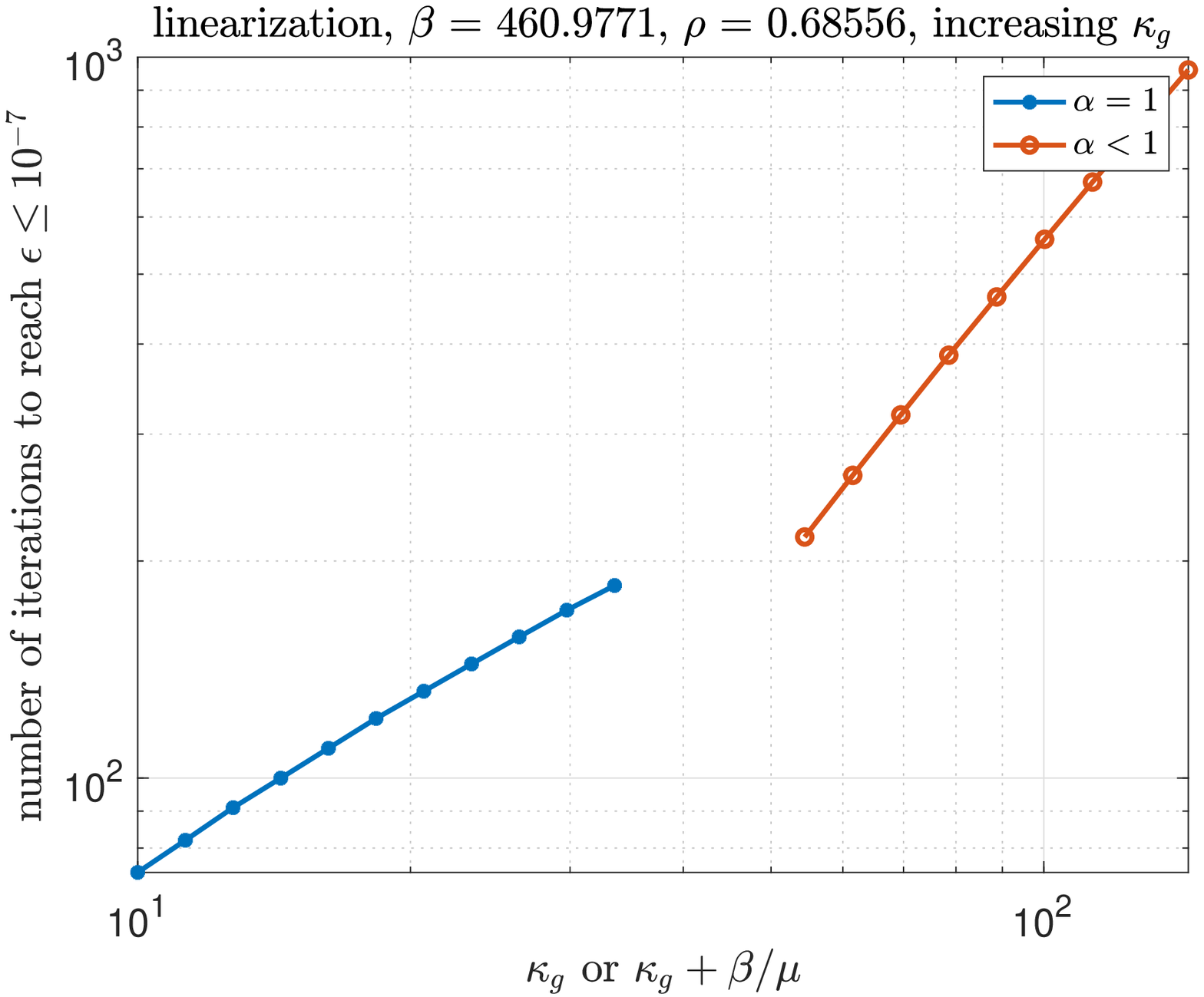}	\\ \includegraphics[width = 4cm, height = 2cm ]{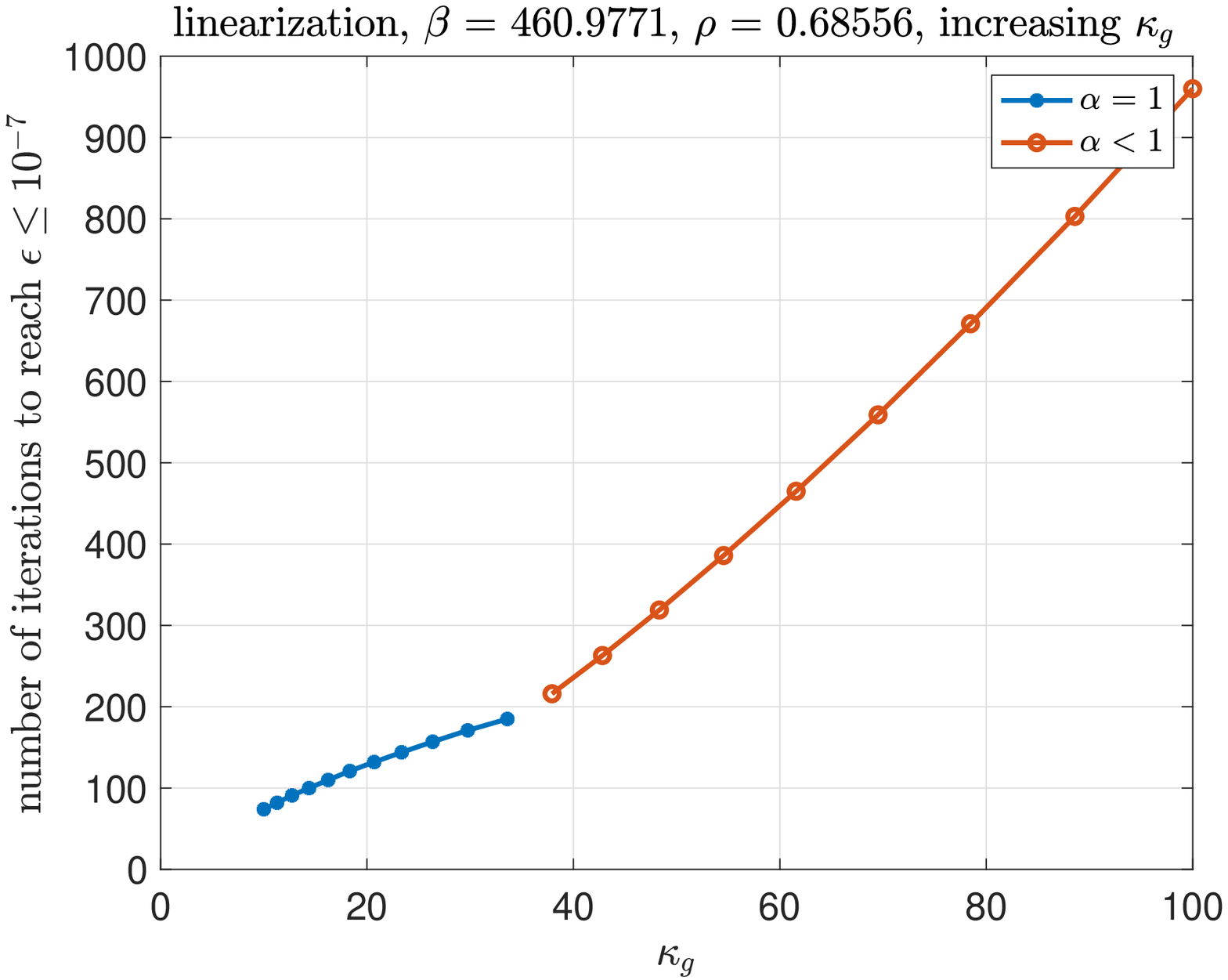}} &  \makecell{(s.4) \\ \includegraphics[width = 4cm, height = 2cm ]{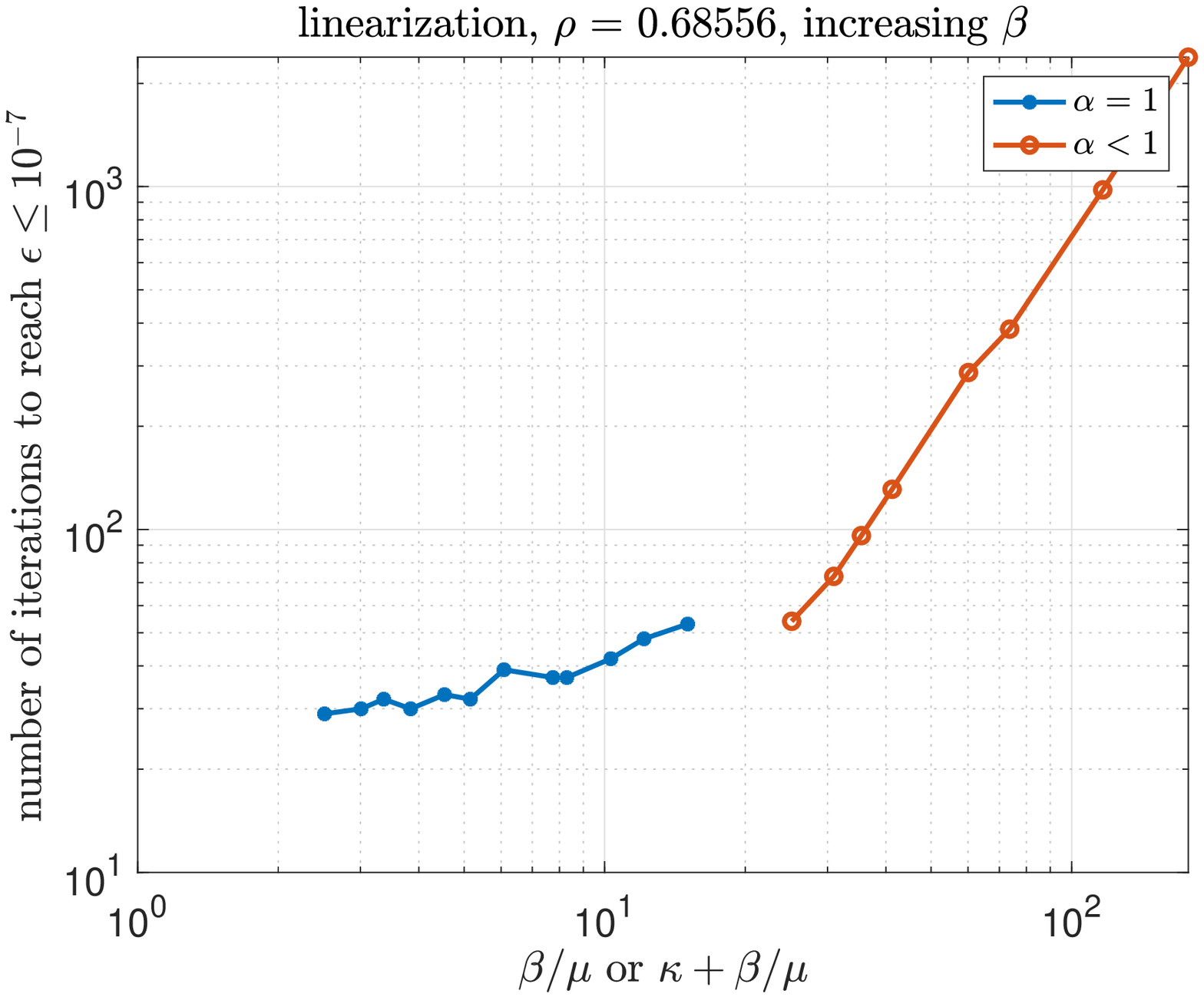} \\ \includegraphics[width = 4cm, height = 2cm ]{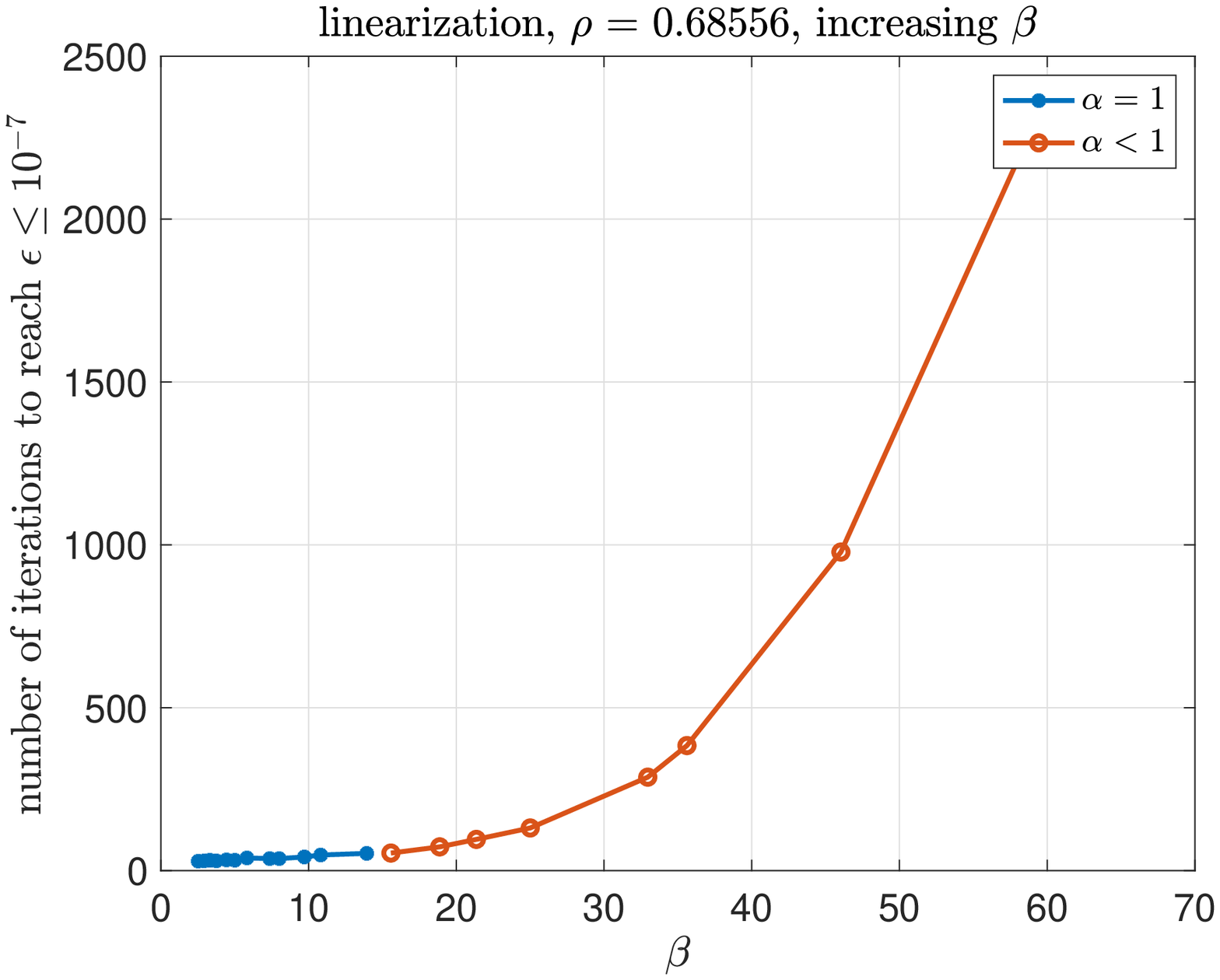} } \\\hline
		Local $f_i$ ($\beta \geq \mu$)	& \makecell{(s.2)\\ \includegraphics[width = 4cm, height = 2cm]{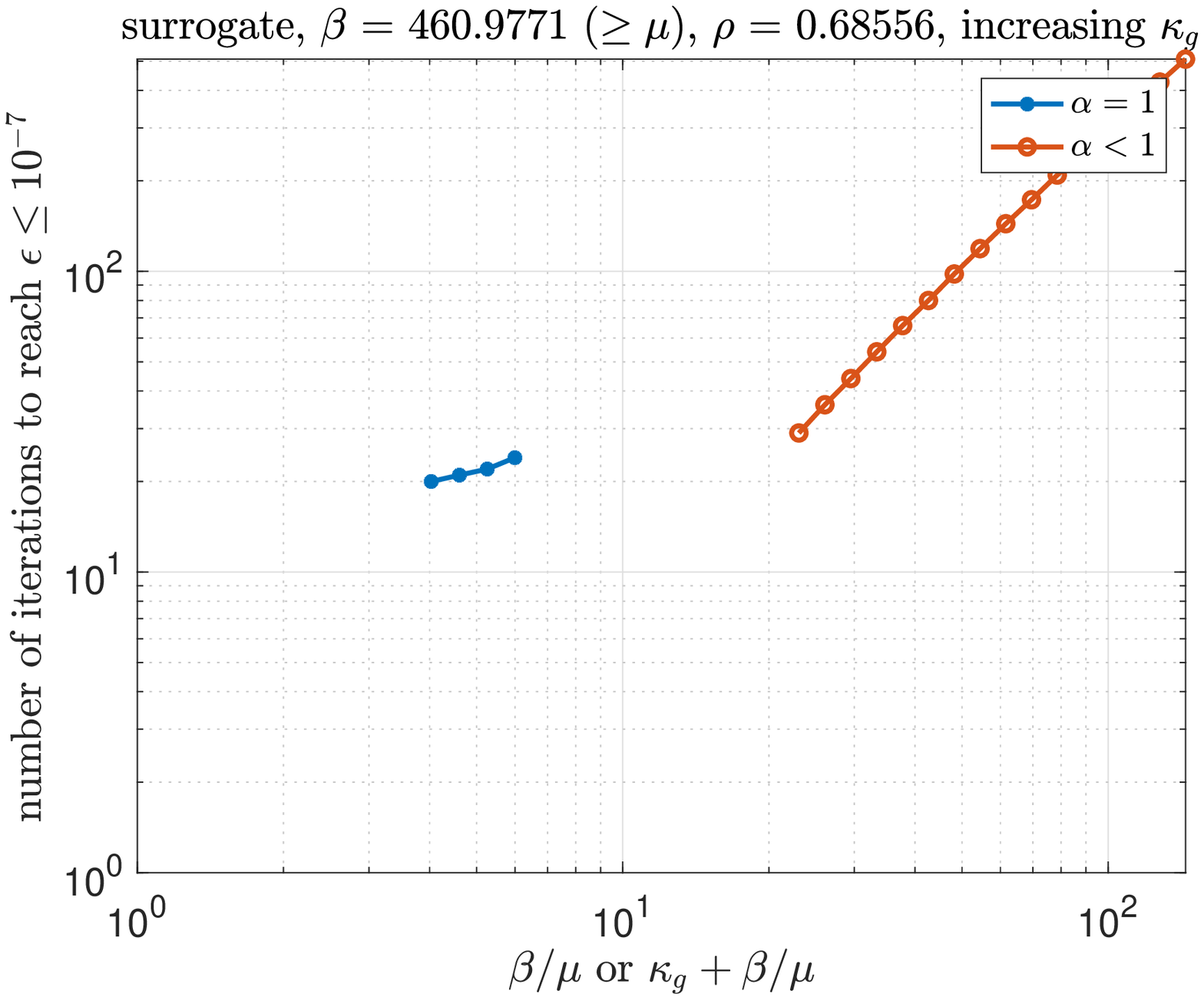} \\  \includegraphics[width = 4cm, height = 2cm]{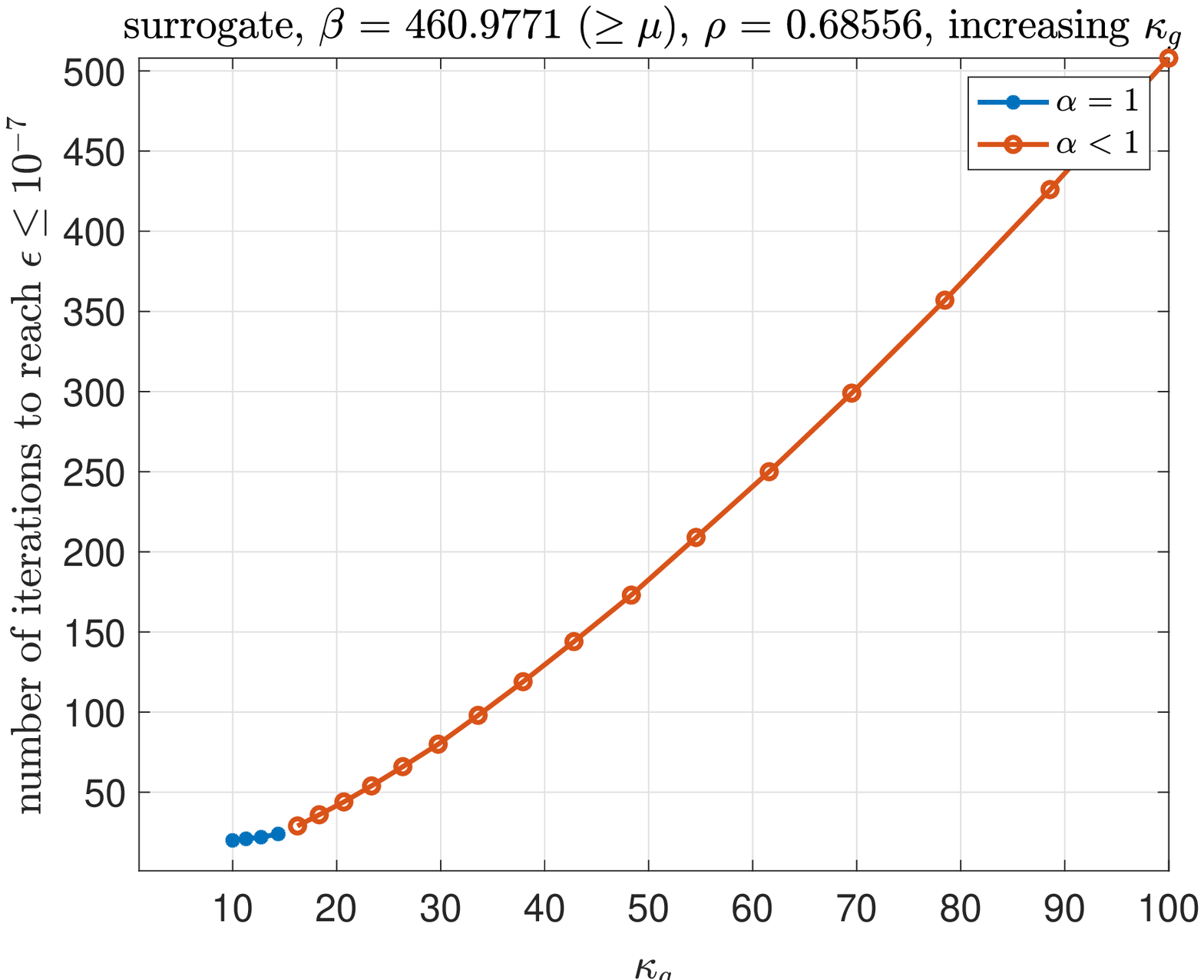}}
		& \makecell{(s.5)\\ \includegraphics[width = 4cm, height = 2cm]{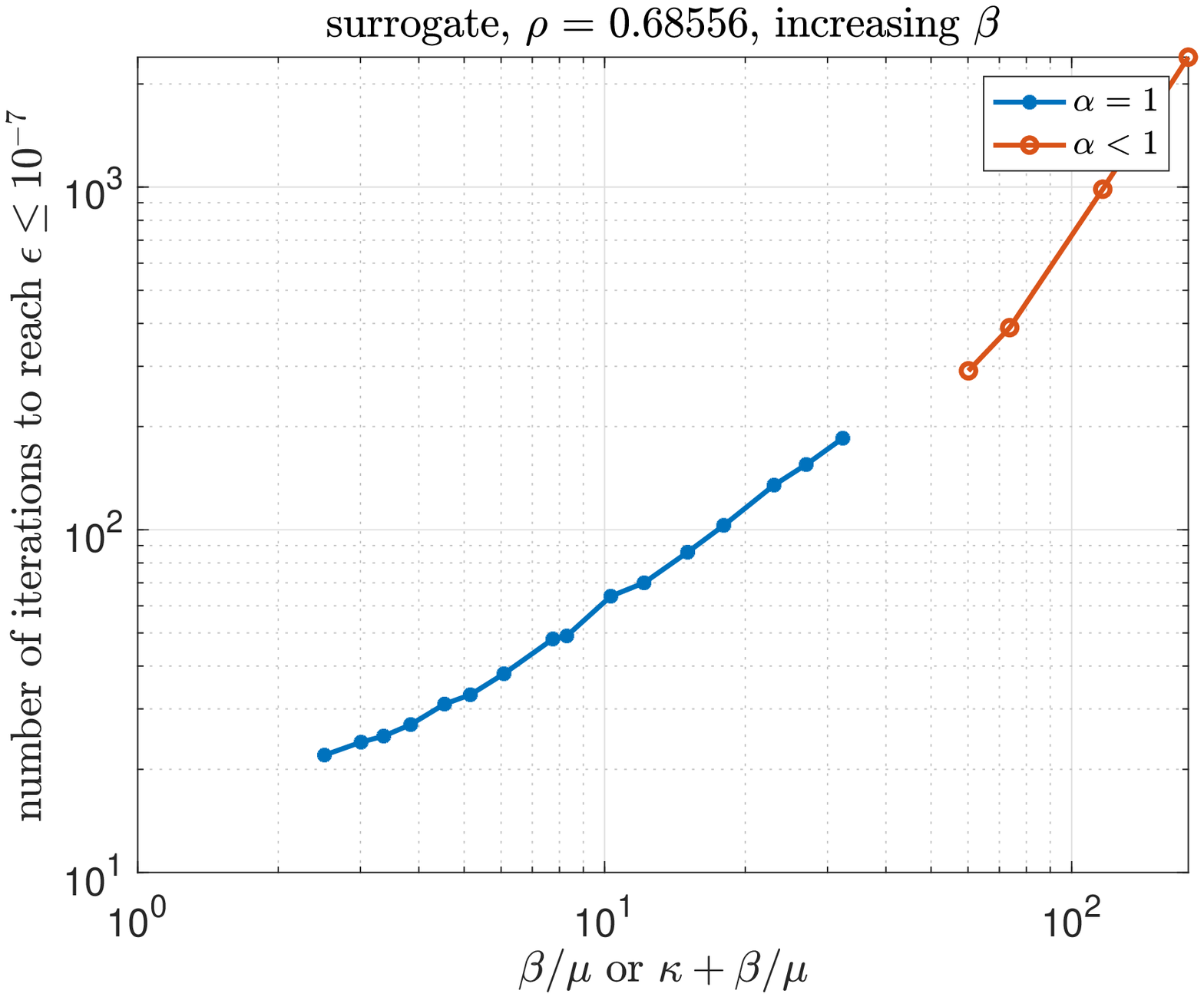} \\ \includegraphics[width = 4cm, height = 2cm]{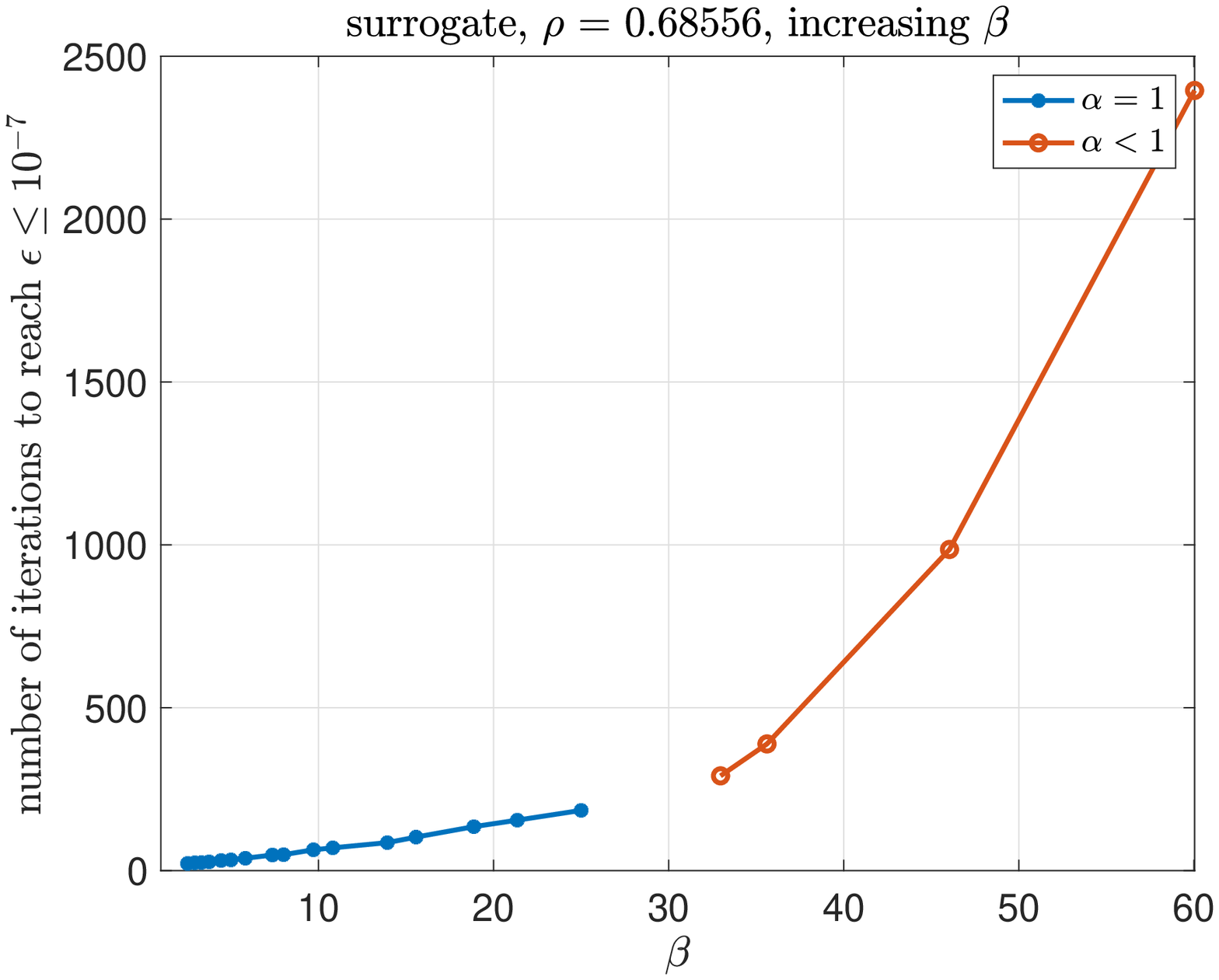}} \\\hline
		Local $f_i$ ($\beta < \mu$)	&  \makecell{(s.3) \\ \includegraphics[scale = .23]{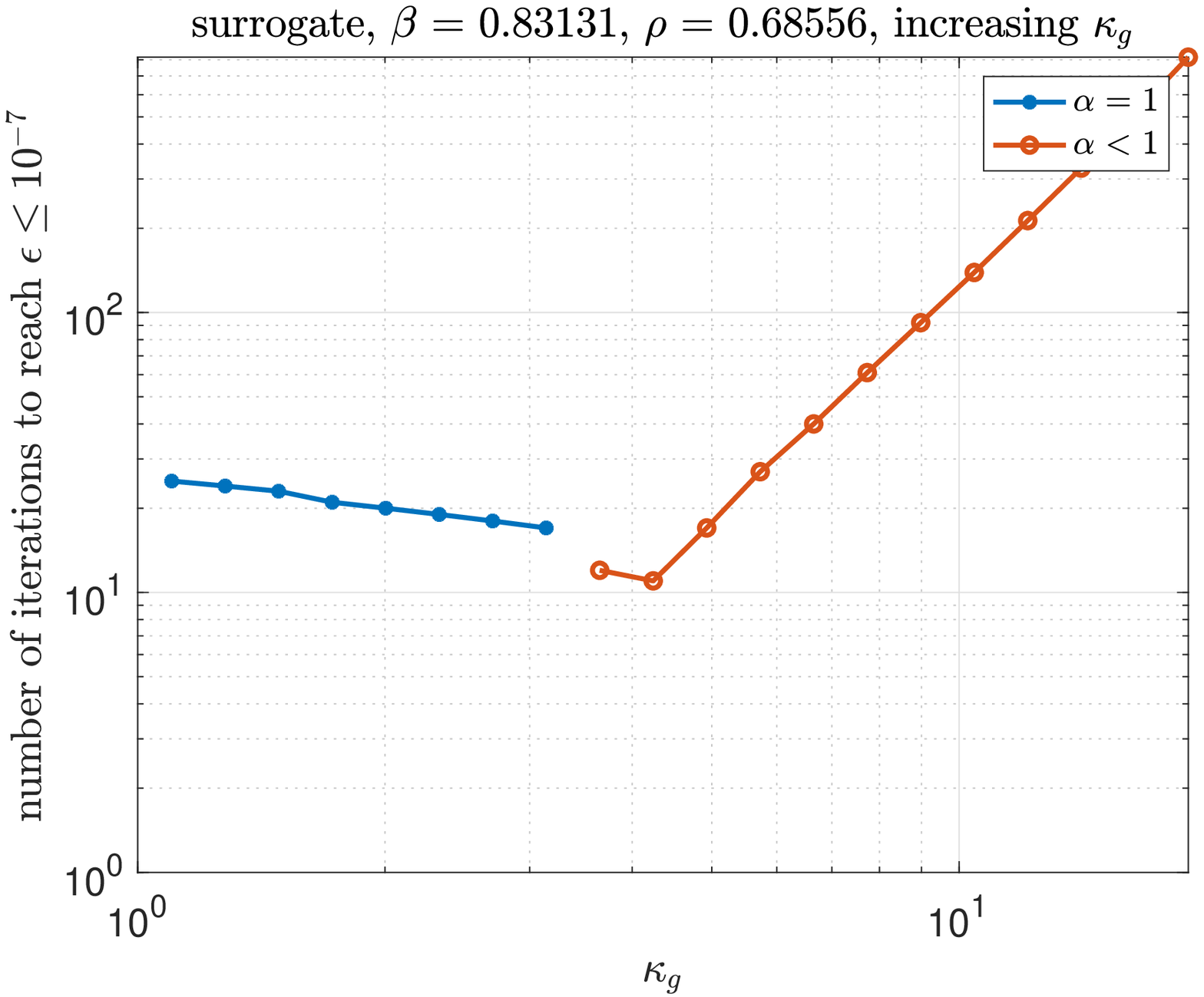}} &  \makecell{(s.6) \\\includegraphics[scale = .23]{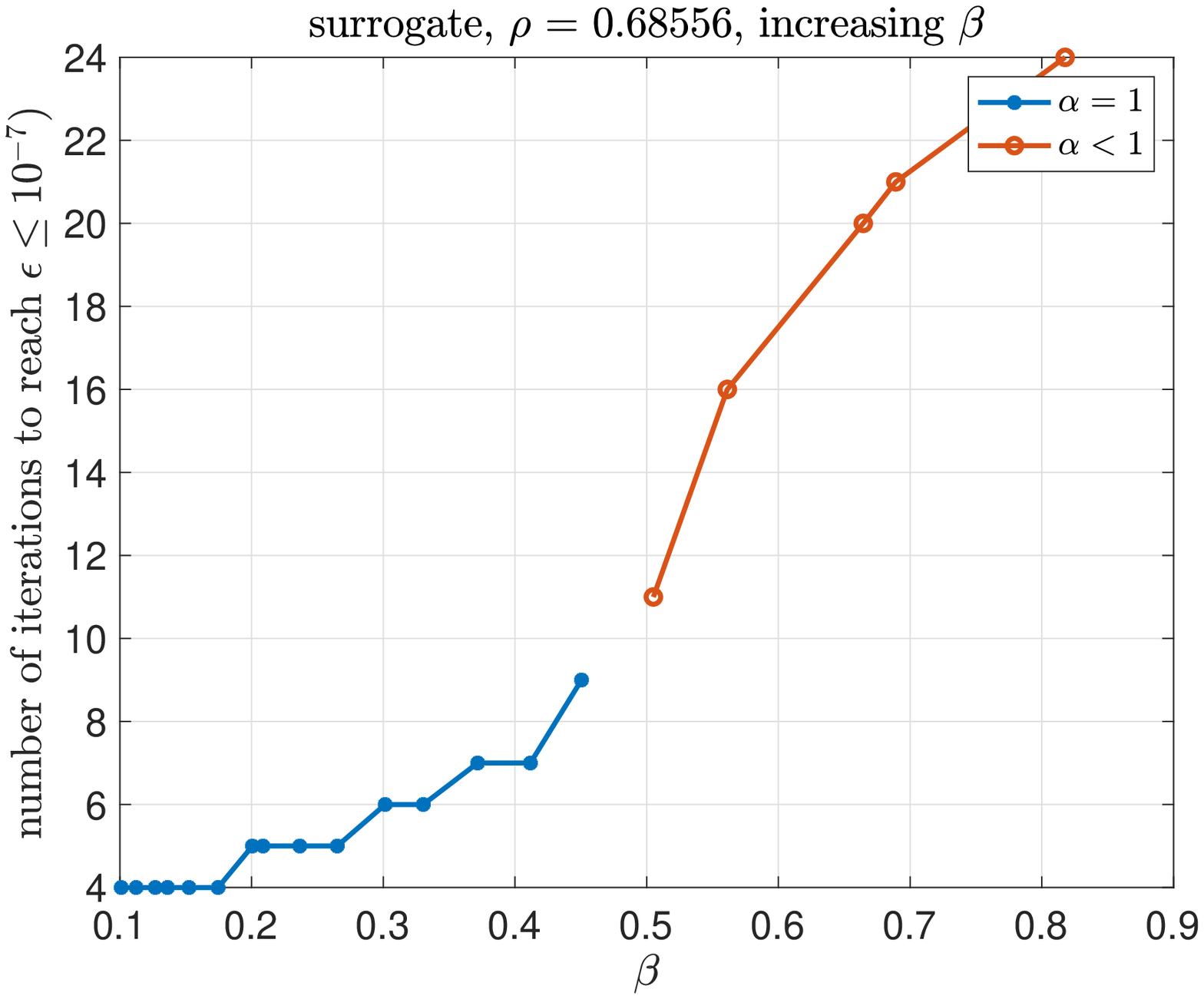}}
		 \\\hline
	\end{tabular}\medskip\caption{{ Iteration complexity of SONATA under the simulation settings in Table~\ref{tab:sim_params}. Left (S.I): scalability of iteration complexity with respect to the condition number $\kappa_g$; Right (S.II): scalability of the iteration complexity with respect to the similarity parameter $\beta$.}}
\end{table}
\begin{figure}[h!]\label{fig:lin_vs_surrogate}
	\centering
\includegraphics[scale = .35]{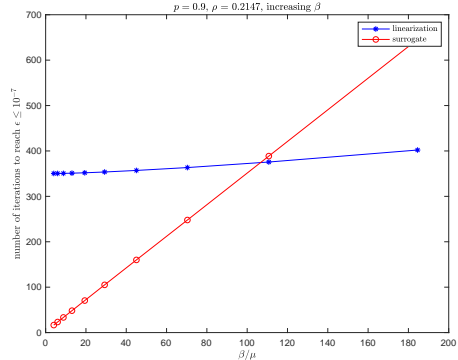}\vspace{-0.6cm}\caption{Complexity of SONATA-L versus SONATA-F.}
\end{figure}
\appendix

\section{Proof of~\eqref{eq:small_gain_chain}} \label{app:pf_small_gain_chain}
Chaining the  inequalities in~\eqref{eq:all_ineq_transf}  as   shown in Fig.~\ref{fig:small_gain}, we have\vspace{-0.2cm} 
\begin{align*}
D^K (z)  
&  {\leq} C_1 \cdot \seqnorm{P} + C_2 \cdot \seqnorm{Y_\bot}\\
& \leq C_1 \cdot \Big(G_P(\alpha,z) \cdot \left( 4   L_{\mx}^2 \seqnorm{X_\bot} +  2  \seqnorm{Y_\bot}\right) + \omega_p \Big) + C_2 \cdot \seqnorm{Y_\bot}\\
& = C_1 \cdot G_P(\alpha,z) \cdot 4   L_{\mx}^2 \seqnorm{X_\bot} + (C_1 \cdot G_P(\alpha,z) \cdot 2 + C_2) \seqnorm{Y_\bot}+ C_1\cdot \omega_p\\
& \leq C_1 \cdot G_P(\alpha,z) \cdot 4  L_{\mx}^2 \cdot G_X (z)  \cdot \rho^2 \alpha^2 \seqnorm{D}   \\
& \quad + (C_1 \cdot G_P(\alpha,z) \cdot 2  + C_2) \cdot  G_Y(z) \cdot 8L_{\mx}^2 \rho^2  \seqnorm{X_\bot}  \\
& \quad + (C_1 \cdot G_P(\alpha,z) \cdot 2  + C_2) \cdot  G_Y(z) \cdot 2L_{\mx}^2 \rho^2  \alpha^2 \seqnorm{D}  \\
& \quad + C_1\cdot \omega_p + (C_1 \cdot G_P(\alpha,z) \cdot 2  + C_2) \cdot \omega_y + C_1 \cdot G_P(\alpha,z) \cdot 4   L_{\mx}^2 \cdot \omega_x \\
& \leq C_1 \cdot G_P(\alpha,z) \cdot 4   L_{\mx}^2 \cdot G_X (z)  \cdot \rho^2 \alpha^2 \seqnorm{D}   \\
& \quad + (C_1 \cdot G_P(\alpha,z) \cdot 2  + C_2) \cdot  G_Y(z) \cdot 8L_{\mx}^2 \rho^2  \cdot G_X (z)  \cdot \rho^2 \alpha^2 \seqnorm{D}  \\
& \quad + (C_1 \cdot G_P(\alpha,z) \cdot 2  + C_2) \cdot  G_Y(z) \cdot 2L_{\mx}^2 \rho^2  \alpha^2 \seqnorm{D}  \\
& \quad + C_1\cdot \omega_p + (C_1 \cdot G_P(\alpha,z) \cdot 2  + C_2) \cdot \omega_y + C_1 \cdot G_P(\alpha,z) \cdot 4   L_{\mx}^2 \cdot \omega_x \\
& \quad + (C_1 \cdot G_P(\alpha,z) \cdot 2  + C_2) \cdot  G_Y(z) \cdot 8L_{\mx}^2 \rho^2 \cdot \omega_x.
\end{align*}
Notice that, under  \eqref{eq:bound_z},   $G_P(\alpha,z)$, $G_X(z)$, $G_Y(z)$,  and  $\omega_p$, $\omega_x$, $\omega_y$ are all bounded, which implies that the reminder $\mathcal{R}(\alpha,z)$ in \eqref{eq:all_ineq_transf} is bounded as well.$\hfill \square$ 

\section{Proof of Theorem~\ref{thm:linear_rate}}\label{app:pf_linear_rate}
We find the smallest $z$ satisfying (\ref{eq:bound_z}) such  that $\PP(\alpha,z) < 1$, for   {$\alpha\in (0, \alpha_{\mx})$}, with $\alpha_{\mx}\in (0,1)$ to be determined. 

Let us begin considering the condition $z>\sigma(\alpha)$ in (\ref{eq:bound_z}). To simplify the analysis, we impose instead the following stronger version  \vspace{-0.2cm}
\begin{align}\label{eq:rate_condition_1}
z \geq   \sigma(\alpha) +   \frac{ (\theta \cdot \alpha) \cdot \left( \left(1 - \frac{\alpha}{2}\right)\tmu_{\mn} + \frac{D_{\mn}^\ell}{2} \alpha - \frac{1}{2}  \epsilon_{opt} \right) }{\frac{ D_{\mx}^2}{\mu} +  \left(1 - \frac{\alpha}{2}\right)\tmu_{\mn} + \frac{D_{\mn}^\ell}{2} \alpha - \frac{1}{2}  \epsilon_{opt} }
\end{align}
for some $\theta \in (0,1)$, which will be chosen to tighten the bound. Notice that    the RHS of (\ref{eq:rate_condition_1})  is   strictly larger than  $\sigma(\alpha)$ but still strictly less than one, for any   $\alpha\in (0, (2 \tmu_{\mn} -\epsilon_{opt} ) / (\tmu_{\mn} - D_{\mn}^\ell))$, with given  $\epsilon_{opt}\in (0, 2 \tmu_{\mn})$.  

Observe that in the expression of $\PP(\alpha,z)$, the only coefficient multiplying $\alpha^2$ that depends on  $\alpha$ is the optimization gain
$G_P(\alpha,z) \triangleq  {\eta (\alpha)}/({z - \sigma(\alpha)}).$ Using (\ref{eq:rate_condition_1}),
$G_P(\alpha,z)$ can be upper bounded as\vspace{-0.1cm} 
\begin{equation}
\label{eq:bound_r}
\begin{aligned} 
G_P(\alpha,z)& \leq \inf_{\epsilon_{opt}\in (0, 2\tmu_{\mn} -  \alpha (\tmu_{\mn} - D_{\mn}^\ell))} \frac{ \frac{1}{2} \epsilon_{opt}^{-1} \cdot \frac{D_{\mx}^2}{\mu} + \frac{1}{\mu} \cdot \left( \left(1 - \frac{\alpha}{2}\right)\tmu_{\mn} + \frac{D_{\mn}^\ell}{2} \alpha - \frac{1}{2}  \epsilon_{opt} \right)}{\left(1 - \frac{\alpha}{2}\right)\tmu_{\mn} + \frac{D_{\mn}^\ell}{2} \alpha - \frac{1}{2}  \epsilon_{opt} } \cdot \theta^{-1}\\
& =    
G_P^\star(\alpha)\cdot \theta^{-1},
\end{aligned} 
\vspace{-0.2cm} 
\end{equation}
where  the minimum is attained at $\epsilon_{opt}^\star  \triangleq \tmu_{\mn} -  \frac{\alpha}{2} (\tmu_{\mn} - D_{\mn}^\ell)$;   and $G_P^\star(\alpha)$ is defined in~\eqref{eq:r_star_def}.
Substituting the upper bound~\eqref{eq:bound_r}  in $\PP(\alpha,z)$ and setting therein   $  \epsilon_{opt} = \epsilon_{opt}^\star$, we get the following sufficient condition for $\PP(\alpha,z)<1$:\vspace{-0.2cm} 
\begin{multline}\label{eq:step_size_bound}
G_P^\star(\alpha)\cdot  \theta^{-1} \cdot C_1 \cdot 4  L_{\mx}^2  \cdot G_X(z) \cdot \rho^2 \cdot \alpha^2\\
+ \left( G_P^\star(\alpha)\cdot  \theta^{-1}\cdot 2   C_1   + C_2  \right)  \cdot G_Y(z) \cdot 2 L_{\mx}^2 \rho^2  \cdot \alpha^2\\
+\left( G_P^\star(\alpha)\cdot  \theta^{-1} \cdot 2   C_1   + C_2   \right) \cdot G_Y(z)\cdot 8 L_{\mx}^2 \rho^2 \cdot G_X(z) \cdot \rho^2 \cdot \alpha^2 <1.
\end{multline}
To minimize the left hand side, we set  $\epsilon_x = \epsilon_y = (\sqrt{z} - \rho)/\rho$. Furthermore, using the fact that $G_P^\star(\alpha)$ is monotonically increasing on $\alpha\in (0, 2\tmu_{\mn}/(\tmu_{\mn} - D_{\mn}^\ell))$, and restricting  $\alpha\in (0, \tmu_{\mn}/(\tmu_{\mn} - D_{\mn}^\ell)]$, a sufficient condition for~\eqref{eq:step_size_bound} is \vspace{-0.2cm}
\begin{equation}\label{eq:step_size_bound_1}
\alpha \leq  \alpha(z) \triangleq \left(A_{1,\theta}\frac{1}{(\sqrt{z} - \rho)^2} + A_{2 ,\theta} \frac{1}{(\sqrt{z} - \rho)^2} + A_{3 ,\theta} \frac{1}{(\sqrt{z} - \rho)^4}\right)^{-1/2},\vspace{-0.2cm}
\end{equation}
where $A_{1,\theta}$, $A_{2,\theta}$ and $A_{3,\theta}$ are constants defined as\vspace{-0.2cm}
\begin{align*}
A_{1,\theta} & \triangleq G_P^\star(\tmu_{\mn}/(\tmu_{\mn} - D_{\mn}^\ell))\cdot  \theta^{-1} \cdot C_1 \cdot 4  L_{\mx}^2   \cdot \rho^2 \\
A_{2,\theta} & \triangleq \left( G_P^\star(\tmu_{\mn}/(\tmu_{\mn} - D_{\mn}^\ell))\cdot  \theta^{-1}\cdot 2   C_1   + C_2  \right)    \cdot 2 L_{\mx}^2 \rho^2   \\
A_{3,\theta} & \triangleq \left( G_P^\star(\tmu_{\mn}/(\tmu_{\mn} - D_{\mn}^\ell))\cdot  \theta^{-1} \cdot 2   C_1   + C_2   \right) \cdot 8 L_{\mx}^2 \rho^4.
\end{align*}
Condition~\eqref{eq:step_size_bound_1} shows the rate $z$ must satisfy \vspace{-0.2cm} 
\begin{equation}\label{eq:rate_condition_2}
z \geq \Big(\rho + \sqrt{A_\theta \alpha} \Big)^2,\quad \text{with}\quad A_{\theta} \triangleq \sqrt{A_{1,\theta} + A_{2,\theta} + A_{3,\theta}}.
\end{equation}

{Notice that, under   $\epsilon_x=\epsilon_y = (\sqrt{z} - \rho)/\rho$, \eqref{eq:rate_condition_2} implies  $z > \rho^2(1 + \epsilon_x) =  \rho^2 (1+ \epsilon_y) = \rho \sqrt{z}$, which are the  other  two conditions on $z$ in (\ref{eq:bound_z}).} Therefore, overall, $z$ must satisfy     
\eqref{eq:rate_condition_1} and \eqref{eq:rate_condition_2}.
Letting $\epsilon_{opt} = \epsilon_{opt}^\star$ in~\eqref{eq:rate_condition_1}, the condition simplifies to\vspace{-0.3cm} 
\begin{equation*}\label{eq:rate_condition_3}
z \geq 1 -  \frac{    \tmu_{\mn} - \frac{\alpha}{2} (\tmu_{\mn} - D_{\mn}^\ell)   }{\frac{2 D_{\mx}^2}{\mu} +  \tmu_{\mn} - \frac{\alpha}{2} (\tmu_{\mn} - D_{\mn}^\ell)  }\cdot (1- \theta) \alpha.\vspace{-0.2cm}
\end{equation*}

Therefore,  the overall convergence rate can be upper bounded by $\mathcal{O}(\bar{z}^\nu)$, where\vspace{-0.2cm}
\begin{equation}\label{eq:rate_expression_complete}
\bar{z} = \inf_{\theta \in (0,1)}\max \left\{ \Big(\rho + \sqrt{A_\theta \alpha} \Big)^2, 1 -  \frac{    \tmu_{\mn} - \frac{\alpha}{2} (\tmu_{\mn} - D_{\mn}^\ell)   }{\frac{2 D_{\mx}^2}{\mu} +  \tmu_{\mn} - \frac{\alpha}{2} (\tmu_{\mn} - D_{\mn}^\ell)  }\cdot (1- \theta) \alpha \right\}.
\end{equation}

\noindent Finally, we further simplify  ~\eqref{eq:rate_expression_complete}. 
Letting $\theta = 1/2$ and using  $\alpha \in (0,  \tmu_{\mn} /(\tmu_{\mn} - D_{\mn}^\ell)]$, the second term in \eqref{eq:rate_expression_complete} can be upper bounded by 
\begin{equation}\label{eq:expression_J}
1- \underbrace{\frac{\tmu_{\mn} \mu}{ 4 D_{\mx}^2 + \tmu_{\mn} \mu}\cdot \frac{1}{2}}_{\triangleq J} \alpha.
\end{equation}
The condition  $\bar{z} <1$   imposes the following upper bound on $\alpha$:
$\alpha < \alpha_{\mx} = \min\{(1-\rho)^2/A_{\frac{1}{2}},\tmu_{\mn} /(\tmu_{\mn} - D_{\mn}^\ell),1 \}.$
Eq.~\eqref{eq:rate_expression_complete} then simplifies to\vspace{-0.2cm} 
\begin{equation}
\bar{z}=\max \left\{ \bigg(\rho + \sqrt{ \alpha A_{\frac{1}{2}}   } \bigg)^2, 1-J\alpha\right\}.\vspace{-0.2cm} 
\end{equation}
Note that as $\alpha$ increases from $0$,  the first term in the max operator above is monotonically increasing from $\rho^2<1$  while the second term is monotonically decreasing from $1$. Therefore, there must exist some $\alpha^*$ so that the two terms are equal, which is\vspace{-0.2cm} 
\begin{equation}\label{eq:expression_alpha_ast}
\alpha^* =  \left(\frac{-\rho \sqrt{A_{\frac{1}{2}}} + \sqrt{A_{\frac{1}{2}}+ J (1 - \rho^2)}}{A_{\frac{1}{2}}+ J}\right)^2.
\end{equation}

To conclude, given the step-size satisfying $\alpha \in (0,\alpha_{\mx}) $, the sequence $\{ \|\Deltaxi{}^\nu\|^2 \}$ converges at rate $\mathcal{O} (z^\nu)$, with $z$ given in (\ref{eq:rate}). $\hfill \square$
{ 
\section{Proof of Corollary \ref{cor:centralized_rate}}\label{app:proof-SONATA-star}

 Since $\mathbf{W}=\mathbf{J}$,  we have  $\bdelta^\nu = \mathbf{0}$; then \eqref{eq:opt_err_0}  and \eqref{eq:delta_x_lb}  reduce to   
\begin{equation}\label{eq:p_nu_star_top}
	p^{\nu + 1}  
	\leq p^\nu -  \left( \left(1 - \frac{\alpha}{2}\right) \tmu_{\mn} +  \frac{\alpha D_{\mn}^\ell}{2} \right) \alpha \| \Deltaxi{}^\nu\|^2\end{equation}
and\vspace{-0.4cm} 
\begin{equation}\label{eq:lower-bound-d_star_top}
\alpha \,\| \Deltaxi{}^\nu \|^2 \geq \frac{2 \mu}{ D_{\mx}^2} \left(p^{\nu+1}- (1-\alpha) p^{\nu}\right),
\end{equation}
respectively. Combining (\ref{eq:p_nu_star_top}) and (\ref{eq:lower-bound-d_star_top}) and using 
 $\alpha < 2 \tmu_{\mn}/(\tmu_{\mn} - D_{\mn}) $, yield  
\begin{align}
p^{\nu + 1}   \leq \left( 1 - \alpha \cdot \frac{  \left(1 - \frac{\alpha}{2}\right) \tmu_{\mn} +  \frac{\alpha D_{\mn}^\ell}{2}  }{\frac{ D_{\mx}^2}{ 2\mu}  +  \left(1 - \frac{\alpha}{2}\right) \tmu_{\mn} +  \frac{\alpha D_{\mn}^\ell}{2} } \right) p^\nu,
\end{align}
which proves \eqref{eq:rate_central_expression}.

 We customize next \eqref{eq:rate_central_expression} to the specific choices of the surrogate functions. 
 
\noindent $\bullet$  \textbf{Linearization:} Consider   the choice of $\tf_i$ as in \eqref{eq:linear_surrogate}.  We have $\tmu_{\mn} = L$; and we can set 
  $D_{\mn}^\ell = 0$,   $D_{\mx} = L - \mu$,  and $\alpha = 1$. Substituting these values in \eqref{eq:rate_central_expression}, we obtain   $z \leq 1 - \kappa^{-1}_g$.

\noindent $\bullet$  \textbf{Local $f_i$:}  Consider now $\tf_i$ as in \eqref{eq:f_surrogate}. By $\nabla^2 f_i(\mathbf{x}) \succeq \mathbf{0}$, for all   $\mathbf{x}\in \KK$, and   Definition~\ref{assump:homogeneity}, we have 
 $\mathbf{0}  \preceq \nabla^2 \tf_i (\bx ,\by) - \nabla^2 F(\bx) \preceq 2\beta\mathbf{I}$, for all $\bx, \by \in \KK$. Therefore,  
 we can set $D_{\mn}^\ell = 0$, $D_{\mx} = 2\beta$, and $\tmu_{\mn} = \beta + (\mu - \beta)_+$. 
 Using these values in \eqref{eq:rate_central_expression}, yields \vspace{-0.2cm}
 \begin{align}
 z   
\begin{cases}
= 1 - \alpha   \cdot  \frac{  \beta \left( 1- \frac{\alpha}{2}\right)  }{\frac{ 2 \beta^2}{\mu}  +  \beta \left( 1- \frac{\alpha}{2}\right) }, & \text{if }\mu \leq \beta\\
 \leq   1 -    \alpha \cdot \frac{\mu \left( 1- \frac{\alpha}{2}\right) }{\frac{ 2\beta^2}{\mu}  + \mu \left( 1- \frac{\alpha}{2}\right)}, & \text{if } \mu > \beta.
\end{cases} 
 \end{align}
Finally, setting $\alpha = \min \{1, 2\tmu_{\mn}/((\mu - \beta)_++ \beta)\}=1$ in the expression above, yields \eqref{eq:z_surrogate}.  \hfill $\square$}
\section{Proof of Corollary~\ref{cor:linearization_rate}}\label{app:linearization_rate} 
{  According to 
	Theorem~\ref{thm:linear_rate},  the rate $z$ can be bounded as \begin{equation}\label{eq_rate_summary} z \leq \max \{z_1, z_2\},\quad \text{ with }\quad 
	z_1 \triangleq 1 - \alpha\cdot J \,\,\text{ and }\,\,  z_2 \triangleq \Big(\rho + \sqrt{\alpha A_{\frac{1}{2}}}\Big)^2,\end{equation} where $J$ and $A_{\frac{1}{2}}$ are defined in (\ref{eq:expression_J}) and (\ref{eq:rate_condition_2}), respectively. 
	
	The proof consists in bounding properly $z_1$ and $z_2$ based upon  the surrogate   \eqref{eq:linear_surrogate} postulated in the corollary.  
	We begin
	particularizing the expressions of $J$ and $A_{\frac{1}{2}}$.   
	Since $\nabla^2 \tf_i (\bx_i; \bx_i^\nu) = L$, one can set $\tmu_{\mn} =  L$, and (\ref{eq:upper-lower-hessian}) holds with     $D_{\mn}^\ell = 0$ and  $D_{\mx} = L -\mu$. Furthermore,   by Assumption~\ref{assump:homogeneity}, it follows that $ \beta\geq \lambda_{\max}(\nabla^2 f_i (\bx)) -L$, for all $\mathbf{x}\in \mathcal{K}$; hence, one can set  $L_{\mx} = L + \beta$.  
		Next, we will substitute the above values into the expressions of $J$ and $A_{\frac{1}{2}}$. 
		
		To do so, we need to particularize first the  quantities $G_P^\star \left(\frac{\tmu_{\mn}}{\tmu_{\mn} - D_{\mn}^\ell}\right)$ [cf. (\ref{eq:r_star_def})], $C_1$   and  $C_2$ [cf. (\ref{eq:C1_C2})]: \vspace{-0.2cm}
	\begin{align*}
	&G_P^\star \left(\frac{\tmu_{\mn}}{\tmu_{\mn} - D_{\mn}^\ell}\right) = G_P^\star \left(1\right)  = \frac{4 (L - \mu)^2 + L^2}{ \mu L^2 }, \\& C_1 = \frac{6}{\mu L^2}\left(   (2 L - \mu)^2  + 4 (L + \beta)^2 \right), \quad \text{and}\quad   C_2 =\frac{4}{L^2}.
	\end{align*}
	Accordingly, the expressions of $J$ and $A_{\frac{1}{2}}$ read: \begin{equation}\label{eq:J_custom}
	J  =  
	{ \frac{1}{2}}\frac{\kappa_g }{4 (\kappa_g - 1)^2 +\kappa_g}\in \left[\frac{1}{8\kappa_g}, \frac{1}{2}\right],
	\end{equation}  
	and 
	\begin{align}\label{eq:A_1/2_upper}
	\begin{split}
	& (A_{\frac{1}{2}})^2 \\
	= {}& G_P^\star(1)\cdot  2 \cdot C_1 \cdot 4  L_{\mx}^2   \cdot \rho^2 + \left( G_P^\star(1)\cdot  4 \cdot C_1   + C_2  \right)    \cdot 2 L_{\mx}^2 \rho^2\\
	& +  \left( G_P^\star(1)\cdot  4 \cdot C_1   + C_2   \right) \cdot 8 L_{\mx}^2 \rho^4\\
	= 
	&  \left( 24 G_P^\star(1)  \cdot C_1   + 5 C_2  \right)    \cdot 2 L_{\mx}^2 \rho^2\\
	=
	&  \left( 24 \cdot \frac{4 (L - \mu)^2 + L^2}{ \mu L^2 } \cdot \frac{6}{\mu L^2}\left(   (2 L - \mu)^2  + 4 (L + \beta)^2 \right)   + 20 L^{-2}  \right)    \cdot 2 (L + \beta)^2 \rho^2\\
	\leq 
	& \left(24 \cdot \frac{5}{\mu} \cdot \frac{24}{\mu L^2}\left( L^2  +  (L + \beta)^2 \right)   + 20 L^{-2} \right)\cdot 2 (L + \beta)^2 \rho^2\\
	=
	& \left(24 \cdot 24 \cdot 5 \left( 1 +  \left(1 + \frac{\beta}{L} \right)^2 \right) \left(1 + \frac{\beta}{L} \right)^2  \kappa_g^2 + 20  \left(1 + \frac{\beta}{L} \right)^2\right)\cdot 2  \rho^2
	\\
	\leq
	& 110^2 \cdot \kappa_g^2 \left(1 + \frac{\beta}{L} \right)^4 \rho^2, 
	\end{split}
	\end{align}
	where in the last inequality we have used the fact that  $\kappa_g \geq 1$.
	
	Using the above expressions, in the sequel we upperbound  $z_1$ and $z_2$. 
	
	
	By (\ref{eq:A_1/2_upper}),   we have 	
	\begin{equation} z_2\leq \bar{z}_2 \triangleq \Big(\rho + \sqrt{\alpha M \rho}\Big)^2,\quad \text{with}\quad 
	M \triangleq 110 \cdot \kappa_g (1 + \beta/L)^2.\end{equation}
	Since   $\alpha \in (0,1]$ must be chosen so that $z \in (0,1]$, we impose $\max\{z_1, \bar{z}_2\}<1$, implying 
	$\alpha \leq \min\{J^{-1}, (1-\rho)^2/(M \rho),1 \}$. 
	Since $J^{-1} >1$ [cf. \eqref{eq:J_custom}],    the  condition on $\alpha$ reduces to $\alpha \leq \alpha_{\mx} \triangleq  \min\{(1-\rho)^2/ (M \rho),1\}$. Choose  $\alpha = c \cdot \alpha_{\mx}$, for some given  $c \in (0,1)$.
	Depending on the value of $\rho$, either $\alpha_{\mx}=1$ or $\alpha_{\mx}=(1-\rho)^2/(M \rho)$. 
}

{ 
	\noindent $\bullet$  \textbf{Case I: $\alpha_{\mx} = 1$.} This corresponds to the case  $ M \rho \leq (1 - \rho)^2$, which happens when the network is sufficiently connected ($\rho$ is small). Note that, we also have  $\rho \leq 1/110$,  otherwise $M \rho \geq 110 \,\kappa_g \, \rho >1 > (1 - \rho)^2$. 
	In this setting,   $\alpha = c\cdot \alpha_{\mx} =c$, and  \vspace{-0.1cm}\begin{align*}
	z_1 &=   1 - c \cdot J,\\
	\bar{z}_2 & = \Big(\rho + \sqrt{c M \rho}\Big)^2 \overset{(a)}{\leq}
	\Big( 1 - (1 - \rho) + \sqrt{c (1 - \rho)^2}\Big)^2\\
	& = \left(1 - \left(1 - \sqrt{  c} \right)(1 - \rho) \right)^2 \leq 1 - \left(1 - \sqrt{  c} \right)^2 (1 - \rho)^2\\
	&\overset{(b)}{\leq} 1 - (1 - \sqrt{c})^2 (1 - 1/110)^2,
	\end{align*}
	where in (a) we used $ M \rho \leq (1 - \rho)^2$ and (b) follows from  $\rho \leq 1/110$. 
	
	Therefore, $z$ can be bounded as
	\begin{align}
	\begin{split}
	z  \leq \max \{z_1, \bar{z}_2\} & \leq 1 - c \cdot  \left(1 - \sqrt{  c} \right)^2\cdot  \left(1 -  {1}/{110}\right)^2 \cdot J\\
	& \leq  1 - c \cdot  \left(1 - \sqrt{  c} \right)^2\cdot \left(1 - {1}/{110}\right)^2 \cdot \frac{1 }{8 \kappa_g}.
	\end{split}
	\end{align} }

{ \noindent \noindent $\bullet$ \textbf{Case II: $\alpha_{\mx} = (1-\rho)^2/(M \rho)$.}  This corresponds to the case $ M \rho \geq (1 - \rho)^2$.    
	We have $\alpha = c \cdot \alpha_{\mx}=c\cdot (1-\rho)^2/(M \rho)$, \vspace{-0.2cm}
	\begin{equation*}
	z_1 = 1 - \frac{J \, c}{{M \rho}} \cdot   {(1-\rho)^2} \quad \text{and} \quad \bar{z}_2 = 1 - \left(1 - \sqrt{  c} \right)^2 (1 - \rho)^2.
	\end{equation*}
	We claim that $({J\,c})/({M \rho})<1$. Suppose this is not the case, that is, $M \rho \leq   {J c}$. Since  {${J c}<1/{2}$} [cf. \eqref{eq:J_custom}] and $M\geq 110\,\kappa$, $M \rho \leq   {J c}$ would imply  $\rho<1/(220\kappa_g)$. This however is in contradiction with the assumption 
	$ M \rho \geq (1 - \rho)^2$, as it would lead to $1/2>M\rho \geq (1-\rho)^2>(1-1/(220\kappa_g))^2$.  
	
	Using $({J\,c})/({M \rho})<1$, we can bound $z$
	\begin{align*}\vspace{-0.3cm}
	\begin{split}
	z  & \leq \max\{z_1,\bar{z}_2\} \leq 1 - \frac{c\,J}{{M \rho}} \cdot  \left(1 - \sqrt{  c} \right)^2     {(1-\rho)^2} \\
	& \leq  1 -c \cdot  \left(1 - \sqrt{  c} \right)^2 \cdot \frac{1 }{8 \kappa_g }   \cdot \frac{(1-\rho)^2}{110 \cdot \kappa_g \cdot (1 + \beta/L)^2 \cdot \rho}.
	\end{split}
	\end{align*}\vspace{-0.3cm}
}
\vspace{-0.6cm}
\section{Proof of Corollaries \ref{cor:f_rate_1}  and \ref{cor:f_rate_2}}\label{app:f_rate}{  
\vspace{-0.3cm} We follow similar steps as in Appendix \ref{app:linearization_rate} but customized to  the surrogate   \eqref{eq:f_surrogate}. We begin particularizing the expressions of $J$ and $A_{\frac{1}{2}}$.
	
	In the setting of the corollary, we have:  $\nabla^2 \tf_i (\bx; \by) = \nabla^2 f_i (\bx) + \beta \bI$,  for all $\mathbf{y}\in \KK$;  $\nabla^2 f_i(\mathbf{x}) \succeq \mathbf{0}$, for all   $\mathbf{x}\in \KK$; and,  by Assumption~\ref{assump:homogeneity}, 
	$\mathbf{0}  \preceq \nabla^2 \tf_i (\bx ,\by) - \nabla^2 F(\bx) \preceq 2\beta\mathbf{I}$, for all $\bx, \by \in \KK$. Therefore,  
	we can set $D_{\mn}^\ell = 0$, $D_{\mx} = 2 \beta$,   $\tmu_{\mn} = \beta + (\mu - \beta)_+=\max\{\beta, \mu\}$, and $L_{\mx} = L + \beta$. 
	Using these values, 
	$G_P^\star \left(\frac{\tmu_{\mn}}{\tmu_{\mn} - D_{\mn}^\ell}\right)$, $C_1$, and $C_2$   can be simplified as follows:
	\begin{align*}
	&G_P^\star \left(\frac{\tmu_{\mn}}{\tmu_{\mn} - D_{\mn}^\ell}\right) = G_P^\star \left(1\right)  = \frac{ 16 \beta^2 +  \max\{\beta, \mu\}^2}{ \mu \max\{\beta, \mu\}^2 },  \\
	& C_1 = \frac{6}{\mu}\left(  \left( \frac{2 \beta}{\max\{\beta, \mu\}} + 1\right)^2 + \frac{4 (L + \beta)^2}{\max\{\beta, \mu\}^2 } \right), \quad \text{and}\quad C_2 =\frac{4}{\max\{\beta, \mu\}^2}.
	\end{align*}
	Accordingly, the expressions of $J$ and $A_{\frac{1}{2}}$ read: \vspace{-0.2cm}\begin{equation}\label{eq:J_II}J = \frac{1}{2}\frac{1}{1 + 16 \left(\frac{\beta}{\mu}\right) \cdot \min \left\{1, \frac{\beta}{\mu}\right\}},\vspace{-0.3cm} \end{equation}  and \vspace{-0.2cm}
	\begin{align*}
	\begin{split}
	& (A_{\frac{1}{2}})^2 \\
	\leq 
	&  \left( 24 G_P^\star(1)  \cdot C_1   + 5 C_2  \right)    \cdot 2 L_{\mx}^2 \rho^2\\
	\leq
	&  \left( 24 \cdot \frac{ 16 \beta^2 +  \max\{\beta, \mu\}^2}{ \max\{\beta, \mu\}^2 }\cdot \frac{6}{\mu^2}\left(  \left( \frac{2 \beta}{\max\{\beta, \mu\}} + 1\right)^2 + \frac{4 (L + \beta)^2}{\max\{\beta, \mu\}^2 } \right)  + \frac{20} {\max\{\beta, \mu\}^{2}}  \right)    \cdot 2 (L + \beta)^2 \rho^2\\
	=
	&
	\begin{cases}
	\left( 24 \cdot 17\cdot 6 \cdot \left(9 + 4\left( 1+\frac{L}{\beta}\right)^2 \right) \cdot  \left(\kappa_g + \frac{\beta}{\mu}\right)^2 + 20 \left(1 + \frac{L}{\beta}\right)^2 \right)    \cdot 2 \rho^2, & \beta>\mu,\\
	\left( 24 \cdot \left(\frac{16 \beta^2}{\mu^2} + 1\right)\cdot 6 \left(\kappa_g + \frac{\beta}{\mu}\right)^2\left(  \left( \frac{2 \beta}{\mu} + 1\right)^2 + 4 \left(\kappa_g + \frac{\beta}{\mu}\right)^2 \right)  + 20 \left(\kappa_g + \frac{\beta}{\mu}\right)^2  \right)    \cdot 2 \rho^2, & \beta\leq \mu;
	\end{cases}\\
	\leq & M^2 \rho^2,
	\end{split}
	\end{align*}
	where\vspace{-0.2cm}
	\begin{align} 
	M = 
	\begin{cases}\label{eq:def_M_surrogate}
	253\left(1 + \frac{L}{\beta}\right) \left(\kappa_g + \frac{\beta}{\mu}\right), &  \beta>\mu,\\
	193\left(1 + \frac{\beta}{\mu}\right)^2\left(\kappa_g + \frac{\beta}{\mu}\right)^2, & \beta\leq \mu.
	\end{cases}
	\end{align}
	
	Similarly to the proof of Corollary~\ref{cor:linearization_rate}, we bound $z \leq \max \{z_1, z_2\}$ as
	\begin{equation}\label{eq_rate_summary_2} z \leq \max \{z_1, \bar{z}_2\},\quad \text{ with }\quad 
	z_1 \triangleq 1 - \alpha\cdot J \,\,\text{ and }\,\,  \bar{z}_2   \triangleq \Big(\rho + \sqrt{\alpha\,M\,\rho}\Big)^2,\end{equation} where $J$ and $M$ are now given by (\ref{eq:J_II}) and (\ref{eq:def_M_surrogate}),  respectively. 
	For  $\max \{z_1, z_2\}<1$, we require $\alpha \leq \alpha_{\mx} \triangleq  \min\{1,  (1-\rho)^2/(M \rho)\}$,   and choose  $\alpha = c \cdot \alpha_{\mx}$, with arbitrary $c \in (0,1)$.
	We  study  separately the cases $\beta>\mu$ and $\beta\leq \mu$.
	
	\noindent  \textbf{1)  $\beta>\mu$.} In this case we have \begin{equation} \label{eq:M_and_L_Case_I}M =  253\left(1 + \frac{L}{\beta}\right) \left(\kappa_g + \frac{\beta}{\mu}\right)\quad  \text{and}\quad  J =  \frac{1}{2}\frac{1}{1 + 16 \left({\beta}/{\mu}\right) } \geq \frac{1}{34 (\beta /\mu)}.\end{equation} Since $\alpha = c \alpha_{\mx} = c \min\{1,  (1-\rho)^2/(M \rho)\}$, we study next the case $\alpha_{\mx}=1$ and $\alpha_{\mx}= (1-\rho)^2/(M \rho)$ separately.
	
	\begin{itemize}
		\item
		\textbf{Case I: $\alpha_{\mx} = 1$}. We have $ M \rho \leq (1 - \rho)^2$, $\alpha = c$, and thus
		\begin{equation*}
		z_1  = 1 - c \cdot J \quad \text{and} \quad \bar{z}_2    \leq 1 - \left(1 - \sqrt{  c} \right)^2 (1 - \rho)^2.
		\end{equation*}
		Since $M \geq 253$ and $(1 - \rho)^2 \leq 1$, it must be $\rho \leq 1/253$.
		Therefore, the rate $z$ can be bounded as
		\begin{align*}
		\begin{split}
		z  \leq \max \{z_1, \bar{z}_2\} & \leq 1 - c \cdot  \left(1 - \sqrt{  c} \right)^2\cdot  J \cdot (1 - \rho)^2\\
		& \leq  1 - c \cdot  \left(1 - \sqrt{  c} \right)^2\cdot \left(1 - \frac{1}{253}\right)^2 \cdot \frac{1}{34} \cdot \frac{\mu}{\beta}.
		\end{split}
		\end{align*} 
		
		\item   \textbf{Case II: $\alpha_{\mx} = (1-\rho)^2/(M\rho)$}. This corresponds to  $ M\rho \geq (1 - \rho)^2$, $\alpha = c \cdot (1-\rho)^2/(M\rho)$, and
		\begin{equation*}
		z_1 = 1 - \frac{J \, c}{{M \rho}} \cdot   {(1-\rho)^2}  \quad \text{and} \quad \bar{z}_2 \leq 1 - \left(1 - \sqrt{  c} \right)^2 (1 - \rho)^2.
		\end{equation*}
		Using the same argument as in the proof of Corollary \ref{cor:linearization_rate}--Case II, one  can show that  {$(c\,J)/(M\rho)<1$}. Therefore, 
		\begin{align*}
		z &\leq \max\{z_1,\bar{z}_2\}\leq 1 -   \left(1 - \sqrt{  c} \right)^2 \cdot c  \, J \cdot   \frac{(1-\rho)^2}{M \rho}\\
		&\overset{\eqref{eq:M_and_L_Case_I}}{\leq}  1 -   c \cdot  \left(1 - \sqrt{  c} \right)^2 \cdot \frac{1}{34 } \cdot   \frac{(1-\rho)^2}{253\left(\kappa_g + \frac{\beta}{\mu}\right)^2  \rho}.
		\end{align*}
	\end{itemize}
	\noindent \textbf{2)  $\beta\leq \mu$.}  In this case we have \begin{equation}\label{eq:M_and_L_Case_II}M = 193\left(1 + \frac{\beta}{\mu}\right)^2\left(\kappa_g + \frac{\beta}{\mu}\right)^2\quad \text{and}\quad J = \frac{1}{2}\frac{1}{1 + 16 \left({\beta}/{\mu}\right)^2}.\end{equation}
	
	\begin{itemize}\item \textbf{Case I: $\alpha_{\mx} = 1$.} Following the same reasoning as $\mu \leq \beta$, we can prove
		\begin{align}
		z \leq \max\{z_1,\bar{z}_2\}\leq  1 - c \cdot  \left(1 - \sqrt{  c} \right)^2\cdot \left(1 - \frac{1}{193}\right)^2 \cdot \frac{1}{2 + 32 \left(\frac{\beta}{\mu}\right)^2}.
		\end{align}
		
		\item \textbf{Case II: $\alpha_{\mx} = (1-\rho)^2/(M\rho)$.} We claim that $(c\,J)/(M\rho)\leq 1$, otherwise  $\rho\leq c/386$, which would lead to the following contradiction 
		$c/2\geq (c\,J) > M \rho\geq (1-\rho)^2 \geq (1-c/386)^2$.  
		Therefore,
		\begin{align*}
		z & \leq \max\{z_1,\bar{z}_2\}\leq 1 - c \cdot  \left(1 - \sqrt{  c} \right)^2 \cdot \frac{1}{2 + 32 \left(\frac{\beta}{\mu}\right)^2} \frac{(1-\rho)^2}{193\left(1 + \frac{\beta}{\mu}\right)^2\left(\kappa_g + \frac{\beta}{\mu}\right)^2 \rho}\\
		& \leq 1 - c^\prime \cdot \frac{(1 - \rho)^2}{\kappa_g^2 \,\rho},
		\end{align*}
	\end{itemize}
	where $c^\prime\in (0,1)$ is a suitable constant, independent on $\beta/\mu$, $\kappa_g,$ and $\rho$.\hfill $\square$ }

}

 \bibliographystyle{tfs}
\bibliography{references}

\begin{thebibliography}{10}
\providecommand{\MR}{\relax\unskip\space MR }
\providecommand{\url}[1]{\normalfont{#1}}
\providecommand{\urlprefix}{Available at }

\bibitem{alghunaim2019linearly}
S.A. Alghunaim, K. Yuan, and A.H. Sayed, \emph{A linearly convergent proximal
  gradient algorithm for decentralized optimization}, arXiv:1905.07996  (2019).

\bibitem{Arjevani-ShamirNIPS15}
Y. Arjevani and O. Shamir, \emph{Communication complexity of distributed convex
  learning and optimization}, in \emph{Proc. of the 28th International
  Conference on Neural Information Processing Systems (NIPS)}, Vol.~1. 2005,
  pp. 1756--1764.

\bibitem{Berahas:bs}
A. Berahas, R. Bollapragada, N.S. Keskar, and E. Wei, \emph{{Balancing
  Communication and Computation in Distributed Optimization}}, IEEE Trans.
  Autom. Control  (to appear, 2019).

\bibitem{LorenzoScutari-C'15}
P. {Di Lorenzo} and G. Scutari, \emph{Distributed nonconvex optimization over
  networks}, in \emph{Proc. of 2015 IEEE 6th International Workshop on
  Computational Advances in Multi-Sensor Adaptive Processing (CAMSAP)}, Dec.,
  Cancun. 2015, pp. 229--232.

\bibitem{LorenzoScutari-J'16}
P. {Di Lorenzo} and G. Scutari, \emph{{NEXT: I}n-network nonconvex
  optimization}, IEEE Trans. Signal Inf. Process. Netw. 2 (2016), pp. 120--136.

\bibitem{facchinei2015parallel}
F. Facchinei, G. Scutari, and S. Sagratella, \emph{Parallel selective
  algorithms for nonconvex big data optimization}, IEEE Trans. Signal Process.
  63 (2015), pp. 1874--1889.

\bibitem{Fan2020}
J. Fan, Y. Guo, and K. Wang, \emph{Communication-efficient accurate statistical
  estimation}, arXiv:1906.04870  (2019).

\bibitem{5530578}
B. {Gharesifard} and J. {Cortés}, \emph{When does a digraph admit a doubly
  stochastic adjacency matrix?}, in \emph{Proc. of the 2010 American Control
  Conference}, June. 2010, pp. 2440--2445.

\bibitem{6926737}
D. Jakovetic, J.M.F. Moura, and J. Xavier, \emph{Linear convergence rate of a
  class of distributed augmented lagrangian algorithms}, IEEE Trans. Autom.
  Control 60 (2015), pp. 922--936.

\bibitem{Jakovetic:da}
D. Jakovetic, \emph{{A Unification and Generalization of Exact Distributed
  First-Order Methods}}, IEEE Trans. Signal Inf. Process. Netw. 5 (2019), pp.
  31--46.

\bibitem{jakovetic2011cooperative}
D. Jakovetic, J. Xavier, and J.M. Moura, \emph{Cooperative convex optimization
  in networked systems: {Augmented Lagrangian} algorithms with directed gossip
  communication}, IEEE Trans. Signal Process. 59 (2011), pp. 3889--3902.

\bibitem{XU_at_al20}
X. Jinming, Y. Tian, Y. Sun, and G. Scutari, \emph{Distributed algorithms for
  composite optimization: Unified and tight convergence analysis},
  arXiv:2002.11534  (2020).

\bibitem{NetDane}
B. Li, S. Cen, Y. Chen, and Y. Chi, \emph{Communication-efficient distributed
  optimization in networks with gradient tracking and variance reduction},
  arXiv:1909.05844v3  (2019).

\bibitem{li2017decentralized}
Z. Li, W. Shi, and M. Yan, \emph{A decentralized proximal-gradient method with
  network independent step-sizes and separated convergence rates}, IEEE
  Transactions on Signal Processing 67 (2019), pp. 4494--4506.

\bibitem{ling2015dlm}
Q. Ling, W. Shi, G. Wu, and A. Ribeiro, \emph{{DLM: D}ecentralized linearized
  alternating direction method of multipliers}, IEEE Trans. Signal Process. 63
  (2015), pp. 4051--4064.

\bibitem{Lopes:ja}
C.G. Lopes and A.H. Sayed, \emph{{Diffusion Least-Mean Squares Over Adaptive
  Networks: Formulation and Performance Analysis}}, IEEE Trans. Signal Process.
  56 (2008), pp. 3122--3136.

\bibitem{LuFreundNesterov2020}
H. Lu, R.M. Freund, and Y. Nesterov, \emph{Relatively smooth convex
  optimization by first-order methods, and applications}, SIAM J. on
  Optimization 28 (2020), pp. 333--354.

\bibitem{Maros:2018vo}
M. Maros and J. Jalden, \emph{{PANDA: A Dual Linearly Converging Method for
  Distributed Optimization Over Time-Varying Undirected Graphs}}, 2018 IEEE
  Conference on Decision and Control (CDC)  (2018), pp. 6520--6525.

\bibitem{Maros:2019ia}
M. Maros and J. Jalden, \emph{{On the Q-linear convergence of Distributed
  Generalized ADMM under non-strongly convex function components}}, IEEE Trans.
  Signal Inf. Process. Netw. PP (2019), pp. 1--1.

\bibitem{mokhtari2015dqm}
A. {Mokhtari}, W. {Shi}, Q. {Ling}, and A. {Ribeiro}, \emph{Dqm: Decentralized
  quadratically approximated alternating direction method of multipliers}, IEEE
  Transactions on Signal Processing 64 (2016), pp. 5158--5173.

\bibitem{nedich2016achieving}
A. Nedić, A. Olshevsky, and W. Shi, \emph{Achieving geometric convergence for
  distributed optimization over time-varying graphs}, SIAM Journal on
  Optimization 27 (2017), pp. 2597--2633.

\bibitem{nedic2015distributed}
A. Nedic and A. Olshevsky, \emph{Distributed optimization over time-varying
  directed graphs}, IEEE Trans. Autom. Control 60 (2015), pp. 601--615.

\bibitem{Nedic2009}
A. Nedi{\'c} and A. Ozdaglar, \emph{Distributed subgradient methods for
  multi-agent optimization}, IEEE Trans. Autom. Control 54 (2009), pp. 48--61.

\bibitem{nedic2010constrained}
A. Nedi{\'c}, A. Ozdaglar, and P.A. Parrilo, \emph{Constrained consensus and
  optimization in multi-agent networks}, IEEE Trans. Autom. Control 55 (2010),
  pp. 922--938.

\bibitem{nedic2017geometrically}
A. Nedi{\'c}, A. Olshevsky, W. Shi, and C.A. Uribe, \emph{Geometrically
  convergent distributed optimization with uncoordinated step-sizes}, in
  \emph{2017 American Control Conference}. 2017, pp. 3950--3955.

\bibitem{nedic2010convergence}
A. Nedi{\'c} and A. Ozdaglar, \emph{Convergence rate for consensus with
  delays}, Journal of Global Optimization 47 (2010), pp. 437--456.

\bibitem{Pu:ir}
S. Pu, W. Shi, J. Xu, and A. Nedic, \emph{{A Push-Pull Gradient Method for
  Distributed Optimization in Networks}}, in \emph{2018 IEEE Conference on
  Decision and Control (CDC)}. 2018, pp. 3385--3390.

\bibitem{NaLi_Allerton_2016}
G. Qu and N. Li, \emph{Accelerated Distributed Nesterov Gradient Descent for
  smooth and strongly convex functions}, in \emph{2016 54th Annual Allerton
  Conference on Communication, Control, and Computing (Allerton)}, Sept. 2016,
  pp. 209--216.

\bibitem{qu2016harnessing}
G. Qu and N. Li, \emph{{Harnessing Smoothness to Accelerate Distributed
  Optimization}}, IEEE Control Netw. Syst. 5 (2018), pp. 1245--1260.

\bibitem{rogozin2019projected}
A. Rogozin and A. Gasnikov, \emph{Projected gradient method for decentralized
  optimization over time-varying networks}, arXiv:1911.08527  (2019).

\bibitem{Khan-TV-AB18}
F. {Saadatniaki}, R. {Xin}, and U.A. {Khan}, \emph{Decentralized optimization
  over time-varying directed graphs with row and column-stochastic matrices},
  IEEE Transactions on Automatic Control  (2020), pp. 1--1.

\bibitem{pmlr-v70-scaman17a}
K. Scaman, F. Bach, S. Bubeck, Y.T. Lee, and L. Massouli{\'e}, \emph{Optimal
  Algorithms for Smooth and Strongly Convex Distributed Optimization in
  Networks}, in \emph{Proc. of the 34th International Conference on Machine
  Learning}, Vol.~70. 2017, pp. 3027--3036.

\bibitem{scutari_PartI}
G. Scutari, F. Facchinei, and L. Lampariello, \emph{Parallel and distributed
  methods for constrained nonconvex optimization--{P}art {I}: Theory}, IEEE
  Trans. Signal Process. 65 (2017), pp. 1929--1944.

\bibitem{Scutari_Ying_LectureNote}
G. Scutari and Y. Sun, \emph{Parallel and Distributed Successive Convex
  Approximation Methods for Big-Data Optimization}, Springer Verlag Series,
  2018.

\bibitem{SONATA-companion}
G. Scutari and Y. Sun, \emph{Distributed nonconvex constrained optimization
  over time-varying digraphs}, Math. Prog. 176 (2019), pp. 497--544.

\bibitem{Shwartz_et_al}
S. Shalev-Shwartz, O. Shamir, N. Srebro, and K. Sridharan, \emph{Stochastic
  Convex Optimization}, in \emph{Proc. of the 22nd Annual Conference on
  Learning Theory (COLT)}, June 18-21, Montreal, Canada. 2009.

\bibitem{DANE}
O. Shamir, N. Srebro, and T. Zhang, \emph{Communication-Efficient Distributed
  Optimization using an Approximate Newton-type Method}, in \emph{Proc. of the
  31st International Conference on Machine Learning (PMLR)}, Vol.~32. 2014, pp.
  1000--1008.

\bibitem{shi2015extra}
W. Shi, Q. Ling, G. Wu, and W. Yin, \emph{{EXTRA: A}n exact first-order
  algorithm for decentralized consensus optimization}, SIAM J. Optim. 25
  (2015), pp. 944--966.

\bibitem{shi2015proximal}
W. Shi, Q. Ling, G. Wu, and W. Yin, \emph{A proximal gradient algorithm for
  decentralized composite optimization}, IEEE Trans. Signal Process. 63 (2015),
  pp. 6013--6023.

\bibitem{shi2014linear}
W. Shi, Q. Ling, K. Yuan, G. Wu, and W. Yin, \emph{On the linear convergence of
  the {ADMM} in decentralized consensus optimization}, IEEE Trans. Signal
  Process. 62 (2014), pp. 1750--1761.

\bibitem{SONATA-arxiv}
Y. Sun, A. Daneshmand, and G. Scutari, \emph{Convergence rate of distributed
  optimization algorithms based on gradient tracking}, arXiv:1905.02637v1
  (2019).

\bibitem{SunScutariPalomar-C'16}
Y. Sun, G. Scutari, and D. Palomar, \emph{Distributed nonconvex multiagent
  optimization over time-varying networks}, in Proc. of the Asilomar Conference
  on Signals, Systems, and Computers  (2016).

\bibitem{tian2020achieving}
Y. Tian, Y. Sun, and G. Scutari, \emph{Achieving linear convergence in
  distributed asynchronous multi-agent optimization}, IEEE Trans. on Automatic
  Control  (2020).

\bibitem{Tsitsiklis_Thesis}
J. Tsitsiklis, \emph{Problems in decentralized decision making and
  computation}, Ph.D. dissertation, Dept. of Electrical Engineering and
  Computer Science, MIT  (1984).

\bibitem{auzinger2011iterative}
A. Wien, \emph{Iterative solution of large linear systems}, Lecture Notes, TU
  Wien, 2011.

\bibitem{8267245}
C. Xi, V.S. Mai, R. Xin, E.H. Abed, and U.A. Khan, \emph{Linear convergence in
  optimization over directed graphs with row-stochastic matrices}, IEEE Trans.
  Autom. Control 63 (2018), pp. 3558--3565.

\bibitem{XiKha-J'16}
C. Xi and U.A. Khan, \emph{{ADD-OPT: A}ccelerated distributed directed
  optimization}, IEEE Trans. Autom. Control 63 (2018), pp. 1329--1339.

\bibitem{xi2015linear}
C. Xi and U.A. Khan, \emph{A linear algorithm for optimization over directed
  graphs with geometric convergence}, IEEE Contr. Syst. Lett. 2 (2018), pp.
  315--320.

\bibitem{xiao2005scheme}
L. Xiao, S. Boyd, and S. Lall, \emph{A scheme for robust distributed sensor
  fusion based on average consensus}, in \emph{Proc. of the 4th international
  symposium on Information processing in sensor networks}, April, Los Angeles,
  CA. 2005, pp. 63--70.

\bibitem{Xin:uf}
R. Xin and U. Khan, \emph{{Distributed heavy-ball: A generalization and
  acceleration of first-order methods with gradient tracking}},
  arXiv:1808.02942  (2018).

\bibitem{Khan-AB-Nesterov18}
R. Xin, D. Jakovetic, and U.A. Khan, \emph{Distributed nesterov gradient
  methods over arbitrary graphs}, arXiv:1901.06995  (2018).

\bibitem{Xu2015augmented}
J. Xu, S. Zhu, Y.C. Soh, and L. Xie, \emph{Augmented distributed gradient
  methods for multi-agent optimization under uncoordinated constant stepsizes},
  in \emph{Proc. of the 54th IEEE Conference on Decision and Control (CDC
  2015)}, Dec., Osaka, Japan. 2015, pp. 2055--2060.

\bibitem{Xu-TAC:hs}
J. Xu, S. Zhu, Y.C. Soh, and L. Xie, \emph{{Convergence of Asynchronous
  Distributed Gradient Methods Over Stochastic Networks}}, IEEE Trans. Autom.
  Control 63 (2018), pp. 434--448.

\bibitem{Yuan:2016dv}
K. Yuan, Q. Ling, and W. Yin, \emph{{On the Convergence of Decentralized
  Gradient Descent}}, SIAM J. Optim. 26 (2016), pp. 1835--1854.

\bibitem{yuan2018exact_p2}
K. Yuan, B. Ying, X. Zhao, and A.H. Sayed, \emph{Exact diffusion for
  distributed optimization and learning—part ii: Convergence analysis}, IEEE
  Transactions on Signal Processing 67 (2018), pp. 724--739.

\bibitem{zeng2015extrapush}
J. Zeng and W. Yin, \emph{{ExtraPush} for convex smooth decentralized
  optimization over directed networks}, J. Comput. Math. 35 (2017), pp.
  383--396.

\bibitem{DISCO}
Y. Zhang and X. Lin, \emph{DiSCO: Distributed Optimization for Self-Concordant
  Empirical Loss}, in \emph{Proc. of the 32nd International Conference on
  Machine Learning (PMLR)}, Vol.~37. 2015, pp. 362--370.

\bibitem{Zhang-Xiao-chapter18}
Y. Zhang and L. Xiao, \emph{Communication-efficient distributed optimization of
  self-concordant empirical loss}, in \emph{Large-Scale and Distributed
  Optimization, number 2227 in Lecture Notes in Mathematics}, chap.~11,
  Springer,  2018, pp. 289--341.

\end{thebibliography}

\newpage 

\section*{Supporting Material}

\setcounter{section}{0}

\renewcommand{\thesection}{\Roman{section}}

\section{Proof of Proposition  \ref{prop:loc_geo_rate2}}\label{SUpp_proof_prop_4_1}
We begin introducing some intermediate results. 
\begin{lemma}\label{lem:loc_dec_obj_TV} 
	Consider Problem~\eqref{eq:P} under Assumption~\ref{assump:p}; and 
 SONATA    (Algorithm~\ref{alg:SONATA_TV}) under Assumptions~\ref{assump:SCA_surrogate} and \ref{assumption:TVwights}. 
	Then, there  holds\vspace{-0.1cm}
	\begin{align}\label{eq:loc_dec_obj_TV}
	U(\bx_i^{\nu+\frac{1}{2}} ) \leq U(\bx_i^\nu) - \alpha \left(\left(1 - \frac{\alpha}{2}\right)\tmu_i +  \frac{\alpha}{2} \cdot D_i^{\ell} \right)\| \Deltaxi{i}^\nu\|^2   + \alpha\|\Deltaxi{i}^\nu\| \|\bdelta_i^\nu \|,\vspace{-0.1cm}
	\end{align}
	with $\bdelta_i^\nu$  defined in~\eqref{eq:tracking_err_def}.
\end{lemma}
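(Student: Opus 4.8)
The plan is to observe that inequality~\eqref{eq:loc_dec_obj_TV} concerns \emph{only} the local optimization step [S.1] of Algorithm~\ref{alg:SONATA_TV}, i.e.\ the subproblem~\eqref{eq:loc_opt2} and the descent update~\eqref{eq:descent2}. These two relations coincide verbatim with~\eqref{eq:loc_opt} and~\eqref{eq:descent} of the undirected SONATA (Algorithm~\ref{alg:SONATA}); the only difference between the two algorithms lies in the information-mixing step [S.2], which does not enter the statement~\eqref{eq:loc_dec_obj_TV} at all (the variable $\by_i^\nu$ appears solely through the tracking error $\bdelta_i^\nu = \nabla F(\bx_i^\nu) - \by_i^\nu$). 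Consequently, the proof of Lemma~\ref{lem:loc_dec_obj} applies unchanged, and I would simply replay that argument, since nowhere in it is any property of the doubly-stochastic matrix $\bW$ used.

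Concretely, first I would Taylor-expand $F$ about $\bx_i^\nu$ along the step $\bx_i^{\nu+\frac{1}{2}} = \bx_i^\nu + \alpha\Deltaxi{i}^\nu$, writing $F(\bx_i^{\nu+\frac12}) = F(\bx_i^\nu) + (\bdelta_i^\nu)^\top(\alpha\Deltaxi{i}^\nu) + (\by_i^\nu)^\top(\alpha\Deltaxi{i}^\nu) + (\alpha\Deltaxi{i}^\nu)^\top\bH(\alpha\Deltaxi{i}^\nu)$, where I substitute $\nabla F(\bx_i^\nu) = \bdelta_i^\nu + \by_i^\nu$ from~\eqref{eq:tracking_err_def} and set $\bH \triangleq \int_0^1(1-\theta)\nabla^2 F(\theta\bx_i^{\nu+\frac12} + (1-\theta)\bx_i^\nu)\,d\theta$. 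Second, I would invoke the first-order optimality of $\hbx_i^\nu$ for~\eqref{eq:loc_opt2}, which together with Assumption~\ref{assump:SCA_surrogate}(i) and the integral mean-value theorem yields $G(\bx_i^\nu) - G(\hbx_i^\nu) \geq (\Deltaxi{i}^\nu)^\top\bigl(\by_i^\nu + \tH_i\Deltaxi{i}^\nu\bigr)$, with $\tH_i \triangleq \int_0^1\nabla^2\tf_i(\theta\hbx_i^\nu + (1-\theta)\bx_i^\nu;\bx_i^\nu)\,d\theta$.

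Substituting this optimality estimate into the Taylor expansion and using the convexity of $G$ (so that $G(\bx_i^{\nu+\frac12}) \leq (1-\alpha)G(\bx_i^\nu) + \alpha G(\hbx_i^\nu)$), the term $(\by_i^\nu)^\top(\alpha\Deltaxi{i}^\nu)$ cancels and one is left with a quadratic form in $\Deltaxi{i}^\nu$ governed by $\alpha\bH - \tH_i$. The decisive step is then to bound this matrix: splitting the integrals at $\theta=\alpha$, invoking~\eqref{eq:upper-lower-hessian} to compare $\nabla^2 F$ against $\nabla^2\tf_i$ through $D_i^\ell$, and using the $\tmu_i$-strong convexity of $\tf_i$ (Assumption~\ref{assump:SCA_surrogate}(iii)) gives $\alpha\bH - \tH_i \preceq -\tfrac12\alpha D_i^\ell\,\bI - \bigl(1-\tfrac\alpha2\bigr)\tmu_i\,\bI$. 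Plugging this in, bounding $(\bdelta_i^\nu)^\top(\alpha\Deltaxi{i}^\nu) \leq \alpha\|\Deltaxi{i}^\nu\|\,\|\bdelta_i^\nu\|$, and reconstructing $U = F + G$ on both sides yields exactly~\eqref{eq:loc_dec_obj_TV}. I do not anticipate any genuine obstacle: the entire content is that step [S.1] is shared between the two algorithms, so the only thing to verify is that the original argument never exploited double stochasticity of the mixing weights — and inspection confirms it did not.
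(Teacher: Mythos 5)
Your proposal is correct and is essentially identical to the paper's own proof: the supporting material proves Lemma~\ref{lem:loc_dec_obj_TV} by replaying the proof of Lemma~\ref{lem:loc_dec_obj} verbatim (Taylor expansion of $F$ with $\nabla F(\bx_i^\nu)=\bdelta_i^\nu+\by_i^\nu$, the optimality estimate $G(\bx_i^\nu)-G(\hbx_i^\nu)\geq(\Deltaxi{i}^\nu)^\top(\by_i^\nu+\tH_i\Deltaxi{i}^\nu)$, convexity of $G$, and the bound $\alpha\bH-\tH_i\preceq-\tfrac12\alpha D_i^\ell\,\bI-(1-\tfrac\alpha2)\tmu_i\,\bI$), exactly as you describe. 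Your key observation — that the argument never uses double stochasticity of the mixing weights, so it carries over untouched to the directed time-varying setting — is precisely why the paper can reuse the proof.
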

\begin{proof}
	Consider the Taylor expansion of $F$:  
	\begin{align}\label{eq:taylor_F_TV}
	\begin{split}
	F(\bx_i^{\nu +{\frac{1}{2}}})= \,& F(\bx_i^\nu) + \nabla F(\bx_i^\nu)^\top (\alpha \Deltaxi{i}^\nu)  + (\alpha\Deltaxi{i}^\nu)^\top \mathbf{H}(\alpha\Deltaxi{i}^\nu),\\
	\stackrel{\eqref{eq:tracking_err_def}}{=}& F(\bx_i^\nu) + \big(\bdelta_i^\nu\big)^\top(\alpha \Deltaxi{i}^\nu) + \big(\mathbf{y}_i^\nu\big)^\top(\alpha \Deltaxi{i}^\nu) + (\alpha\Deltaxi{i}^\nu)^\top \mathbf{H}(\alpha\Deltaxi{i}^\nu),
	\end{split}
	\end{align}
	where $\mathbf{H} \triangleq  \int_{0}^1 (1 - \theta) \nabla^2 F (\theta \bx_i^{\nu + \frac{1}{2}} + (1 - \theta) \bx_i^\nu) d \theta$.
	
	Invoking the  optimality of $\hbx_i^\nu$, we have 
	\begin{align}\label{eq:opt_x_hat_TV}
	\begin{split}
	G(\bx_i^\nu)-G(\widehat{\bx}_i^\nu) & \geq (\Deltaxi{i}^\nu)^\top \big(\nabla \tf_i(\hbx_i^\nu;\bx_i^\nu)  +  \by_i^\nu - \nabla f_i(\bx_i^\nu)\big) 
	= (\Deltaxi{i}^\nu)^\top \big( \by_i^\nu + \tH_i \Deltaxi{i}^\nu\big) \\
	\end{split}
	\end{align}
	where the equality follows from  $\nabla \tf_i(\bx_i^\nu;\bx_i^\nu)=\nabla f_i(\bx_i^\nu)$ and the integral form of the mean value theorem; and  $\tH_i \triangleq  \int_0^1 \nabla^2 \tf_i (\theta \,\hbx_i^\nu + (1 - \theta) \,\bx_i^\nu; \bx_i^\nu) d \theta$. 
	
	Substituting (\ref{eq:opt_x_hat_TV}) in (\ref{eq:taylor_F_TV}) and  using the convexity of $G$ yield
	\begin{align}\label{eq:descent_lem_TV}
	\begin{split}
	&F(\bx_i^{\nu +{\frac{1}{2}}})\\
	\leq &\,F(\bx_i^\nu) +(   \bdelta_i^\nu)^\top (\alpha \Deltaxi{i}^\nu)  + (\alpha\Deltaxi{i}^\nu)^\top \mathbf{H}(\alpha\Deltaxi{i}^\nu)
	+ \alpha \left(G(\bx_i^\nu)-G(\widehat{\bx}_i^\nu)  - (\Deltaxi{i}^\nu)^\top \tH_i \Deltaxi{i}^\nu\right)\\
	\leq & \, F(\bx_i^\nu) +(   \bdelta_i^\nu)^\top (\alpha \Deltaxi{i}^\nu)  +\alpha \left(  - (\Deltaxi{i}^\nu)^\top \tH_i \Deltaxi{i}^\nu+  (\alpha\Deltaxi{i}^\nu)^\top \mathbf{H}(\Deltaxi{i}^\nu)  \right)
	+ G (\bx_i^\nu) - G(\bx_i^{\nu + \frac{1}{2}}).
	\end{split}
	\end{align}\vspace{-0.5cm}
	It remains to bound $ \alpha \bH- \tH_i$. We proceed as follows:
	\begin{align}\label{eq:Delta_H_bound_TV}
	\begin{split}
	& \, \alpha \bH- \tH_i \\
	= &\, \alpha \int_{0}^1 (1 - \theta) \nabla^2 F (\theta \bx_i^{\nu + \frac{1}{2}} + (1 - \theta) \bx_i^\nu) d \theta -  \int_0^1 \nabla^2 \tf_i (\theta \hbx_i^\nu  + (1 - \theta) \bx_i^\nu; \bx_i^\nu) d \theta\\
	\stackrel{\eqref{eq:descent}}{=} &\, \int_0^\alpha (1 - \theta/\alpha) \nabla^2 F (\theta \hbx_i^\nu + (1 - \theta) \bx_i^\nu) d \theta - \int_0^1 \nabla^2 \tf_i (\theta \hbx_i^\nu + (1 - \theta) \bx_i^\nu; \bx_i^\nu) d \theta\\
	\stackrel{(a)}{\preceq}  & 
	\,-\int_0^\alpha (1 - \theta/\alpha) \cdot ( D_i^{\ell} )\,\mathbf{I}\, d\theta - \int_0^\alpha (\theta/\alpha) \nabla^2 \tf_i (\theta \hbx_i + (1 - \theta) \bx_i^\nu;\mathbf{x}_i^\nu) d \theta\\
	& \,  - \int_\alpha^1 \nabla^2 \tf_i (\theta \,\hbx_i^\nu + (1 - \theta) \,\bx_i^\nu;\mathbf{x}_i^\nu) d \theta\\
	\stackrel{(b)}{\preceq} & \, -\frac{1}{2} \alpha \, ( D_i^{\ell})\,\mathbf{I}  - \left( 1 - \frac{\alpha}{2}\right)  \, \tmu_i \,\mathbf{I},
	\end{split}
	\end{align}
	where in (a) we used $\nabla^2 F(\theta \hbx_i^\nu + (1 - \theta) \bx_i^\nu) \preceq - (D_i^{\ell}) \mathbf{I} + \nabla^2\tf_i (\theta \hbx_i^\nu + (1 - \theta) \bx_i^\nu; \bx_i^\nu)$ [cf.~\eqref{eq:upper-lower-hessian}] while (b) follows from the fact that $\tf_i$  is  $\tmu_i$-strongly convex (cf. Assumption~\ref{assump:SCA_surrogate}). Substituting \eqref{eq:Delta_H_bound_TV} into  \eqref{eq:descent_lem_TV} completes the proof
\end{proof}

We connect now the individual decreases in (\ref{eq:loc_dec_obj_TV}) with that of the   optimality gap $\optgap^\nu$, defined in (\ref{eq:opt_gap_TV}).  Notice that   \begin{equation}\label{eq:bound_doubly_stoc_cvx_TV}
\sum_{i=1}^m \phi_i^{\nu+1}U(\bx_i^{\nu + 1}) \leq \sum_{i=1}^m\sum_{j=1}^m c_{ij}\phi_j^\nu U\Big(\bx_j^{\nu+\frac{1}{2}}\Big) 
=  \sum_{i=1}^m \phi_i^\nu U(\bx_i^{\nu+\frac{1}{2}}),
\end{equation}
due to the convexity of $U$, column-stochasticity of $\{c_{ij}^\nu\}_{i,j}$  and $\sum_{j=1}^m {c^\nu_{ij}\phi_j^\nu}/{\phi_i^{\nu+1}}=1$, for all $i=1,\ldots, m$. Summing (\ref{eq:loc_dec_obj_TV}) over $i=1,\ldots m$, and using (\ref{eq:bound_doubly_stoc_cvx_TV}),  we obtain 
\begin{equation}\label{eq:opt_err_1_TV}
	\begin{aligned} 
	\optgap^{\nu + 1}  &\leq \optgap^\nu + \sum_{i=1}^m \phi_i^\nu\left\{ \alpha \|\Deltaxi{i}^\nu\|  \| \bdelta_i^\nu \| - \alpha \left(1 - \frac{\alpha}{2}\right) \tmu_i \| \Deltaxi{i}^\nu\|^2 - \frac{D_i^{\ell}}{2}\alpha^2  \| \Deltaxi{i}^\nu\|^2 \right\} 
	\\
	& \overset{(a)}{\leq} \optgap^\nu -  \left( \left(1 - \frac{\alpha}{2}\right) \tmu_{\mn} +  \frac{\alpha D_{\mn}}{2} - \frac{1}{2} \epsilon_{opt}\right) \alpha \sum_{i=1}^m \phi_i^\nu\| \Deltaxi{i}^\nu\|^2 + \frac{1}{2} \epsilon_{opt}^{-1} \,\alpha \cdot  \phi_{ub}\cdot \| \bdelta^\nu \|^2,
	\end{aligned}
	\end{equation}
where in (a) we used  Young's inequality, with   $\epsilon_{opt}>0$ satisfying 
\begin{equation}\label{eq:eps_opt_TV}
\left(1 - \frac{\alpha}{2}\right) \tmu_{\mn} +  \frac{\alpha D_{\mn}^L}{2} - \frac{1}{2} \epsilon_{opt} >0.
\end{equation}

Next we lower bound  $\|\Deltaxi{}^\nu\|^2$ in terms of  the   optimality gap. 

\begin{lemma}\label{lem:delta_x_lb_TV}
	In the setting of Lemma \ref{lem:loc_dec_obj}, there holds:
	\begin{equation}\label{eq:delta_x_lb_TV}
	\alpha\sum_{i=1}^m\phi_i^\nu\| \Deltaxi{i}^\nu \|^2 \geq \frac{\mu}{D_{\mx}^2} \left(\optgap^{\nu+1} - (1-\alpha) \optgap^\nu - \frac{\alpha}{\mu}
	\sum_{i=1}^m\phi_i^\nu \| \bdelta_i^\nu \|^2\right)
	\vspace{-0.1cm}
	\end{equation}with $D_{\mx}$ defined in \eqref{eq:alg_param_def}.
	
	
\end{lemma}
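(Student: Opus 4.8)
The plan is to replay the proof of Lemma~\ref{lem:delta_x_lb} almost verbatim, the only structural change being that every network-wide sum is now weighted by the coefficients $\phi_i^\nu$, which are uniformly bounded between $\phi_{lb}$ and $\phi_{ub}$ [cf.~\eqref{eq:net_param_def}]. The per-agent analysis is left untouched because the local subproblem~\eqref{eq:loc_opt2} coincides with~\eqref{eq:loc_opt}; the novelty lies entirely in how the individual inequalities are aggregated.

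First I would establish, for each $i\in[m]$, exactly the same pointwise bound used in Lemma~\ref{lem:delta_x_lb}. Writing the first-order optimality condition of $\hbx_i^\nu$ against the competitor $\bx^\star$ and combining it with the $\mu$-strong convexity of $F$ (the Taylor remainder being controlled through the Hessian gap~\eqref{eq:upper-lower-hessian} with constant $D_i$), I would complete the square and arrive at
$$U(\bx^\star) \geq U(\hbx_i^\nu) - \frac{D_i^2}{\mu}\,\norm{\Deltaxi{i}^\nu}^2 - \frac{1}{\mu}\,\norm{\bdelta_i^\nu}^2.$$
Since this is a strictly per-agent statement, it carries over with no modification from the undirected setting.

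The aggregation step is where the weights enter. I would multiply the rearranged inequality by $\phi_i^\nu\ge 0$, sum over $i$, and bound $D_i\le D_{\mx}$ to obtain
$$\frac{D_{\mx}^2}{\mu}\sum_{i=1}^m \phi_i^\nu\,\norm{\Deltaxi{i}^\nu}^2 \;\ge\; \sum_{i=1}^m \phi_i^\nu\big(U(\hbx_i^\nu)-U(\bx^\star)\big) - \frac{1}{\mu}\sum_{i=1}^m \phi_i^\nu\,\norm{\bdelta_i^\nu}^2.$$
To convert the $\phi$-weighted average of $U(\hbx_i^\nu)-U(\bx^\star)$ into the optimality gap, I would use the convexity relation $U(\bx_i^{\nu+\frac{1}{2}})\le \alpha\,U(\hbx_i^\nu)+(1-\alpha)U(\bx_i^\nu)$, multiply by $\phi_i^\nu$, sum, and then invoke~\eqref{eq:bound_doubly_stoc_cvx_TV} to pass from $\sum_i\phi_i^\nu U(\bx_i^{\nu+\frac{1}{2}})$ down to $\sum_i\phi_i^{\nu+1}U(\bx_i^{\nu+1})$. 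This yields $\alpha\sum_i\phi_i^\nu\big(U(\hbx_i^\nu)-U(\bx^\star)\big)\ge \optgap^{\nu+1}-(1-\alpha)\optgap^\nu$, and substituting it back gives~\eqref{eq:delta_x_lb_TV} after multiplying through by $\alpha$.

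The only subtlety --- the step I would flag as requiring care rather than being genuinely hard --- is the bookkeeping of the constant $U(\bx^\star)$ term across iterations. Because $\optgap^\nu$ uses the weights $\phi^\nu$ while $\optgap^{\nu+1}$ uses $\phi^{\nu+1}$ [cf.~\eqref{eq:opt_gap_TV}], the subtraction of the optimum is legitimate only if $\sum_i\phi_i^{\nu+1}=\sum_i\phi_i^\nu$. This total-mass conservation follows from the column stochasticity of $\mathbf{C}^\nu$ (Assumption~\ref{assumption:TVwights}) together with $\phi_i^{\nu+1}=\sum_j c_{ij}^\nu\phi_j^\nu$, and it is precisely what makes~\eqref{eq:bound_doubly_stoc_cvx_TV} compatible with the weighted optimality gap, closing the argument.
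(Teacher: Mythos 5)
Your proposal is correct and follows essentially the same route as the paper's proof: the per-agent bound $U(\bx^\star) \geq U(\hbx_i^\nu) - \tfrac{D_i^2}{\mu}\|\Deltaxi{i}^\nu\|^2 - \tfrac{1}{\mu}\|\bdelta_i^\nu\|^2$ from optimality of $\hbx_i^\nu$ and strong convexity of $F$, followed by $\phi_i^\nu$-weighted aggregation, the convexity relation $U(\bx_i^{\nu+\frac{1}{2}})\le \alpha U(\hbx_i^\nu)+(1-\alpha)U(\bx_i^\nu)$, and~\eqref{eq:bound_doubly_stoc_cvx_TV}. Your explicit observation that the subtraction of $U^\star$ across iterations requires $\sum_i\phi_i^{\nu+1}=\sum_i\phi_i^\nu$ (from column stochasticity of $\mathbf{C}^\nu$) is a point the paper uses only implicitly, and is a worthwhile clarification rather than a deviation.
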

\begin{proof}
	Invoking the optimality condition of $\hbx_i^\nu$, yields 
	
	\begin{align}\label{eq:FOC_TV}
	G(\bx^\star)-G(\hbx_i^\nu)\geq -(\bx^\star - \hbx_i^\nu)^\top \Big(\nabla \tf_i ( \hbx_i^\nu ; \bx_i^\nu)  +  \by_i^\nu - \nabla f_i (\bx_i^\nu) \Big).
	\end{align}
	Using the $\mu$-strong convexity of $F$, we can write
	\begin{align*}
	\hspace{-0.2cm}\begin{split}
	& U(\bx^\star)   \geq  U(\hbx_i^\nu) +    G(\bx^\star)-G(\hbx_i^\nu) +\nabla F( \hbx_i^\nu )^\top (\bx^\star - \hbx_i^\nu) + \frac{\mu}{2} \| \bx^\star -  \hbx_i^\nu\|^2   \\
	& \overset{(\ref{eq:FOC})}{\geq}  \!\!  U(\hbx_i^\nu) \!+\! \Big( \nabla F( \hbx_i^\nu )\! -\!\nabla \tf_i ( \hbx_i^\nu ; \bx_i^\nu) \! -\! \big( \by_i^\nu - \nabla f_i (\bx_i^\nu)\big)  \Big)^\top \!\!\!(\bx^\star - \hbx_i^\nu) + \frac{\mu}{2} \| \bx^\star -  \hbx_i^\nu\|^2\\
	& \,\,\,\,=   U(\hbx_i^\nu) + \frac{\mu}{2} \Big\| \bx^\star -  \hbx_i^\nu + \frac{1}{\mu}\Big( \nabla F( \hbx_i^\nu ) -\nabla \tf_i ( \hbx_i^\nu ; \bx_i^\nu)  - \big(  \by_i^\nu - \nabla f_i (\bx_i^\nu) \big) \Big) \Big\|^2 \\
	& \,\,\,\,\quad- \frac{1}{2\mu } \left\|  \nabla F( \hbx_i^\nu ) -\nabla \tf_i ( \hbx_i^\nu ; \bx_i^\nu)  - \big(\by_i^\nu - \nabla f_i (\bx_i^\nu) \big)  \right\|^2
		\end{split}
		\end{align*}
			\begin{align*}\begin{split}
	&  \,\,\,\,\geq  U(\hbx_i^\nu)  - \frac{1}{2\mu } \left\|  \nabla F( \hbx_i^\nu )  \pm \nabla F(\bx_i^\nu)-\nabla \tf_i ( \hbx_i^\nu ; \bx_i^\nu)  - \big( \by_i^\nu - \nabla f_i (\bx_i^\nu) \big)  \right\|^2
	\\
	& \,\,\,\, \geq  U(\hbx_i^\nu)  - \frac{1}{\mu } \left \|  \nabla F( \hbx_i^\nu )  -  \nabla F(\bx_i^\nu) + \nabla f_i(\bx_i^\nu) - \nabla \tf_i ( \hbx_i^\nu ; \bx_i^\nu) \right\|^2 - \frac{1}{\mu} \|\bdelta_i^\nu \|^2 
	\\
	&  \,\,\,\, =  U(\hbx_i^\nu)  - \frac{1}{\mu } \left\| \int_{0}^{1}\left(\nabla^2 F(\theta \hbx_i^\nu+(1-\theta)\bx_i^\nu)-\nabla^2 \tf_i(\theta \hbx_i^\nu+(1-\theta)\bx_i^\nu; \bx_i^\nu)\right)(\mathbf{d}_i^\nu)\,\text{d}\theta \right\|^2 \!\!- \frac{1}{\mu} \|\bdelta_i^\nu \|^2 
	\\
	&  \,\,\,\, \geq  U(\hbx_i^\nu)  - \frac{D_i^2}{\mu } \norm{\mathbf{d}_i^\nu}^2 - \frac{1}{\mu} \|\bdelta_i^\nu \|^2 ,
	\end{split}
	\end{align*}
	where $D_i = \max\{ |D_i^{\ell}|,| D_i^{u}|\}$.

	Rearranging the terms and summing over $i=1,\ldots, m$, yields  \begin{equation}\label{eq:delta_x_lb_a_TV}
	\sum_{i=1}^m\phi_i^\nu\| \Deltaxi{i}^\nu \|^2 \geq \frac{\mu}{D_{\mx}^2} \left(\sum_{i=1}^m \phi_i^\nu\big( U( \hbx_i^\nu) - U(\bx^\star)\big) - \frac{1}{\mu} \sum_{i=1}^m \phi_i^\nu \| \bdelta_i^\nu \|^2\right).\vspace{-0.1cm}
	\end{equation}
	Using (\ref{eq:bound_doubly_stoc_cvx}) in conjunction with  $U(\bx_i^{\nu+\frac{1}{2}})\leq \alpha U(\hbx_i^\nu)+ (1-\alpha) U(\bx_i^\nu)$ leads to   \vspace{-0.2cm} 
	\begin{equation}\label{eq:lower-bound_U_hat_TV}
	\alpha \, \sum_{i=1}^m \phi_i^\nu\left(U(\hbx_i^\nu) - U(\bx^\star)\right) \geq \optgap^{\nu+1} - (1-\alpha) \optgap^\nu.
	\end{equation}
	Combining (\ref{eq:delta_x_lb_a_TV}) with (\ref{eq:lower-bound_U_hat_TV}) yields the desired result (\ref{eq:delta_x_lb_TV}). 
\end{proof}

As last step, we upper bound   $\|\bdelta_{}^\nu\|^2$ in (\ref{eq:opt_err_1}) in terms of the consensus errors $\| \bx_\bot^\nu\|^2$ and $\| \by_\bot^\nu\|^2$.

\vspace{-0.1cm}
\begin{lemma}\label{lem:tracking_err_bound_TV}
	The tracking error $\|\bdelta_{}^\nu\|^2$ can be bounded as
	\begin{equation}\label{eq:delta_upper_by_consensus_TV}
	\|\bdelta_{}^\nu\|^2  \leq 8  L_{\mx}^2 \| \var{x}{\nu}\|^2 + 2  \| \var{y}{\nu} \|^2.,
	\end{equation} 
	where $L_{\mx}$ is defined in~\eqref{eq:prob_param_def}.
\end{lemma}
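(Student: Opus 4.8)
The plan is to mirror the proof of Lemma~\ref{lem:tracking_err_bound} from the undirected case, adapting it to the weighted averages that arise under the push-sum dynamics. First I would establish the weighted gradient-tracking identity $\wavg{\by}{\nu} = \overline{\pgrad}^\nu$: starting from the recursion~(\ref{eq:avg_y_eq_TV}) together with the initialization $\by_i^0 = \nabla f_i(\bx_i^0)$ and $\phi_i^0 = 1$, a straightforward induction shows that the $\boldsymbol{\phi}$-weighted average of the $\by_i^\nu$'s coincides with the plain average of the local gradients $\nabla f_i(\bx_i^\nu)$ at every iteration. This is the exact analogue of~(\ref{eq:average-consistency}), now phrased for the weighted average.

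Next, using the definition $\bdelta_i^\nu = \nabla F(\bx_i^\nu) - \by_i^\nu$ along with $\nabla F(\bx_i^\nu) = \frac{1}{m}\sum_{j=1}^m \nabla f_j(\bx_i^\nu)$ and $\wavg{\by}{\nu} = \frac{1}{m}\sum_{j=1}^m \nabla f_j(\bx_j^\nu)$, I would split $\bdelta_i^\nu = \big(\nabla F(\bx_i^\nu) - \wavg{\by}{\nu}\big) + \big(\wavg{\by}{\nu} - \by_i^\nu\big)$, where the first summand equals $\frac{1}{m}\sum_{j=1}^m\big(\nabla f_j(\bx_i^\nu) - \nabla f_j(\bx_j^\nu)\big)$ and the second is the negative of the $i$-th block of $\var{y}{\nu}$. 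Applying $\norm{a+b}^2 \le 2\norm{a}^2 + 2\norm{b}^2$, then Jensen's inequality on the averaged sum, and finally the $L_{\mx}$-Lipschitz continuity of each $\nabla f_j$ (cf.~(\ref{eq:mu-L-smooth})), yields, after summing over $i\in[m]$, the intermediate bound $\norm{\bdelta^\nu}^2 \le \frac{2L_{\mx}^2}{m}\sum_{i,j}\norm{\bx_i^\nu - \bx_j^\nu}^2 + 2\norm{\var{y}{\nu}}^2$.

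The last step is to control $\sum_{i,j}\norm{\bx_i^\nu - \bx_j^\nu}^2$ by $\norm{\var{x}{\nu}}^2$, and this is the only genuine difference from the undirected argument and the main (minor) obstacle. In the static doubly-stochastic setting one exploits the \emph{exact} identity $\sum_{i,j}\norm{\bx_i^\nu - \bx_j^\nu}^2 = 2m\norm{\bx_\bot^\nu}^2$, which holds precisely because $\bar{\bx}^\nu$ is the \emph{plain} average. Under push-sum the natural reference point is the \emph{weighted} average $\wavg{\bx}{\nu}$, for which this identity is no longer valid. Instead I would write $\bx_i^\nu - \bx_j^\nu = (\bx_i^\nu - \wavg{\bx}{\nu}) - (\bx_j^\nu - \wavg{\bx}{\nu})$ and use the coarser bound $\norm{\bx_i^\nu - \bx_j^\nu}^2 \le 2\norm{\bx_i^\nu - \wavg{\bx}{\nu}}^2 + 2\norm{\bx_j^\nu - \wavg{\bx}{\nu}}^2$, which upon summation gives $\sum_{i,j}\norm{\bx_i^\nu - \bx_j^\nu}^2 \le 4m\norm{\var{x}{\nu}}^2$. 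Substituting this back produces the leading constant $8L_{\mx}^2$ — exactly twice the undirected factor of $4$, as anticipated from replacing the exact identity by the triangle-inequality bound — and completes the proof of~(\ref{eq:delta_upper_by_consensus_TV}).
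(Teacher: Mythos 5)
Your proof is correct and follows essentially the same route as the paper's: the inductively established weighted tracking identity $\wavg{\by}{\nu}=\overline{\pgrad}^{\nu}$, the same $\pm\,\wavg{\by}{\nu}$ splitting combined with Young's inequality, Jensen, and the $L_{\mx}$-Lipschitz continuity of the $\nabla f_j$'s, and the same pairwise bound producing the constant $8L_{\mx}^2$. One minor remark: your claim that the coarser bound is forced is slightly overstated---since the cross term $\sum_{i,j}(\bx_i^\nu-\wavg{\bx}{\nu})^\top(\bx_j^\nu-\wavg{\bx}{\nu})=\big\|\sum_{i}(\bx_i^\nu-\wavg{\bx}{\nu})\big\|^2$ is nonnegative, one still has $\sum_{i,j}\|\bx_i^\nu-\bx_j^\nu\|^2\le 2m\|\var{x}{\nu}\|^2$ and could retain the constant $4L_{\mx}^2$, but the constant $8$ you obtain matches the paper's statement.
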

\begin{proof}    
	\begin{equation*}  
	\begin{aligned}  
	\begin{split}  \hspace{-1.3cm}
	\|\bdelta_{}^\nu\|^2  \overset{\eqref{eq:tracking_err_def}}{=} & \sum_{i=1}^m \|\nabla F(\bx_i^{\nu}) \pm  \wavg{\by}{\nu}-  \by_i^\nu\|^2  
	\\ 
	\overset{\eqref{eq:avg_y_eq_TV}}{=} & \frac{1}{m^2} \sum_{i=1}^m \Big\|\sum_{j=1}^m \nabla f_j(\bx_i^\nu)- \sum_{j=1}^m \nabla f_j(\bx_j^\nu) +   m\cdot \wavg{\by}{\nu}- m \cdot \by_i^\nu \Big\|^2
	\\
	 \overset{A2,\,\eqref{eq:prob_param_def}}{\leq} &\frac{1}{m^2}\sum_{i=1}^m \!\!\left( 2 m\sum_{j=1}^m L_{\mx}^2\| \bx_i^\nu - \bx_j^\nu \|^2 + 2 m^2 \|\wavg{\by}{\nu}-  \by_i^\nu \|^2\right)
	\\
	\leq & 8  L_{\mx}^2 \| \var{x}{\nu}\|^2 + 2  \| \var{y}{\nu} \|^2.
	\end{split}
	\end{aligned}\vspace{-0.4cm}
	\end{equation*}
	~
\end{proof}

The linear convergence of the optimality gap   up to consensus errors as stated in Proposition   follows readily multiplying (\ref{eq:delta_x_lb_TV}) by $\left(1 - \frac{\alpha}{2}\right) \tmu_{\mn} +  \frac{\alpha D_{\mn}}{2} - \frac{1}{2} \epsilon_{opt}$ and   adding with \eqref{eq:opt_err_1_TV}  
to cancel out  $\|\Deltaxi{}^\nu\|$, and   using (\ref{eq:delta_upper_by_consensus}) to bound $\|\bdelta_{}^\nu\|^2$. 

\section{Proof of Theorem~\ref{thm:step_size_condition2}}\label{app:step_size_condition2}
Following the same steps as in the proof of Theorem~\ref{thm:step_size_condition}, we derive the optimal $\epsilon_{opt}$ appearing  in $\eta (\alpha)$ and $\sigma(\alpha)$:
\begin{align}\label{eq:opt_star2}
\epsilon_{opt}^\star =\left(1 - \frac{\alpha}{2}\right) \tmu_{\mn} + \alpha  {D_{\mn}^\ell}/{2},
\end{align} 	
where $\alpha$ must  satisfy  
\begin{align}\label{eq:step_size_12}
\alpha < {2 \tmu_{\mn}}/({\tmu_{\mn} - D_{\mn}^\ell}).
\end{align}

Setting   $\epsilon_{opt}=\epsilon_{opt}^\star$ and denoting the corresponding $\PP(\alpha,z)$ as $\PP^\star(\alpha,z)$,
the expression of $\PP^\star (\alpha,1)$ reads
\begin{align}
\begin{split}
\PP^\star (\alpha ,1)  \triangleq {}& G^\star_P(\alpha)  \cdot C_1 \cdot 8 \phi_{ub}  L_{\mx}^2 \cdot  \frac{2c_0^2 \rho_B^2}{(1-\rho_{{B}})^2} \alpha^2
\\
& + \left(G^\star_P(\alpha)  \cdot 2 \phi_{ub}  \cdot C_1 + C_2\right)  \cdot 2m\phi_{lb}^{-2} L_{\mx}^2 \cdot \frac{2c_0^2\rho_B^2 }{(1-\rho_{\bar{B}})^2} \alpha^2 
\\
& + \left(G^\star_P(\alpha)  \cdot 2 \phi_{ub} \cdot C_1 + C_2\right) \cdot 8m\phi_{lb}^{-2} L_{\mx}^2   \cdot \frac{4 c_0^4 \rho_B^4 }{(1-\rho_{\bar{B}})^4} \alpha^2,
\end{split}
\end{align}
where  \vspace{-0.3cm}
\begin{equation}\label{eq:r_star_def_TV}
G_P^\star (\alpha) \triangleq \frac{\frac{D_{\mx}^2}{\mu} + \frac{1}{\mu} \cdot \left(\left(1 - \frac{\alpha}{2}\right) \tmu_{\mn} + \frac{D_{\mn}^\ell}{2} \alpha \right)^2}{ \left(\left(1 - \frac{\alpha}{2}\right) \tmu_{\mn} + \frac{D_{\mn}^\ell}{2} \alpha\right)^2 }.
\end{equation}
Since $\PP^\star (\bullet ,1)$ is continuous and monotonically increasing on $(0, 2 \tmu_{\mn} / (\tmu_{\mn} - D_{\mn}^\ell)$, with $\PP^\star (0 ,1) = 0$. A upperbound of $\alpha$ can be found by setting

\begin{align}\label{eq:expression_alpha2}
\alpha < \alpha_2 \triangleq & \left( G^\star_P \left(\frac{\tmu_{\mn}}{\tmu_{\mn} - D_{\mn}^\ell} \right)  \cdot C_1 \cdot 8 \phi_{ub}  L_{\mx}^2 \cdot \frac{2c_0^2 \rho_B^2}{(1-\rho_{{B}})^2} \alpha^2 \right.
\\
& \quad + \left(G^\star_P \left(\frac{\tmu_{\mn}}{\tmu_{\mn} - D_{\mn}^\ell}\right)  \cdot 2 \phi_{ub}  \cdot C_1 + C_2\right)  \cdot  2m\phi_{lb}^{-2} L_{\mx}^2 \cdot \frac{2c_0^2\rho_B^2 }{(1-\rho_{\bar{B}})^2} \alpha^2 
\\
& \left. \quad + \left(G^\star_P \left(\frac{\tmu_{\mn}}{\tmu_{\mn} - D_{\mn}^\ell}\right)  \cdot 2 \phi_{ub}  \cdot C_1 + C_2\right) \cdot 8m\phi_{lb}^{-2} L_{\mx}^2   \cdot \frac{4 c_0^4 \rho_B^4 }{(1-\rho_{\bar{B}})^4}\right)^{-1/2}.
\end{align}
Therefore, a valid $\bar{\alpha}$ is 
$\bar{\alpha} = \min\{  \tmu_{\mn} / (\tmu_{\mn} - D_{\mn}^\ell), \alpha_2\}$.

\section{Explicit expression of the linear rate in the  time-varying directed network setting}
\label{app:pf_linear_rate2}

The following theorem provides an explicit expression of the convergence rate in Theorem \ref{thm:step_size_condition2}, in terms of the step-size $\alpha$; the  constants $J$ and $A_{\frac{1}{2}}$ therein are defined in~\eqref{eq:expression_J_TV} and \eqref{eq:rate_condition_2_TV} with $\theta = 1/2$, respectively.

\begin{theorem}\label{thm:linear_rate_TV}
	In the setting of Theorem \ref{thm:step_size_condition2}, suppose that  the step-size $\alpha$ satisfies $\alpha \in (0,\alpha_{\mx})$, with $\alpha_{\mx}\triangleq  \min\{(1-\rho_{{B}})/{A}_{\frac{1}{2}},,\tmu_{\mn} /(\tmu_{\mn} - D_{\mn}^\ell),1 \}$. Then $\{U(\bx_i^\nu)\}$ converges to $U^\star$ at the  R-linear rate $\mathcal{O}(z^\nu)$, for all $i =1,\ldots,m$, where 
	\begin{align}\label{rate_exp}
	z =  
	\begin{cases}
	1 - J \cdot \alpha, & \text{if }\alpha \in \left( 0,\min\{\alpha^*,\alpha_{\mx}\}\right), \\
	\rho_B + A_{\frac{1}{2}} \alpha, & \text{if } \alpha \in [\min\{\alpha^*,\alpha_{\mx}\}, \alpha_{\mx} ).
	\end{cases}
	\end{align}
\end{theorem}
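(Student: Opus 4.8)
The plan is to reproduce, essentially verbatim, the argument of Appendix~\ref{app:pf_linear_rate} that established the explicit rate in the undirected setting, substituting the stability polynomial \eqref{eq:stability_polynomial} by its time-varying counterpart \eqref{def_P_alpha_TV} and the mixing modulus $\rho$ by $\rho_B$. Concretely, I would seek the smallest admissible $z$ in the window \eqref{eq:bound_z2}, namely $z\in(\max\{\sigma(\alpha),\rho_B\},1)$, for which $\PP^\star(\alpha,z)<1$, where $\PP^\star$ denotes $\PP$ evaluated at the optimal $\epsilon_{opt}^\star=(1-\tfrac{\alpha}{2})\tmu_{\mn}+\tfrac{\alpha}{2}D_{\mn}^\ell$ already identified in the proof of Theorem~\ref{thm:step_size_condition2}. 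Once $\{\|\Deltaxi{}^\nu\|^2\}$ is shown to be $\mathcal{O}(z^\nu)$, the same rate transfers to $\{\optgap^\nu\}$ and hence to $\{U(\bx_i^\nu)-U^\star\}$ via the chained inequalities of Proposition~\ref{prop:small_gain_eqs_transformed2} and Lemma~\ref{Lemma_Small_gain}, exactly as in the undirected case.

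The two branches of \eqref{rate_exp} arise from the two active constraints in \eqref{eq:bound_z2}. For the constraint $z>\sigma(\alpha)$, I would strengthen it to $z\ge\sigma(\alpha)+\theta\,\alpha\,[\cdots]$ for a free $\theta\in(0,1)$, substitute $\epsilon_{opt}^\star$, and simplify; since $\sigma(\alpha)$, $\eta(\alpha)$, and therefore $G_P(\alpha,z)$ are literally unchanged with respect to the undirected analysis (cf.~Proposition~\ref{prop:loc_geo_rate2}), the bound $G_P(\alpha,z)\le G_P^\star(\alpha)\,\theta^{-1}$ and the resulting linear threshold $z\ge 1-J\alpha$ with $J$ as in \eqref{eq:expression_J_TV} carry over unaltered at $\theta=1/2$. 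This produces the first branch.

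The second branch comes from enforcing $\PP^\star(\alpha,z)<1$ together with $z>\rho_B$. Writing $u\triangleq z-\rho_B>0$ and using $G_X(z)=G_Y(z)=\tfrac{2c_0^2}{(1-\rho_B)(z-\rho_B)}$, the polynomial \eqref{def_P_alpha_TV} becomes a sum of one term proportional to $\alpha^2/u$ and one proportional to $\alpha^2/u^2$; requiring each piece to remain below a fixed fraction of $1$ yields $u\ge A_{\frac{1}{2}}\alpha$, i.e.\ $z\ge\rho_B+A_{\frac{1}{2}}\alpha$, with $A_{\frac{1}{2}}$ collecting the constants $G_P^\star(\tmu_{\mn}/(\tmu_{\mn}-D_{\mn}^\ell))$, $C_1$, $C_2$, $L_{\mx}$, $\phi_{ub}$, $\phi_{lb}$, $m$, $c_0$, and $\rho_B$ that appear in \eqref{def_P_alpha_TV}, precisely the quantity \eqref{eq:rate_condition_2_TV} specialized to $\theta=1/2$. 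The key structural difference from the undirected case is worth flagging: there, the nested optimization over $\epsilon_x,\epsilon_y$ forced the square-root form $(\rho+\sqrt{\alpha A})^2$, whereas here $G_X$ and $G_Y$ carry $z-\rho_B$ (rather than $z-\rho^2(1+\epsilon)$) directly in their denominators, so balancing $\alpha^2$ against $u^2$ gives the \emph{linear} threshold $\rho_B+A_{\frac{1}{2}}\alpha$ recorded in \eqref{rate_exp}.

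Finally I would set $\bar z=\max\{1-J\alpha,\ \rho_B+A_{\frac{1}{2}}\alpha\}$ and locate the crossover: the first term decreases from $1$ while the second increases from $\rho_B$, so they coincide at $\alpha^*=(1-\rho_B)/(J+A_{\frac{1}{2}})$, giving the case split in \eqref{rate_exp}; admissibility $\bar z<1$ forces $\alpha<(1-\rho_B)/A_{\frac{1}{2}}$, which combined with the feasibility bound $\alpha\le\tmu_{\mn}/(\tmu_{\mn}-D_{\mn}^\ell)$ and the cap $\alpha\le1$ produces the stated $\alpha_{\mx}=\min\{(1-\rho_B)/A_{\frac{1}{2}},\,\tmu_{\mn}/(\tmu_{\mn}-D_{\mn}^\ell),\,1\}$. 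The main obstacle is purely bookkeeping: correctly propagating the network-dependent constants $\phi_{ub}$, $\phi_{lb}$, $m$, $c_0$ through \eqref{def_P_alpha_TV} and verifying that the mixed $1/u+1/u^2$ structure collapses to the single linear threshold; the conceptual skeleton is otherwise a direct transcription of Appendix~\ref{app:pf_linear_rate}.
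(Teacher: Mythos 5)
Your proposal follows the paper's own proof essentially step for step: the same two constraints on $z$ from \eqref{eq:bound_z2} (the strengthened $\sigma(\alpha)$ condition with the free parameter $\theta$ set to $1/2$, yielding the branch $1-J\alpha$ with $J$ as in \eqref{eq:expression_J_TV}, and the stability condition $\PP^\star(\alpha,z)<1$ yielding the linear threshold $z\geq \rho_B+A_{\frac{1}{2}}\alpha$), the same crossover point $\alpha^*=(1-\rho_B)/(J+A_{\frac{1}{2}})$, the same derivation of $\alpha_{\mx}$, and you correctly flag the structural reason the threshold is linear here rather than of the form $(\rho+\sqrt{\alpha A})^2$ as in the undirected case. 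The only cosmetic deviation is in the last bookkeeping step: the paper collapses the mixed $1/(z-\rho_B)+1/(z-\rho_B)^2$ structure by bounding $z-\rho_B\geq (z-\rho_B)^2$, which produces exactly $A_{\theta}=\sqrt{A_{1,\theta}+A_{2,\theta}+A_{3,\theta}}$ as in \eqref{eq:rate_condition_2_TV}, whereas your ``each piece below a fixed fraction'' argument would produce a slightly different (but equally valid) constant; this does not affect the correctness or the structure of the argument.
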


\begin{proof}
The proof follows similar steps as the proof of Theorem~\ref{thm:linear_rate}. For sake of simplicity, we used the same notation as therein.
We find the smallest $z$ satisfying (\ref{eq:bound_z2}) such  that $\PP(\alpha,z) < 1$,  for   {$\alpha\in (0, \alpha_{\mx})$},  and $\alpha_{\mx}\in (0,1)$ to be determined[recall that   $\PP(\alpha,z)$ is defined in  \eqref{def_P_alpha_TV}].  

Using exactly the same argument as Theorem~\ref{thm:linear_rate} we have the following two conditions on $z$:
\begin{align}\label{eq:rate_condition_12}
z \geq   \sigma(\alpha) +   \frac{ (\theta \cdot \alpha) \cdot \left( \left(1 - \frac{\alpha}{2}\right)\tmu_{\mn} + \frac{D_{\mn}^\ell}{2} \alpha - \frac{1}{2}  \epsilon_{opt} \right) }{\frac{ D_{\mx}^2}{\mu} +  \left(1 - \frac{\alpha}{2}\right)\tmu_{\mn} + \frac{D_{\mn}^\ell}{2} \alpha - \frac{1}{2}  \epsilon_{opt} }
\end{align}
for some $\theta \in (0,1)$; and
\begin{align}\label{eq:step_size_bound2}
\begin{split}
& G_{P}^\star(\alpha)\cdot \theta^{-1} \cdot G_X(z) \cdot C_1 \cdot 8 \phi_{ub}  L_{\mx}^2 \cdot \rho_B^2 \cdot \alpha^2 
\\
& + \left( G_{P}^\star(\alpha)\cdot \theta^{-1} \cdot 2 \phi_{ub} \cdot C_1 + C_2\right) \cdot G_Y(z) \cdot 2m \phi_{lb}^{-2} L_{\mx}^2 \cdot \rho_B^2 \cdot  \alpha^2 
\\
& + \left( G_{P}^\star(\alpha)\cdot \theta^{-1} \cdot 2 \phi_{ub} \cdot C_1 + C_2\right) \cdot G_Y(z) \cdot 8m\phi_{lb}^{-2} L_{\mx}^2 \cdot  G_X(z)  \cdot \rho_B^4 \cdot \alpha^2 <1.
\end{split}
\end{align}

Using the fact that $G_P^\star(\alpha)$ is monotonically increasing on $\alpha\in (0, 2\tmu_{\mn}/(\tmu_{\mn} - D_{\mn}^\ell))$, and restricting  $\alpha\in (0, \tmu_{\mn}/(\tmu_{\mn} - D_{\mn}^\ell)]$, a sufficient condition for~\eqref{eq:step_size_bound2} is 
\begin{align}\label{eq:step_size_bound_12}
\alpha \leq  \alpha(z) \triangleq \left(A_{1,\theta}\frac{1 }{z-\rho_{{B}} } + A_{2 ,\theta}\frac{1 }{z-\rho_{{B}}} + A_{3 ,\theta} \frac{1 }{(z-\rho_{{B}})^2}\right)^{-1/2},
\end{align} 
where $A_{1,\theta}$, $A_{2,\theta}$ and $A_{3,\theta}$ are constants defined as
\begin{align*}
A_{1,\theta} & \triangleq  G_{P}^\star \left(\frac{\tmu_{\mn}}{\tmu_{\mn} - D_{\mn}^\ell}\right) \cdot \theta^{-1}  \cdot C_1 \cdot 8 \phi_{ub}  L_{\mx}^2 \cdot \frac{2 c_0^2 \rho_B^2}{1 - \rho_B}
\\
A_{2,\theta} & \triangleq \left( G_{P}^\star \left(\frac{\tmu_{\mn}}{\tmu_{\mn} - D_{\mn}^\ell}\right) \cdot \theta^{-1} \cdot 2 \phi_{ub}  \cdot C_1 + C_2\right)  \cdot 2m \phi_{lb}^{-2} L_{\mx}^2 \cdot \frac{2 c_0^2 \rho_B^2}{1 - \rho_B}
\\
A_{3,\theta} & \triangleq  \left( G_{P}^\star \left(\frac{\tmu_{\mn}}{\tmu_{\mn} - D_{\mn}^\ell}\right) \cdot \theta^{-1} \cdot 2 \phi_{ub} \cdot C_1 + C_2\right)  \cdot 8m\phi_{lb}^{-2} L_{\mx}^2 \cdot   \frac{4 c_0^4 \rho_B^4 }{(1 - \rho_B)^2}.
\end{align*}
Lower bounding $z - \rho_B$ by $(z - \rho_B)^2$ we obtain
\begin{align}\label{eq:rate_condition_2_TV}
z \geq \rho_B + A_\theta \alpha,\quad \text{with}\quad A_{\theta} \triangleq \sqrt{A_{1,\theta} + A_{2,\theta} + A_{3,\theta}}.
\end{align}
Letting $\epsilon_{opt} = \epsilon_{opt}^\star$ in~\eqref{eq:rate_condition_12}, the condition reduces to 
\begin{align}\label{eq:rate_condition_32}
z \geq 1 -  \frac{    \tmu_{\mn} - \frac{\alpha}{2} (\tmu_{\mn} - D_{\mn}^\ell)   }{\frac{2 D_{\mx}^2}{\mu} +  \tmu_{\mn} - \frac{\alpha}{2} (\tmu_{\mn} - D_{\mn}^\ell)  }\cdot (1- \theta) \alpha.
\end{align}
Therefore,  the overall convergence rate can be upper bounded by $\mathcal{O}(\bar{z}^\nu)$, where
\begin{align}\label{eq:rate_expression_complete2}
\bar{z} = \inf_{\theta \in (0,1)}\max \left\{ \rho_B + A_\theta \alpha,  1 -  \frac{    \tmu_{\mn} - \frac{\alpha}{2} (\tmu_{\mn} - D_{\mn}^\ell)   }{\frac{2 D_{\mx}^2}{\mu} +  \tmu_{\mn} - \frac{\alpha}{2} (\tmu_{\mn} - D_{\mn}^\ell)  }\cdot (1- \theta) \alpha \right\},
\end{align}
with  $A_\theta$  defined in~\eqref{eq:rate_condition_2_TV}.

Finally, we further simplify \eqref{eq:rate_expression_complete2}. Letting $\theta = 1/2$ and using  $\alpha \in (0,  \tmu_{\mn} /(\tmu_{\mn} - D_{\mn}^\ell)]$, the second term in the max of \eqref{eq:rate_expression_complete2}  can be upper bounded by  
\begin{align}\label{eq:expression_J_TV}
1- \underbrace{\frac{\tmu_{\mn} \mu}{ 4 D_{\mx}^2 + \tmu_{\mn} \mu}\cdot \frac{1}{2}}_{\triangleq J} \alpha.
\end{align}
The condition  $\bar{z} <1$  imposes the following upper bound on $\alpha$: $\alpha < \alpha_{\mx} = \min\{(1-\rho_{{B}})/{A}_{\frac{1}{2}},\tmu_{\mn} /(\tmu_{\mn} - D_{\mn}^\ell),1 \}$. 
Eq.~\eqref{eq:rate_expression_complete2} then simplifies to  \eqref{rate_exp} with $\alpha^* = (1 - \rho_B)/(A_{\frac{1}{2}} + J)$ that  equates $1 - J \alpha$ and $\rho_B + A_{\frac{1}{2}} \alpha$.
\end{proof}

\section{Rate estimate using linearization surrogate  \eqref{eq:linear_surrogate} (time-varying directed network case)}
\label{app:linearization_rate_TV}

	\begin{corollary}[Linearization surrogates]\label{cor:linearization_rate_TV} 
	In the setting of Theorem~\ref{thm:linear_rate_TV}, let $\{\bx^\nu\}$ be the sequence generated by SONATA (Algorithm~\ref{alg:SONATA_TV}), using the surrogates   \eqref{eq:linear_surrogate} and   step-size   $\alpha = c\cdot \alpha_{\mx}$,  $c \in (0,1)$,    where $\alpha_{\mx} =  \min\{1,(1 - \rho_B)^2/(C_M \cdot \kappa_g (1 + \beta/L)^2)\}$ and $C_M$ is a constant defined in \eqref{C_M_def_1}. The number of iterations (communications) needed for $U(\bx_i^\nu)-U^\star\leq \epsilon$, $i\in [m]$,   is\vspace{-0.2cm}
	\begin{align}
	& \mathcal{O} \left(\kappa_g \log (1/\epsilon)\right), &\text{if } \quad  \frac{\rho_{B}}{(1 - \rho_{B})^2} \leq \frac{1}{C_M \cdot  \kappa_g \left(1 + \frac{\beta}{L} \right)^2},\label{eq:Case_I_linear_TV}
	\\
	&\mathcal{O} \left(\frac{ \big(\kappa_g + \beta/\mu\big)^2 \rho_{B}}{(1 - \rho_{B})^2}\,\log (1/\epsilon)\right), &\text{otherwise}.
	\label{eq:Case_II_linear_TV}
	\end{align}
\end{corollary}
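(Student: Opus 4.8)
The plan is to follow the template of Appendix~\ref{app:linearization_rate} (the static undirected case), now invoking Theorem~\ref{thm:linear_rate_TV} in place of Theorem~\ref{thm:linear_rate}. By Theorem~\ref{thm:linear_rate_TV} the rate obeys $z\le\max\{z_1,z_2\}$ with $z_1\triangleq 1-\alpha J$ and $z_2\triangleq\rho_B+A_{\frac{1}{2}}\alpha$, where $J$ and $A_{\frac{1}{2}}$ are the quantities in \eqref{eq:expression_J_TV} and \eqref{eq:rate_condition_2_TV} (at $\theta=1/2$). Two structural differences from the undirected analysis drive the whole argument: the consensus branch $z_2$ is now \emph{affine} in $\alpha$ (rather than of the squared form $(\rho+\sqrt{\alpha A_{\frac{1}{2}}})^2$), and the coefficients $A_{1,\theta},A_{2,\theta},A_{3,\theta}$ entering $A_{\frac{1}{2}}$ now carry the push-sum network constants $\phi_{ub},\phi_{lb},m,c_0$ and the graph parameter $\rho_B$.

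First I would particularize every optimization-dependent quantity to the linearization surrogate \eqref{eq:linear_surrogate} exactly as in Appendix~\ref{app:linearization_rate}: since $\nabla^2\tf_i(\bx_i;\bx_i^\nu)=L\,\mathbf{I}$ one takes $\tmu_{\mn}=L$, and \eqref{eq:upper-lower-hessian} holds with $D_{\mn}^\ell=0$, $D_{\mx}=L-\mu$; Assumption~\ref{assump:homogeneity} gives $L_{\mx}=L+\beta$. Feeding these into \eqref{eq:r_star_def_TV} and \eqref{eq:C1_C2_TV} yields $G_P^\star(1)=(4(L-\mu)^2+L^2)/(\mu L^2)$ together with the corresponding $C_1$ and $C_2$; the only difference from the undirected computation is the extra factor $\phi_{lb}^{-1}$ inside $C_1$. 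Consequently $J$ is unchanged, namely $J=\tfrac12\,\kappa_g/(4(\kappa_g-1)^2+\kappa_g)\in[\tfrac1{8\kappa_g},\tfrac12]$, as in \eqref{eq:J_custom}.

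Next I would bound $A_{\frac{1}{2}}=\sqrt{A_{1,1/2}+A_{2,1/2}+A_{3,1/2}}$. Using $\rho_B<1$ to dominate $\tfrac{\rho_B^2}{1-\rho_B}\le\tfrac{\rho_B^2}{(1-\rho_B)^2}$ and $\tfrac{\rho_B^4}{(1-\rho_B)^2}\le\tfrac{\rho_B^2}{(1-\rho_B)^2}$, all three coefficients acquire the common factor $\rho_B^2/(1-\rho_B)^2$; the optimization scaling reduces—just as in \eqref{eq:A_1/2_upper}, via $L_{\mx}^2/\mu^2=\kappa_g^2(1+\beta/L)^2$ and $\kappa_g\ge1$—to $\kappa_g^2(1+\beta/L)^4$, while the surviving network constants are collected into the single $C_M$ [defined in \eqref{C_M_def_1}]. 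This gives $A_{\frac{1}{2}}\le C_M\,\kappa_g(1+\beta/L)^2\,\rho_B/(1-\rho_B)$. Imposing $\max\{z_1,z_2\}<1$ with $\alpha=c\,\alpha_{\mx}$ then reproduces the step-size threshold $\alpha_{\mx}$ of the statement and splits the analysis into the two regimes $\alpha_{\mx}=1$ and $\alpha_{\mx}<1$, which are precisely the triggering conditions of \eqref{eq:Case_I_linear_TV} and \eqref{eq:Case_II_linear_TV}.

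Finally I would close the two cases. In Case~I ($\alpha_{\mx}=1$, i.e.\ $\rho_B/(1-\rho_B)^2\le[C_M\kappa_g(1+\beta/L)^2]^{-1}$) one has $A_{\frac{1}{2}}\le1-\rho_B$, hence $z_2\le\rho_B+c(1-\rho_B)=1-(1-c)(1-\rho_B)$ while $z_1=1-cJ$; since the triggering inequality keeps $\rho_B$ bounded away from $1$, both $1-z_1=\Omega(1/\kappa_g)$ and $1-z_2=\Omega(1)$, so $1-z=\Omega(1/\kappa_g)$ and \eqref{eq:Case_I_linear_TV} follows. In Case~II ($\alpha_{\mx}<1$) one gets $z_2\le1-(1-c)(1-\rho_B)$ and $z_1=1-cJ\alpha_{\mx}$, so that $1-z_1=\Omega\big((1-\rho_B)^2/[(\kappa_g+\beta/\mu)^2\rho_B]\big)$ after substituting $J\ge\tfrac1{8\kappa_g}$ and $\kappa_g^2(1+\beta/L)^2=(\kappa_g+\beta/\mu)^2$. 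The one genuinely new point—caused by $z_2$ being affine rather than squared—is that $1-z_2$ only decays like $(1-\rho_B)^1$; to show it does not dominate, I would use the Case~II defining inequality $(1-\rho_B)^2/\rho_B<C_M\kappa_g(1+\beta/L)^2$ (which also bounds $\rho_B$ away from $0$) to verify $(1-\rho_B)^{-1}=O\big((\kappa_g+\beta/\mu)^2\rho_B/(1-\rho_B)^2\big)$, so the binding branch is $z_1$ and the stated complexity \eqref{eq:Case_II_linear_TV} is recovered. The main obstacle I anticipate is precisely this constant bookkeeping: tracking $\phi_{ub},\phi_{lb},m,c_0$ through the square root that defines $A_{\frac{1}{2}}$, and confirming the branch comparison in Case~II—the analogue of the $cJ/(M\rho)<1$ step of the undirected proof.
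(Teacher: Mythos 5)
Your proposal is correct and follows essentially the same route as the paper's own proof (Appendix V): the identical particularization $\tmu_{\mn}=L$, $D_{\mn}^\ell=0$, $D_{\mx}=L-\mu$, $L_{\mx}=L+\beta$ (so $J$ is unchanged), the same packaging of the network constants into $C_M$ so that $A_{\frac{1}{2}}\leq C_M\,\kappa_g(1+\beta/L)^2\,\rho_B/(1-\rho_B)$, and the same two-case split on $\alpha_{\mx}$ with $\rho_B\leq 1/C_M$ in Case I. Your Case II branch comparison—using the Case II inequality to bound $\rho_B$ away from zero so that $1-z_2$ cannot dominate—is mathematically equivalent to the paper's step establishing $Jc/(M\rho_B)<1$ by contradiction, as you yourself anticipated.
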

\begin{proof}

According to 
Theorem~\ref{thm:linear_rate_TV},  the rate $z$ can be bounded as \begin{equation}\label{eq_rate_summary_TV} z \leq \max \{z_1, z_2\},\quad \text{ with }\quad 
z_1 \triangleq 1 - \alpha\cdot J \,\,\text{ and }\,\,  z_2 \triangleq \rho_{B} + A_{\frac{1}{2}} \alpha,
\end{equation} 
where $J$ and $A_{\frac{1}{2}}$ are defined in (\ref{eq:expression_J_TV}) and (\ref{eq:rate_condition_2_TV}), respectively. 

The proof consists in bounding properly $z_1$ and $z_2$ based upon  the surrogate   \eqref{eq:linear_surrogate} postulated in the corollary.  
We begin
particularizing the expressions of $J$ and $A_{\frac{1}{2}}$.   
Since $\nabla^2 \tf_i (\bx_i; \bx_i^\nu) = L$, one can set $\tmu_{\mn} =  L$, and (\ref{eq:upper-lower-hessian}) holds with     $D_{\mn}^\ell = 0$ and  $D_{\mx} = L -\mu$. Furthermore,   by Assumption~\ref{assump:homogeneity}, it follows that $ \beta\geq \lambda_{\max}(\nabla^2 f_i (\bx)) -L$, for all $\mathbf{x}\in \mathcal{K}$; hence, one can set  $L_{\mx} = L + \beta$.  
Next, we will substitute the above values into the expressions of $J$ and $A_{\frac{1}{2}}$. 

To do so, we need to particularize first the  quantities $G_P^\star \left(\frac{\tmu_{\mn}}{\tmu_{\mn} - D_{\mn}^\ell}\right)$ [cf. (\ref{eq:r_star_def_TV})], $C_1$   and  $C_2$ [cf. (\ref{eq:C1_C2_TV})]: \vspace{-0.2cm}
\begin{align*}
&G_P^\star \left(\frac{\tmu_{\mn}}{\tmu_{\mn} - D_{\mn}^\ell}\right) = G_P^\star \left(1\right)  = \frac{4 (L - \mu)^2 + L^2}{ \mu L^2 }, 
\\
& C_1 = \frac{6}{\mu\phi_{lb} L^2}\left(   (2 L - \mu)^2  + 4 (L + \beta)^2 \right), \quad \text{and}\quad   C_2 =\frac{4}{L^2}.
\end{align*}
Accordingly, the expressions of $J$ and $A_{\frac{1}{2}}$ read: \begin{equation}\label{eq:J_custom_TV}
J  =  
{ \frac{1}{2}}\frac{\kappa_g }{4 (\kappa_g - 1)^2 +\kappa_g}\in \left[\frac{1}{8\kappa_g}, \frac{1}{2}\right],
\end{equation}  
and 
\begin{align}\label{eq:A_1/2_upper_TV}
\begin{split}
& (A_{\frac{1}{2}})^2 
\\
= & G_P^\star(1) \cdot 2\cdot C_1\cdot 8 \phi_{ub} \cdot  L_{\mx}^2\cdot \frac{2c_0^2\rho_B^2}{1-\rho_B}
\\
&+\left(G_P^\star(1) \cdot 2\cdot C_1\cdot 2 \phi_{ub}+C_2\right)  \cdot 2m\phi_{lb}^{-2} L_{\mx}^2\cdot \frac{2c_0^2\rho_B^2}{1-\rho_B}
\\
&+\left(G_P^\star(1) \cdot 2\cdot C_1\cdot 2 \phi_{ub}+C_2\right)  \cdot 8m\phi_{lb}^{-2} L_{\mx}^2\cdot \frac{4c_0^4\rho_B^4}{(1-\rho_B)^2}
\\
\leq & \left(G_P^\star(1) \cdot 2\cdot C_1\cdot 12 \phi_{ub}+C_2\right)  \cdot 8m\phi_{lb}^{-2} L_{\mx}^2\cdot \frac{4c_0^4\rho_B^2}{(1-\rho_B)^2}
\\
\leq & \left[\frac{4 (L - \mu)^2 + L^2}{ \mu L^2 }\cdot \frac{12}{\mu\phi_{lb} L^2}\left(   (2 L - \mu)^2  + 4 (L + \beta)^2 \right)\cdot 12\phi_{ub}+\frac{4}{L^2}\right]
\\
&\qquad \qquad \qquad \qquad\qquad \qquad \qquad \qquad\qquad  \cdot 8m\phi_{lb}^{-2} \left(L+\beta\right)^2\cdot \frac{4c_0^4\rho_B^2}{(1-\rho_B)^2}
\\
\leq & C_M^2 \cdot  \kappa_g^2 \left(1 + \frac{\beta}{L} \right)^4\cdot \frac{\rho_B^2}{(1-\rho_B)^2},
\end{split}
\end{align}
where 
\begin{equation}
\label{C_M_def_1}
C_M\triangleq 608\cdot\phi_{lb}^{-1}\cdot c_0~\sqrt{\frac{\phi_{ub}}{\phi_{lb}} \cdot m},
\end{equation}
and in the first inequality we have used the fact that  $\phi_{lb}<1$ and $c_0>1$, and the last inequality holds since $\kappa_g \geq 1$ and $\frac{\phi_{ub}}{\phi_{lb}}\geq 1$. Using the above expressions, in the sequel we upperbound  $z_1$ and $z_2$. 


By (\ref{eq:A_1/2_upper_TV}),   we have 	
\begin{equation} z_2\leq \bar{z}_2 \triangleq \rho_B + \alpha M\cdot \frac{\rho_B}{1-\rho_B},\quad \text{with}\quad 
M \triangleq C_M \cdot \kappa_g (1 + \beta/L)^2.
\end{equation}
Since   $\alpha \in (0,1]$ must be chosen so that $z \in (0,1]$, we impose $\max\{z_1, \bar{z}_2\}<1$, implying 
$\alpha \leq \min\{J^{-1}, (1-\rho_B)^2/(M\rho_B),1 \}$. 
Since $J^{-1} >1$ [cf. \eqref{eq:J_custom_TV}],    the  condition on $\alpha$ reduces to $\alpha \leq \alpha_{\mx} \triangleq  \min\{1,(1-\rho_B)^{2}/ (M\rho_B)\}<1$. Choose  $\alpha = c \cdot \alpha_{\mx}$, for some given  $c \in (0,1)$. Depending on the value of $\rho_B$, either $\alpha_{\mx}=1$ or $\alpha_{\mx} =(1-\rho_B)^{2}/ (M\rho_B)$.

\textbf{$\bullet$ Case I:}  $\alpha_{\mx}=1$. This corresponds to the case  $ M \rho_B \leq (1 - \rho_B)^2$. Note that, we also have  $\rho_B \leq 1/C_M$,  otherwise $M \rho_B \geq C_M \,\kappa_g \, \rho_B >1 > (1 - \rho_B)^2$. 
In this setting,   $\alpha = c\cdot \alpha_{\mx} =c$, and  \vspace{-0.1cm}\begin{align*}
z_1 &=   1 - c \cdot J,
\\
\bar{z}_2 & = \rho_B + c M \cdot \frac{\rho_B}{1-\rho_B}\overset{(a)}{\leq}
1 - (1-c)(1 - \rho_B) 
\\
&\overset{(b)}{\leq} 
1 - (1-c)\left(1 - \frac{1}{C_M}\right),
\end{align*}
where in (a) we used $ M \rho_B \leq (1 - \rho_B)^2$ and (b) follows from  $\rho_B \leq 1/C_M$. 

Therefore, $z$ can be bounded as
\begin{align}
\begin{split}
z  \leq \max \{z_1, \bar{z}_2\} & \leq 	 1 - c\cdot (1-c)\left(1 - \frac{1}{C_M}\right)\cdot J
\\
& \leq 	 1 - c\cdot (1-c)\left(1 - \frac{1}{C_M}\right)\cdot \frac{1}{8\kappa_g}.
\end{split}
\end{align} 

\textbf{$\bullet$ Case II:}	$ \alpha_{\mx}= (1-\rho_B)^2/(M\rho_B)$. This corresponds to $ M \rho_B> (1 - \rho_B)^2$. We have $\alpha=c\cdot  \alpha_{\mx}$,
\begin{equation*}
\begin{aligned}
z_1 = & 1 - \frac{J \, c}{{M \rho_B}} \cdot   {(1-\rho_B)^2},
\\
\bar{z}_2 = & 1 - (1-c)\left(1 -\rho_B\right).
\end{aligned}
\end{equation*}	
Now we can bound $z$. Since $Jc/(M\rho_B)<1$ (by the same reasoning as in proof of Proposition \ref{cor:linearization_rate}), 
\begin{equation}
\label{z_bound_Case2}
\begin{aligned}
z  & \leq \max\{z_1,\bar{z}_2\} \leq 1 - \frac{Jc}{{M \rho_B}} \cdot  \left(1 - c \right)    {(1-\rho_B)^2}
\\
& \overset{\eqref{eq:J_custom_TV}}{\leq } 1 - \frac{c\left(1 - c \right)}{{8C_M }} \cdot \frac{(1-\rho_B)^2}{\kappa_g^2(1+\beta/L)^2\rho_B}
\\
& = 1 - \frac{c\left(1 - c \right)}{{8C_M }} \cdot \frac{(1-\rho_B)^2}{(\kappa_g+\beta/\mu)^2\rho_B}.
\end{aligned}
\end{equation}
~	
\end{proof}

\section{Rate estimate using local $f_i$ \eqref{eq:f_surrogate} (time-varying directed network case)}
\label{app:f_rate_TV}

\begin{corollary}[Local $f_i$, $\beta\leq \mu$]\label{cor:f_rate_1_TV}  
	Instate assumptions of Theorem~\ref{thm:linear_rate_TV} and suppose $\beta \leq \mu$. Consider   SONATA (Algorithm~\ref{alg:SONATA_TV}) using the surrogates~\eqref{eq:f_surrogate} and step-size $\alpha = c\cdot \alpha_{\mx}$,  $c \in (0,1)$, with $\alpha_{\mx} = \min\{1,(1 - \rho_B)^2/(\tilde{M}_2\rho_B)\} $ where  $\tilde{M}_2 = 1087 \tilde{C}_M\left(1 + \frac{\beta}{\mu}\right)^2\left(\kappa_g + \frac{\beta}{\mu}\right)^2$ and the constant $\tilde{C}_M$ is defined in \eqref{C_M_def_2}. The number of iterations  (communications) needed for $U(\bx_i^\nu)-U^\star\leq \epsilon$, $i\in [m]$,   is\vspace{-0.2cm}
	\begin{align}
	& \mathcal{O} \left(1\cdot \log (1/\epsilon) \right), & \text{if} \quad \frac{\rho_B}{(1 - \rho_B)^2} \leq \frac{1}{1087\cdot \tilde{C}_M \cdot  \left(1 + \frac{\beta}{\mu}\right)^2\left(\kappa_g + \frac{\beta}{\mu}\right)^2},
	\\
	&\mathcal{O} \left( \frac{\kappa_g^2 \rho_B}{(1 - \rho_B)^2} \,\log (1/\epsilon)\right), &\text{otherwise}.
	\end{align}
\end{corollary}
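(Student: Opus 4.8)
The plan is to mirror closely the proof of Corollary~\ref{cor:linearization_rate_TV}, only replacing the linearization-specific constants with those induced by the local-$f_i$ surrogate \eqref{eq:f_surrogate} under the regime $\beta\le\mu$. First I would invoke Theorem~\ref{thm:linear_rate_TV} to reduce the claim to controlling $z\le\max\{z_1,z_2\}$, with $z_1\triangleq 1-\alpha J$ and $z_2\triangleq\rho_B+A_{\frac12}\alpha$, and then particularize the two quantities $J$ [cf.~\eqref{eq:expression_J_TV}] and $A_{\frac12}$ [cf.~\eqref{eq:rate_condition_2_TV}] to the chosen surrogate.

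For \eqref{eq:f_surrogate} we have $\nabla^2\tf_i(\bx;\by)=\nabla^2 f_i(\bx)+\beta\bI$; since $\nabla^2 f_i\succeq\0$ and $\|\nabla^2 F-\nabla^2 f_i\|\le\beta$ (Definition~\ref{assump:homogeneity}), \eqref{eq:upper-lower-hessian} holds with $D_{\mn}^\ell=0$ and $D_{\mx}=2\beta$, while $\tmu_{\mn}=\max\{\beta,\mu\}=\mu$ (using $\beta\le\mu$) and $L_{\mx}=L+\beta$. Substituting these into \eqref{eq:expression_J_TV} gives $J=\tfrac12\big(1+16(\beta/\mu)^2\big)^{-1}=\Theta(1)$, and substituting the corresponding $G_P^\star(1)$, $C_1$, $C_2$ [cf.~\eqref{eq:r_star_def_TV}, \eqref{eq:C1_C2_TV}] into the estimate of $(A_{\frac12})^2$---exactly along the lines of \eqref{eq:A_1/2_upper_TV}, retaining the network factors $\phi_{lb},\phi_{ub},c_0,m$---yields a bound of the form $(A_{\frac12})^2\le\tilde M_2^2\,\rho_B^2/(1-\rho_B)^2$, with $\tilde M_2=1087\,\tilde C_M(1+\beta/\mu)^2(\kappa_g+\beta/\mu)^2$ and $\tilde C_M$ collecting the network constants. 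Hence $z_2\le\bar z_2\triangleq\rho_B+\alpha\tilde M_2\rho_B/(1-\rho_B)$.

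With $z\le\max\{z_1,\bar z_2\}$ in hand, imposing strict sublinearity forces $\alpha\le\alpha_{\mx}=\min\{1,(1-\rho_B)^2/(\tilde M_2\rho_B)\}$ (the bound $J^{-1}>1$ makes the $z_1<1$ constraint inactive), and I would set $\alpha=c\,\alpha_{\mx}$. The proof then splits on the value of $\alpha_{\mx}$. In Case~I ($\alpha_{\mx}=1$, i.e.~$\tilde M_2\rho_B\le(1-\rho_B)^2$, which also forces $\rho_B\le\tilde M_2^{-1}$), bounding $\bar z_2$ via $\tilde M_2\rho_B\le(1-\rho_B)^2$ and using $z_1=1-cJ$ with $J=\Theta(1)$ gives $z\le 1-\Theta(1)$, hence iteration complexity $\mathcal O(\log(1/\epsilon))$. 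In Case~II ($\alpha_{\mx}=(1-\rho_B)^2/(\tilde M_2\rho_B)$) I would substitute $\alpha=c\,\alpha_{\mx}$ into $z_1$ and $\bar z_2$, obtaining $z_1=1-\tfrac{Jc}{\tilde M_2\rho_B}(1-\rho_B)^2$, and simplify using $J=\Theta\big((1+(\beta/\mu)^2)^{-1}\big)$ together with $\tilde M_2=\Theta\big((1+\beta/\mu)^2(\kappa_g+\beta/\mu)^2\big)$ to reach $z\le 1-c'(1-\rho_B)^2/(\kappa_g^2\rho_B)$, hence $\mathcal O\big(\kappa_g^2\rho_B(1-\rho_B)^{-2}\log(1/\epsilon)\big)$.

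The main obstacle I expect is the Case~II step of verifying $Jc/(\tilde M_2\rho_B)<1$, which is needed to keep $z_1<1$ and thus legitimize the final simplification; as in Corollary~\ref{cor:linearization_rate_TV}--Case~II and in the static $\beta\le\mu$ analysis of Corollary~\ref{cor:f_rate_1}, this is established by contradiction---assuming $\tilde M_2\rho_B\le Jc$ would force $\rho_B$ so small that $(1-\rho_B)^2>\tilde M_2\rho_B$, contradicting the Case~II hypothesis. The remaining labour is bookkeeping: absorbing $\phi_{lb},\phi_{ub},c_0,m$ and all absolute constants into $\tilde C_M$ and $c'$, and checking that for $\beta\le\mu$ the $\beta/\mu$-dependence of $J$ cancels the $(1+\beta/\mu)^2$ factor carried by $\tilde M_2$, so that only $\kappa_g^2$ survives in the denominator of the Case~II rate.
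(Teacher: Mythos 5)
Your proposal is correct and follows essentially the same route as the paper's own proof in the supporting material: reduce via Theorem~\ref{thm:linear_rate_TV} to bounding $\max\{z_1,z_2\}$, particularize $D_{\mn}^\ell=0$, $D_{\mx}=2\beta$, $\tmu_{\mn}=\max\{\beta,\mu\}$, $L_{\mx}=L+\beta$ into $J$ and $(A_{\frac12})^2$ along the lines of \eqref{eq:A_1/2_upper_TV}, split on $\alpha_{\mx}=1$ versus $\alpha_{\mx}=(1-\rho_B)^2/(\tilde M_2\rho_B)$, and in Case~II verify $Jc/(\tilde M_2\rho_B)<1$ by the same contradiction argument used in Corollary~\ref{cor:linearization_rate_TV}, with the factor $(1+\beta/\mu)^2\le 4$ absorbing the $\beta/\mu$-dependence so that only $\kappa_g^2$ survives. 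No gaps; the remaining differences from the paper are purely bookkeeping of absolute constants.
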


\begin{corollary}[Local $f_i$, $\beta>\mu$]\label{cor:f_rate_2_TV}  
	Instate assumptions of Theorem~\ref{thm:linear_rate_TV} and suppose $\beta > \mu$.  Consider   SONATA  (Algorithm~\ref{alg:SONATA_TV}) using the surrogates~\eqref{eq:f_surrogate}  and step-size    $\alpha = c\cdot \alpha_{\mx}$, $c \in (0,1)$, where $\alpha_{\mx} =\min\{1,(1 - \rho_B)^2/(\tilde{M}_1\rho_B)\}$ with  $\tilde{M}_1 = 1428 \tilde{C}_M \left(1 + \frac{L}{\beta}\right) \left(\kappa_g + \frac{\beta}{\mu}\right)$ and the constant $\tilde{C}_M$ is defined in \eqref{C_M_def_2}.  The number of iterations (communications) needed for $U(\bx_i^\nu)-U^\star\leq \epsilon$, $i\in [m]$,   is\vspace{-0.2cm}
	\begin{align}
	& \mathcal{O} \left(\frac{\beta}{\mu}\log (1/\epsilon) \right), & \text{if} \quad \frac{\rho_B}{(1 - \rho_B)^2} \leq \frac{1}{1428\cdot \tilde{C}_M \cdot  \left(1 + \frac{L}{\beta}\right) \left(\kappa_g + \frac{\beta}{\mu}\right)},
	\\
	& \mathcal{O} \left( \frac{\left(\kappa_g + (\beta/\mu)\right)^2 \rho_B }{(1 - \rho_B)^2}\,\log (1/\epsilon) \right), &\text{otherwise.}
	\label{eq:Case_II_surrogate_beta_big_mu_TV}
	\end{align}
\end{corollary}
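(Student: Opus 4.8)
The plan is to mirror the proof of Corollary~\ref{cor:f_rate_2} (Appendix~\ref{app:f_rate}), replacing the static-graph rate expression of Theorem~\ref{thm:linear_rate} with the time-varying one from Theorem~\ref{thm:linear_rate_TV}. By that theorem the rate obeys $z \leq \max\{z_1, z_2\}$ with $z_1 \triangleq 1 - J\alpha$ and $z_2 \triangleq \rho_B + A_{\frac{1}{2}}\alpha$, where $J$ and $A_{\frac{1}{2}}$ are given in~\eqref{eq:expression_J_TV} and~\eqref{eq:rate_condition_2_TV}. The whole task therefore reduces to specializing $J$ and $A_{\frac{1}{2}}$ to the surrogate~\eqref{eq:f_surrogate} in the regime $\beta > \mu$, and then tuning $\alpha$.

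First I would pin down the Hessian-mismatch constants. Since $\nabla^2 \tf_i(\bx;\by) = \nabla^2 f_i(\bx) + \beta \bI$ and each $f_i$ is convex, Definition~\ref{assump:homogeneity} gives $\0 \preceq \nabla^2 \tf_i - \nabla^2 F \preceq 2\beta \bI$, so one may set $D_{\mn}^\ell = 0$, $D_{\mx} = 2\beta$, $\tmu_{\mn} = \max\{\beta,\mu\} = \beta$, and $L_{\mx} = L + \beta$. Substituting these into~\eqref{eq:r_star_def_TV},~\eqref{eq:C1_C2_TV} and~\eqref{eq:expression_J_TV} yields, exactly as in Appendix~\ref{app:f_rate}, $J = \tfrac12(1 + 16\beta/\mu)^{-1} \geq (34\,\beta/\mu)^{-1}$, so $J^{-1} = \mathcal{O}(\beta/\mu)$. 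The genuinely new computation is the bound on $A_{\frac{1}{2}}$: plugging the same values into the time-varying coefficients $A_{1,\theta},A_{2,\theta},A_{3,\theta}$ of~\eqref{eq:rate_condition_2_TV} with $\theta = 1/2$, and absorbing the network factors $\phi_{lb}^{-1},\phi_{ub},c_0,m$ into a single constant $\tilde{C}_M$ (analogous to $C_M$ in~\eqref{C_M_def_1}), I expect $(A_{\frac{1}{2}})^2 \leq \tilde{M}_1^2\,\rho_B^2/(1-\rho_B)^2$ with $\tilde{M}_1 = 1428\,\tilde{C}_M(1 + L/\beta)(\kappa_g + \beta/\mu)$; hence $z_2 \leq \rho_B + \alpha\,\tilde{M}_1\,\rho_B/(1-\rho_B)$.

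With these two estimates the closing argument is identical to that of Corollary~\ref{cor:linearization_rate_TV}. Imposing $\max\{z_1,z_2\} < 1$ forces $\alpha \leq \alpha_{\mx} = \min\{1,(1-\rho_B)^2/(\tilde{M}_1\rho_B)\}$ (the constraint $\alpha \leq J^{-1}$ is slack since $J^{-1} > 1$), and I set $\alpha = c\,\alpha_{\mx}$. In \emph{Case~I}, $\alpha_{\mx} = 1$ and $\tilde{M}_1\rho_B \leq (1-\rho_B)^2$, which also forces $\rho_B \leq 1/\tilde{M}_1$; then $z \leq 1 - \Omega(J) = 1 - \Omega(\mu/\beta)$, giving the $\mathcal{O}((\beta/\mu)\log(1/\epsilon))$ complexity. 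In \emph{Case~II}, $\alpha_{\mx} = (1-\rho_B)^2/(\tilde{M}_1\rho_B)$; after verifying $cJ/(\tilde{M}_1\rho_B) < 1$ by the same contradiction used in Appendix~\ref{app:f_rate} (were it false, $\rho_B$ would be so small that $\tilde{M}_1\rho_B < (1-\rho_B)^2$, contradicting the Case~II hypothesis), I obtain $z \leq 1 - \Omega\big((1-\rho_B)^2/((\kappa_g + \beta/\mu)^2\rho_B)\big)$, i.e.\ the $\mathcal{O}((\kappa_g + \beta/\mu)^2\,\rho_B\,(1-\rho_B)^{-2}\log(1/\epsilon))$ bound in~\eqref{eq:Case_II_surrogate_beta_big_mu_TV}.

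The only real obstacle is the constant bookkeeping in the bound on $A_{\frac{1}{2}}$: one must upper-bound each of $A_{1,1/2},A_{2,1/2},A_{3,1/2}$, collect $\phi_{lb}^{-1},\phi_{ub},c_0,m$ into the single $\tilde{C}_M$, and retain exactly the leading $(\kappa_g + \beta/\mu)$ and $(1 + L/\beta)$ factors so that $\tilde{M}_1$ takes the advertised form. Everything else is a routine transcription of the $\beta > \mu$ branch of Appendix~\ref{app:f_rate}, with $\rho$ replaced by $\rho_B$ and the quadratic-in-$\alpha$ term $(\rho + \sqrt{\alpha A_{\frac{1}{2}}})^2$ replaced by the linear $\rho_B + A_{\frac{1}{2}}\alpha$ dictated by Theorem~\ref{thm:linear_rate_TV}.
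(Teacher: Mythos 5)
Your proposal is correct and follows essentially the same route as the paper's own proof: specialize $D_{\mn}^\ell=0$, $D_{\mx}=2\beta$, $\tmu_{\mn}=\beta$, $L_{\mx}=L+\beta$ into the time-varying constants, bound $(A_{\frac{1}{2}})^2 \leq \tilde{M}_1^2\,\rho_B^2/(1-\rho_B)^2$ with the network factors collected into $\tilde{C}_M$, and then split on $\alpha_{\mx}=1$ versus $\alpha_{\mx}=(1-\rho_B)^2/(\tilde{M}_1\rho_B)$ exactly as in the static case, including the contradiction argument for $cJ/(\tilde{M}_1\rho_B)<1$. The only part you leave unexecuted is the constant bookkeeping for $A_{\frac{1}{2}}$, which is precisely the computation the paper carries out to arrive at the factor $1428\,\tilde{C}_M(1+L/\beta)(\kappa_g+\beta/\mu)$.
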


\begin{proof}
In the setting of the corollary, we have:  $\nabla^2 \tf_i (\bx; \by) = \nabla^2 f_i (\bx) + \beta \bI$,  for all $\mathbf{y}\in \KK$;  $\nabla^2 f_i(\mathbf{x}) \succeq \mathbf{0}$, for all   $\mathbf{x}\in \KK$; and,  by Assumption~\ref{assump:homogeneity}, 
$\mathbf{0}  \preceq \nabla^2 \tf_i (\bx ,\by) - \nabla^2 F(\bx) \preceq 2\beta\mathbf{I}$, for all $\bx, \by \in \KK$. Therefore,  
we can set $D_{\mn}^\ell = 0$, $D_{\mx} = 2 \beta$,   $\tmu_{\mn} = \beta + (\mu - \beta)_+=\max\{\beta, \mu\}$, and $L_{\mx} = L + \beta$.

Using these values, 
$G_P^\star \left(\frac{\tmu_{\mn}}{\tmu_{\mn} - D_{\mn}^\ell}\right)$, $C_1$, and $C_2$   can be simplified as follows:
\begin{align*}
&G_P^\star \left(\frac{\tmu_{\mn}}{\tmu_{\mn} - D_{\mn}^\ell}\right) = G_P^\star \left(1\right)  = \frac{ 16 \beta^2 +  \max\{\beta, \mu\}^2}{ \mu \max\{\beta, \mu\}^2 },  \\
& C_1 = \frac{6}{\mu\phi_{lb}}\left(  \left( \frac{2 \beta}{\max\{\beta, \mu\}} + 1\right)^2 + \frac{4 (L + \beta)^2}{\max\{\beta, \mu\}^2 } \right), \quad \text{and}\quad C_2 =\frac{4}{\max\{\beta, \mu\}^2}.
\end{align*}

Accordingly, the expressions of $J$ and $A_{\frac{1}{2}}$ read: \vspace{-0.2cm}
\begin{equation}\label{eq:J_II_TV}J = \frac{1}{2}\frac{1}{1 + 16 \left(\frac{\beta}{\mu}\right) \cdot \min \left\{1, \frac{\beta}{\mu}\right\}}, 
\end{equation} and 
\begin{align*}
\begin{split}
& (A_{\frac{1}{2}})^2 
\\
\leq & \left(G_P^\star(1) \cdot 2\cdot C_1\cdot 12 \phi_{ub}+C_2\right)  \cdot 8m\phi_{lb}^{-2} L_{\mx}^2\cdot \frac{4c_0^4\rho_B^2}{(1-\rho_B)^2}
\\
\leq  & \left(\frac{ 16 \beta^2 +  \max\{\beta, \mu\}^2}{ \mu \max\{\beta, \mu\}^2 } \cdot \frac{12}{\mu\phi_{lb}}\left(  \left( \frac{2 \beta}{\max\{\beta, \mu\}} + 1\right)^2 + \frac{4 (L + \beta)^2}{\max\{\beta, \mu\}^2 } \right)\cdot 12 \phi_{ub}+\frac{4}{\max\{\beta, \mu\}^2}\right) 
\\
&\qquad \qquad \qquad \qquad\qquad \qquad \qquad \qquad\qquad  \cdot 8m\phi_{lb}^{-2} (L+\beta)^2\cdot \frac{4c_0^4\rho_B^2}{(1-\rho_B)^2}
\\
\leq 
&
\begin{cases}
\left( 2448 \cdot \frac{\phi_{ub}}{\phi_{lb}} \cdot \left(9 + 4\left( 1+\frac{L}{\beta}\right)^2 \right) \cdot  \left(\kappa_g + \frac{\beta}{\mu}\right)^2 + 4 \left(1 + \frac{L}{\beta}\right)^2 \right) \cdot 8m\phi_{lb}^{-2} \cdot \frac{4c_0^4\rho_B^2}{(1-\rho_B)^2}, &  \beta>\mu,
\\
\left( 144 \cdot \frac{\phi_{ub}}{\phi_{lb}} \cdot  \left(\frac{16 \beta^2}{\mu^2} + 1\right) \left(\kappa_g + \frac{\beta}{\mu}\right)^2\left(  \left( \frac{2 \beta}{\mu} + 1\right)^2 + 4 \left(\kappa_g + \frac{\beta}{\mu}\right)^2 \right)  + 4 \left(\kappa_g + \frac{\beta}{\mu}\right)^2  \right)  & 
\\
\qquad\qquad\qquad\qquad\qquad \qquad\qquad\qquad\qquad\qquad \qquad\qquad\qquad\qquad\qquad \cdot 8m\phi_{lb}^{-2} \cdot \frac{4c_0^4\rho_B^2}{(1-\rho_B)^2}, & \beta\leq \mu;
\end{cases}\\
\leq & \tilde{M}^2\frac{ \rho_B^2}{1-\rho_B^2},
\end{split}
\end{align*}
where\vspace{-0.2cm}
\begin{align} 
\tilde{M} \triangleq  
\begin{cases}\label{eq:def_M_surrogate_TV}
1428\cdot \tilde{C}_{M}\left(1 + \frac{L}{\beta}\right) \left(\kappa_g + \frac{\beta}{\mu}\right), & \beta>\mu,
\\
1087\cdot \tilde{C}_{M}\left(1 + \frac{\beta}{\mu}\right)^2\left(\kappa_g + \frac{\beta}{\mu}\right)^2, &  \beta\leq \mu,
\end{cases}
\end{align}
and
\begin{equation}
\label{C_M_def_2}
\begin{aligned}
\tilde{C}_{M} & \triangleq c_0^2\phi_{lb}^{-1}\sqrt{\frac{\phi_{ub}}{\phi_{lb}} \cdot m},
\end{aligned}
\end{equation}
and the last inequality holds since $\kappa_g \geq 1$ and $\frac{\phi_{ub}}{\phi_{lb}}\geq 1$.

Similarly, we bound $z \leq \max \{z_1, z_2\}$ as
\begin{equation}\label{eq_rate_summary_2_TV} z \leq \max \{z_1, \bar{z}_2\},\quad \text{ with }\quad 
z_1 \triangleq 1 - \alpha\cdot J \,\,\text{ and }\,\,  \bar{z}_2   \triangleq \rho_B + \alpha\,\tilde{M}\cdot \frac{\rho_B}{1-\rho_B},
\end{equation} where $J$ and $\tilde{M}$ are now given by (\ref{eq:J_II_TV}) and (\ref{eq:def_M_surrogate_TV}),  respectively. 
For  $\max \{z_1, z_2\}<1$, we require $\alpha \leq \alpha_{\mx} \triangleq   \min\{1,(1-\rho_B)^2/(\tilde{M}\rho_B) \}$,   and choose  $\alpha = c \cdot \alpha_{\mx}$, with arbitrary $c \in (0,1)$.

\textbf{$\bullet$ Case I:} $\alpha_{\mx}=1$. This correspond to $ \tilde{M} \rho_B \leq (1 - \rho_B)^2$,  $\alpha = c$, hence,
\begin{equation*}
z_1  = 1 - c \cdot J \quad \text{and} \quad \bar{z}_2    \leq1 - \left(1 - c \right) (1 - \rho_B).
\end{equation*}
Since $\tilde{M} \geq  1087\cdot \tilde{C}_M$ and $(1 - \rho_B)^2 \leq 1$, it must be $\rho_B \leq 1/(1087\cdot \tilde{C}_M)$.
Therefore, the rate $z$ can be bounded as
\begin{align*}
\begin{split}
z  \leq \max \{z_1, \bar{z}_2\} & \leq1 - c \cdot  \left(1 - c \right)\cdot  J \cdot (1 - \rho_B)
\\
& \leq  1 - c \cdot  \left(1 - c\right)\cdot \left(1 - \frac{1}{1087\cdot \tilde{C}_M}\right) \cdot \frac{1}{34} \cdot \frac{\mu}{\beta},
\end{split}
\end{align*} 
when $\beta>\mu$, and 
\begin{align*}
\begin{split}
z  
& \leq   1 - c \cdot  \left(1 - c \right)\cdot \left(1 - \frac{1}{1087\cdot \tilde{C}_M}\right) \cdot\frac{1}{2+32\left(\frac{\beta}{\mu}\right)^2},
\end{split}
\end{align*} 
when $\beta\leq \mu$.  


\textbf{$\bullet$ Case II:} 	$ \alpha_{\mx}= (1-\rho_B)^2/(\tilde{M}\rho_B)$. This corresponds to $\tilde{ M} \rho_B> (1 - \rho_B)^2$. We have $\alpha=c\cdot  \alpha_{\mx}$. Similarly to inequality \eqref{z_bound_Case2} in proof of Corollary \ref{cor:linearization_rate_TV}, we have
\begin{align*}
\begin{split}
z  & \leq \max\{z_1,\bar{z}_2\} \leq 1 - \frac{c\,J}{{\tilde{M}\rho_B }} \cdot  \left(1 - c \right) (1-\rho_B)^2,
\end{split}
\end{align*}
which yields 
\begin{align*}
\begin{split}
z  & \leq 1 - \frac{c\left(1 - c\right)  }{{1428\cdot 34 \cdot   \tilde{C}_{M} }} \cdot \frac{(1-\rho_B)^2}{(\kappa_g+\beta/\mu)^2\rho_B}     ,
\end{split}
\end{align*}	
when $\beta>\mu$, and 
\begin{align*}
\begin{split}
z  & \leq 1 - \frac{c\left(1 - c \right) }{{34\cdot 1087\cdot \tilde{C}_{M} }} \cdot \frac{(1-\rho_B)^2}{(1+\beta/\mu)^2(\kappa_g+\beta/\mu)^2\rho_B}      
\\
& \leq 1 - \frac{c\left(1 - c\right) }{{34\cdot 16\cdot 1087\cdot \tilde{C}_{M} }} \cdot \frac{(1-\rho_B)^2}{\kappa_g^2\rho_B},
\end{split}
\end{align*}
when $\beta\leq \mu$; the last inequality holds due to $\kappa_g\geq 1$.

\end{proof}	

\end{document}